\documentclass[12pt]{amsart}

\usepackage{amssymb,amsmath,amsthm,amsfonts,amscd,latexsym, dsfont, color, xcolor, mathtools}
\usepackage[hidelinks]{hyperref} 
\usepackage{graphics}
\usepackage{subcaption}
\usepackage{wrapfig}
\usepackage{pst-node}
\usepackage{tikz-cd} 
\usepackage{enumitem}
\usepackage{mathrsfs}
\usepackage{cleveref}
\usepackage{todonotes}
\usepackage{microtype}

\newcommand{\bbR}{\mathbb R}
\newcommand{\RP}{\mathbb R \mathbb P}
\newcommand{\bbZ}{\mathbb Z}

\newcommand{\bbH}{\mathbb H}

\newcommand{\bbP}{\mathbb P}

\newcommand{\dev}{\mathrm{dev}}
\newcommand{\hol}{\mathrm{hol}}
\newcommand{\Hol}{\mathrm{Hol}}
\newcommand{\pr}{\mathrm{pr}}
\newcommand{\PSL}{\mathrm{PSL}}
\newcommand{\SL}{\mathrm{SL}}
\newcommand{\Gr}{\mathrm{Gr}}
\newcommand{\Hit}{\mathrm{Hit}}
\newcommand{\Gtr}{\mathcal{G}^{\mathrm{tr}}}
\newcommand{\Gtap}{\mathcal{G}^{\mathrm{tan},+}}
\newcommand{\Gtam}{\mathcal{G}^{\mathrm{tan},-}}
\newcommand{\Phitr}{\Phi_{\mathrm{tr}}}
\newcommand{\Phitap}{\Phi_{\mathrm{tan},+}}
\newcommand{\Phitam}{\Phi_{\mathrm{tan},-}}
\newcommand{\devtr}{\mathrm{dev}^{\mathrm{tr}}}
\newcommand{\devtap}{\mathrm{dev}^{\mathrm{ta,+}}}
\newcommand{\devtam}{\mathrm{dev}^{\mathrm{ta,-}}}

\newcounter{foo}
\newtheorem{theo}[foo]{Theorem}

\newtheorem{propo}[foo]{Proposition}
\newtheorem*{defi}{Definition}


\DeclareFontFamily{U}{mathx}{\hyphenchar\font45}
\DeclareFontShape{U}{mathx}{m}{n}{<-> mathx10}{}
\DeclareSymbolFont{mathx}{U}{mathx}{m}{n}
\DeclareMathAccent{\widebar}{0}{mathx}{"73}
	
\crefformat{section}{\S#2#1#3} 
\crefformat{subsection}{\S#2#1#3}
\crefformat{subsubsection}{\S#2#1#3}
	\allowdisplaybreaks

\theoremstyle{definition} 

\usepackage{thmtools, thm-restate}
\theoremstyle{plain}

\newtheorem{theorem}{Theorem}[section]

\newtheorem{claim}[theorem]{Claim}
\newtheorem{proposition}[theorem]{Proposition}
\newtheorem{lemma}[theorem]{Lemma}
\newtheorem{corollary}[theorem]{Corollary}

\newtheorem{example}[theorem]{Example}

\newtheorem{definition}[theorem]{Definition}
\newtheorem{remark}[theorem]{Remark}

\numberwithin{equation}{section}

\newtheorem*{proofcase}{Case}
	
\usepackage[margin=1in]{geometry}
\setlength{\textheight}{9in}

\begin{document}

\title{Concave Foliated Flag Structures and the $\text{SL}_3(\mathbb{R})$ Hitchin Component}
\author[A. Nolte and J.\ M. Riestenberg]{Alexander Nolte and J.\ Maxwell Riestenberg}
\date{}

\vspace{-1cm}
\begin{abstract}
    We give a geometric characterization of flag geometries associated to Hitchin representations in $\SL_3(\bbR)$.
    Our characterization is based on distinguished invariant foliations, similar to those studied by Guichard-Wienhard in $\PSL_4(\bbR)$.

    We connect to the dynamics of Hitchin representations by constructing refraction flows for all positive roots in general $\mathfrak{sl}_n(\bbR)$ in our setting. 
    For $n = 3$, leaves of our one-dimensional foliations are flow-lines.
    One consequence is that the highest root flows are $C^{1+\alpha}$.
\end{abstract}

\maketitle

\stepcounter{section}

In seminal work \cite{hitchin1992lie}, Hitchin discovered an unexpected component $\text{Hit}_n(S)$ of the $\text{PSL}_n(\bbR)$ character variety for surface groups $\pi_1(S)$. These now-called \textit{Hitchin components} have striking similarities to Teichm\"uller space in $\text{PSL}_2(\bbR)$.
Hitchin noted that the geometric significance of $\text{Hit}_n(S)$ was unclear, and singled out interpretations of the \textit{Hitchin representations} in $\text{Hit}_n(S)$ in terms of analogues of complex or hyperbolic structures on $S$ as appealing.

Both directions have been developed in the time since.
The main attempts surrounding complex geometry have been in terms of minimal surfaces \cite{labourie2007flat, labourie2017cyclic, sagman2022unstable} and so-called \textit{higher degree complex structures} \cite{fock2021higher,kydonakis2023fock,nolte2022canonical}.
Our focus here is on analogues of hyperbolic structures.

The general theory here centers on the \textit{Anosov} condition that Labourie introduced in his study of Hitchin representations \cite{labourie2006anosov} and Guichard-Wienhard extended \cite{guichard2012anosov}.
In particular, Guichard-Wienhard associate to every Hitchin representation a Thurston-Klein $(G,X)$-structure whose holonomy induces that representation.
There is a rich body of work on $(G,X)$-structures associated to Anosov representations, e.g. \cite{alessandrini2023fiber,collierToulisseTholozan2019geometry,dancigerGK2024convex,davalo2023nearly,dumasSanders2020geometrycomplex,Filip2023,kapovitchLeebPorti2018dynamics,stecker2023balanced}.

Finding \emph{geometric characterizations} of $(G,X)$-structures corresponding to the Hitchin component is an appealing but challenging problem, and remains open for $n \ge 5$.
For $n=3$, a satisfying answer is provided by Choi and Goldman \cite{choi1993convex,goldman1990convex}, who interpret $\Hit_3(S)$ as holonomies of \textit{convex} projective structures on $S$.
When $n=4$, Guichard-Wienhard \cite{guichard2008convex} identify $\Hit_4(S)$ with a moduli space of projective structures on the unit tangent bundle $\mathrm{T}^1S$ that are equipped with geometrically distinguished foliations.

An interpretation of $\Hit_n(S)$ as convex projective structures can only hold for $n = 3$.
In particular, for even $n > 3$, there are cocompact domains of discontinuity in $\RP^{n-1}$, but they are never convex \cite{dancigerGK2024convex}.
For odd $n > 3$, the failure is more dramatic: Stecker has shown there is no cocompact domain of discontinuity in any Grassmannian $\Gr_k(\bbR^n)$ for any such Hitchin representation \cite{stecker2023balanced}.

Our view is towards developing a perspective for odd $n$.
Since there is no cocompact domain of discontinuity in projective space one must work in another flag manifold. 
The space $\mathscr{F}_{1,n-1}$ of partial flags of subspaces $(V_1, V_{n-1})$ of $\bbR^n$ of dimensions $1$ and $n-1$ is a natural candidate because there is a unique cocompact domain of discontinuity in $\mathscr{F}_{1,n-1}$ for all odd $n \geq 5$  \cite{guichard2012anosov,stecker2023balanced}.

The point of this paper is to give a new characterization of $\Hit_3(S)$ in terms of geometric structures modelled on $\mathscr{F}_{1,2}$, and so to give a perspective on the geometry of $\SL_3(\bbR)$-Hitchin representations that is not limited by known results to only hold for $n = 3$.
Our results are analogues of the geometric interpretation Guichard-Wienhard gave for $\text{Hit}_4(S)$ in \cite{guichard2008convex}. 
Distinguished foliations play a central role in both our results and those of Guichard-Wienhard.

We also find a connection between the developing maps and foliations that appear in this study and the dynamics of Hitchin representations: we produce geometric realizations in projective space of reparameterizations of the geodesic flow on $\mathrm{T}^1S$ (\textit{refraction flows}) introduced by Sambarino \cite{bridgeman2015pressure, sambarino2015orbital,sambarino2024report} (see \S \ref{ss-intro-reparams} and \S \ref{s-geo-flow-reparameterizations}).
Refraction flows are fundamental objects in the study of dynamics of Anosov representations (e.g. \cite{brayCanaryKaoMartone2022counting, bridgeman2015pressure,carvajales2022thurstonsasymmetric, delarueMonclairSanders2024axiomA, dey2024pattersonsullivan,kim2023ergodicdichotomysubspaceflows,kim2024properlydiscontinuousactionsgrowth, potrie2017eigenvalues-entropy, sambarino2024report}).
Their standard construction is based on rather abstract general considerations; our construction is in terms of explicit canonical maps and cross-ratios.

Our hope is that this new perspective will lead to an improved understanding of and intuition for refraction flows.
As a proof-of-concept of the utility of our perspective, we prove an exceptional regularity property of highest-root refraction flows of general $\PSL_n(\bbR)$-Hitchin representations (see \S \ref{ss-intro-reparams}).
This extends a result of Benoist for $n = 3$ \cite{benoist2004convexesI}.

We remark that the theory of geometric structures for general flag manifolds has seen relatively little development (e.g. \cite{barbot2010three, falbel2024geometric, mion-mouton2023geometrical}). A contribution of this work is to single out analogues in $\mathscr{F}_{1,2}$ of convex domains in projective planes for questions in this setting.

\subsection{Main Results}
Let $\Gamma$ be the fundamental group of a closed, orientable hyperbolic surface $S$ with unit tangent bundle ${\text{T}}^1S$ and $\overline{\Gamma} = \pi_1( {\text{T}}^1S)$.
For $\rho \colon \Gamma \to \text{SL}_3(\bbR)$ Hitchin, we give seven families of explicit developing maps for $(\text{SL}_3(\bbR), \mathscr{F}_{1,2})$-structures on $\mathrm{T}^1S$ whose holonomies factor through the canonical map $\overline{\Gamma} \to \Gamma$ and induce $\rho$ (\S \ref{ss-dev-maps}).
These developing maps take leaves of the geodesic and weakly stable foliations on the universal cover of $\mathrm{T}^1S$ to geometrically distinguished subspaces of $\mathscr{F}_{1,2}$.
The associated $(\mathrm{SL}_3(\bbR), \mathscr{F}_{1,2})$-structures are two-sheeted covers of those constructed in Guichard-Wienhard's general work \cite{guichard2012anosov}.

We formalize the geometry of three related families of these $\mathscr{F}_{1,2}$-structures in the notion of \textit{concave foliated flag structures} on $\text{T}^1S$.
We then define a moduli space $\mathscr{D}^{\text{cff}}_{\mathscr{F}_{1,2}}(\text{T}^1S)$ of concave foliated flag structures on $\text{T}^1S$ (\S \ref{ss-foliated-flag-structures}).
The framework is designed to parallel Guichard-Wienhard's properly convex foliated projective structures in $\text{PSL}_4(\bbR)$ \cite{guichard2008convex}.

Our main theorems are analogues of Guichard-Wienhard's results in $\text{PSL}_4(\bbR)$: \textit{any} $(\text{SL}_3(\bbR),\mathscr{F}_{1,2})$-structure on $\mathrm{T}^1S$ satisfying the synthetic condition of concave foliation is strongly equivalent to one of our explicit examples.
Furthermore, this induces identifications of moduli spaces of geometric structures and the $\text{SL}_3(\bbR)$-Hitchin component.
This is documented in the following two theorems.
Let $\mathfrak{X}(\Gamma, \text{SL}_3(\bbR))$ be the $\text{SL}_3(\bbR)$ character variety of $S$, i.e.\ the collection of conjugacy classes of representations $\Gamma \to \SL_3(\bbR)$.

\begin{theo}[Moduli Space Identification]\label{theorem-moduli-space}
    The holonomy $\hol$ of any concave foliated flag structure on ${\rm{T}}^1S$ vanishes on the kernel of the canonical projection $\pi_1({\rm{T}}^1S) \to \pi_1(S)$, and induces a Hitchin representation $\hol_* \colon \pi_1S \to {\rm{SL}}_3(\bbR)$.
   
    The moduli space $\mathscr{D}^{\rm{cff}}_{\mathscr{F}_{1,2}}({\rm{T}}^1S)$ has three connected components.
    The restriction of the holonomy map $\Hol_* \colon \mathscr{D}^{\rm{cff}}_{\mathscr{F}_{1,2}}({\rm{T}}^1S) \to \mathfrak{X}(\pi_1(S), {\rm{SL}}_3(\bbR))$ to any connected component $\mathscr{C}$ of $\mathscr{D}^{\rm{cff}}_{\mathscr{F}_{1,2}}({\rm{T}}^1S)$ is a homeomorphism $\mathscr{C} \to {\rm{Hit}}_3(S)$.
\end{theo}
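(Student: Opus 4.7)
The plan is to follow the blueprint of Guichard--Wienhard's treatment of $\Hit_4(S)$ in \cite{guichard2008convex}, transplanted to the $\mathscr{F}_{1,2}$ setting. The argument splits into a \emph{rigidity half}, showing that the synthetic concave foliation axioms force any developing map to be equivalent to one of the three explicit families from \S\ref{ss-dev-maps}, and a \emph{parameter-counting half} identifying each family with $\Hit_3(S)$.

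I would first establish that the holonomy vanishes on the kernel of $\pi_1(\mathrm{T}^1 S) \to \pi_1(S)$. Writing $\widetilde{\mathrm{T}^1 S} \cong \widetilde{S} \times \bbR$, the central generator $c$ of $\pi_1(\mathrm{T}^1 S)$ acts by translation in the $\bbR$-factor while preserving each geodesic leaf. By the concavity axioms, the restriction of the developing map to a geodesic leaf with endpoints $\gamma^\pm \in \partial_\infty \widetilde{S}$ has well-defined limits $\xi(\gamma^\pm) \in \mathscr{F}_{1,2}$, and equivariance forces $\hol(c)$ to fix $\xi(\gamma)$ for every $\gamma$. Since the concavity hypotheses ensure that $\xi$ has Zariski-dense image, no nontrivial element of $\SL_3(\bbR)$ can fix every $\xi(\gamma)$, so $\hol(c) = 1$ and $\hol$ descends to $\hol_* \colon \pi_1(S) \to \SL_3(\bbR)$.

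The main obstacle is the rigidity statement. The strategy is first to show that the boundary curve $\xi$ is automatically Frenet (hyperconvex), so that by Labourie's characterization $\hol_*$ is Hitchin and $\xi$ is its Frenet curve. One then argues that, given $\xi$ and the transverse $S^1$-fibration, the concavity axioms leave exactly three choices for how the developing map interpolates a flag at an interior point $v \in \mathrm{T}^1 S$ from $\xi(v^+)$, $\xi(v^-)$ and the fiber direction. Ruling out ``exotic'' interpolations is the delicate technical step and is precisely where the combinatorial ``three'' originates, corresponding to the three privileged families among the seven constructed in \S\ref{ss-dev-maps}. This part, I expect, occupies the bulk of the paper: one must leverage the concavity of leafwise boundary behavior together with positivity/transversality of the two foliations to turn a purely synthetic hypothesis into the concrete statement that $\mathrm{dev}$ agrees with an explicit model up to the $\SL_3(\bbR)$-action.

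Once rigidity is in hand the remainder is formal. Continuity of $\Hol_*$ is automatic for $(G,X)$-structures, and each of the three explicit families from \S\ref{ss-dev-maps} furnishes a continuous section $\Hit_3(S) \to \mathscr{D}^{\mathrm{cff}}_{\mathscr{F}_{1,2}}(\mathrm{T}^1 S)$, so the restriction of $\Hol_*$ to each family is a continuous bijection onto $\Hit_3(S)$ with continuous inverse. Ehresmann--Thurston openness of $\Hol_*$ combined with the rigidity classification then forces the three families to coincide with the connected components of $\mathscr{D}^{\mathrm{cff}}_{\mathscr{F}_{1,2}}(\mathrm{T}^1 S)$, yielding the claimed three components and the homeomorphism onto $\Hit_3(S)$ on each.
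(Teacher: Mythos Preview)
Your high-level decomposition (rigidity, then formal section/inverse argument) matches the paper, and the final paragraph is essentially what the paper does. But your proposed mechanism for the two substantive steps has genuine gaps.

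\textbf{The argument for $\hol(c)=1$ does not work as stated.} The central element $\tau$ does \emph{not} preserve individual geodesic leaves in $\widetilde{\mathrm{T}^1 S}$: the leaf space $\widetilde{\mathcal{G}}$ is identified with $\widetilde{\partial\Gamma}^{(2)}_{[0]}$, and $\tau$ acts on it freely via its action on $\widetilde{\partial\Gamma}$. The $\bbR$-factor in $\widetilde{\mathrm{T}^1 S}\cong\widetilde{S}\times\bbR$ is the fiber direction of the circle bundle, not the geodesic-flow direction, so your ``$c$ fixes the endpoints of each geodesic leaf'' step is not available. The paper's route is instead: first show each zero-translation $\gamma$ has $\hol(\gamma)$ loxodromic (via the segment-lift axiom and the interaction of $\mathcal{L}(\gamma^-,\gamma^+)$ with $\xi^2(\gamma^\pm)$), then use commutation of $\hol(\tau)$ with these loxodromic elements to see that $\hol(\tau)$ fixes every $\xi^1(\gamma^+)$. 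Only then does one invoke non-degeneracy of $\xi^1$ to force $\hol(\tau)=\mathrm{Id}$.

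\textbf{The construction of $\xi$ and its non-degeneracy are where the real work is, and you have bypassed both.} The paper does not define $\xi$ via limits along geodesic leaves (such limits are not a priori well-defined from the axioms); rather, $\xi^1(x)$ is the base-point of the concave domain lift $\dev(f_x)$ and $\xi^2(x)$ is the unique line in the complement of the concave region $B_x$. Establishing that $\xi^1(\widetilde{\partial\Gamma})$ is not contained in a projective line (your ``Zariski-dense image'') is the single hardest step in the proof and cannot simply be asserted from the concavity axioms; it requires a contradiction argument using semicontinuity of the complements $D_x$ under sequences $x_n\to x$, together with the dynamics of zero-translation elements, to force the convex domains $C_{\gamma^+}$ to be triangles and then derive a contradiction. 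This step must precede the $\hol(\tau)=1$ argument, not follow it. Finally, the paper does not prove the Frenet condition directly: it shows $\hol_*$ is injective and preserves a properly convex domain, invokes Choi--Goldman to get Hitchin, and then Guichard's uniqueness to identify $\xi$ with the Frenet curve. The trichotomy (transverse, positive tangent, negative tangent) only emerges after all of this.
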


In particular, this gives three new qualitative interpretations of the $\text{SL}_3(\bbR)$-Hitchin component in terms of geometric structures on manifolds.

\medskip

Underlying Theorem \ref{theorem-moduli-space} is a rigidity result for concave foliated flag structures, described below.
Before stating the result, we give a more detailed description of concave foliated $(\SL_3(\bbR), \mathscr{F}_{1,2})$-structures.
See \S \ref{ss-foliated-flag-structures} for full definitions.

The space $\partial \Gamma^{(3)+}$ of positively oriented triples in the Gromov boundary $\partial\Gamma$ has quotient by $\Gamma$ homeomorphic to $\text{T}^1S$.
The space $\partial\Gamma^{(3)+}$ has canonical foliations with leaves of codimensions $1$ and $2$ coming from its product structure.
Fixing a hyperbolic structure on $S$ gives an identification of the universal cover $\widetilde{S}$ of $S$ with the hyperbolic plane $\bbH$ and an identification of the unit tangent bundle $\text{T}^1 \bbH$ and $\partial\Gamma^{(3)+}$, under which the weakly stable foliation is identified with one such codimension-$1$ foliation $\overline{\mathcal{F}}$ and the geodesic foliation is identified with one such codimension-$2$ foliation $\overline{\mathcal{G}}$.
Let $\mathcal{F}$ and $\mathcal{G}$ denote the corresponding foliations on $\partial \Gamma^{(3)+}/\Gamma$.

A concave foliated flag structure $(A, \mathcal{F}', \mathcal{G}')$ consists of a $(\text{SL}_3(\bbR), \mathscr{F}_{1,2})$-structure $A$ on $\text{T}^1S$ and two marked foliations $\mathcal{F}'$ and $\mathcal{G}'$ that are isotopic as a pair to $\mathcal{F}$ and $\mathcal{G}$.
The developing map is required to map all lifts of leaves of $\mathcal{F}'$ to distinguished $2$-dimensional submanifolds of $\mathscr{F}_{1,2}$ (\textit{concave domain lifts}) and all lifts of leaves of $\mathcal{G}'$ to distinguished $1$-dimensional submanifolds of $\mathscr{F}_{1,2}$ (\textit{segment lifts}).

We now describe these distinguished subspaces. See also \S \ref{ss-foliated-flag-structures}.
A central notion to our study that we introduce is that of a concave domain in $\RP^2$:

\begin{definition}[Concave Domain] An open subset $\Omega \subset \RP^2$ is {\rm{concave}} if $\RP^2 - \Omega$ is the union of the closure of a properly convex domain $C$ and a supporting line to $C$.
\end{definition}
In any affine chart for $\RP^2$ in which the distinguished complementary line is the circle at infinity, a concave domain is concave in the usual sense.
Concave regions appear in the study of $\PSL_4(\bbR)$-Hitchin representations, see e.g. \cite[Thm. 4.10]{guichard2008convex} or \cite{nolte2024foliations}.
\textit{Concave domain lifts} are subsets of $\mathscr{F}_{1,2}$ whose projections to $\RP^2$ are concave domains $\Omega$ and whose projective line entries of flags all intersect in a point in $\RP^2 - \Omega$.
A \textit{segment} is a properly convex subset of a projective line; segment lifts are defined similarly.

Two concave foliated flag structures are equivalent if there is an equivalence of $\mathscr{F}_{1,2}$ structures that respects the marked foliations.
We prove that the explicit examples we describe in \S \ref{ss-dev-maps} give all examples of concave foliated flag structures up to \textit{this} demanding equivalence relation.
We call these examples \textit{model concave foliated flag structures}.

\begin{theo}[Foliated Flag Structure Classification]\label{thm-structure-rigidity}
    Any concave foliated flag structure on ${\rm{T}}^1S$ is equivalent to a model concave foliated flag structure induced by a Hitchin representation.
\end{theo}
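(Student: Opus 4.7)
The plan is to extract a continuous, $\Gamma$-equivariant positive Frenet curve $(\xi^1, \xi^2) \colon \partial\Gamma \to \mathscr{F}_{1,2}$ from the developing data, use this to identify the holonomy as Hitchin, and then match the developing map with a model one. By the isotopy hypothesis on $(\mathcal{F}', \mathcal{G}')$, I first replace the structure with an equivalent one for which $\mathcal{F}' = \mathcal{F}$ and $\mathcal{G}' = \mathcal{G}$, then lift to $\widetilde{\mathrm{dev}} \colon \partial\Gamma^{(3)+} \to \mathscr{F}_{1,2}$. For each $y \in \partial\Gamma$, the weakly stable leaf $\widetilde{\mathcal{F}}_y = \{(x, y, z)\}$ develops to a concave domain lift, which canonically produces a properly convex set $C(y)$, a supporting line $\ell(y)$, and a common line-intersection point $q(y) \in \RP^2 \setminus \Omega(y)$, where $\Omega(y) = \RP^2 \setminus (\overline{C(y)} \cup \ell(y))$. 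Setting $\xi^1(y) := q(y)$ and $\xi^2(y) := \ell(y)$, continuity and $\overline{\Gamma}$-equivariance of the resulting maps (factoring through $\overline{\Gamma} \to \Gamma$) follow from those of $\widetilde{\mathrm{dev}}$ together with the uniqueness of the concave decomposition.

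The main technical step is to verify that $(\xi^1, \xi^2)$ is a transverse, positive Frenet curve. Transversality $\xi^1(x) \not\in \xi^2(y)$ for $x \ne y$ follows from the observation that the segment lift of the geodesic leaf $\widetilde{\mathcal{G}}_{x,y}$ lies in the concave domain lift at $y$ and accumulates on a flag naturally associated with $x$ as the free coordinate $z \to x$, placing $\xi^1(x)$ in $\Omega(y)$ and hence outside $\ell(y) = \xi^2(y)$. Positivity --- the cyclic-ordering condition on distinct positively ordered triples --- requires coordinating the concave complements $C(y)$ as $y$ varies: the segment lifts link concave domain lifts associated to different weakly stable leaves, and I expect this rigidity to force the $C(y)$ to share a common properly convex hull on whose boundary $\xi^1(\partial\Gamma)$ is cyclically ordered compatibly with the orientation of $\partial\Gamma$. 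By the Frenet curve characterization of Hitchin representations, the holonomy then factors through a Hitchin representation $\hol_* \colon \pi_1(S) \to \SL_3(\bbR)$ whose associated Frenet curve is $(\xi^1, \xi^2)$.

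To conclude, $\widetilde{\mathrm{dev}}$ is determined by the boundary curve $\xi$ together with a finite combinatorial choice: on each geodesic leaf, the segment lift is constrained to lie on a line through $\xi^1(y)$ and to be parameterized by an auxiliary boundary datum drawn from limits of neighboring concave data; gluing consistently across the geodesic foliation and imposing the concave foliation conditions isolates exactly the three model concave foliated flag structures of \S\ref{ss-dev-maps}, and equivalence respecting the marked foliations then identifies the given structure with one of them. The main obstacle I foresee is the positivity step: translating the qualitative concave and segment hypotheses into the algebraic cyclic-ordering constraint on $\xi^1(\partial\Gamma)$ requires a delicate global analysis of how $C(y)$ and $\ell(y)$ deform as $y$ varies along $\partial\Gamma$, and I expect this coordinating argument --- rather than the combinatorial model-matching --- to be the technical heart of the proof.
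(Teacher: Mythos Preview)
Your outline has the correct skeleton---extract $\xi^1,\xi^2$ from the concave data, show the holonomy is Hitchin, then match developing maps---but several steps are either false as stated or hide most of the work. First, your transversality argument is backwards: you claim the geodesic-leaf segment ``accumulates on a flag naturally associated with $x$'' and this ``plac[es] $\xi^1(x)$ in $\Omega(y)$,'' but in fact the opposite holds---the paper proves (Lemma~\ref{lemma-image-doesnt-touch-xi1}) that the image of $\mathrm{pr}_1\circ\mathrm{dev}$ is \emph{disjoint} from $\xi^1(\widetilde{\partial\Gamma})$, so $\xi^1(x)$ lies in the complement of the concave region, not inside it. Second, there is no a priori reason the lift point $q(y)$ lies on the supporting line $\ell(y)$, so $(\xi^1,\xi^2)$ is not obviously valued in $\mathscr{F}_{1,2}$; this is established only late in the argument (Lemma~\ref{lem:xi1 in xi2}). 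Third, and most seriously, you assert in passing that the maps factor through $\overline{\Gamma}\to\Gamma$, but this is equivalent to $\hol(\tau)=\mathrm{Id}$, which is one of the main goals of the proof and requires first knowing that holonomies of translation-zero elements are loxodromic and that $\xi^1(\widetilde{\partial\Gamma})$ spans $\bbR^3$.

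The paper's route is also quite different from your proposed direct verification of the Frenet axioms. Rather than checking positivity of $(\xi^1,\xi^2)$ head-on, the paper studies the eigenvalue structure of $\hol(\gamma)$ for zero-translation $\gamma$ via the lines $\mathcal{L}(x,z)$ spanned by the segment lifts, proves $\xi^1$ is not contained in a line (Proposition~\ref{lemma-not-line-contained}, the technical heart, using semicontinuity of the convex complements and ruling out ``Barbot-like'' behavior), deduces $\hol(\tau)=\mathrm{Id}$, and then shows $\hol_\ast$ preserves a properly convex domain and is injective---invoking Choi--Goldman to conclude Hitchin, and Guichard's uniqueness of equivariant curves to identify $(\xi^1,\xi^2)$ with the Frenet curve. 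Your anticipated ``coordinating the $C(y)$'' difficulty is real, but it is resolved by this dynamical route rather than by a direct cyclic-ordering argument. (A minor point: your leaf convention $\widetilde{\mathcal{F}}_y=\{(x,y,z)\}$ fixes the middle coordinate, whereas the paper's weakly stable leaves fix the first; this matters because the geodesic leaves $g_{xz}$ must be \emph{subordinate} to the weakly stable leaves.)
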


We remark that in Theorems \ref{theorem-moduli-space} and \ref{thm-structure-rigidity} we do \textit{not} assume that our $(\SL_3(\bbR), \mathscr{F}_{1,2})$ structure is Kleinian, i.e. induced by the quotient of an open domain $\Omega \subset \mathscr{F}_{1,2}$ by a proper action of a subgroup $\Gamma \subset \SL_3(\bbR)$.
Nor do we assume the developing map is a covering space map onto its image.
These properties, up to two-sheeted covering, follow from Theorem \ref{thm-structure-rigidity}, but neither is obvious from our hypotheses.
This is in contrast to the results of Choi-Goldman \cite{choi1993convex} in $\SL_3(\bbR)$, and is in line with Guichard-Wienhard's results in $\PSL_4(\bbR)$.

We also remark that even though our moduli space $\mathscr{D}^{\mathrm{cff}}_{\mathscr{F}_{1,2}}(\mathrm{T}^1S)$ is defined by a finer equivalence relation than $(\SL_3(\bbR), \mathscr{F}_{1,2})$-equivalence, it is a consequence of Theorem \ref{theorem-moduli-space} that the natural map from $\mathscr{D}^{\mathrm{cff}}_{\mathscr{F}_{1,2}}(\mathrm{T}^1S)$ to the standard deformation space $\mathscr{D}_{\mathscr{F}_{1,2}}(\mathrm{T}^1S)$ of marked $(\SL_3(\bbR), \mathscr{F}_{1,2})$-structures on $\mathrm{T}^1S$ is injective on each connected component of $\mathscr{D}^{\mathrm{cff}}_{\mathscr{F}_{1,2}}(\mathrm{T}^1S)$.
So our framework fits inside of the more familiar setting of Thurston-Klein geometric structures. 
We describe this deduction and some related phenomena in \S \ref{sss-forgetting-foliations}.

We have found techniques developed by Guichard-Wienhard's work useful in proving these results, but the salient structure used in our proofs of Theorems \ref{theorem-moduli-space} and \ref{thm-structure-rigidity} differs from that of \cite{guichard2008convex}.
In particular, our concave regions are based on the complements of convex domains.
So arguments from convexity in \cite{guichard2008convex} do not directly work in our setting, and the structure of our argument ends up differing considerably from \cite{guichard2008convex}.

\subsection{Refraction Flows}\label{ss-intro-reparams}
The geometric structures that we study and those of Guichard-Wienhard in \cite{guichard2008convex} are defined in terms of developing maps that geometrically respect the leaves of the weakly stable and geodesic foliations on the unit tangent bundle $\mathrm{T}^1S$.
We observe that such maps induce reparameterizations of the geodesic flow on $\mathrm{T}^1S$ in \S \ref{s-geo-flow-reparameterizations}.

The periods of closed geodesics in these flows may be computed and are the lengths of $\rho(\gamma)$, measured with respect to certain positive roots of $\mathfrak{sl}_n(\bbR)$.
Such flows are known as \textit{refraction flows}, and play a central role in the study of dynamics of Hitchin representations.
The existence of refraction flows has been known for more than a decade \cite{sambarino2015orbital}, though this is among the first times refraction flows have been realized in terms of a $(G, X)$-structures construction.
The only previous such constructions are of highest root flows for Benoist representations in \cite{benoist2004convexesI} and a different recent construction of first-simple-root flows in \cite{delarueMonclairSanders2024axiomA}.

Our construction is in fact quite general.
To give a formal statement, let us first set Lie-theoretic notation.
Recall that the closed Weyl chamber $\mathfrak{a}^+$ of $\mathfrak{sl}_n(\bbR)$ is modelled as $\{(x_1, \dots, x_n )\in \bbR^n \mid \sum_{i=1}^n = 0, x_1 \geq \dots \geq x_n\}$.
For a linear functional $\varphi : \bbR^n \to \bbR$ that is non-negative on $\mathfrak{a}^+$, the \textit{$\varphi$-length} $\ell^\varphi(g)$ of $g \in \PSL_n(\bbR)$ is $\varphi(\lambda(g))$, where $\lambda$ is the projection $\PSL_n(\bbR) \to \mathfrak{a}^+$ arising from the Jordan canonical form.
Recall that the \textit{positive roots} $\alpha_{ij}$ $(1 \leq i < j \leq n)$ of $\mathfrak{sl}_n(\bbR)$ are the functionals $\mathfrak{a}^+ \to \bbR^{\geq 0}$ given by $\alpha_{ij}(x_1, \dots, x_n) = x_i - x_j$.

The following formalizes the relevant objects that appear in our framework.

\begin{defi}[Geodesic Realizations]
    A continuous map $\Phi:\mathrm{T}^1\widetilde{S} \to \mathbb{RP}^{n-1}$ is a \emph{geodesic realization} if for every leaf $g$ of the geodesic foliation $\overline{\mathcal{G}}$ of $\mathrm{T}^1\widetilde{S}$ the restriction $\Phi|_g$ is injective with image a segment, i.e., a properly convex subset of a projective line.
\end{defi}

Given a geodesic realization $\Phi$, we define \textit{leafwise metrics} on leaves $g$ of the geodesic foliation by logarithms of cross ratios along the segment $\Phi(g)$.
One may then define a flow on $\mathrm{T}^1\widetilde{S}$ by moving points forward by the distance $t$ on each leaf (see \S \ref{s-geo-flow-reparameterizations}).
In all examples we see, such flows are H\"older reparameterizations of the geodesic flow.

Our basic general result here is:

\begin{theo}[Root Refraction Flows]
    Let $\alpha = \alpha_{ij}$ be a positive root of $\mathfrak{sl}_n(\bbR)$ and let $\rho \colon \Gamma \to \PSL_n(\bbR)$ be Hitchin. 
    Then the map $\Phi_\rho^\alpha$ given in terms of the limit map $\xi = (\xi^1 ,\dots, \xi^{n-1})$ of $\rho$ by
    $$ \Phi_{\rho}^{\alpha_{ij}}(x,y,z) = [(\xi^{i}(x) \cap \xi^{n-i+1}(z)) \oplus (\xi^j(x) \cap \xi^{n-j+1}(z))] \cap \xi^{n-1}(y)$$
    is a $\rho$-equivariant geodesic realization.
    Furthermore, when $n>3$ the geodesic realization $\Phi_\rho^\alpha$ is locally injective unless $\alpha$ is one of the two simple roots $\alpha_{12}$ or $\alpha_{(n-1)n}$.
    
    The period of a closed geodesic $\gamma$ under the flow $\phi^\alpha_t$ induced by $\Phi^\alpha_\rho$ is the $\alpha$-length $\ell^\alpha(\rho(\gamma))$. 
    The flow $\phi^\alpha_t$ is a H\"older reparameterization of the geodesic flow of any reference hyperbolic structure on $S$.
\end{theo}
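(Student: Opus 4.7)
The plan is to handle the four parts of the statement—geodesic realization, local injectivity away from simple roots when $n>3$, the period formula, and Hölder reparameterization—in sequence, with positivity of the Hitchin flag curve $\xi$ as the essential tool throughout.

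\textbf{Geodesic realization.} I would parameterize a geodesic leaf as $g_{xz} = \{(x,y,z) : y \in (x,z)\}$, where $(x,z) \subset \partial\Gamma$ is the positively oriented arc from $x$ to $z$. By transversality of Hitchin flags, the decomposition $\bbR^n = \bigoplus_{k=1}^n \ell_k$ with $\ell_k = \xi^k(x)\cap \xi^{n-k+1}(z)$ is direct, so $p_i := \ell_i$ and $p_j := \ell_j$ are distinct points of $\RP^{n-1}$ depending only on $(x,z)$. Consequently
\[ \Phi_\rho^\alpha(x,y,z) = L_{xz} \cap \xi^{n-1}(y), \qquad L_{xz} := p_i \oplus p_j, \]
and the projective line $L_{xz}$ is constant along the leaf. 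Positivity of $\xi$ on the triple $(x,y,z)$ ensures that $\xi^{n-1}(y)$ is in general position with respect to the $\ell_\bullet$-decomposition, so it meets $L_{xz}$ transversely at a single point distinct from $p_i$ and $p_j$. To upgrade this to segmenthood, I would verify in coordinates adapted to $\ell_\bullet$ that $\lim_{y\to x}\Phi_\rho^\alpha(x,y,z) = p_i$ and $\lim_{y\to z}\Phi_\rho^\alpha(x,y,z) = p_j$; this forces the image to lie in a single arc of $L_{xz}\setminus\{p_i,p_j\}$ and parameterize it homeomorphically, with injectivity supplied by a monotone cross-ratio argument using positivity. Equivariance is immediate from $\rho$-equivariance of $\xi$, and continuity from its Hölder regularity.

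\textbf{Local injectivity.} For $n > 3$ and $\{i,j\} \notin \{\{1,2\},\{n-1,n\}\}$, both $p_i$ and $p_j$ depend nontrivially on both endpoints $x$ and $z$ through the flag entries $\xi^{i\pm 1}, \xi^{j\pm 1}$. I would show local injectivity by checking that, in a frame adapted to the $\ell_\bullet$-decomposition, the three partial derivatives of $\Phi_\rho^\alpha$ along independent variations of $x, y, z$ span a $3$-dimensional subspace of the tangent space at $\Phi_\rho^\alpha(x,y,z)$. For the simple roots $\alpha_{12}$ or $\alpha_{(n-1)n}$ this fails because, e.g., $p_1 = \xi^1(x)$ is independent of $z$, collapsing a transverse direction.

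\textbf{Periods and Hölder reparameterization.} On a leaf stabilized by $\gamma \in \Gamma$ with $\gamma^+=x$ and $\gamma^-=z$, the matrix $\rho(\gamma)$ preserves the $\ell_\bullet$-decomposition with eigenvalues equal to the Jordan projection coordinates of $\rho(\gamma)$. In particular it acts on $L_{xz}$ as the diagonal matrix with eigenvalues $\lambda_i(\rho(\gamma)), \lambda_j(\rho(\gamma))$ on $\ell_i, \ell_j$, and a direct computation of the logarithmic cross-ratio identifies the translation length in the leafwise metric with $\log(\lambda_i/\lambda_j) = \alpha_{ij}(\lambda(\rho(\gamma))) = \ell^\alpha(\rho(\gamma))$. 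For the Hölder reparameterization claim, the infinitesimal density of $\phi^\alpha_t$ relative to the hyperbolic geodesic flow is the leafwise derivative of a quantity built rationally from the Hölder map $\xi$; this density is therefore Hölder and strictly positive on the compact base $\mathrm{T}^1S$, yielding the stated reparameterization.

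\textbf{Main obstacle.} The delicate step is the segmenthood assertion: showing that the image on each leaf lies in a single arc of $L_{xz} \setminus \{p_i, p_j\}$, not merely a connected subset of it, and is traversed injectively. This requires genuine use of positivity of the Hitchin flag curve—in the spirit of Labourie's Frenet property—rather than just transversality. The local-injectivity computation and the period/Hölder statements are more routine once the leafwise picture is secured.
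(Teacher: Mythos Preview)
Your outline has the correct global shape and you correctly flag segmenthood as the delicate point, but two of your proposed mechanisms diverge from what the paper does, and one of them has a real gap.

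\textbf{Segmenthood.} Your plan to verify $\lim_{y\to x}\Phi_\rho^\alpha(x,y,z)=p_i$ directly in $\ell_\bullet$-coordinates runs into trouble when $1<i<j<n$: then both $p_i$ and $p_j$ already lie in $\xi^{n-1}(x)$, so $L_{xz}\subset\xi^{n-1}(x)$ and the intersection $L_{xz}\cap\xi^{n-1}(y)$ degenerates as $y\to x$. Determining the limit then requires the osculation part of the Frenet condition, not just transversality. The paper avoids this by invoking the Frenet Restriction Lemma to rewrite $\Phi_\rho^\alpha$ inside the subspace $x^j\cap z^{n-i+1}$ as a map of the form treated in Lemma~\ref{lemma-easy-from-frenet}(\ref{lemma-sum-frenet}); this reduces the segment claim to the already-proved $n=3$ picture and is the missing ingredient in your sketch.

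\textbf{Local injectivity.} Your proposed differential argument is genuinely different from the paper's proof, which is a topological case analysis (splitting on whether $i=1$ or $j=n$) using the Frenet Restriction and Frenet Projection Lemmas to force coincidences among $x,y,z$. A derivative-based argument is not unreasonable since $\xi^k$ is $C^{1+\alpha}$ by Zhang--Zimmer, and indeed the paper uses exactly such a computation for the highest root in Proposition~\ref{prop-exceptional-hilbert}; but you have not carried out the linear-independence check for general $\alpha_{ij}$, and your explanation of the $\alpha_{12}$ failure is imprecise. It is not merely that $p_1=\xi^1(x)$ ignores $z$: the point is that $p_2=\xi^2(x)\cap\xi^{n-1}(z)$ also lies in $\xi^2(x)$, so the entire line $L_{xz}=\xi^2(x)$ is independent of $z$ and the whole $z$-direction collapses.

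Your period computation matches the paper's. Your H\"older sketch is thinner than the paper's argument, which shows separately that $\Phi_\rho^\alpha$ is H\"older and that its restriction to each leaf is locally uniformly bi-H\"older (needed to invert), then packages this as a translation cocycle.
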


We emphasize that the novelty here is that these flows arise from a concrete geometric construction in our setting.
Such flows, produced from general considerations, have been known and studied for some time (e.g. \cite{bridgeman2015pressure,sambarino2015orbital}).
Our flows $\phi^\alpha_t$ are refraction flows in the standard sense that they are H\"older reparameterizations of the geodesic flow on $\mathrm{T}^1S$ corresponding to translation cocycles $\kappa_\rho^\alpha$ that are Liv\v{s}ic cohomologous to the translation cocycles of Sambarino's reparameterizations (Cor. \ref{cor-flow-connection} below).

\medskip

A hope for these reparameterizations is that their explicit nature will make some previously unobserved structure accessible.
As a first result of this type, we establish a novel basic property of the flow associated to the highest root $\mathrm{H} = \alpha_{1n}$ for general $n$.
Namely, though it is only a H\"older reparameterization of the geodesic flow of $\mathrm{T}^1S$, the geodesic realization $\Phi^\mathrm{H}_\rho$ endows $\mathrm{T}^1S$ with a $C^{1+\alpha}$ structure with respect to which the flow $\phi_t^\mathrm{H}$ is $C^{1+\alpha'}$.

\begin{propo}[Exceptional Regularity: Highest Roots]\label{prop-exceptional-reg-highest}
    If $n > 3$, there is an $\alpha > 0$ so that the image of $\Phi_{\rho}^{\mathrm{H}}$ in $\RP^{n-1}$ is $C^{1+\alpha}$.
    With respect to the $C^{1+\alpha}$ structure on $\mathrm{T}^1S$ induced by $\Phi_\rho^{\mathrm{H}}$, the flow $\phi^{\mathrm{H}}_t$ is $C^{1 + \alpha'}$ for an $\alpha' > 0$, in the sense that $\phi_t^\mathrm{H}$ integrates a $C^{1+\alpha'}$ vector field on the image of $\Phi_\rho^{\mathrm H}$.

    For $n = 3$, there is a canonical real-analytic structure on $\mathrm{T}^1S$ induced by $\rho$, with respect to which the flow $\phi_t^\mathrm{H}$ is $C^{1+\alpha'}$.
\end{propo}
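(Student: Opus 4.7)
My approach splits according to whether $n = 3$ or $n > 3$.

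For $n = 3$, the formula simplifies using $\xi^3 \equiv \bbR^3$ to $\Phi_\rho^H(x,y,z) = \xi^2(y) \cap [\xi^1(x) \oplus \xi^1(z)]$, and the image is the properly convex domain $\Omega \subset \RP^2$ bounded by $\xi^1(\partial \Gamma)$ and divided by $\rho(\Gamma)$. The canonical real-analytic structure on $\mathrm{T}^1 S$ is the one inherited from the convex projective structure $\Omega / \rho(\Gamma)$ on $S$ (Choi-Goldman), pulled back via the identification of $\mathrm{T}^1 \widetilde{S}$ with $\partial \Gamma^{(3)+}$ provided by $\Phi^H$. Under this identification, $\phi_t^H$ is the Hilbert geodesic flow, whose $C^{1+\alpha'}$ regularity is Benoist's theorem \cite{benoist2004convexesI}.

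For $n > 3$, the formula reduces (using $\xi^n \equiv \bbR^n$) to $\Phi_\rho^H(x,y,z) = [\xi^1(x) \oplus \xi^1(z)] \cap \xi^{n-1}(y)$. The plan is to parameterize the image locally as a graph over a $3$-dimensional affine subspace of $\RP^{n-1}$. First, observe that the chord map $(x,z) \mapsto \xi^1(x) \oplus \xi^1(z) \in \mathrm{Gr}_2(\bbR^n)$ extends continuously across the diagonal by the Frenet property, sending $(y,y) \mapsto \xi^2(y)$. Fix an affine chart centered at a reference image point; in this chart, the image is described by the equations $p \in \xi^{n-1}(y)$ and $p \in \xi^1(x) \oplus \xi^1(z)$. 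The regularity of this graph description is controlled by two ingredients: (i) the Hölder continuity of the Frenet curve together with Hölder continuity of the osculating direction $\xi^2$, both of which follow from the $(1,1,2)$-Anosov property of Hitchin representations (cf.\ Labourie and Pozzetti-Sambarino-Wienhard), and (ii) the Hölder continuity of $\xi^{n-1}$, governing the hyperplane intersection. A Hölder implicit function argument combines these into a $C^{1+\alpha}$ parameterization.

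For the flow regularity, the vector field generating $\phi_t^H$ on the image is tangent to the chord $\xi^1(x) \oplus \xi^1(z)$, with speed determined by the cross-ratio metric along this chord. Because cross-ratios are rational functions of projective coordinates and the above parameterization is $C^{1+\alpha}$, the generating vector field is $C^{1+\alpha'}$ for some $\alpha' > 0$, yielding the flow statement. The main obstacle is item (i)-(ii) in the paragraph above: while Hölder control of $\xi^1$ is classical, transferring this into $C^{1+\alpha}$ control of the image rather than the underlying limit curve requires careful tracking of the chord-hyperplane intersection near the diagonal, where the Frenet flag governs the local geometry. Here is where the special structure of the highest root --- that the formula depends only on $\xi^1$ and its projective dual $\xi^{n-1}$ --- is essential, and where one uses that both $\xi^1$ is locally a $C^{1+\alpha}$ curve in $\RP^{n-1}$ and $\xi^{n-1}$ is locally a $C^{1+\alpha}$ family of hyperplanes.
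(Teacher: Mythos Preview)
Your outline for $n=3$ is fine and matches the paper's stance (the paper also defers this to Benoist's work).

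For $n>3$ there is a genuine gap. You correctly identify at the end that $\xi^1$ and $\xi^{n-1}$ are $C^{1+\alpha}$ curves (this is Zhang--Zimmer, which the paper cites explicitly). You also correctly observe that $\Phi_\rho^{\mathrm H}$ factors through a smooth map on the product of these curves. But you never verify that this smooth map is an \emph{immersion} when restricted to the product of limit curves, and this is the entire technical content of the proof. Without the immersion check, there is no reason the image should be a $C^{1+\alpha}$ submanifold rather than something collapsed or singular, and your ``H\"older implicit function argument'' is not a substitute---it presupposes exactly the nondegeneracy that needs to be established.

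The paper carries out this check by computing the three partial derivatives of $(x^1,y^{n-1},z^1)\mapsto (x^1\oplus z^1)\cap y^{n-1}$ along tangent vectors to the three limit curves. The $x$-derivative lands in $(x^2\oplus z^1)\cap y^{n-1}$, the $z$-derivative in $(x^1\oplus z^2)\cap y^{n-1}$, and the $y$-derivative in $x^1\oplus z^1$. Showing these three directions are linearly independent reduces to the identity
\[
\bigl((x^2\oplus z^1)\cap y^{n-1}\bigr) + \bigl((x^1\oplus z^2)\cap y^{n-1}\bigr) = (x^2\oplus z^2)\cap y^{n-1},
\]
which in turn uses that $x^2\cap z^2=\{0\}$---and this is exactly where the hypothesis $n>3$ enters. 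Your proposal never touches this point.

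Separately, your concern about behaviour ``near the diagonal'' is a red herring: $\Phi_\rho^{\mathrm H}$ is defined on $\partial\Gamma^{(3)+}$, so $x$ and $z$ are always distinct, and the chord $\xi^1(x)\oplus\xi^1(z)$ is always a genuine 2-plane transverse to $\xi^{n-1}(y)$. No extension across the diagonal is needed or used.

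Your treatment of the flow regularity is essentially correct in spirit and matches the paper: once the image is known to be $C^{1+\alpha}$, the generating vector field is determined by the cross-ratio metric along each chord, and since the chord endpoints $x^1,z^1$ lie on the $C^{1+\alpha'}$ curve $\xi^1(\partial\Gamma)$ and the chords are transverse to it, this vector field is $C^{1+\alpha'}$.
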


This was known for $n = 3$ with a slightly different construction by work of Benoist \cite{benoist2004convexesI}.
Our proofs below---in particular of Theorem \ref{thm-genl-flow}---suggest that Proposition \ref{prop-exceptional-reg-highest} should in general fail for all roots other than $\mathrm{H}$.
This can be checked directly for $\SL_3(\bbR)$.

We use the case of $\phi_t^\alpha$ with $\alpha$ the second simple root $\alpha_{23}$ of $\mathfrak{sl}_3(\bbR)$ as a guiding example in \S \ref{s-geo-flow-reparameterizations}.
One interesting feature of this example is that concave foliation leads to a notion of distance between geodesic leaves within leaves of the weakly stable foliation of $\mathrm{T}^1 \widetilde{S}$.
We show that the regularity of the boundary of the associated convex divisible domain controls convergence rates of points inside leaves with respect to this notion in \S \ref{ss-flow-conv-within-leaves}.

\subsection{Context and Prior Work}
The project of finding interpretations of Hitchin components in terms of geometric structures, in analogy to hyperbolic structures and Teichm\"uller space, has been a prominent direction in higher Teichm\"uller theory since its inception \cite{hitchin1992lie}.
In the direction of general results, the foundational work is due to Guichard-Wienhard \cite{guichard2012anosov}, which was further systemized by Kapovich-Leeb-Porti \cite{kapovitchLeebPorti2018dynamics}.
The theory of domains of discontinuity for Anosov representations has been developed since, for instance in \cite{burelleTreib2018schottky,BurelleTreib2022positive,carvajales2023anosov,davalo2024finitesided,stecker2023balanced,steckerTreib2022domains}.
Topological features of these domains of discontinuity are studied in \cite{alessandriniDavaloLi2021projective,alessandrini2023fiber,davalo2023nearly}.

As mentioned before, the \textit{qualitative} theory of the geometric structures corresponding to Hitchin representations has largely been developed in individual cases, as is the case in the present work.
In addition to what was mentioned above, a qualitative theory of maximal representations in $\text{PSL}_2(\bbR) \times \text{PSL}_2(\bbR)$ was developed by Mess \cite{mess1990lorentz} and a qualitative theory of Anosov representations in $\mathrm{SO}(2,n)$ was developed by Barbot-Merigot \cite{merigot2012anosov}.
We mention that the flag geometry of general Anosov representations in $\SL_3(\bbR)$ has been studied by Barbot \cite{barbot2010three}.
Further geometric features of the geometric structures associated to Anosov representations have been investigated in e.g. \cite{beyrerKassel2023Hpq,dancigerGK2018pseudohyperbolic,dancigerGK2024convex,davalo2024geometric,MV2022maximalreprs,MV2023diam_vol_entr,seppi2023complete,zimmer2021anosov}.

In another direction, analytic constructions based on Higgs bundles have proved effective at developing qualitative characterizations of geometric structures associated to Hitchin and maximal representations of a more analytic character \textit{for Lie groups of real rank 2}.
Notable results in this direction are obtained in \cite{allessandriniCollier2019geometryPSp4,baraglia2010thesis,collierToulisseTholozan2019geometry,evans2024geometric,labourie2017cyclic}.
These characterizations are expected to require qualification to extend to Lie groups of higher rank due to \cite{markovic2022unstable} and \cite{sagman2022unstable}.

The perspective of Guichard-Wienhard in $\text{PSL}_4(\bbR)$ is the inspiration for our work.
This perspective has seen little development since Guichard-Wienhard's seminal paper, though there has been some recent work on the direction by the first named author \cite{nolte2023leaves,nolte2024foliations}.
This present paper is the first implementation of a similar framework to \cite{guichard2008convex} outside of $\text{PSL}_4(\bbR)$.

On flows, Delarue-Monclair-Sanders have recently constructed embedded copies of ${\mathrm{T}}^1\widetilde{S}$ as the basic hyperbolic set of a real-analytic Axiom-A flow on a domain of discontinuity in a different homogeneous space \cite{delarueMonclairSanders2024axiomA}.
The constructions are distinct, and seem adapted to studying different structure.
In particular, Delarue-Monclair-Sanders' construction uses only the projective Anosov property of the involved representations and involves an analytic global flow on their non-cocompact domain of discontinuity.
In contrast, our construction relies essentially on the full Hitchin condition and that closed surface groups have circles as their Gromov boundaries.
Furthermore, our flow does not seem to arise from a global flow on a domain of discontinuity in general.

The methods of \cite{delarueMonclairSanders2024axiomA} allow for techniques from the study of real-analytic Axiom A flows to be leveraged in the study of Anosov representations, with rather striking consequences.
As it uses more situation-specific structure, our construction seems well-adapted for investigating the special structure of Hitchin representations.

\subsection{Outline}\label{ss-contrast-outline}

In \S \ref{s-background}, we set notation and recall some repeatedly relevant facts.
In \S \ref{s-geom-strs}, we construct seven families of explicit developing maps for $(\text{SL}_3(\bbR), \mathscr{F}_{1,2})$-structures with Hitchin holonomy, and define our relevant moduli spaces of geometric structures.
The connection to refraction flows is developed in \S \ref{s-geo-flow-reparameterizations}.
We prove the main theorems in \S \ref{s-core}.
The central proof is fairly involved; we give a detailed outline in \S \ref{s-core}.

\medskip

\par \noindent \textbf{Acknowledgements.}
A.N. thanks Universit\"at Heidelberg for its generous hospitality during a visit in Fall 2023 which much of this work was completed, and is grateful to Mike Wolf for his support and interest.
The authors are grateful to Yves Benoist, Jeff Danciger, Bill Goldman, Daniel Monclair, Andy Sanders, Florian Stecker, Kostas Tsouvalas, and Anna Wienhard for interesting conversations.
A.N. was supported by the National Science Foundation under Grants No. 1842494 and 2005551.

\section{Conventions and Reminders}\label{s-background}

\subsection{Unit Tangent Bundles} We briefly recall the facts about unit tangent bundles of surfaces that shall be useful in the following.
Our discussion parallels \cite[\S 1.1--1.2]{guichard2008convex}, with fewer details and minor changes in conventions.

Let $S$ be a closed connected orientable surface of genus $g \geq 2$, and $\Gamma = \pi_1(S)$. 
Let $\overline{\Gamma}$ be the fundamental group of the unit tangent bundle $\mathrm{T}^1S$. 
Then $\overline{\Gamma}$ is a central extension of $\Gamma$ by $\bbZ$. 
Denote the quotient map by $q_\Gamma$.
With $a_1, \dots, a_g, b_1, \dots, b_g$ a standard generating set for $\Gamma$ and $\tau$ a central element of $\overline{\Gamma}$ we have the presentation 
$$ \overline{\Gamma} = \left\langle a_1, \dots, a_g, b_1, \dots, b_g, \tau \, \bigg|\, [a_i, \tau], [b_i, \tau], \tau^{2g-2} \prod_{i=1}^g [a_i, b_i] \right\rangle.$$

Let $\partial \Gamma$ denote the Gromov boundary of $\Gamma$, which is homeomorphic to the circle and acted on by $\Gamma$.
It is well-known that any $\gamma \in \Gamma - \{e\}$ preserves orientations on $\partial \Gamma$ and acts with North-South dynamics on $\partial \Gamma$.
That is, for any $\gamma \in \Gamma$ there exist $\gamma^+ \neq \gamma^-$ in $\partial \Gamma$ that are fixed by $\gamma$ and so that for any $x \neq \gamma^-$ we have $\lim\limits_{n \to \infty} \gamma^n x = \gamma^+$.
In the following, we forever fix an orientation on $\partial \Gamma$.

Let $\partial \Gamma^{(2)}$ be the complement of the diagonal in $\partial \Gamma^2$.
Pairs $(\gamma^-, \gamma^+)$ of repelling and attracting fixed-points of elements $\gamma \in \Gamma - \{e\}$ (henceforth \textit{pole-pairs}) are dense in $\partial \Gamma^{(2)}$, and $\Gamma$ acts topologically transitively on $\partial \Gamma^{2}$.

Let $\overline{M} = \partial\Gamma^{(3)+}$ be the collection of positively oriented triples in $\partial \Gamma$.
Then $\Gamma$ acts properly discontinuously and cocompactly on $\partial \Gamma^{(3)+}$, with quotient $M = \partial \Gamma^{(3)+}/\Gamma$ homeomorphic to the unit tangent bundle $\mathrm{T}^1S$.
Then $\overline{M}$ is the cover of $M$ associated to the subgroup $\ker q_\Gamma = \langle \tau \rangle$ of $\overline{\Gamma}$.
From its product structure, $\partial \Gamma^{(3)+}$ has natural foliations.
We record two:
\begin{enumerate}
    \item Let $\overline{\mathcal{F}}$ be the leaf space of the codimension-$1$ foliation of $\partial \Gamma^{(3)+}$ with leaves $f_x = \{(x, y, z) \in \partial \Gamma^{(3)+} \}$ for $x \in \partial \Gamma$. We shall denote leaves in $\overline{\mathcal{F}}$ by $f$ or $f_x$.
    \item Let $\overline{\mathcal{G}}$ be the leaf space of the codimension-$2$ foliation of $\partial \Gamma^{(3)+}$ with leaves $g_{xz} = \{ (x,y,z) \in \partial \Gamma^{(3)+} \}$ for $(x,z) \in \partial \Gamma^{(2)}$. We shall denote leaves of $\overline{\mathcal{G}}$ by $g$ or $g_{xz}$.
\end{enumerate}
Foliation charts for each of foliation here are given by the global product structure on $\partial \Gamma^{(3)+}$.
We abuse notation and also denote the foliations corresponding to $\overline{\mathcal{F}}$ and $\overline{\mathcal{G}}$ by the same symbols.

\begin{figure}
    \centering
    \includegraphics[scale=0.65]{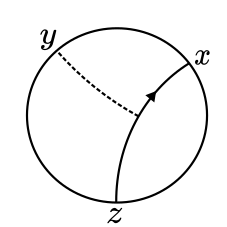}
    \caption{The model of $\mathrm{T}^1\bbH$ as $\partial \Gamma^{(3)+}$.}
    \label{fig-tan-vec}
\end{figure}

As is well-known, a choice of hyperbolic metric identifies $\overline{\mathcal{G}}$ with the geodesic foliation of $\mathrm{T}^1\bbH$ and $\overline{\mathcal{F}}$ with the weakly stable foliation of $\mathrm{T}^1 \bbH$.
The construction is as follows.
See Figure \ref{fig-tan-vec}.
A choice of hyperbolic metric identifies $\partial \Gamma$ and the Gromov boundary $\partial \bbH$ of $\bbH$.
To a point $(x,y,z) \in \partial \Gamma^{(3)+}$, let $g_{xz}$ be the oriented geodesic from $z$ to $x$, considered as elements of $\partial \bbH$.
There is a unique geodesic with endpoint $y$ that meets $g_{xz}$ orthogonally.
Then map $(x,y,z)$ to this point of intersection in $g_{xz}$, with tangent vector pointing along $g_{xz}$.

The leaf spaces $\overline{\mathcal{F}}$ and $\overline{\mathcal{G}}$ inherit a topology from the Gromov-Hausdorff topology associated to such a choice of reference metric.
Denote by $\widetilde{\mathcal{F}}$ and $\widetilde{\mathcal{G}}$ the leaf spaces of the lifts of the foliations $\overline{\mathcal{F}}$ and $\overline{\mathcal{G}}$, respectively, to the universal cover $\widetilde{M}$ of $M$.
Each such foliation is preserved by $\overline{\Gamma}$.
Denote the quotient $\widetilde {\partial \Gamma} \to \partial \Gamma$ by $\pi$.
The leaf space $\widetilde{\mathcal{F}}$ is identified (well-defined up to the action of $\langle \tau \rangle$) with $\widetilde{\partial \Gamma}$. 
This induces an action of $\overline{\Gamma}$ on $\widetilde{\partial \Gamma}$, which is minimal \cite[Lemma 1.9]{guichard2008convex}.
Choose a generator $\tau$ of $\ker q_\Gamma$ so that for $x \in \widetilde{\partial \Gamma}$ and $n > m > 0$ the triple $(x , \tau^m x,\tau^n x)$ is positively oriented.

Write
\begin{align*}
    \widetilde{\partial \Gamma}^{(2)}_{[n]} = \left\{ (x,y) \in \widetilde{\partial \Gamma}\times \widetilde{\partial \Gamma} \mid (\tau^{n} x, y , \tau^{n+1} x) \in \widetilde{\partial \Gamma}^{(3)+} \right\},
\end{align*} so that we have the decomposition
\begin{align*}
    \left\{ (x, y) \in \widetilde{\partial \Gamma} \times \widetilde{\partial \Gamma} \mid \pi(x) \neq \pi(y) \right\} = \bigsqcup_{n \in \bbZ} \widetilde{\partial \Gamma}^{(2)}_{[n]}.
\end{align*}
By picking a lift of $g \in \overline{\mathcal{G}} = \partial \Gamma^{(2)}$ to $\widetilde{\partial\Gamma} \times \widetilde{\partial \Gamma}$, the leaf space $\widetilde{\mathcal{G}}$ is identified with $\widetilde{\partial \Gamma}^{(2)}_{[n]}$ for some $n$.
By changing lift, we take $n = 0$.
Note that $(x,y) \in \widetilde{\partial \Gamma}^{(2)}_{[0]}$ if and only if $(y, \tau x) \in \widetilde{\partial \Gamma}^{(2)}_{[0]}$.

\subsubsection{Translation} We recall the useful method of \cite{guichard2008convex} to distinguish elements of $\overline{\Gamma}$ in a class $q_{\gamma}^{-1}(\Gamma)$ for $\gamma \in \Gamma - \{e\}$.

Let $\gamma \in \Gamma - \{e\}$ have fixed-points $\gamma^-, \gamma^+ \in \partial \Gamma$ and let $\overline{\gamma}$ have $q_\Gamma(\overline{\gamma}) = \gamma$.
Then there are two families of lifts $\{\tau^n \widetilde{\gamma}^+\}_{n \in \bbZ}$ and $\{\tau^n \widetilde{\gamma}^-\}_{n \in \bbZ}$ of $\gamma^+$ and $\gamma^-$ to $\widetilde {\partial \Gamma}$.
Among these, there is a unique $\langle \tau \rangle$ orbit $\mathcal{O}_{\gamma} = \{\tau^n (\widetilde{\gamma}^-, \widetilde{\gamma}^{+})\}$ in $\widetilde{\partial \Gamma}^{(2)}_{[0]}$.
For any $(\widetilde{\gamma}^-, \widetilde{\gamma}^{+})$ in this orbit, there is a unique $l \in \bbZ$ so that $\overline{\gamma}(\widetilde{\gamma}^-, \widetilde{\gamma}^{+}) = \tau^l(\widetilde{\gamma}^-, \widetilde{\gamma}^{+})$.
The integer $l = \bf{t}(\overline{\gamma})$ is called the \textit{translation} of $\overline{\gamma}$.
For each $\gamma \in \Gamma - \{e\}$, there is a unique $\overline{\gamma} \in \overline{\Gamma}$ of translation $0$ with $q_{\Gamma}(\overline{\gamma}) = \gamma$.
The fixed points of translation $0$ elements of $\overline{\Gamma}$ are dense in $\widetilde{\partial \Gamma}^{(2)}_{[0]}$ \cite[Lemma 1.11]{guichard2008convex}, and the action of $\overline{\Gamma}$ on $\widetilde{\partial \Gamma}^{(2)}_{[0]}$ is topologically transitive, as a consequence of the topological transitivity of the action of $\Gamma$ on $\partial \Gamma^{(2)}$.

\subsection{Hyperconvex Frenet Curves and Hitchin Representations}
We now discuss the characterization of Hitchin representations in $\text{SL}(3,\bbR)$ in terms of boundary maps that is relevant to us.
We then discuss their domains of discontinuity and some relevant facts.

For $n \geq 1$, let $\mathscr{F}_n$ denote the space of full flags of nested subspaces of $\bbR^n$.
For $n = 3$, we have $\mathscr{F}_3 = \mathscr{F}_{1,2}$.
For $1 \leq k < n$, denote the canonical projection to the $k$-Grassmannian $\mathrm{Gr}_k(\bbR^n)$ by $\mathrm{pr}_k$.

\begin{definition}
    A continuous map $S^1 \to \mathscr{F}_n$ is a {\rm{hyperconvex Frenet curve}} if
    \begin{enumerate}
        \item {\rm{(General Position)}} For any integers $k_1,\dots,k_j$ with $\sum_{l=1}^j k_l = p \leq n$, and distinct points $x_1,\dots, x_j \in S^1$, the sum $\xi^{k_1}(x_1) + \dots + \xi^{k_j}(x_j)$ is direct.
    \item {\rm{(Osculation)}} For any $x \in S^1$, $k_1,\dots,k_j$ as above, and sequence $(x_1^m, \dots, x_j^m)$ of $j$-tuples of distinct points in $S^1$ converging to $(x, \dots, x)$, we have $\xi^p(x) = \lim\limits_{m \to \infty} \bigoplus_{l = 1}^j \xi^{k_l}(x_l^m)$.
    \end{enumerate}
\end{definition}

In the case $n = 3$, a map $(\xi^1, \xi^2) : S^1 \to \mathscr{F}_{1,2}$ is a hyperconvex Frenet curve if and only if $\xi^1$ is a continuous injection with image the boundary of a properly convex, strictly convex, $C^1$ domain $\mathcal{C}_\xi$ in $\RP^2$ and for all $x \in S^1$, the tangent line to $\partial \mathcal{C}_\xi$ at $\xi^1(x)$ is $\xi^2(x)$.
Following the convention in the literature (e.g. \cite{pozzettiSambarino2022lipschitz}), when the hyperconvex Frenet curve present is clear, we write $x^k$ in place of $\xi^k(x)$.

Hitchin representations to $\text{PSL}_n(\bbR)$ may be characterized in terms of the presence of this structure in their limit maps:

\begin{theorem}[{Labourie \cite[Theorem 1.4]{labourie2006anosov}, Guichard \cite[Theorem 1]{guichard2008composantes}}]
    A  representation $\rho : \Gamma \to {\rm{PSL}}_n(\bbR)$ is Hitchin if and only if there exists a $\rho$-equivariant hyperconvex Frenet curve $\xi: \partial \Gamma \to \mathscr{F}_n$.
\end{theorem}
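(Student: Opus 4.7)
The plan is to prove the two implications separately, since they are of rather different character. For the forward direction, I would exploit that $\text{Hit}_n(S)$ is by definition the connected component of the character variety containing the Fuchsian locus, i.e.\ representations of the form $\iota_n \circ \rho_0$ for $\rho_0 : \Gamma \to \text{PSL}_2(\bbR)$ discrete faithful and $\iota_n : \text{PSL}_2(\bbR) \to \text{PSL}_n(\bbR)$ the irreducible representation. On the Fuchsian locus, the equivariant boundary map is the Veronese-type curve $x \mapsto [1:x:\cdots:x^{n-1}]$ together with its osculating flags, which is manifestly a hyperconvex Frenet curve. The strategy is to show that the subset of $\text{Hit}_n(S)$ consisting of representations admitting a $\rho$-equivariant hyperconvex Frenet curve is both open and closed, hence all of $\text{Hit}_n(S)$ by connectedness.

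For openness, I would first establish that such $\rho$ are $B$-Anosov (with respect to the minimal parabolic): general position yields transversality of the boundary maps, and osculation plus dynamics-preserving equivariance allow one to produce a dominated splitting on the appropriate flag bundle over $\mathrm{T}^1 S$. Structural stability of Anosov representations then provides a neighborhood of $\rho$ of Anosov representations with continuously varying boundary maps. The more delicate step is verifying that hyperconvexity and osculation persist under small deformation; here one uses that transversality is an open condition and that the osculating limit identities survive in the limit by continuity of the boundary map in the representation. For closedness, one uses uniform compactness of the hyperconvex Frenet curves in a limit (via the compactness of $\mathscr{F}_n$ and equicontinuity coming from the Anosov contraction estimates), passes to a limit, and checks that the general position and osculation conditions are closed conditions on continuous maps $\partial\Gamma \to \mathscr{F}_n$.

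For the reverse direction, suppose $\rho$ admits a $\rho$-equivariant hyperconvex Frenet curve $\xi$. The aim is to show $\rho \in \text{Hit}_n(S)$. The key is that the general position property together with the equivariance allows construction of a $\rho$-equivariant continuous splitting of the trivial $\bbR^n$-bundle over $\partial\Gamma^{(2)}$ into rank-one subbundles $\xi^k(x) \cap \xi^{n-k+1}(y)$. Using the osculation property at the two endpoints of periodic orbits, together with the North-South dynamics of $\rho(\gamma)$ (which forces eigenvalues to be real, simple, and ordered along $\xi$), one deduces domination of this splitting along the geodesic flow. This gives that $\rho$ is $B$-Anosov, and in particular has a well-defined component in the character variety. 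Continuity of the hyperconvex-Frenet condition and density of such representations near the Fuchsian locus then allow a connecting path argument: one can continuously deform $\rho$ along Anosov representations, preserving the hyperconvex Frenet structure, to a Fuchsian representation, showing $\rho$ lies in the Hitchin component.

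The main obstacle I expect is the osculation condition in the forward direction: transversality is genuinely easy from Anosov theory, but the identification of higher-order osculating limits with the flag components requires quantitative control on how iterated dynamics contract transversely to the splitting, and this is where Labourie's original argument uses the cross-ratio and the Frenet property explicitly. In the reverse direction, the comparable difficulty is extracting uniform domination constants from the purely topological hypothesis of hyperconvexity, which requires translating the geometric Frenet condition into coarse hyperbolic data compatible with the dynamics of $\Gamma$ on $\partial\Gamma$.
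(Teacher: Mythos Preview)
The paper does not prove this theorem; it is quoted as background and attributed to Labourie and Guichard, with the remark that the $n=3$ case follows from Choi--Goldman. So there is no ``paper's own proof'' to compare against, and your proposal should be read as an attempt to reconstruct the original arguments of Labourie and Guichard.

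As a sketch of those arguments, your outline has two genuine gaps. In the forward direction, your closedness step asserts that ``general position and osculation conditions are closed conditions on continuous maps $\partial\Gamma \to \mathscr{F}_n$.'' General position is an \emph{open} condition, not a closed one: under a limit of curves, sums that were direct can collapse. Labourie does not argue this way; he first proves that all Hitchin representations are Anosov (which is what gives openness and closedness via structural stability), and then derives the hyperconvex Frenet property from the Anosov limit map together with contraction estimates that force the required transversality and osculation directly, not by a limiting argument on the Frenet condition itself.

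In the reverse direction, your concluding ``connecting path argument'' is circular: you want to conclude that $\rho$ lies in the Hitchin component, but deforming $\rho$ to a Fuchsian representation along a path of Anosov representations presupposes exactly the connectedness you are trying to prove. Guichard's actual argument shows that the set of representations admitting an equivariant hyperconvex Frenet curve is open and closed in the character variety, hence a union of connected components, and then identifies this union with $\mathrm{Hit}_n(S)$ by showing no other component can support such a curve (using, among other things, that such representations are discrete, faithful, and have loxodromic image). Your sketch correctly extracts the Anosov property from hyperconvexity, but stops short of the component-identification step that is the heart of Guichard's contribution.
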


In the case of $n = 3$, this is a consequence of a theorem of Choi-Goldman \cite{choi1993convex,goldman1990convex}.

We collect some consequences of the hyperconvex Frenet condition that we shall use below. These are all rather visually obvious from the equivalent formulation of the hyperconvex Frenet condition in $\mathscr{F}_{1,2}$ in terms of boundaries of properly convex domains when $n =3$.
In this subsection, we prefer arguments from the hyperconvex Frenet curve condition to direct arguments from convexity for their amenability to generalization.

\begin{lemma}[Basic Convexity Consequences]\label{lemma-easy-from-frenet}
    Let a hyperconvex Frenet curve in $\mathscr{F}_{1,2}$ be given.
   \begin{enumerate}
       \item\label{lemma-frenet-restriction} For any $x \in \partial \Gamma$, a homeomorphism $\partial \Gamma \to x^2$ is defined by $$\eta_x (y) = \begin{cases} y^2 \cap x^2 & y\neq x \\ x^1 & y = x \end{cases}.$$
       \item\label{lemma-sum-frenet} For any fixed $(x, z) \in \partial \Gamma^{(2)}$, a homeomorphism of $[x,z]$ onto the closure of a connected component of $(x^1 + z^1) - \{ x^1,z^1\}$ is defined by
       $$\eta_{xz} (y) = \begin{cases} y^2 \cap (x^1 + z^1) & y\neq x,z \\ x^1 & y = x \\ z^1 & y = z \end{cases}.$$
   \end{enumerate} 
\end{lemma}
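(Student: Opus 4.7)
The plan is to treat both parts by the same template: well-definedness via general position; continuity off the coincidence points via continuity of $\xi^2$ and of transverse intersections of $2$-planes in $\bbR^3$; continuity at the coincidence points using elementary limits in (2) and the strictly convex $C^1$ description of $\xi^1$ recalled just before the lemma in (1); and bijectivity onto the stated target via strict convexity plus a topological argument. For well-definedness of (1), if $y \neq x$ then general position at $\{x, y\}$ with $(k_1, k_2) = (1, 2)$ gives $x^1 \oplus y^2 = \bbR^3$, so $x^2 \cap y^2$ is a $1$-dimensional subspace of $x^2$ different from $x^1$. For (2), general position at $\{x, y, z\}$ with all $k_i = 1$ gives $y^1 \not\subset x^1 + z^1$, so $y^2 \neq x^1 + z^1$ and $\eta_{xz}(y)$ is a $1$-dimensional subspace of $x^1 + z^1$; the conditions $x^1 \not\subset y^2$ and $z^1 \not\subset y^2$ (general position again) prevent it from being $x^1$ or $z^1$.

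For $\eta_{xz}$ the continuity at the endpoints is easy: as $y \to x$, $y^2 \to x^2$, and since $x^2$ and $x^1 + z^1$ are distinct $2$-planes meeting in $x^1$, the intersection $y^2 \cap (x^1 + z^1)$ converges to $x^1$ by continuity of transverse intersections; and symmetrically at $z$. The delicate continuity is of $\eta_x$ at $x$, where $y^2$ and $x^2$ both approach $x^2$ and their intersection must nonetheless be shown to approach $x^1$. Here I use the equivalent formulation of the hyperconvex Frenet condition in $\mathscr{F}_{1,2}$: in affine coordinates on $\RP^2$ with $x^1$ the origin and $x^2$ the $x$-axis, I write the curve locally as $(t, f(t))$ with $f \in C^1$, $f(0) = f'(0) = 0$, and $f'$ strictly monotone from strict convexity. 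A mean value computation places the intersection of the tangent at $(t, f(t))$ with the $x$-axis at abscissa $t(1 - f'(\xi)/f'(t))$ for some $\xi$ strictly between $0$ and $t$; monotonicity of $f'$ confines this abscissa strictly between $0$ and $t$, so it tends to $0$ with $t$.

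Injectivity of $\eta_x$ follows from strict convexity: a coincidence $\eta_x(y) = \eta_x(y')$ with $y, y'$ distinct and different from $x$ would force the three distinct tangent lines $x^2, y^2, y'^2$ to be concurrent at a point outside $\mathcal{C}_\xi$ (general position excludes this point from $\partial \mathcal{C}_\xi$), contradicting that through any exterior point of a strictly convex $C^1$ region pass exactly two tangent lines. For $\eta_{xz}$, a coincidence $\eta_{xz}(y) = \eta_{xz}(y') = Q$ with $y, y' \in (x, z)$ distinct makes $x^1 + z^1$ a secant through the exterior point $Q$ whose two tangent lines to $\partial \mathcal{C}_\xi$ touch at $y$ and $y'$; the entry and exit of this secant at $\partial \mathcal{C}_\xi$ then lie on opposite arcs of $\partial \mathcal{C}_\xi - \{y, y'\}$, while $y, y' \in (x, z)$ forces $x^1$ and $z^1$ onto the same arc of $\partial \Gamma - \{y, y'\}$, a contradiction. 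Endpoint coincidences are ruled out by general position.

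Bijectivity is then topological. In (1), $\eta_x$ is a continuous injection $S^1 \cong \partial \Gamma \to S^1 \cong x^2$; its image is open by invariance of domain in dimension one and closed by compactness, so $\eta_x$ is surjective, hence a homeomorphism. In (2), $\eta_{xz}$ is a continuous injection of the arc $[x, z]$ into $\RP^1 = x^1 + z^1$ sending the endpoints to $x^1$ and $z^1$; its image is a closed embedded arc with those endpoints, i.e.\ the closure of one of the two components of $(x^1 + z^1) - \{x^1, z^1\}$. The main obstacle I expect is the continuity of $\eta_x$ at $x$: the convex $C^1$ description makes my argument elementary, but given the authors' stated preference for arguments from the hyperconvex Frenet condition that are amenable to generalization to arbitrary $n$, the paper's proof likely replaces my mean value computation with an argument based directly on osculation.
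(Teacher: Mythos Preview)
Your proof is correct. The approach differs from the paper's in exactly the way you anticipate: the paper disposes of part~(1) in one line by citing it as an elementary case of the Frenet Restriction Lemma \cite[Prop.~3.2]{nolte2024metrics}, and then proves part~(2) by appealing to part~(1) for the dual curve to see that $x^1+z^1$ is transverse to $x^2$ and $z^2$, from which endpoint continuity follows. Your route instead leans entirely on the equivalent $C^1$ strictly convex description in $\RP^2$: the mean-value argument for continuity of $\eta_x$ at $x$, the ``at most two tangents through an exterior point'' fact for injectivity of $\eta_x$, and the separating-arc argument for injectivity of $\eta_{xz}$. This buys you a self-contained elementary proof with no external citations, at the cost of being specific to $n=3$; the paper's route is terser and, as the authors state just before the lemma, is chosen for its amenability to generalization. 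Your injectivity argument for $\eta_{xz}$ is actually more fully justified than the paper's one-line assertion that $y^2 \cap w^2 \cap (x^1+z^1) = \emptyset$ for $y \neq w$ in $(x,z)$, which is not an immediate consequence of general position alone and tacitly uses the same convexity picture you spell out.
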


\begin{proof}
    (\ref{lemma-frenet-restriction}) is an elementary case of the Frenet Restriction Lemma \cite[Prop. 3.2]{nolte2024metrics}.

    For (\ref{lemma-sum-frenet}), we first note that $\eta_{xz}$ is a continuous injection into a connected component of $(x^1 + z^1) - \{x^1, z^1\}$ since for any $y \neq w$ in $(x,y)$, we have $y^2 \cap w^2 \cap (x^1 + z^1) =\emptyset$ and hence $\eta_{xz}(y) \neq \eta_{xz}(w)$.
    We next observe that from part (\ref{lemma-frenet-restriction}) applied to the dual curve, $x^1 + z^1$ is neither $x^2$ nor $z^2$, and hence is transverse to $x^2$ and to $z^2$.
    So $\lim_{y \to x} \eta_{xz}(y) = x^2 \cap (x^1 + z^1) = x^1$ and similarly $\lim_{y \to z} \eta_{xz}(y) = z^1$.
    The claim follows.
\end{proof}

The following more general form of Claim (\ref{lemma-frenet-restriction}) above is only used in \S (\ref{sss-other-roots})--(\ref{sss-exceptional-regularity}), which addresses more general Hitchin representations than the rest of the paper.

\begin{lemma}[{\cite[Lemma 3.2: Frenet Restriction]{nolte2024metrics}}]
    \label{lemma-full-restriction-lemma}
Let $\xi : \partial \Gamma \to \mathscr{F}_n$ be a hyperconvex Frenet curve and fix $1 < D < n$ and $x_0 \in \partial \Gamma$. Then $\xi_{x_0^D} = (\xi^1_{x_0^D}, \dots, \xi^{D-1}_{x_0^D}): \partial \Gamma \to \mathscr{F}(\xi^D(x_0))$ defined by 
$$ \xi^k_{x_0^D}(s) = \begin{cases} \xi^{n-D+k}(s) \cap \xi^D(x_0) & s \neq x_0 \\ \xi^{k}(x_0) & s = x_0
 \end{cases} $$ is a hyperconvex Frenet curve.
\end{lemma}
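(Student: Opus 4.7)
The plan is to verify the two defining conditions of a hyperconvex Frenet curve---general position and osculation---for the candidate $\xi_{x_0^D}$, together with continuity at $x_0$. First I would handle dimensions: for $s \neq x_0$, Frenet general position applied to $\xi^{n-D+k}(s)$ and $\xi^{D-k}(x_0)$ (total dimension $n$, distinct points) gives $\xi^{n-D+k}(s) \oplus \xi^{D-k}(x_0) = \bbR^n$, forcing $\dim \xi^k_{x_0^D}(s) = k$ inside $\xi^D(x_0)$. Flag nesting $\xi^k_{x_0^D}(s) \subset \xi^{k+1}_{x_0^D}(s)$ is immediate from $\xi^{n-D+k}(s) \subset \xi^{n-D+k+1}(s)$, and continuity at $s \neq x_0$ follows from continuity of $\xi$ together with the transversality just observed. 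Continuity at $x_0$ is more delicate, since $\dim\bigl(\xi^{n-D+k}(s) \cap \xi^D(x_0)\bigr)$ jumps at $s = x_0$: one must show the $k$-plane limit in $\Gr_k(\xi^D(x_0))$ is exactly $\xi^k(x_0)$. This is verified by resolving the degeneration via the osculation of $\xi$ at $x_0$---in a local Frenet-derivative model one writes $\xi^{n-D+k}(s)$ as an explicit family degenerating to $\xi^{n-D+k}(x_0)$ and computes directly that the intersection with $\xi^D(x_0)$ selects precisely the first $k$ Frenet directions at $x_0$.

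For general position, given distinct $s_1,\dots,s_j \in \partial \Gamma$ and positive integers $k_1,\dots,k_j$ with $\sum_l k_l = p \leq D$, I must show $\sum_l \xi^{k_l}_{x_0^D}(s_l)$ is direct of total dimension $p$ inside $\xi^D(x_0)$. When some $s_l = x_0$, the term $\xi^{k_l}(x_0)$ is absorbed into the Frenet floor at $x_0$, reducing to the case $s_l \neq x_0$ for all $l$. For that case, the strategy is to realize $\xi^D(x_0)$ as an osculating limit of the $D$-dimensional direct sums $E(y) = \xi^{D-p}(x_0) \oplus \xi^{k_1}(y_1) \oplus \cdots \oplus \xi^{k_j}(y_j)$, indexed by tuples of distinct $y_l \to x_0$ (distinct from each other, from $x_0$, and from the $s_l$). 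By Frenet general position in $\bbR^n$ these sums are direct (total dimension $D \leq n$), and one shows that the approximations $V_l^y := \xi^{n-D+k_l}(s_l) \cap E(y)$ have dimension $k_l$ and sum directly inside $E(y)$ by combining Frenet general position with the internal flag structure of $E(y)$. Passing to the limit $y \to x_0$ then transports the direct-sum relation to $\xi^D(x_0)$, provided the intersections remain transverse throughout.

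The osculation condition for $\xi_{x_0^D}$ is verified by the analogous limiting argument, using osculation of $\xi$ and the dimension/continuity information already obtained. The main obstacle is the general position step: a direct application of Frenet general position to the sums $\sum_l \xi^{n-D+k_l}(s_l)$ in $\bbR^n$ fails once $j \geq 2$, because the total dimension $j(n-D) + p$ typically exceeds $n$, so there is no ambient direct-sum relation to restrict to $\xi^D(x_0)$. The osculating approximation $E(y)$ reduces all bookkeeping to total dimension $\leq n$ where Frenet general position does apply; the technical heart of the argument is verifying that the direct-sum relations for the $V_l^y$ survive the limit $y \to x_0$ as relations for the $V_l := \xi^{n-D+k_l}(s_l) \cap \xi^D(x_0)$ without dimension collapse, which requires controlling the transversality of each $\xi^{n-D+k_l}(s_l)$ with $E(y)$ uniformly as $y \to x_0$.
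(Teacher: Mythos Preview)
The paper does not supply its own proof of this lemma---it is quoted from \cite{nolte2024metrics}---so there is no in-paper argument to compare against. Your treatment of dimensions, flag nesting, and continuity away from $x_0$ is correct, and continuity at $x_0$ can indeed be extracted from osculation (most cleanly by passing to the dual curve: the annihilator of $\xi^{n-D+k}(s)\cap\xi^D(x_0)$ in $(\bbR^n)^*$ is $\xi^{*,D-k}(s)+\xi^{*,n-D}(x_0)$, which osculates to $\xi^{*,n-k}(x_0)$).

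The genuine gap is in your general-position step. Your osculating-approximation scheme has two problems. First, you do not actually establish that $\sum_l V_l^y$ is direct inside $E(y)$: what Frenet general position in $\bbR^n$ gives you is $V_l^y \oplus \bigl(\xi^{D-p}(x_0)\oplus\bigoplus_{m\neq l}\xi^{k_m}(y_m)\bigr)=E(y)$, i.e.\ each $V_l^y$ projects isomorphically onto the $l$-th block, and this does \emph{not} force the $V_l^y$ to be mutually in direct sum (one can write down $2$-block counterexamples in $\bbR^3$). Second, and more seriously, directness of a sum is an \emph{open} condition and does not pass to limits: even granting $V_l^y\to V_l$ and $\sum_l V_l^y$ direct for all $y$, the limit of $\sum_l V_l^y$ in $\Gr_p$ is a $p$-plane containing $\sum_l V_l$, but $\sum_l V_l$ could be a proper subspace. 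Your proposed fix---uniform transversality of $\xi^{n-D+k_l}(s_l)$ to $E(y)$---only guarantees $V_l^y\to V_l$ and does nothing to prevent this collapse.

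The clean route bypasses the limiting altogether via duality and \emph{projection}. Set $q\colon\bbR^n\to\bbR^n/\xi^{n-D}(x_0)$ and define $\eta^k(s)=q(\xi^k(s))$ for $s\neq x_0$, $\eta^k(x_0)=q(\xi^{n-D+k}(x_0))$. General position for $\eta$ is immediate from Frenet general position of $\xi$: for distinct $s_1,\dots,s_j\neq x_0$ with $\sum k_l=p\le D$, the sum $\xi^{n-D}(x_0)\oplus\bigoplus_l\xi^{k_l}(s_l)$ has total dimension $(n-D)+p\le n$ and is therefore direct in $\bbR^n$, and applying $q$ gives directness of $\bigoplus_l\eta^{k_l}(s_l)$; osculation is similar. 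One then checks that the dual of $\eta$, realized in $(\bbR^n/\xi^{n-D}(x_0))^*\cong\xi^{*,D}(x_0)$, is precisely the restricted curve $(\xi^*)_{x_0^{*,D}}$. Since duals of hyperconvex Frenet curves are hyperconvex Frenet \cite{guichard2005dualite} and $\xi^{**}=\xi$, the Restriction Lemma follows. The point is that for \emph{projection} the bookkeeping stays at total dimension $\le n$ without any limiting, which is exactly the obstacle you identified but did not overcome.
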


\begin{remark}
    The Frenet Restriction Lemma may be applied repeatedly to obtain hyperconvex Frenet curves in subspaces of the form $x_1^{n_1} \cap \cdots \cap x_k^{n_k}$ with $\sum_{i=1}^k (n - n_i) \leq n-1$ and $x_1, \dots, x_k$ pairwise distinct.
    We denote these hyperconvex Frenet curves by $\xi_{x^{n_1}_1 \cap \cdots \cap x^{n_k}_k}$.
\end{remark}

\subsubsection{Dualities} For an $n$-dimensional vector space $V$, there is a duality between $\text{Gr}_k(V)$ and $\text{Gr}_{n-k}(V)$ given by mapping a subspace $A$ to its annihilator $A^{\perp}$.
Given any hyperconvex Frenet curve $\xi : S^1 \to \mathscr{F}(V)$, this induces a dual curve $\xi^* = (\xi^{\perp, 1}, \dots, \xi^{\perp,n})$ defined by $\xi^*(x) = ((\xi^{n-1}(x))^\perp,\dots, (\xi^1(x))^\perp)$.
Guichard has shown that $\xi^*$ is also a hyperconvex Frenet curve when $\xi$ is \cite[Th\'eor\`eme 2]{guichard2005dualite}.
In the case where $\xi$ is the limit map of a Hitchin representation, the dual curve $\xi^*$ is invariant under the contragredient representation $\rho^\dagger$, which is also Hitchin.

There is also a duality between properly convex domains $\Omega$ in $\bbP(V)$ and $\bbP(V^*)$, where the dual domain $\Omega^*$ to $\Omega$ consists of the linear functionals that vanish nowhere on $\overline{\Omega}$.

\subsubsection{Domains of Discontinuity}\label{ss-domains} A Hitchin representation $\rho \in \text{Hit}_3(S)$ preserves a convex domain $\mathcal{C}_\rho$ in $\RP^2$ bounded by $\xi^1(\partial \Gamma)$, and acts properly discontinuously and cocompactly on $\mathcal{C}_\rho$ \cite{choi1993convex}.
Write the dual convex domain by $\mathcal{C}_\rho^*$, which has boundary $\xi^2(\partial \Gamma)$.

The representation $\rho$ also preserves three cocompact domains of discontinuity $\Omega_\rho^{1}, \Omega_\rho^2,$ and $\Omega_\rho^3$ in $\mathscr{F}_{1,2}$, produced in \cite{guichard2012anosov} (see also \cite{kapovitchLeebPorti2018dynamics}) which are our focus here.

Let us recall these domains.
Let $\rho \in \text{Hit}_3(S)$.
Define \begin{align}
    \Omega_\rho = \{ (x,l) \in \mathscr{F}_{1,2} \mid x \notin \xi^1(\partial\Gamma), \, l \notin \xi^2(\partial \Gamma) \}.
\end{align}
Geometrically, the complement $K_\rho = \mathscr{F}_{1,2} - \Omega_\rho$ of $\Omega_\rho$ consists of flags either containing a point in the boundary $\xi^1(\partial \Gamma)$ of the convex domain $\mathcal{C}_\rho$ in $\RP^2$ or whose projective line entry meets and is tangent to the $C^1$ curve $\xi^1(\partial \Gamma)$.
Then $\rho(\Gamma)$ acts properly discontinuously and cocompactly on $\Omega_\rho$, which has three connected components:
\begin{align}
    \Omega_\rho^1 &= \{ (x, l) \in \Omega_\rho \mid x \in \mathcal{C}_\rho \}, \\
    \Omega_\rho^2 &= \{ (x,l) \in\Omega_\rho \mid x \notin \mathcal{C}_\rho, l \cap \mathcal{C}_\rho \text{ is nonempty} \}, \\
    \Omega_\rho^3 &= \{ (x,l) \in \Omega_\rho \mid l \cap \overline{\mathcal{C}_\rho} = \emptyset \}.
\end{align}

\section{Geometric Structures}\label{s-geom-strs}

The remainder of the paper is spent developing a qualitative theory for the flag-manifold geometry of Hitchin representations to $\text{SL}_3(\bbR)$.
Our approach is modelled on Guichard-Wienhard's work in $\text{PSL}_4(\bbR)$ \cite{guichard2008convex}.
The basic outline is:

\begin{enumerate}
    \item Study the domain of discontinuity $\Omega_\rho^2$ in $\mathscr{F}_{1,2}$ associated to a representation $\rho \in \text{Hit}_3(S)$.
    The unit tangent bundle $\mathrm{T}^1S$ double covers $\Omega_\rho^2$ via our developing maps, and this induces certain foliations of $\mathrm{T}^1S$.
    \item Formalize geometric properties of these developing maps in terms of the geometry of leaves of $\widetilde{\mathcal{F}}$ and $\widetilde{\mathcal{G}}$. 
    Define a moduli space $\mathscr{D}^\mathrm{cff}_{\mathscr{F}_{1,2}}(\mathrm{T}^1S)$ of geometric structures on $\mathrm{T}^1S$ with marked foliations satisfying these properties.
    \item Prove that every flag structure on $\mathrm{T}^1S$ in $\mathscr{D}^\mathrm{cff}_{\mathscr{F}_{1,2}}(\mathrm{T}^1S)$ is foliation-preserving equivalent to one of the examples of the first step. 
    In particular, we prove that the holonomy map from any connected component of $\mathscr{D}^\mathrm{cff}_{\mathscr{F}_{1,2}}(\mathrm{T}^1S)$ to the character variety $\mathfrak{X}(\Gamma, \text{SL}_3(\bbR))$ is a homeomorphism onto the Hitchin component.
\end{enumerate}

The first step is carried out in \S \ref{ss-foliations}, where we study natural foliation-like objects in $\Omega_\rho^2$.
We also provide explicit developing maps lining up the weakly stable and geodesic foliations of $\mathrm{T}^1S$ with these foliations in \S \ref{ss-dev-maps}.
We primarily restrict our attention to the domain of discontinuity least closely related to the convex foliated domain studied in \cite{guichard2008convex}, and formalize its features in \S \ref{ss-foliated-flag-structures}.
The core of the work is the third step, which is carried out in \S \ref{s-core}.
We take a detour from the main line of development in \S \ref{s-geo-flow-reparameterizations} to describe how refraction flows arise in our setting.

\subsection{Foliations}\label{ss-foliations}
The domains $\Omega_\rho^1$ and $\Omega_\rho^3$ can be interpreted as the projective tangent bundles of the convex domains $\mathcal{C}_\rho$ and $\mathcal{C}_\rho^\ast$, respectively.
We focus instead on the less familiar domain $\Omega_\rho^2$ and now describe some of its natural foliations and related objects.

Let $x$ be a point on the boundary of $\mathcal{C}_\rho$. 
We set 
$$ \mathcal{F}_x = \{(y,l) \in \Omega_\rho^2 \mid x \in l \} ,$$
that is, $\mathcal{F}_x$ is the subset of $\Omega_\rho^2$ consisting of flags with projective line passing through $x$. 
We observe that for $(y,l)$ in $\mathcal{F}_x$, the line $l$ can be any line containing $x$ except the tangent line to $\partial \mathcal{C}_\rho$ at $x$, and $y$ can be any point on $l$ outside of $\overline{\mathcal{C}_\rho}$.
In particular, its projection
$$ \Omega_x = \pr_1(\mathcal{F}_x) = \{ y \in \mathbb{RP}^2 \mid (y,y + x) \in \mathcal{F}_x \} $$
is the complement of the union of $\overline{\mathcal{C}_\rho}$ and the tangent line to $\partial \mathcal{C}_\rho$ at $x$.
We call such domains \emph{concave}:

\begin{definition}\label{def: concave} 
An open subset $\Omega$ of $\RP^2$ is {\rm{concave}} if $\RP^2 - \Omega$ is the union of the closure of a properly convex domain $C \subset \RP^2$ and a supporting line to $C$.
\end{definition}

The subspace $\mathcal{F}_x$ may be obtained from its concave projection $\Omega_x$ to $\RP^2$ by taking all flags of the form $(p, p+x)$ for $p \in \Omega_x$.
We call such subsets of the flag manifold \textit{lifts}:

\begin{definition}
    Given a subset $S$ of $\RP^2$ and a point $p \in \RP^2 - S$, the {\rm lift of $S$ about $p$} is the subset $\{(v, v+p) \mid v \in S \}$ of $\mathscr{F}_{1,2}$.
\end{definition}

The family of subsets $\mathcal{F}_\rho = \{ \mathcal{F}_x \mid x \in \partial \mathcal{C}_\rho \}$ resembles a foliation, except the ``leaves" $\mathcal{F}_x$ and $\mathcal{F}_z$  ($x \neq z$ in $\partial \mathcal{C}_\rho$), fail to be disjoint. 
The intersection of $\mathcal{F}_x$ and $\mathcal{F}_z$ is exactly
$$ \Gtr_{xz} = \{(w,l) \in \Omega_\rho^2 \mid l=x+z\} .$$
Moreover, every point in $\Omega_\rho^2$ is contained in exactly two subspaces of $\Omega_\rho^2$ of the form $\mathcal{F}_x$.

By taking a suitable $2$-sheeted covering of $\Omega_\rho^2$, the family $\mathcal{F}_\rho$ lifts to the leaves of an honest foliation.
This occurs in the cover obtained by orienting the projective line of each flag. 
We will see in \S \ref{ss-dev-maps} that there is a natural $2$-sheeted covering map $\partial \Gamma^{(3)+}/\Gamma \to \Omega_\rho^2$ associated to $\rho$ that maps the leaves of the weakly stable foliation $\overline{\mathcal{F}}$ described in \S \ref{s-background} onto the leaves of $\mathcal{F}_\rho$ and the leaves of the geodesic foliation $\overline{\mathcal{G}}$ onto $\mathcal{G}_{xz}^{\mathrm{tr}}$. 
In other words, the family of subsets $\mathcal{F}_\rho$ induces a foliation of $\mathrm{T}^1S$. 

The family $\Gtr_{xz}$ forms a $1$-dimensional foliation of $\Omega_\rho^2$ of interest to us. 
We observe that the set of points 
$$ \pr_1(\Gtr_{xz})= \{w \in \mathbb{RP}^2 \mid (w,l) \in \Gtr_{xz} \} $$
is a properly convex segment in $\mathbb{RP}^2$ (although it cannot be contained in an affine chart where $\mathcal{C}_\rho$ is bounded).
Furthermore, $\Gtr_{xz}$ is a lift of this segment about $x$ (or $z$).

We further consider 
$$ \mathcal{G}_{xz}^{\mathrm{tan}} = \{(w,l) \in \Omega_\rho^2 \mid l \text{ is tangent to } \mathcal{C}_\rho \text{ at } z \} ,$$
which is the tangent line to $\mathcal{C}_\rho$ at $z$, punctured twice: at $z$ itself and at its intersection with the tangent line at $x$. 
The leaf $\mathcal{G}_{xz}^{\mathrm{tan}}$ has two components, which we denote by $\Gtap_{xz}$ and $\Gtam_{xz}$.
Our convention is that $\Gtap_{xz}$ consists of points $w$ that lie on tangent lines of points $y$ on $\partial \mathcal{C}_\rho$ with $(x,y,z)$ positively oriented. 
We observe that each of $\Gtap_{xz}$ and $\Gtam_{xz}$ project to properly convex domains in a projective line in $\RP^2$. 
Each of $\Gtap$ and $\Gtam$ are $2$-to-$1$ analogues of foliations of $\Omega_\rho^2$ in the same sense that $\mathcal{F}_\rho$ is, while $\Gtr$ is a foliation of $\Omega_\rho^2$.

Our main result roughly states that a manifold with an $(\SL(3,\mathbb{R}),\mathscr{F}_{1,2})$-structure admitting a pair of foliations that ``look like'' $(\mathcal{F},\mathcal{G})$, where $\mathcal{F}$ is $\mathcal{F}_\rho$ and $\mathcal{G}$ is one of $\Gtr,\Gtap,\Gtam$ is actually equivalent in a strong sense to a two-sheeted cover of $\Omega_\rho^2/\rho(\pi_1 S)$. 

\subsection{Developing maps}\label{ss-dev-maps}
\begin{figure}
    \centering
    \includegraphics[scale=0.6]{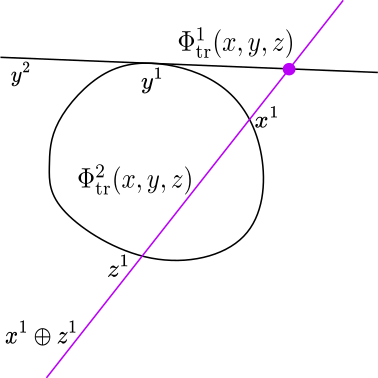} 
    \hspace{1cm} \includegraphics[scale=0.6]{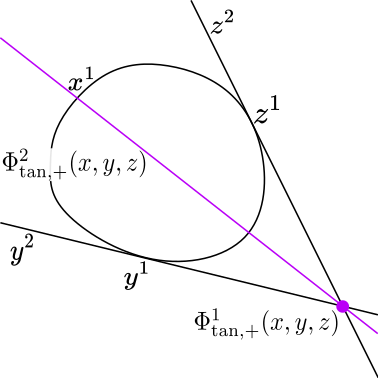}
    \caption{Sketches of the maps $\Phitr$ and $\Phitap$. In each sketch, a dot is placed over the first entry of the image flag and the second entry of the image flag is drawn in purple.}
    \label{fig-dev-sketches}
\end{figure}

We now give explicit developing maps for the domain $\Omega_\rho^2$, compatible with the foliations we mentioned above.
Each of the maps constructed in this subsection will map leaves of $\overline{\mathcal{F}}$ to leaves of $\mathcal{F}_\rho$ and leaves of $\overline{\mathcal{G}}$ to leaves of one of $\Gtr, \Gtap$, and $\Gtam$.

We define two maps $\partial \Gamma^{(3)+} \to \Omega_\rho^2$ by:
\begin{align}
    \Phitr(x,y,z) &= ((x^1 + z^1) \cap y^2, x^1 + z^1),
    \label{dev-tr} \\
    \Phitap(x,y,z) &= (y^2 \cap z^2, x^1 \oplus (y^2 \cap z^2)). \label{dev-tan+}
\end{align}

We expand the maps into factors with the notation $\Phitr(x,y,z) = (\Phitr^1(x,y,z), \Phitr^2(x,y,z))$ and $\Phitap(x,y,z) = (\Phitap^1(x,y,z), \Phitap^2(x,y,z))$.
See Figure \ref{fig-dev-sketches}.

\subsubsection{An Involution}\label{sss-involution} There is a useful modification $\Phitam$ of $\Phitap$ that arises from the convexity of $\xi^1(\partial \Gamma)$ and is compatible with $\Gtam$ instead of $\Gtap$.
We describe it here.

Denote the space of negatively oriented triples in $\partial \Gamma$ by $\partial \Gamma^{(3)-}$ and the space of all triples of distinct elements of $\partial \Gamma$ by $\partial \Gamma^{(3)}$.
We define $\iota_{\rho}: \partial \Gamma^{(3)} \to \partial \Gamma^{(3)}$ as follows.

Let $(x,y,z) \in\partial \Gamma^{(3)+}$ be given.
Then $y^1 + (x^2 \cap z^2)$ intersects $\partial \mathcal{C}_{\rho}$ in two points, $y^1$ and one other point, that we call $w_{xz}(y)$.
Then $\iota_\rho(x,y,z)$ is defined to be $(x,w_{xz}(y),z)$.
The map $\iota_\rho$ is then an involution of $\partial \Gamma^{(3)}$ that interchanges $\partial \Gamma^{(3)+}$ and $\partial \Gamma^{(3)-}$.
See Figure \ref{fig-involution}, Left.

The map $\Phitap$ is defined just as well on $\partial \Gamma^{(3)-}$ as on $\partial\Gamma^{(3)+}$ by the same formula.
Put
\begin{align}
    \Phitam: \partial \Gamma^{(3)+} &\to \Omega_\rho^2 \label{dev-3-inv} \\
    (x,y,z) &\mapsto \Phitap(\iota_\rho(x,y,z)). \nonumber
\end{align}
See Figure \ref{fig-involution}, Right.

\begin{figure}
    \centering
    \includegraphics[scale=0.6]{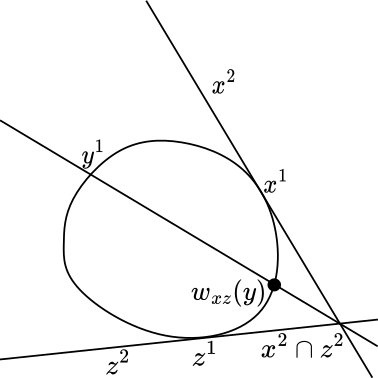} \hspace{1cm}
    \includegraphics[scale=0.6]{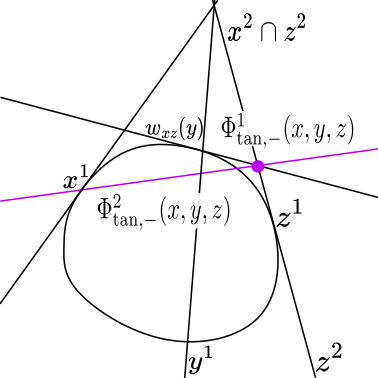}
    \caption{Left: the involution $\iota_\rho$ interchanges the two arcs in the boundary of a convex domain obtained by removing two points. Right: post-composing $\Phitap$ with $\iota_\rho$ gives a new developing map $\Phitam$ that takes leaves $g_{xz}$ of $\overline{\mathcal{G}}$ to leaves of $\Gtam$.}
    \label{fig-involution}
\end{figure}

\subsubsection{Covering} We now show that the maps $\Phitr, \Phitap,$ and $\Phitam$ are equivariant coverings, and so define developing maps for $(\SL_3(\bbR), \mathscr{F}_{1,2})$-structures on $\mathrm{T}^1S$.

\begin{proposition}\label{prop-relevant-dev-coverings}
    The maps $\Phitr$, $\Phitap$, and $\Phitam$ are two-sheeted coverings $\partial \Gamma^{(3)+} \to \Omega_\rho^2$ and are equivariant with respect to the actions of $\Gamma$ on $\partial \Gamma^{(3)+}$ and $\rho(\Gamma)$ on $\mathscr{F}_{1,2}$.

    For any $(x,z) \in \partial \Gamma^{(2)}$ with corresponding leaves $f_x$ of $\overline{\mathcal{F}}$ and $g_{xz}$ of $\overline{\mathcal{G}}$, the images $\Phitr(f_x)$, $\Phitap(f_x)$, and $\Phitam(f_x)$ are lifts of concave domains and $\Phitr(g_{xz}), \Phitap(g_{xz})$, and $\Phitam(g_{xz})$ are lifts of properly convex segments in projective lines.
\end{proposition}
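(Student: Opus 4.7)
The plan is to establish in parallel, for each of the three developing maps, (i) equivariance, (ii) the leaf image identifications, and (iii) the two-sheeted covering property. Throughout, $\Phitam$ reduces to $\Phitap$ via the $\Gamma$-equivariant homeomorphism $\iota_\rho \colon \partial\Gamma^{(3)+} \to \partial\Gamma^{(3)-}$. Equivariance is immediate: each formula involves only $\SL_3(\bbR)$-equivariant operations (intersection and sum) applied to $\rho$-equivariant subspaces $\xi^k(w)$, and $\iota_\rho$ is itself defined from $\xi$.

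For the leaf images on a geodesic leaf $g_{xz}$ I would fix $(x,z)$ and vary $y$ over the positive arc. The map $\Phitr$ has constant second entry $L = x^1 + z^1$, while Lemma \ref{lemma-easy-from-frenet}(\ref{lemma-sum-frenet}) identifies $y \mapsto (x^1 + z^1) \cap y^2$ as a homeomorphism onto a connected component of $L - \{x^1, z^1\}$; this component must be the one disjoint from $\overline{\mathcal{C}_\rho}$ since $y^2$ is tangent to $\mathcal{C}_\rho$, so $\Phitr(g_{xz})$ is the lift of a properly convex segment in $L$ about $x^1$. For $\Phitap(g_{xz})$, Lemma \ref{lemma-easy-from-frenet}(\ref{lemma-frenet-restriction}) identifies $y \mapsto y^2 \cap z^2$ as a homeomorphism from the positive arc onto an open subarc of $z^2$ whose endpoints are $x^2 \cap z^2$ and $z^1$; since $x^1 \notin z^2$, this realizes $\Phitap(g_{xz})$ as a lift of a properly convex segment about $x^1$. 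For a weakly stable leaf $f_x$ I would argue $\Phitr(f_x) = \mathcal{F}_{x^1} = \Phitap(f_x)$, where $\mathcal{F}_{x^1}$ was identified in \S\ref{ss-foliations} as the lift about $x^1$ of the concave domain $\Omega_{x^1}$. Containment in $\mathcal{F}_{x^1}$ is immediate from the formulas. For the reverse inclusion, given $p \in \Omega_{x^1}$, the line $p + x^1 \neq x^2$ meets $\partial\mathcal{C}_\rho$ at $x^1$ and a unique second point, determining $z$; the required $y$ on the positive arc is then produced by Lemma \ref{lemma-easy-from-frenet}(\ref{lemma-sum-frenet}) for $\Phitr$, and by taking $\{y,z\}$ to be the two tangent points to $\mathcal{C}_\rho$ from $p$ (with ordering fixed by orientation) for $\Phitap$.

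For the covering claim, I would first verify $\Phi(\partial\Gamma^{(3)+}) \subset \Omega_\rho^2$: the first entry lies on a tangent line away from the tangent point, hence off $\overline{\mathcal{C}_\rho}$, and the second entry is a chord (for $\Phitap$ this uses that strict convexity of $\mathcal{C}_\rho$ precludes three concurrent tangent lines, so $x^1 \oplus (y^2 \cap z^2) \neq x^2$). For the preimage count of $(v, L) \in \Omega_\rho^2$: in both cases $L$ meets $\partial\mathcal{C}_\rho$ at exactly two points, yielding two candidates for $x$; for $\Phitr$ the middle entry $y$ is then uniquely determined on the positive arc, while for $\Phitap$ the two tangent lines through $v$ give two candidates for the unordered pair $\{y,z\}$ and exactly one ordering of each pair is positively oriented for each choice of $x$. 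Local injectivity then follows from continuous local inverses, and the covering property follows by invariance of domain since both source and target are $3$-manifolds.

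The main obstacle I anticipate is the orientation-parity bookkeeping in the $\Phitap$ preimage count: among the four combinatorial configurations (two choices of $x$, two orderings of $\{y,z\}$), one must verify that exactly two yield positively oriented triples, and verify the accompanying non-degeneracy that $x$ differs from both tangent points from $v$. The non-degeneracy reduces to $v \notin x^2$, which follows from $x^2 \cap L = \{x^1\}$ combined with $v \notin \mathcal{C}_\rho$; the parity is the elementary fact that for fixed $x$ and unordered pair $\{y_1, y_2\}$, exactly one of $(x, y_1, y_2)$ and $(x, y_2, y_1)$ is positively oriented on the circle $\partial\Gamma$.
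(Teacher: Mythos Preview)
Your argument is correct and takes a genuinely different route from the paper's proof. The paper proceeds for $\Phitap$ by first establishing local injectivity directly from the hyperconvex Frenet condition (analyzing when $\Phitap^1$ can coincide, then handling the $x$-coordinate via the dual curve), then proving properness through a case analysis on degenerations of triples $(x_n,y_n,z_n)$; invariance of domain and properness then yield a finite covering, and a final computation shows two sheets. The paper then reduces $\Phitr$ to $\Phitap$ by projective duality rather than arguing directly.

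Your approach instead constructs the two preimages of a general $(v,L)\in\Omega_\rho^2$ explicitly from the chord/tangent geometry of $\partial\mathcal{C}_\rho$, which simultaneously yields surjectivity onto $\Omega_\rho^2$, the exact sheet count, and (since the intersection points with $\partial\mathcal{C}_\rho$ and the tangent points from $v$ vary continuously) two continuous local sections. This bypasses the properness analysis entirely and handles $\Phitr$ on the same footing as $\Phitap$. The trade-off is that the paper's local-injectivity-plus-properness template is more portable to the other developing maps in \S\ref{ss-side-remarks} and to the higher-rank setting of \S\ref{sss-other-roots}, whereas your argument leans specifically on the two-dimensional convex geometry of $\mathcal{C}_\rho$.

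One small expository point: your concluding sentence invokes invariance of domain, but once you have two continuous sections whose images partition each fiber, you already have evenly covered neighborhoods and hence a covering map directly; invariance of domain is not needed. Also, in the non-degeneracy step you want $v\notin\overline{\mathcal{C}_\rho}$ (not merely $v\notin\mathcal{C}_\rho$) to conclude $v\neq x^1$.
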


\begin{proof}
Note first that $\Phitr$, $\Phitap$, and $\Phitam$ are continuous. All three maps are equivariant because they are constructed using only the equivariant hyperconvex Frenet curve.
The main claim is that $\Phitr$, $\Phitap$, and $\Phitam$ are two-sheeted coverings.

Let us begin with $\Phitap$.
We then reduce the proofs for $\Phitr$ and $\Phitam$ to this case.
Our strategy is to use the hyperconvex Frenet curve condition to prove local injectivity and properness.
Invariance of domain then implies that $\Phitap$ is a finite-sheeted covering, and we show that it has exactly two sheets.

 Let us begin with local injectivity.
 We have for any distinct $x,z,w \in \partial \Gamma$ that $y^2 \cap z^2 \cap w^2 = \{0\}$ so $\Phitap^1(x_1,y_1,z_1) = \Phitap^1(x_2,y_2,z_2)$  if and only if $\{y_1, z_1\} = \{y_2, z_2\}$. For fixed and distinct $y_0,z_0 \in \partial \Gamma$, note that the dual map of $\Phitap^2$ is given by $$(x,y_0,z_0) \mapsto (x^1)^\perp \cap (y_0^2 \cap z_0^2)^\perp = \xi^{\perp,2}(x) \cap (\xi^{\perp,1}(y_0) \oplus \xi^{\perp,1}(z_0)).$$ 
 Lemma \ref{lemma-easy-from-frenet}.(\ref{lemma-sum-frenet}) applied to this map shows $\Phitap^2(x,y_0,z_0)$ is injective for $x$ in a connected component of $\partial \Gamma - \{y_0, z_0\}$.
 So $\Phitap$ is locally injective.

To establish properness, suppose that $(x_n,y_n, z_n)$ is a divergent sequence in $\partial \Gamma^{(3)+}$. After taking a subsequence, we may assume that $(x_n,y_n,z_n)$ converges to a point $(x,y,z) \in \partial \Gamma^3$ of one of the following forms:
    \begin{enumerate}
        \item $(a,a,a)$,
        \item $(b,a,a)$ $(a \neq b)$,
        \item $(a,b,a)$ $(a \neq b)$,
        \item $(a,a,b)$ $(a \neq b)$.
    \end{enumerate}
    In the first two cases, osculation shows $y^2_n \cap z^2_n \to a^1 \in \partial \mathcal{C}_\rho$, so that $\Phitap(x_n,y_n,z_n)$ leaves all compact sets in $\Omega_\rho^2$.
    In the third case, since $a^2$ is transverse to $b^2$ we have $y^2_n \cap z^2_n \to a^2 \cap b^2$. Since $a^2 \cap b^2 \neq a_1$ by Lemma \ref{lemma-easy-from-frenet}.(\ref{lemma-sum-frenet}) (or \cite[Proposition 6]{guichard2005dualite}), we have $(y_n^2 \cap z_n^2) + x_n^1 \to a^2$, so that $\Phitap(x_n,y_n,z_n)$ leaves all compact sets in $\Omega_\rho^2$. The final case is similar to the third.
    So $\Phitap$ is proper.

    Since $\Phitap$ is a continuous local injection between topological $3$-manifolds, $\Phitap$ is a local homeomorphism.
    Since $\Phitap$ is a proper local homeomorphism, $\Phitap$ is a finite-sheeted covering map.

    To see $\Phitap$ has two sheets, note that the above analysis shows that there are at most two points in the preimage of a point $\Phitap(x,y,z) \in \Omega_\rho^2$, namely $(x,y,z)$ and at most one point of the form $(x',z,y) \in \partial \Gamma^{(3)+}$.
 On the other hand, note that $\Phitap(w_{yz}(x), z,y) = \Phitap(x,y,z)$, so that $\Phitap$ is exactly $2$-to-$1$ onto its image.
    So $\Phitap(x,y,z)$ is a two-sheeted covering.
The assertions on images of leaves of $\overline{\mathcal{F}}$ and $\overline{\mathcal{G}}$ follow from the above and the discussion in \S \ref{ss-foliations}.

    The analogues for $\Phitr$ and $\Phitam$ follow from observing that $\Phitr$ and $\Phitam$ are obtained from modifications of $\Phitap$ by equivariant constructions.
    This follows for $\Phitam$ by its definition.
    Finally, $\Phitr$ is obtained by a dual construction to $\Phitap$ after rotating the product factors of $\partial \Gamma^{(3)+}$.
    At the risk of obfuscating a simple construction, an exact statement is that with $\Phitap^\dagger$ the version of $\Phitap$ associated to the contragredient representation $\rho^\dagger$ of $\rho$ viewed as a map to $\mathscr{F}_{1,2}((\bbR^3)^*)$, we have $\Phitr(x,y,z) = \Phitap^\dagger(y,x,z)^\perp$ for all $(x,y,z) \in \partial \Gamma$.
\end{proof}

\subsection{Foliated Flag Structures}\label{ss-foliated-flag-structures}
We use the developing maps of the previous subsection to develop a parallel theory to that of Guichard-Wienhard in \cite{guichard2008convex}.
This section documents the relevant definitions of moduli spaces of geometric structures.

We remark that concave regions also appear in the qualitative geometry of ${\rm{PSL}}_4(\bbR)$-Hitchin representations as a component of the complement of the intersection of the boundary of the maximal domain of discontinuity in $\RP^3$ with subspaces of the form $\xi^3(x)$ \cite{guichard2008convex}, and are also salient in \cite{nolte2024foliations}.

\subsubsection{Concave Foliated Flag Structures}
We consider $(\text{SL}_3(\bbR), \mathscr{F}_{1,2})$-structures on $\mathrm{T}^1S$ in the sense of Thurston-Klein $(G,X)$-structures, see \cite{goldman2022geometric} for background.
Such a structure amounts to a maximal atlas of charts on $\mathrm{T}^1S$ valued in $\mathscr{F}_{1,2}$ with transition maps that are locally restrictions of elements of $\text{SL}_3(\bbR)$. 
It is well-known that such a structure can equivalently be defined in terms of developing-holonomy pairs, which is our preferred viewpoint.

Namely, given a manifold $M$, a \textit{$(\SL_3(\bbR), \mathscr{F}_{1,2})$-developing pair} is a pair $(\text{dev}, \text{hol})$ with $\text{hol}\colon \pi_1(M) \to \text{SL}_3(\bbR)$ a representation and $\text{dev}\colon \widetilde{M} \to \mathscr{F}_{1,2}$ a $\text{hol}$-equivariant local homeomorphism with domain the universal cover $\widetilde{M}$ of $M$.
Two developing pairs $(\text{dev}_1, \text{hol}_1)$ and $(\text{dev}_2, \text{hol}_2)$ are said to be \textit{equivalent} if there is a homeomorphism $\varphi$ of $M$ isotopic to the identity and an element $g \in \text{SL}_3(\bbR)$ so that $\text{hol}_2 (\gamma) = g \text{hol}_1(\gamma) g^{-1}$ for all $\gamma \in \pi_1(M)$, and so that the lift $\widetilde{\varphi}$ of $\varphi$ to $\widetilde{M}$ satisfies the equivariance property $\dev_1 \circ \widetilde{\varphi} = g^{-1} \circ \dev_2$.
The space $\mathscr{D}_{\mathscr{F}_{1,2}}(M)$ of equivalence classes of developing pairs is identified with the space of (marked) $(\text{SL}_3(\bbR), \mathscr{F}_{1,2})$-structures on $M$.
In equivalence classes, holonomies are only well-defined up to conjugation.

The relevant definitions to our setting are:

\begin{definition}
    A {\rm{concave foliated flag structure}} on $\mathrm{T}^1S$ is a quadruple $(\dev, \hol, \mathcal{F}', \mathcal{G}')$ with data
    \begin{enumerate}
        \item $(\dev, \hol)$ a $(\SL_3(\bbR), \mathscr{F}_{1,2})$-developing pair on $\mathrm{T}^1S$,
        \item $(\mathcal{F}', \mathcal{G}')$ a pair of foliations on $\mathrm{T}^1S$ so that there is a homeomorphism $\varphi$ isotopic to the identity so $(\varphi^*\mathcal{F}', \varphi^*\mathcal{G}') = (\mathcal{F}, \mathcal{G})$.
    \end{enumerate}
    This data is required to satisfy the further requirements that:
    \begin{enumerate}
        \item For every leaf $f \in \widetilde{\mathcal{F}'}$ of the lift of $\mathcal{F}'$ to $\widetilde{\mathrm{T}^1S}$, the image $\dev(f)$ is a concave domain lift,
        \item For every leaf $g \in \widetilde{\mathcal{G}'}$ of the lift of $\mathcal{G}'$ to $\widetilde{\mathrm{T}^1S}$, the image $\dev(g)$ is a proper line segment lift.
    \end{enumerate}
\end{definition}

This formalizes the phenomena seen in \S \ref{ss-dev-maps} in the following sense.

\begin{example}\label{lemma-affirmatively-flag-foliated}
    Let $\rho \colon \Gamma \to \SL_3(\bbR)$ be a Hitchin representation.
    Then $\rho$ may be extended to $\overline{\rho}: \overline{\Gamma} \to \mathrm{SL}_3(\bbR)$ by $\overline{\rho} = \rho \circ q_\Gamma$.
    Let $\devtr_\rho, \devtap_\rho, $ and $\devtam_\rho$ be lifts of $\Phitr, \Phitam$, and $\Phitap$ to the universal cover of $\partial \Gamma^{(3)+}$, respectively.
    
    Then $(\devtr_\rho, \overline{\rho}, \mathcal{F}, \mathcal{G})$, $(\devtap_\rho, \overline{\rho}, \mathcal{F}, \mathcal{G})$, and $(\devtam_\rho, \overline{\rho}, \mathcal{F}, \mathcal{G})$ are concave foliated flag structures on $\partial \Gamma^{(3)+}/\Gamma$.
\end{example}

The relevant notions of equivalence and moduli space are:
\begin{definition}[Foliated Equivalence]
    Two concave foliated $(\mathrm{SL}_3(\bbR), \mathscr{F}_{1,2})$-structures $(\dev_1, \hol_1, \mathcal{F}_1, \mathcal{G}_1)$ and $(\dev_2, \hol_2, \mathcal{F}_2, \mathcal{G}_2)$ are {\rm{foliated equivalent}} if there is an equivalence $(\varphi, g)$ of $({\rm{SL}}_3(\bbR), \mathscr{F}_{1,2})$-structures so that furthermore $\varphi^*\mathcal{F}_1 = \mathcal{F}_2$ and $\varphi^*\mathcal{G}_1 = \mathcal{G}_2$. 

    Let $\mathscr{D}^{{\rm{cff}}}_{\mathscr{F}_{1,2}}(\mathrm{T}^1S)$ be the collection of foliated equivalence classes of concave foliated flag structures on $\mathrm{T}^1S$.
\end{definition}

Note that up to foliated $(\text{SL}_3(\bbR), \mathscr{F}_{1,2})$-equivalence every concave foliated flag structure may be taken to be on the model $\partial \Gamma^{(3)+}/\Gamma$ of $\mathrm{T}^1S$ with $\mathcal{F}' = \mathcal{F}$ and $\mathcal{G}' = \mathcal{G}$.
We topologize the space of concave foliated flag structures on $\partial \Gamma^{(3)+}$ with $\mathcal{F}' = \mathcal{F}$ and $\mathcal{G}' = \mathcal{G}$ by using the compact-open topologies on maps $\widetilde{\partial \Gamma^{(3)+}} \to \mathscr{F}_{1,2}$ for developing maps and on maps $\Gamma \to \text{SL}_3(\bbR)$ for holonomies.
Then $\mathscr{D}^{\text{cff}}_{\mathscr{F}_{1,2}}(\mathrm{T}^1S)$ is given the quotient topology with respect to equivalence of $(\text{SL}_3(\bbR), \mathscr{F}_{1,2})$-structures among these representatives.
This construction of the topology avoids direct discussion of closeness of foliations, and is well-adapted to our main structure theorems for concave foliated flag structures.

\begin{remark}\label{rmk-equivalence}
    As in Guichard-Wienhard's setting \cite[Rmk. 2.4]{guichard2008convex}, it is a consequence of the uniqueness of geodesics in homotopy classes on hyperbolic surfaces and the density of closed geodesics in the unit tangent bundle that among homeomorphisms of $\mathrm{T}^1S$ isotopic to the identity, $\varphi^*\mathcal{G} = \mathcal{G}$ is equivalent to $\varphi^*$ sending every leaf of $\mathcal{G}$ to itself.
    Furthermore, $\varphi \in \mathrm{Homeo}(S)$ sending every leaf of $\mathcal{G}$ to itself is equivalent to $\mathcal{G}$ being isotopic to the identity and $\varphi^*\mathcal{G} = \mathcal{G}$.
    A consequence is that any $\varphi$ isotopic to the identity so that $\varphi^* \mathcal{G} = \mathcal{G}$ satisfies $\varphi^* \mathcal{F} = \mathcal{F}$.
\end{remark}

The main theorem we shall prove is:
\begin{theorem}[Rigidity]\label{thm-technical-main}
    Let $(\dev, \hol, \mathcal{F}', \mathcal{G}')$ be a concave foliated flag structure on $\mathrm{T}^1S$.
    Then $(\dev, \hol, \mathcal{F}', \mathcal{G}')$ is foliated equivalent to exactly one of $(\devtr_\rho, \rho \circ q_\Gamma, \mathcal{F}, \mathcal{G})$  or $(\devtap_\rho, \rho \circ q_\Gamma, \mathcal{F}, \mathcal{G})$ or $(\devtam_\rho, \rho \circ q_\Gamma, \mathcal{F}, \mathcal{G})$ for a Hitchin representation $\rho \in \mathrm{Hit}_3(S)$. 
\end{theorem}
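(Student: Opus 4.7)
The plan is to reduce any concave foliated flag structure to one on $\partial\Gamma^{(3)+}/\Gamma$ with $(\mathcal{F}', \mathcal{G}') = (\mathcal{F}, \mathcal{G})$, then recover the underlying Hitchin representation by extracting a hyperconvex Frenet curve from the foliation data. First, I would lift to the universal cover to obtain a pair $(\dev, \hol)$ with $\dev\colon \widetilde{\partial\Gamma^{(3)+}} \to \mathscr{F}_{1,2}$ an $\overline{\Gamma}$-equivariant local homeomorphism sending each leaf of $\widetilde{\mathcal{F}}$ to a concave domain lift and each leaf of $\widetilde{\mathcal{G}}$ to a proper segment lift.

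The heart of the argument is to construct a hyperconvex Frenet curve. For each leaf $f$ of $\widetilde{\mathcal{F}}$, the concave domain underlying $\dev(f)$ has canonical complement consisting of a closed properly convex domain $C_f \subset \RP^2$ together with a supporting line $\ell_f$, and the lift structure distinguishes a point $p_f \in \ell_f \cap \partial C_f$. Setting $\xi(f) = (p_f, \ell_f) \in \mathscr{F}_{1,2}$, I would establish that (i) $C_f$ is independent of $f$, yielding a single properly convex domain $C \subset \RP^2$ whose boundary contains every $p_f$; (ii) $\xi$ is continuous, injective, $\overline{\Gamma}$-equivariant and $\tau$-invariant, so it descends to a continuous injection $\xi\colon \partial\Gamma \to \mathscr{F}_{1,2}$ equivariant with respect to an induced representation $\rho\colon \Gamma \to \SL_3(\bbR)$; and (iii) $\xi$ satisfies the hyperconvex Frenet curve conditions. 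For (i), two leaves $f, f'$ whose closures share a leaf of $\widetilde{\mathcal{G}}$ must agree on the corresponding boundary arc because the associated segment lift lies on both $\partial \dev(f)$ and $\partial \dev(f')$; propagating this relation via the topological transitivity of $\overline{\Gamma}$ on $\widetilde{\partial\Gamma}^{(2)}_{[0]}$ forces a single global $C$. Claim (iii) follows from the general position inherited from transversality of neighboring concave domain lifts, after which the Labourie--Guichard theorem identifies $\rho$ as Hitchin.

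Once $\xi$ is identified as the Frenet curve of $\rho$, I would identify $\dev$ with one of $\devtr_\rho, \devtap_\rho, \devtam_\rho$ up to foliated equivalence. The key observation is that for a leaf $g = g_{xz}$ of $\widetilde{\mathcal{G}}$, the image $\dev(g)$ is a segment lift whose underlying projective line passes through both $p_{f_x}$ and $p_{f_z}$, and therefore must equal one of $x^1 + z^1$, $x^2$, or $z^2$. Connectedness of the leaf space of $\widetilde{\mathcal{G}}$ and continuity force the same case of this trichotomy on every leaf, picking out one of the three model developing maps; the foliation data then determines $\dev$ pointwise through $\dev(x,y,z) \in \dev(f_x) \cap \dev(g_{xz}) \cap \dev(f_y)$, which a short computation identifies with the model value. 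Uniqueness among the three cases follows because $\devtr_\rho, \devtap_\rho, \devtam_\rho$ produce pairwise distinct segment behaviors on geodesic leaves.

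The hardest step will be proving that the convex domains $C_f$ extracted from individual concave leaves all coincide with a single global $C \subset \RP^2$. This is the analogue in our setting of Guichard--Wienhard's global convexity argument, and its difficulty is aggravated by the fact that $\dev(\widetilde{\partial\Gamma^{(3)+}})$ is itself not convex, so one cannot apply a direct Hahn--Banach-style argument on the image. Instead the argument must exploit how segment leaves stitch neighboring concave domains together along common boundary arcs, with the dynamical input of topological transitivity and the density of translation-$0$ fixed points providing the mechanism for promoting local boundary identifications to a single global convex body.
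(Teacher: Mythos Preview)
Your outline has two structural gaps that would block the argument as written.

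First, you assert that the concave domain lift of a leaf $f$ distinguishes a point $p_f \in \ell_f \cap \partial C_f$. The definition of a concave domain lift only guarantees $p_f \in \RP^2 - \Omega_f = \overline{C_f} \cup \ell_f$; it does \emph{not} place $p_f$ on the supporting line $\ell_f$. In the paper this containment ($\xi^1(x) \in \xi^2(x)$) is established quite late, only after one knows that holonomies of translation-zero elements are loxodromic with controlled eigenline placement and that $\xi^1$ is not contained in a line. Treating it as given short-circuits most of the analytic work.

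Second, your mechanism for showing the convex domains $C_f$ coincide does not function. A leaf $g_{xz}$ of $\widetilde{\mathcal G}$ is contained in \emph{exactly one} leaf $f_x$ of $\widetilde{\mathcal F}$, and its image $\dev(g_{xz})$ lies in the \emph{interior} of the concave region $B_x$, not on $\partial C_x$. So no segment lift sits on the boundary of two concave domain lifts simultaneously, and there is no ``shared boundary arc'' to propagate via transitivity. Relatedly, your claim that the projective line underlying $\dev(g_{xz})$ passes through both $p_{f_x}$ and $p_{f_z}$ is false in the tangent models: for $\Phitap$ the underlying line is $\xi^2(z)$, which contains $\xi^1(z)$ but not $\xi^1(x)$ (the lift is \emph{about} $\xi^1(x)$, meaning the line entries of the flags pass through it, which is a different statement). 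The correct dichotomy for $\mathcal{L}(x,z)$ is $\xi^1(x)\oplus\xi^1(z)$ versus $\xi^2(z)$; the option $\xi^2(x)$ is ruled out because $\mathcal{S}(x,z) \subset B_x$ while $\xi^2(x) \cap B_x = \emptyset$.

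The paper's architecture is essentially inverted from yours: it first extracts the candidate maps $\xi^1, \xi^2$ and the line map $\mathcal{L}$, then uses dynamics of translation-zero elements to prove $\hol(\gamma)$ is loxodromic with eigenlines tied to $\xi^1$, then (the genuinely hard step) rules out $\xi^1(\partial\Gamma)$ lying in a line via a semicontinuity-of-complements argument that must obstruct a Barbot-like triangular configuration. Only after this does $\hol(\tau) = e$ follow, and Choi--Goldman then gives Hitchin holonomy; the identification $C_x = \Omega$ for all $x$ is a relatively easy consequence at that stage. You have correctly located the difficulty as proving a global convex object exists, but the route to it goes through eigenvalue and non-degeneracy analysis rather than through stitching boundary arcs.
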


This implies that every concave foliated flag structure has holonomy $\hol$ with $\ker q_\Gamma = \ker \hol$, and so induces a homomorphism $\hol_* \colon \Gamma \to \text{SL}_3(\bbR)$.
We emphasize that we do not assume that the flag structure on $\mathrm{T}^1S$ is virtually Kleinian (i.e. a finite-sheeted cover of the quotient of a domain by a properly discontinuous cocompact action), but this is a consequence of Theorem \ref{thm-technical-main}. 
Similarly, we do not assume that the developing image is foliated by the image of the weakly stable foliation (which anyway only holds after passing to a suitable double cover).

Let $\mathfrak{X}(\Gamma, \text{SL}_3(\bbR))$ be the $\text{SL}_3(\bbR)$ character variety of $\Gamma$.
Taking conjugacy classes of the induced homomorphisms $\hol_*$ produces a map $\Hol_* \colon \mathscr{D}^{\text{cff}}_{\mathscr{F}_{1,2}}(\text{T}^1S) \to \mathfrak{X}(\Gamma, \text{SL}_3(\bbR))$ that we call the \textit{holonomy map}.

\begin{theorem}[Moduli Space Maps]\label{thm-moduli-maps}
    $\mathscr{D}_{\mathscr{F}_{1,2}}^{{\rm{cff}}}(\mathrm{T}^1S)$ has three connected components.
    The holonomy map $\Hol_{*}: \mathscr{D}_{\mathscr{F}_{1,2}}^{{\rm{cff}}}(\mathrm{T}^1S) \to \mathfrak{X}(\Gamma, {\rm{SL}}_3(\bbR))$ restricted to any connected component of $\mathscr{D}_{\mathscr{F}_{1,2}}^{{\rm{cff}}}(\mathrm{T}^1S)$ is a homeomorphism onto ${\rm{Hit}}_3(S)$.
\end{theorem}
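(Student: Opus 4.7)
The plan is to parameterize $\mathscr{D}^{\mathrm{cff}}_{\mathscr{F}_{1,2}}(\mathrm{T}^1S)$ by three copies of $\Hit_3(S)$, one per developing map construction of \S \ref{ss-dev-maps}. For each type $\tau \in \{\mathrm{tr}, \mathrm{ta}{+}, \mathrm{ta}{-}\}$ define
\[
\Psi^\tau \colon \Hit_3(S) \to \mathscr{D}^{\mathrm{cff}}_{\mathscr{F}_{1,2}}(\mathrm{T}^1S), \qquad \Psi^{\mathrm{tr}}(\rho) = [\devtr_\rho, \rho \circ q_\Gamma, \mathcal{F}, \mathcal{G}],
\]
and analogously for $\Psi^{\mathrm{ta}{+}}$ and $\Psi^{\mathrm{ta}{-}}$ using the model structures of Example \ref{lemma-affirmatively-flag-foliated}. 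The rigidity theorem (Theorem \ref{thm-technical-main}) immediately gives a set-theoretic disjoint union
$\mathscr{D}^{\mathrm{cff}}_{\mathscr{F}_{1,2}}(\mathrm{T}^1S) = \mathcal{A}_{\mathrm{tr}} \sqcup \mathcal{A}_{\mathrm{ta}{+}} \sqcup \mathcal{A}_{\mathrm{ta}{-}}$ with $\mathcal{A}_\tau := \mathrm{image}(\Psi^\tau)$, and each $\Psi^\tau$ is a bijection onto $\mathcal{A}_\tau$. Everything that remains is topological.

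Next I would verify that each $\Psi^\tau$ is continuous. The limit map $\xi_\rho$ of a Hitchin representation depends continuously on $\rho$ uniformly on $\partial \Gamma$, and the formulas \eqref{dev-tr}, \eqref{dev-tan+}, \eqref{dev-3-inv} express each $\Phi_\rho^\tau$ as a continuous function of $\xi_\rho$. This yields continuity of the developing map in the compact-open topology, while $\rho \mapsto \rho \circ q_\Gamma$ is obviously continuous. Passing to the quotient gives continuity of $\Psi^\tau$. Since $\Hol_* \circ \Psi^\tau$ is the identity on $\Hit_3(S)$ and $\Hol_*$ is continuous on $\mathscr{D}^{\mathrm{cff}}_{\mathscr{F}_{1,2}}(\mathrm{T}^1S)$ by the construction of the quotient topology, each $\Psi^\tau$ is a homeomorphism onto $\mathcal{A}_\tau$ equipped with the subspace topology. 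In particular each $\mathcal{A}_\tau$ is connected, being homeomorphic to the connected (in fact contractible) space $\Hit_3(S)$.

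The main obstacle is showing that each $\mathcal{A}_\tau$ is \emph{open} in $\mathscr{D}^{\mathrm{cff}}_{\mathscr{F}_{1,2}}(\mathrm{T}^1S)$, which is what is needed to identify the $\mathcal{A}_\tau$ with its connected components. My approach is sequential. Suppose $\sigma_n \to \sigma_0$ with $\sigma_0 = \Psi^{\tau_0}(\rho_0)$. By rigidity write $\sigma_n = \Psi^{\tau_n}(\rho_n)$; continuity of $\Hol_*$ forces $\rho_n \to \rho_0$ in $\Hit_3(S)$. After passing to a subsequence, the discrete index $\tau_n = \tau'$ is constant, and continuity of $\Psi^{\tau'}$ yields $\sigma_n = \Psi^{\tau'}(\rho_n) \to \Psi^{\tau'}(\rho_0)$. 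Assuming $\mathscr{D}^{\mathrm{cff}}_{\mathscr{F}_{1,2}}(\mathrm{T}^1S)$ is Hausdorff, the limit is unique, so $\Psi^{\tau'}(\rho_0) = \sigma_0 = \Psi^{\tau_0}(\rho_0)$, and then the uniqueness clause of Theorem \ref{thm-technical-main} forces $\tau' = \tau_0$. Thus $\tau_n = \tau_0$ eventually, so $\mathcal{A}_{\tau_0}$ is sequentially open; first countability of the topology, inherited from the compact-open topology on developing pairs, promotes this to openness. The Hausdorff property itself needs a short verification, amounting to separating two inequivalent developing pairs by compact-open neighborhoods after accounting for the $\mathrm{SL}_3(\bbR)$-conjugation and isotopy ambiguity; I expect this to be routine but it is the one input beyond the rigidity result that is required.

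Once each $\mathcal{A}_\tau$ is clopen and connected, the three pieces are precisely the connected components of $\mathscr{D}^{\mathrm{cff}}_{\mathscr{F}_{1,2}}(\mathrm{T}^1S)$. The restriction of $\Hol_*$ to each is the continuous inverse $(\Psi^\tau)^{-1}$, delivering the claimed homeomorphism onto $\Hit_3(S)$.
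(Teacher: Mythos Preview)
Your overall architecture matches the paper's: define sections $\Psi^\tau$ from $\Hit_3(S)$, invoke Theorem~\ref{thm-technical-main} to see that $\mathscr{D}^{\mathrm{cff}}_{\mathscr{F}_{1,2}}(\mathrm{T}^1S)$ is set-theoretically the disjoint union of their images, and check that $\Hol_*$ and $\Psi^\tau$ are continuous mutual inverses on each piece. The paper carries this out in the ``Conclusion'' subsection with the final unnumbered lemma.

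Where you diverge is in how you separate the three pieces topologically. You argue sequentially and are forced to assume that $\mathscr{D}^{\mathrm{cff}}_{\mathscr{F}_{1,2}}(\mathrm{T}^1S)$ is Hausdorff and first countable; you correctly flag this as the one genuine extra input, and for a quotient by both $\SL_3(\bbR)$-conjugation and foliation-preserving isotopies it is not obviously routine. The paper avoids this entirely by using an \emph{intrinsic} type invariant. Lemma~\ref{lemma-L-dichotomy} shows that every concave foliated structure satisfies exactly one of three geometric conditions on the lines $\mathcal{L}(x,z)$ (chord versus tangent, and in the tangent case which component contains $\mathcal{S}(x,z)$), and Lemma~\ref{lemma-distinguish-types} shows these conditions are invariant under foliated equivalence. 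Since $\xi^1$, $\xi^2$, and $\mathcal{L}$ depend continuously on the developing map and the trichotomy conditions are mutually exclusive open conditions on developing pairs (a chord through a strictly convex domain stays a chord under perturbation, etc.), their preimages upstairs are open and saturated, hence the pieces $\mathcal{C}^{\mathrm{tr}}, \mathcal{C}^{\mathrm{ta},+}, \mathcal{C}^{\mathrm{ta},-}$ are open in the quotient without any appeal to Hausdorffness or uniqueness of limits.

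In short: your proof is fine provided you can verify Hausdorffness, but the paper's route via the geometric type trichotomy buys you openness of the components for free and sidesteps that verification altogether. If you want to tighten your argument, the cleanest fix is to replace your sequential openness step with the observation that ``type'' is an open, equivalence-invariant condition on developing pairs.
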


\subsubsection{Maps to Standard Deformation Spaces}\label{sss-forgetting-foliations}
As mentioned in the introduction, though our moduli space $\mathscr{D}_{\mathscr{F}_{1,2}}^{\mathrm{cff}}(\mathrm{T}^1S)$ is defined in terms of a refinement of the standard notion of $(\SL_3(\bbR), \mathscr{F}_{1,2})$-equivalence adapted to our foliations, its connected components do include into $\mathscr{D}_{\mathscr{F}_{1,2}}(\mathrm{T}^1S)$ as a consequence of our main results.
We describe this circle of ideas here.

Let $S$ be a fixed topological surface.
The topological manifold given by $\Omega_\rho^2/\rho(\Gamma)$ for a Hitchin representation $\rho \in \Hit_3(S)$ can be seen to be homeomorphic to the projective tangent bundle $\bbP(\mathrm{T}^1S)$.
What matters to us is that its homeomorphism type is independent of $\rho$.
Because of Proposition \ref{prop-relevant-dev-coverings} and Theorem \ref{thm-technical-main}, every concave foliated flag structure $(A, \mathcal{F}, \mathcal{G})$ on $\mathrm{T}^1S$ has its associated $(\SL_3(\bbR), \mathscr{F}_{1,2})$-structure on $\mathrm{T}^1S$ arise as the pull-back of the $(\SL_3(\bbR), \mathscr{F}_{1,2})$-structure $B(A)$ on $\bbP(\mathrm{T}^1S)$ induced from the quotient $\Omega_\rho^2 \to \Omega_\rho^2/\rho(\Gamma)$ by a two-sheeted covering, which has constant isotopy class on each connected component of $\mathscr{D}_{\mathscr{F}_{1,2}}^{{\rm{cff}}}(\mathrm{T}^1S)$.
This induces a mapping, that we shall denote by $\Pi$, from $\mathscr{D}_{\mathscr{F}_{1,2}}^{\mathrm{cff}}(\mathrm{T}^1S)$ to the deformation space $\mathscr{D}_{\mathscr{F}_{1,2}}(\bbP(\mathrm{T}^1S))$ of (marked) $(\SL_3(\bbR), \mathscr{F}_{1,2})$-structures on $\bbP(\mathrm{T}^1S)$.
The map $\Pi$ decomposes as a composition of two maps, namely $\Pi_1 : \mathscr{D}_{\mathscr{F}_{1,2}}^{{\rm{cff}}}(\mathrm{T}^1S) \to \mathscr{D}_{\mathscr{F}_{1,2}}(\mathrm{T}^1S)$ given by forgetting foliations, and $\Pi_2: \Pi_1(\mathscr{D}_{\mathscr{F}_{1,2}}^{{\rm{cff}}}(\mathrm{T}^1S)) \to \mathscr{D}_{\mathscr{F}_{1,2}}(\bbP(\mathrm{T}^1S))$ given by $A \to B(A)$.

There are maps from the deformation spaces $\mathscr{D}_{\mathscr{F}_{1,2}}^{\mathrm{cff}}(\mathrm{T}^1S)$ and $\mathscr{D}_{\mathscr{F}_{1,2}}(\bbP(\mathrm{T}^1S))$ to the character varieties $\mathfrak{X}(\overline{\Gamma}, \SL_3(\bbR))$ and $\mathfrak{X}(\pi_1(\bbP(\mathrm{T}^1S)), \SL_3(\bbR))$ given by taking holonomies.
Guichard-Wienhard show in \cite[Theorem 11.5]{guichard2012anosov} that all $B \in \mathscr{D}_{\mathscr{F}_{1,2}}(\mathrm{T}^1S)$ in the connected component containing $\Pi(\mathscr{D}_{\mathscr{F}_{1,2}}^{\mathrm{cff}}(\mathrm{T}^1S))$ are Kleinian, arising from quotients $\Omega_\rho^2/\rho(\Gamma)$ for $\rho \colon \Gamma \to\SL_3(\bbR)$ Hitchin.
All such holonomies factor through quotient maps to $\Gamma$ from the geometric structures arising from quotients of the domain $\Omega_\rho^2$ by a representation of $\Gamma$.
This induces maps, that we denote by $\Hol_*$, from $\mathscr{D}_{\mathscr{F}_{1,2}}(\mathrm{T}^1S)$ and $\mathscr{D}_{\mathscr{F}_{1,2}}(\bbP(\mathrm{T}^1S))$ to $\mathfrak{X}(\Gamma, \SL_3(\bbR))$.
Theorem \ref{thm-moduli-maps} shows that the restriction of $\Hol_* \colon \mathscr{D}^{\mathrm{cff}}_{\mathscr{F}_{1,2}}(\mathrm{T}^1S) \to \mathfrak{X}(\Gamma, \SL_3(\bbR))$ to any connected component $\mathscr{C}$ is a homeomorphism onto $\Hit_3(S)$.
Guichard-Wienhard prove \cite[Theorem 11.5]{guichard2012anosov} that the restriction of $\Hol_* \colon \mathscr{D}_{\mathscr{F}_{1,2}}(\bbP(\mathrm{T}^1S))$ to any connected component of $\mathscr{D}_{\mathscr{F}_{1,2}}(\bbP(\mathrm{T}^1S))$ intersecting the image of $\Pi$ is a homeomorphism onto $\Hit_3(S)$.

In other words, from these results and the compatibility of the involved constructions, the following diagram commutes and both maps $\Hol_*$ are homeomorphisms of the connected components we described above onto $\Hit_3(S)$:

\begin{center}
\begin{tikzcd}
    \mathscr{D}_{\mathscr{F}_{1,2}}^{\mathrm{cff}}(\mathrm{T}^1(S)) \arrow[r, "\Pi_1"] \arrow[dr,"\Hol_*"] & \Pi_1(\mathscr{D}_{\mathscr{F}_{1,2}}^{\mathrm{cff}}(\mathrm{T}^1S)) \arrow[r,"\Pi_2"] & \mathscr{D}_{\mathscr{F}_{1,2}}(\bbP(\mathrm{T}^1S)) \arrow[dl, "\Hol_*"]  \\
    & \mathfrak{X}(\Gamma, \SL_3(\bbR)) &
\end{tikzcd}    
\end{center}

From commutativity of the diagram and Theorem \ref{thm-moduli-maps}, we conclude that the restriction of $\Pi_1$ to each connected component of $\mathscr{D}_{\mathscr{F}_{1,2}}^{\mathrm{cff}}(\mathrm{T}^1S)$ is injective.
Because of commutativity and that both maps to $\mathfrak{X}(\Gamma, \SL_3(\bbR))$ are homeomorphisms of appropriate connected components, we conclude that for each connected component $\mathscr{C}$ of $\mathscr{D}_{\mathscr{F}_{1,2}}^{\mathrm{cff}}(\mathrm{T}^1S)$ that $\Pi(\mathscr{C})$ is a connected component of $\mathscr{D}_{\mathscr{F}_{1,2}}(\bbP(\mathrm{T}^1S))$.

Collecting this:

\begin{theorem}[{Forgetting Foliations}]
    The map $\Pi_1 \colon \mathscr{D}_{\mathscr{F}_{1,2}}^{\mathrm{cff}}(\mathrm{T}^1S) \to \mathscr{D}_{\mathscr{F}_{1,2}}(\mathrm{T}^1S)$ obtained by forgetting foliations restricts to an injection on each connected component $\mathscr{C}$ of $\mathscr{D}_{\mathscr{F}_{1,2}}^{\mathrm{cff}}(\mathrm{T}^1S)$.

    The natural mapping $\Pi \colon \mathscr{D}_{\mathscr{F}_{1,2}}^{\mathrm{cff}}(\mathrm{T}^1S) \to \mathscr{D}_{\mathscr{F}_{1,2}}(\bbP(\mathrm{T}^1S))$ described above restricts to a homeomorphism on each connected component $\mathscr{C}$ of $\mathscr{D}_{\mathscr{F}_{1,2}}^{\mathrm{cff}}(\mathrm{T}^1S)$.
\end{theorem}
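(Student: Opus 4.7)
My approach is a diagram chase that extracts both conclusions directly from the commutative square displayed just above the theorem, using two facts already in hand: Theorem \ref{thm-moduli-maps} (Moduli Space Maps), which asserts that $\Hol_*$ is a homeomorphism from each connected component $\mathscr{C}$ of $\mathscr{D}^{\mathrm{cff}}_{\mathscr{F}_{1,2}}(\mathrm{T}^1S)$ onto $\Hit_3(S)$, and Guichard--Wienhard's \cite[Theorem 11.5]{guichard2012anosov}, which is the analogous homeomorphism statement for each connected component of $\mathscr{D}_{\mathscr{F}_{1,2}}(\bbP(\mathrm{T}^1S))$ meeting the image of $\Pi$.

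For the injectivity of $\Pi_1$ on a connected component $\mathscr{C}$, I would read off from commutativity the identity $\Hol_*|_\mathscr{C} = \Hol_* \circ \Pi_2 \circ \Pi_1|_\mathscr{C}$. Since $\Hol_*|_\mathscr{C}$ is injective by Theorem \ref{thm-moduli-maps}, the leftmost factor $\Pi_1|_\mathscr{C}$ must be injective as well.

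For the homeomorphism claim, I would fix $\mathscr{C}$ and let $\mathscr{C}' \subset \mathscr{D}_{\mathscr{F}_{1,2}}(\bbP(\mathrm{T}^1S))$ denote the connected component containing the connected set $\Pi(\mathscr{C})$. The crux is that $\Pi(\mathscr{C}) = \mathscr{C}'$: commutativity yields $\Hol_*(\Pi(\mathscr{C})) = \Hol_*(\mathscr{C}) = \Hit_3(S)$, and because Guichard--Wienhard's theorem makes $\Hol_*|_{\mathscr{C}'}$ a bijection onto $\Hit_3(S)$, the inclusion $\Pi(\mathscr{C}) \subseteq \mathscr{C}'$ must be an equality. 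Consequently $\Pi|_\mathscr{C}$ factors as $(\Hol_*|_{\mathscr{C}'})^{-1} \circ \Hol_*|_\mathscr{C}$, a composition of two homeomorphisms.

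I do not anticipate a real obstacle; the content of the statement is essentially the bookkeeping that the commutative diagram delivers. The one auxiliary point to verify, which is implicit in the discussion preceding the theorem, is that $\Pi_2 \colon A \mapsto B(A)$ is well-defined and continuous. This follows from Proposition \ref{prop-relevant-dev-coverings} together with the rigidity statement Theorem \ref{thm-technical-main}, which together identify the two-sheeted covering $\mathrm{T}^1S \to \bbP(\mathrm{T}^1S)$ up to isotopy on each connected component of $\mathscr{D}^{\mathrm{cff}}_{\mathscr{F}_{1,2}}(\mathrm{T}^1S)$, so that the induced structure on the quotient depends continuously on the data upstairs.
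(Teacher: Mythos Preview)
Your proposal is correct and follows essentially the same diagram chase as the paper: both deduce injectivity of $\Pi_1|_\mathscr{C}$ from the factorization $\Hol_*|_\mathscr{C} = \Hol_* \circ \Pi$ and Theorem \ref{thm-moduli-maps}, and both obtain the homeomorphism statement by combining Theorem \ref{thm-moduli-maps} with \cite[Theorem 11.5]{guichard2012anosov} through the commutative diagram. Your write-up is slightly more explicit than the paper's in exhibiting $\Pi|_\mathscr{C} = (\Hol_*|_{\mathscr{C}'})^{-1}\circ \Hol_*|_\mathscr{C}$ as a composition of homeomorphisms, but this is the same argument.
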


\subsection{Some Further Remarks}\label{ss-side-remarks}

We conclude with some remarks surrounding other directions that could be taken to develop similar theories of $\mathscr{F}_{1,2}$-structures on $\mathrm{T}^1S$ with Hitchin holonomy to that of this paper.

\subsubsection{Other Components} First, there are analogues of $\Phitr, \Phitap,$ and $\Phitam$ for the other components $\Omega_\rho^1$ and $\Omega_\rho^3$ of the domain of discontinuity for a $\text{SL}_3(\bbR)$ Hitchin representation.
Below we give four such maps, the first two of which are to $\Omega_\rho^1$ and the second two of which are to $\Omega_\rho^3$. See Figure \ref{fig-other-devs}.
\begin{align}
    \Psi_1(x,y,z) &= ((x^1 + z^1) \cap (y^1 + (x^2 \cap z^2)), y^1 + (x^2 \cap z^2)), \label{dev-first} \\
    \Psi_2(x,y,z) &= ((x^1 + z^1) \cap (y^1 + (x^2 \cap z^2)), x^1 + z^1), \label{dev-2} \\
    \Psi_3(x,y,z) &= (x^2 \cap z^2, (x^2 \cap z^2) + (y^2 \cap (x^1 + z^1))) \label{dev-5}, \\ 
    \Psi_4(x,y,z) &= ((x^1 + z^1) \cap y^2, ((x^1 + z^1) \cap y^2) + (x^2 \cap z^2)) \label{dev-last}.
\end{align}
Similar proofs to that of Proposition \ref{prop-relevant-dev-coverings} show that each of these maps are two-sheeted coverings of the relevant components of $\Omega_\rho$.
These maps also map some canonical foliations of $\partial \Gamma^{(3)+}$ arising from its product structure to distinguished subsets of $\mathscr{F}_{1,2}$.
It seems likely that similar theorems to those proved in the present paper will also hold for moduli spaces of geometric structures that formalize the features of these canonical maps.

\begin{figure}
    \centering
    \includegraphics[scale=0.4]{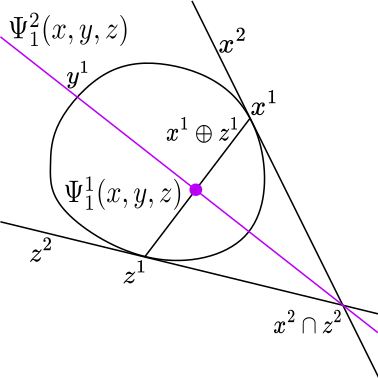} 
    \includegraphics[scale=0.4]{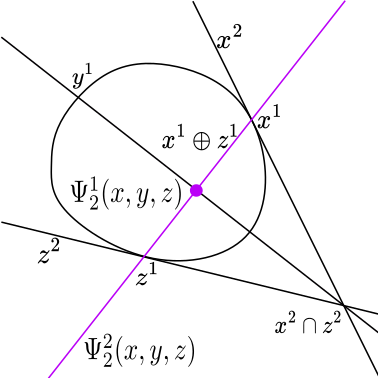} \includegraphics[scale=0.4]{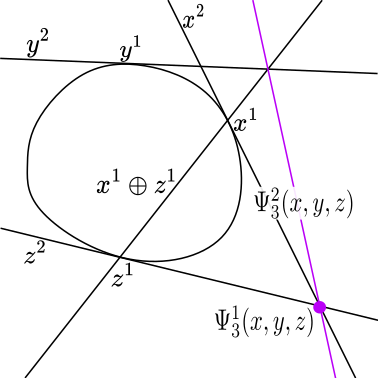} \includegraphics[scale=0.4]{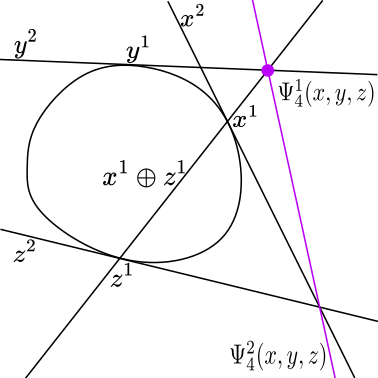}
    \caption{Four further equivariant two-sheeted coverings from $\partial \Gamma^{(3)+}$ to connected components of $\Omega_\rho$.
    The left two maps have image $\Omega_\rho^1$ and the right two maps have image $\Omega_\rho^3$.}
\label{fig-other-devs}
\end{figure}

\subsubsection{Duality and Concave Flag Lifts, and Oriented Flag Manifolds}
We remark that lifts of concave domains in $\RP^2$ about a point are \textit{not} invariant under projective duality.
Indeed, the dual of a concave flag lift $\mathcal{F}_x$ determined by a properly convex, strictly convex, $C^1$ domain $\mathcal{C}$, a boundary point $x \in \partial \mathcal{C}$ with a tangent line $l$ at $x$, consists of pairs of points $(y,m)$ in the full flag domain of the dual $(\bbR^3)^*$ so that $y \in x^{\perp}$ and $m$ meets the dual domain $\mathcal{C}^*$.
Of course, dual theorems to Theorems \ref{thm-technical-main} and \ref{thm-moduli-maps} concerning these objects may be deduced from Theorems \ref{thm-moduli-maps} and \ref{thm-technical-main} by applying projective duality.

We next remark that each of $\Phitr, \Phitap,$ and $\Phitam$ have natural lifts to the manifold $\mathscr{F}_{1,2}^+$ of partially oriented flags in $\mathscr{F}_{1,2}$, in which $(x,l) \in \mathscr{F}_{1,2}^+$ has $l$ given an orientation and $x$ left unoriented.
In each case, such lifts are given by specifying that the orientation so that moving towards $x^1$ within $\Gtr_{xz}, \Gtam_{xz}$ or $\Gtap_{xz}$ is forward.
Such maps give homeomorphisms of $\partial \Gamma^{(3)+}$ onto connected components of a domain of discontinuity, instead of two-sheeted coverings as occurs in $\mathscr{F}_{1,2}$.

\section{Geodesic Flow Reparameterizations}\label{s-geo-flow-reparameterizations}

The developing maps we consider in \S \ref{ss-dev-maps}, as well as those in \cite{guichard2008convex} and \cite{nolte2024foliations}, map leaves of the geodesic and weakly stable foliations of $\mathrm{T}^1\widetilde{S}$ to geometrically distinguished objects in $\RP^{n-1}$ or $\mathscr{F}_{1,n-1}$.
In particular, each of those developing maps is a \emph{geodesic realization}: the leaves of the geodesic folation map to proper line segments in $\RP^{n-1}$ (potentially after projection $\mathscr{F}_{1,n-1} \to \RP^{n-1}$). 
We observe in this section that each of $\Gtr, \Gtap$, and $\Gtam$ in fact consist of the flow-lines of a naturally defined flow.
Each such flow is a \emph{refraction flow} in the sense of \cite{sambarino2024report}, i.e.\ its periods are the marked length spectrum of the Hitchin representation $\rho$ with respect to a certain root (see \S \ref{sss-lengths}).

Our techniques also apply in higher dimensions.
For $n >3$ and any positive root $\alpha$ of $\mathfrak{sl}_n(\bbR)$, we produce a H\"older continuous equivariant geodesic realization $\Phi \colon \mathrm{T}^1\widetilde{S} \to \RP^{n-1}$, along with a naturally defined flow $\phi_t^\alpha \colon \mathrm{T}^1S \to \mathrm{T}^1S$ whose periods are the $(\alpha,\rho)$-marked length spectrum.
Our geodesic realizations are locally injective except for the first and last simple roots.
Our setting allows us to study the dynamics transverse to the flow-lines for $n = 3$ (Proposition \ref{prop-convergence-within-stable}) and demonstrate $C^{1+\alpha}$-regularity for the Hilbert length flow for general $n$.
When $n = 3$, the regularity of the Hilbert length flow is known from work of Benoist \cite{benoist2004convexesI}.

The structure of this section is as follows.
We begin by recalling the standard background to our construction in \S \ref{ss-flow-reminders}.
We then introduce our construction in the case of the second simple root for $\mathfrak{sl}_3(\bbR)$ and study some interesting features of this case in \S \ref{ss-flows-tan-type} and \S \ref{ss-flow-conv-within-leaves}.
We then give the general formulation of our construction in \S \ref{sss-other-roots} and discuss exceptional regularity features of the flows for highest roots in \S \ref{sss-exceptional-regularity}.

\subsection{Reminders}\label{ss-flow-reminders}
We collect some reminders from projective geometry and Lie theory for this section.

\subsubsection{Cross Ratios and Hilbert Length} In the following, for four points on a projective line, let $(a,b;p,q)$ be the \textit{cross-ratio}
\begin{align} (a,b;p,q) = \frac{(q-a)(p-b)}{(p-a)(q-b)}.
\end{align}
The cross ratio is the fundamental invariant of a quadruple of points in a projective line.

We recall the classical \textit{Hilbert metric $d_{\Omega}$} on a properly convex domain $\Omega$ in $\RP^{n}$.
It is given as follows: to any points $p,q \in \Omega$, the line between $p$ and $q$ meets $\partial \Omega$ in two points $a, b$ by proper convexity.
Arrange the points so $(a,p,q,b)$ is circularly ordered.
Then $d_{\Omega}(p,q) = \log |(a,b;p,q)|$.
This recovers the hyperbolic metric when $\Omega$ is an ellipse, but is in general a Finsler metric that is invariant under projective transformations.
The Hilbert metric has the property that its restriction to the intersection of a projective line with $\Omega$ is isometric to the euclidean metric on $\bbR$.

\subsubsection{Length Functionals Associated to Representations}\label{sss-lengths}
Classically, the length of the geodesic in the free homotopy class of a curve $\gamma$ on the hyperbolic surface corresponding to a Fuchsian representation $\rho$ may be seen algebraically as follows.
After conjugation and potential negation, the matrix $\rho(\gamma)$ may be diagonalized with positive eigenvalues. Writing $\rho(\gamma) \sim \mathrm{diag}(e^{\ell_\rho(\gamma)/2}, e^{-\ell_\rho(\gamma)/2})$, the value $\ell_\rho(\gamma)$ is exactly the length of $\gamma$ on $\bbH^2/\rho(\Gamma)$.

Working with Lie groups of real rank not $1$, the natural analogues of length are no longer valued in $\bbR^+$.
To state the construction for $\PSL_n(\bbR)$, let $\mathfrak{a} = \{ (x_1, \dots , x_{n}) \mid \sum_{i=1}^n x_i = 0\}$ be a standard model for the Cartan subalgebra of $\mathfrak{sl}_n(\bbR)$ and let $\mathfrak{a}^+ = \{(x_1, \dots, x_n) \in \mathfrak{a} \mid x_1 \geq \cdots \geq x_n \}$ be a standard closed Weyl chamber.
In this model, the simple roots are given by $\alpha_i (x_1, \dots, x_n) = x_i - x_{i+1}$ ($i=1,\dots,n-1$) and the positive roots are given by $\alpha_{ij} (x_1, \dots, x_n) = x_i - x_j$ ($1 \leq i < j \leq n$).

Recall that for $g \in \PSL_n(\bbR)$ that the \textit{Jordan projection} $\lambda(g) \in \mathfrak{a}^+$ is obtained by listing the absolute values $|\lambda_1|, \dots, |\lambda_n|$ of the moduli of the generalized eigenvalues of $g$ with multiplicity in non-increasing order then defining $\ell_i = \log |\lambda_i|$ ($i=1,\dots,n$).
Then $\lambda(g) = (\ell_1, \dots, \ell_n)$.

A scalar measurement generalizing length in the Fuchsian setting may then be obtained by a choice of linear functional $\varphi \in \mathfrak{a}^*$ whose restriction to $\mathfrak{a}^+$ is non-negative.
Namely, for any such $\varphi$ and representation $\rho \colon \Gamma \to \PSL_n(\bbR)$ the \textit{$(\varphi,\rho)$-length} of $\gamma \in \Gamma$ is $\ell_\rho^\varphi(\gamma) = \varphi(\lambda(\rho(\gamma)))$, see for instance \cite{carvajales2022thurstonsasymmetric}.

\subsubsection{Translation Cocycles and Liv\v sic Cohomology}
We recall the relevant background on flows on metric spaces that is used to connect our construction to the main-line dynamical theory of Hitchin representations.
We largely follow \cite[\S 2]{sambarino2024report} in our exposition here.

Let $X$ be a compact metric space and $\phi_t : X \to X$ $(t \in \bbR)$ be a continuous flow without fixed-points. For instance, $X = \mathrm{T}^1S$ and $\phi_t$ is the geodesic flow of a hyperbolic metric on $S$.

\begin{definition}[Translation Cocycles]
    A map $\kappa : X \times \bbR \to \bbR$ is a {\rm{translation cocycle}} if:
    \begin{enumerate}
        \item {\rm{(Cocycle Condition)}} For all $x \in X$ and pairs of real numbers $s,t$, $$ \kappa(x, s+ t) = \kappa(\phi_s(x),t) + \kappa(x,s),$$
        \item {\rm{(H\"older Regularity)}} There is an $\alpha > 0$ so that for all $t \in \bbR$ the map $\kappa(\cdot, t): X \to X$ is $\alpha$-H\"older.
        Furthermore, on any compact subset $K$ of $\bbR$ the maps $\kappa(\cdot, t)$ $(t \in K)$ are uniformly $\alpha$-H\"older.
    \end{enumerate}
\end{definition}

The standard equivalence relation on translation cocycles is:

\begin{definition}[Liv\v sic Cohomology]
    Two translation cocycles $\kappa_1$ and $\kappa_2$ for a flow $\phi_t$ are {\rm{Liv\v sic cohomologous}} if there is a continuous map $U: X \to \bbR$ so for all $t \in \bbR$ and $x \in X$,
    $$\kappa_1(x,t) - \kappa_2(x,t) = U(\phi_t(x)) - U(x). $$
\end{definition}

The basic cohomological invariant of a translation cocycle is its \textit{periods}, defined for a periodic orbit $\gamma$ of $\phi_t$ with $\phi_t$-period $\ell_0(\gamma)$ by $\ell_\kappa(\gamma) = \kappa(x, \ell_0(\gamma))$ for any $x \in \gamma$.
It is a theorem of Liv\v sic \cite{livsic1972cohomology} that two translation cocycles with equal periods are Liv\v sic cohomologous.

If a translation cocycle $\kappa$ satisfies the further requirement that $\kappa(x, \cdot): \bbR \to \bbR$ is an increasing homeomorphism for each $x \in X$, then there is a function $\alpha_\kappa : X \times \bbR \to \bbR $ defined by the requirement $\alpha_\kappa(x, \kappa(x,t)) = \kappa(x, \alpha(x,t)) = t.$
Given such a translation cocycle $\kappa$, the \textit{H\"older reparameterization} $\psi_t$ of $\phi_t$ associated to $\kappa$ is given by $\psi_t(x) = \phi_{\alpha(x,t)}(x)$ \cite[Def. 2.2]{sambarino2015orbital}.
It is H\"older continuous.

\subsection{The Tangent-Type Reparameterization}\label{ss-flows-tan-type} Let $\rho : \Gamma \to \SL_3(\bbR)$ be Hitchin.
Then $\Phitap$ maps leaves of $\overline{\mathcal{G}}$ into properly embedded projective line segment lifts in $\Omega_\rho^2$.
For such a leaf $g_{xz}$, the projection to $\RP^2$ of $\Phitap(g_{xz})$ has two \textit{distinct} points in its relative boundary: $x^2 \cap z^2$ and $z^1$.
We orient all leaves $g_{xz}$ of $\overline{\mathcal{G}}$ so that $\Phitap(p)$ moving towards $x^2$ corresponds to $p$ moving forward on $g_{xz}$.
See Figure \ref{fig-phi-ta}.

\begin{definition}
    For $(x, z) \in \partial \Gamma^{(2)}$ and $p_1, p_2 \in g_{xz}$ define $d_{\rho}^{\mathrm{ta}}(p_1, p_2) = \log \lvert(x^2 \cap z^2,z^1;p_1,p_2)\rvert.$
    We call $d_\rho^{\mathrm{ta}}$ the {\rm{tangent-type leafwise metric of $\rho$}}.
\end{definition}

\begin{remark}
   There are lines in $\RP^2$ whose intersection with a concave domain is a once-punctured projective line.
   So the restriction to individual leaves is necessary.
\end{remark}

\begin{figure}
    \centering
    \includegraphics[scale=0.55]{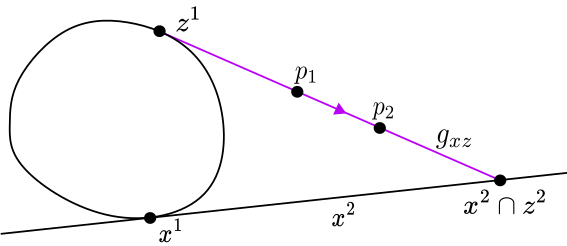}
    \caption{The segment $g_{xz}$ lies inside a projective line that meets $\partial \Omega_\rho^2$ in two distinct points, which allows for cross-ratios of points to be taken as illustrated. The tangent-type flow $\phi^{\mathrm{ta}}_{t}$ moves points within $g_{xz}$ forward by $t$ with respect to the tangent-type leafwise metric $d_\rho^{\rm ta}$.}
    \label{fig-phi-ta}
\end{figure}

Then $d^{\mathrm{ta}}_\rho$ defines a metric on each leaf $g_{xz}$ of the geodesic foliation $\overline{\mathcal{G}}$ that is isometric to the Euclidean metric on $\bbR$.
The leafwise metric $d^{\mathrm{ta}}_\rho(p_1,p_2)$ is invariant under $\rho(\Gamma)$ in the sense that if $d^{\mathrm{ta}}_\rho(p_1,p_2)$ is well-defined and $\gamma \in \Gamma,$ then $d^{\mathrm{ta}}_\rho(\rho(\gamma) p_1, \rho(\gamma) p_2)$ is well-defined and equal to $d^{\mathrm{ta}}_\rho(p_1,p_2)$.

\begin{example}\label{ex-reparam-fuchsian}
    For any $n \geq 2$ there is a unique conjugacy class of irreducible embeddings $\iota_n \colon \PSL_2(\bbR) \to \PSL_n(\bbR)$, see for instance \cite[\S 3]{guichard2008convex} for an explicit description.
    Recall that a representation $\Gamma \to \PSL_n(\bbR)$ is called {\rm{Fuchsian}} if it is of the form $\iota_n \circ \rho_2$ with $\rho_2 \colon \Gamma \to \PSL_2(\bbR)$ discrete and faithful and $\iota_n$ an irreducible embedding.
    
    When $\rho$ is Fuchsian, we will show for any $(x, z) \in \partial \Gamma^{(2)}$ and $p_1, p_2 \in g_{xz}$ that $d_{\rho}^{\mathrm{ta}}(p_1, p_2) = d_\bbH(\pi (p_1), \pi (p_2))$ where $\pi \colon \mathrm{T}^1\bbH \to \bbH$ is the projection and $d_{\bbH}$ is the hyperbolic distance.
    (We are implicitly using the identification of $\partial \Gamma^{(3)+}$ and $\mathrm{T}^1\bbH$ discussed above).
    
    To see this, translate $g_{xz}$ by a loxodromic element $g \in \PSL_2(\bbR)$ preserving $g_{xz}$ and translating by distance $t$, apply an irreducible embedding $\iota_3: \PSL_2(\bbR) \to \SL_3(\bbR)$ under which $\rho$ is equivariant, and compute the relevant cross-ratios.
    To begin, note that $\rho(g)$ is conjugate to $\mathrm{diag}(e^{t}, 1, e^{-t})$.
    Let $e_1, e_2, e_3$ be the eigenlines corresponding to eigenvalues in order of decreasing modulus.
    The line segment $\dev_{\rho}^{\mathrm{ta}}(g_{xz})$ has endpoints $[e_2]$ and $[e_3]$.
    In an affine chart for this projective line where $[e_3] = 0$ and $[e_2] = \infty$, the point $p = 1$ has $gp = e^t$.
    The desired claim now follows from the definition of $d_\rho^{\mathrm{ta}}$.
\end{example}

Next, for $p \in \partial \Gamma^{(3)+}$ with $p \in g_{xz}$ and $t \in \bbR$, define $\widetilde{\phi^{\mathrm{ta}}_t}(p) \in g_{xz}$ to be the unique point so that $d_\rho^{\mathrm{ta}}(\dev_\rho^{\mathrm{ta}}(p), \dev_\rho^{\mathrm{ta}}(\widetilde{\phi^{\mathrm{ta}}_t}(p)) = t$, and $\widetilde{\phi^{\mathrm{ta}}_t}(p)$ is in front of $p$ if and only if $t > 0$.
Then $\widetilde{\phi_t^{\mathrm{ta}}}$ is $\overline{\Gamma}$-invariant, and so descends to a homeomorphism $\phi_t^{\mathrm{ta}}$ of $\mathrm{T}^1S$ that preserves each leaf of $\mathcal{G}$.
We often abuse notation and denote both $\widetilde{\phi_t^\mathrm{ta}}$ and $\phi_t^{\mathrm{ta}}$ by $\phi_t^\mathrm{ta}$.

\begin{definition}
    The family $\{\phi^{\mathrm{ta}}_t\}_{t \in \bbR}$ is the \textit{tangent-type flow} of $\rho$. 
\end{definition}

Now fix a reference hyperbolic metric $g_{0}$ on $S$ corresponding to a Fuchsian representation $\rho_0$ and a corresponding model for $\partial \Gamma^{(3)+}$ as $\mathrm{T}^1\bbH$.
For $t \geq 0$ let $\{\phi_t^0\}_{t \in \bbR}$ be the geodesic flow of $g_0$, which is equal to the tangent type flow of $\rho_0$ because of Example \ref{ex-reparam-fuchsian}.

\begin{proposition}[Refraction Flow]\label{prop-flow}
    Let $\rho \colon \Gamma \to \mathrm{SL}_3(\bbR)$ be Hitchin. Then:
    \begin{enumerate}
        \item \label{claim-root-flow} The period of any periodic orbit $g_{\gamma^-\gamma^+}$ corresponding to $\gamma \in \Gamma$ under the tangent-type flow $\{\phi^\mathrm{ta}_t\}_{t\in\bbR}$ is $\ell_\rho^{\alpha_2}(\gamma)$,
        \item \label{claim-conjugate-holder} The flow $\phi_t^\alpha$ is a H\"older reparameterization of the geodesic flow $\phi^0_t$.
    \end{enumerate}
\end{proposition}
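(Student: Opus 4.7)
For (\ref{claim-root-flow}), the plan is to read the period directly off the Jordan form of $\rho(\gamma)$. Since $\rho$ is Hitchin, $\rho(\gamma) \in \SL_3(\bbR)$ is loxodromic with distinct positive eigenvalues $\lambda_1 > \lambda_2 > \lambda_3$, and combined with the North--South dynamics of $\gamma$ on $\partial\Gamma$ the hyperconvex Frenet condition forces $\xi^1(\gamma^+),\xi^1(\gamma^-)$ to be the $\lambda_1$- and $\lambda_3$-eigenlines and $\xi^2(\gamma^\pm)$ to be the respective two-dimensional eigenspaces, so that $\xi^2(\gamma^+) \cap \xi^2(\gamma^-)$ is the $\lambda_2$-eigenline. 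The segment $\Phitap(g_{\gamma^-\gamma^+})$ sits in the projective line $\xi^2(\gamma^-)$ with relative-boundary endpoints $\xi^2(\gamma^+) \cap \xi^2(\gamma^-)$ and $\xi^1(\gamma^-)$; in an affine chart on that line placing these endpoints at $\infty$ and $0$, the restriction of $\rho(\gamma)$ acts as multiplication by $\lambda_2/\lambda_3$. Substituting into the cross-ratio defining $d_\rho^{\mathrm{ta}}$ gives, for any $p \in g_{\gamma^-\gamma^+}$,
\[
d_\rho^{\mathrm{ta}}\bigl(\Phitap(p), \Phitap(\rho(\gamma) p)\bigr) = \log(\lambda_2/\lambda_3) = \alpha_{23}(\lambda(\rho(\gamma))) = \ell_\rho^{\alpha_2}(\gamma),
\]
which is the period of the orbit.

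For (\ref{claim-conjugate-holder}), the plan is to exhibit a H\"older translation cocycle $\kappa$ for the reference geodesic flow $\phi^0_t$ whose associated reparameterization is $\phi^{\mathrm{ta}}_t$. I would set $\kappa(p,t)$ equal to the signed tangent-type displacement from $p$ to $\phi^0_t(p)$, namely the logarithm of the cross-ratio $(\xi^2(x) \cap \xi^2(z), \xi^1(z); \Phitap(p), \Phitap(\phi^0_t(p)))$, where $(x,z)$ are the endpoints of the $\overline{\mathcal{G}}$-leaf through $p$, signed by the forward-orientation convention of \S\ref{ss-flows-tan-type}. The cocycle identity $\kappa(p, s+t) = \kappa(\phi^0_s(p), t) + \kappa(p,s)$ is immediate from multiplicativity of cross-ratios with two common endpoints, monotonicity of $t \mapsto \kappa(p,t)$ holds because $d_\rho^{\mathrm{ta}}$ is a genuine leafwise metric, and $\Gamma$-equivariance of $\Phitap$ together with projective invariance of cross-ratios lets $\kappa$ descend to $\mathrm{T}^1S$. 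By construction the reparameterization of $\phi^0_t$ associated with $\kappa$ coincides with $\phi^{\mathrm{ta}}_t$, so once $\kappa$ is shown to be H\"older the claim will follow.

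The remaining, and expected main, obstacle is H\"older regularity of $\kappa$. Here I plan to use Labourie's theorem that for Hitchin $\rho$ the Frenet curve $\xi\colon\partial\Gamma \to \mathscr{F}_{1,2}$ is H\"older continuous. Fixing a compact fundamental domain $K \subset \partial\Gamma^{(3)+}$ for $\Gamma$ and a short $t_0>0$, the cocycle identity reduces matters to estimating $\kappa$ on $K \times [-t_0, t_0]$. On this set all four cross-ratio inputs depend H\"older-continuously on $(p,t)$: the leaf endpoints $x,z \in \partial\Gamma$ vary smoothly with $p$, and both the static endpoints $\xi^1(z)$ and $\xi^2(x) \cap \xi^2(z)$ and the moving points $\Phitap(p), \Phitap(\phi^0_t(p))$ then inherit H\"older regularity through $\xi$. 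Combined with a uniform lower bound on the pairwise separations of these four inputs in the ambient projective line---which I expect to obtain by compactness of $K$ together with proper convexity of each segment $\Phitap(g)$ keeping $\Phitap(p)$ and $\Phitap(\phi^0_t(p))$ bounded away from the relative boundary---the cross-ratio becomes a real-analytic and locally Lipschitz function of its inputs on the relevant region, and the desired H\"older estimate for $\kappa$ follows.
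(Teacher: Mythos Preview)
Your proposal is correct and follows essentially the same approach as the paper: for (\ref{claim-root-flow}) both arguments diagonalize $\rho(\gamma)$, identify the segment endpoints with the middle and bottom eigenlines, and read off the period from the cross-ratio in an affine chart; for (\ref{claim-conjugate-holder}) the paper defers to its general Theorem~\ref{thm-genl-flow}, where the identical cocycle $\kappa(p,t)=d_\rho^{\mathrm{ta}}(\phi^0_t(\tilde p),\tilde p)$ is introduced, the cocycle identity is verified from additivity of the leafwise metric, and H\"older regularity is obtained from H\"older continuity of the limit curve (the paper cites \cite{tsouvalas2023holder} and \cite{pozzettiSambarinoWienhard2021conformality} rather than Labourie). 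The only tactical difference is that the paper establishes H\"older continuity of the flow itself by showing $\Phi_\rho^\alpha$ restricted to each leaf is locally uniformly bi-H\"older and composing, whereas you argue directly that the four cross-ratio inputs vary H\"older-continuously and remain uniformly separated on a compact fundamental domain---both reach the same conclusion by the same mechanism.
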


    \begin{remark}
        The existence of reparameterizations of the geodesic flow on $\mathrm{T}^1S$ whose periods are the $(\varphi, \rho)$-lengths is known, e.g. \cite[Proposition 4.1]{bridgeman2015pressure} or \cite{sambarino2015orbital}.
        We emphasize that the novelty of this result and Theorem \ref{thm-genl-flow} below is their explicit modelling on a flow obtained through a distinguished geodesic realization associated to the Anosov limit map $\xi$ of $\rho$.
    \end{remark}

The point that seems to be instructive to present here is (\ref{claim-root-flow}).
We prove the H\"older reparameterization claim in generality in \S \ref{sss-other-roots} (Theorem \ref{thm-genl-flow}), and delay discussion of this point to that subsection.

\begin{proof}[Proof of Proposition \ref{prop-flow}.(\ref{claim-conjugate-holder})]
This is a direct computation, following Example \ref{ex-reparam-fuchsian}.
Let $\gamma \in \Gamma - \{e\}$ be given, write $\rho(\gamma) \sim \mathrm{diag}(\lambda_1, \lambda_2, \lambda_3)$ with $\lambda_1 > \lambda_2 > \lambda_3 > 0$ and corresponding eigenvectors $e_1, e_2, e_3$.
Observe that the segment $\Phitap (g_{xz}) = \Gtap_{\gamma^-\gamma^+}$ spans $\ell^{\mathrm{tan}, +}_{xz} \coloneqq \bbR \{e_2, e_3\}$, with endpoints $[e_2]$ and $[e_3]$.

Now let $p \in \Gtap_{\gamma^- \gamma^+}$ be given.
Take an affine chart for $\ell_{xy}^{\mathrm{tan}, +}$ that places $[e_3]$ at $0$, $p$ at $1$, and $[e_2]$ at $\infty$. 
Then $\rho(\gamma)p = \rho(\gamma)[1:1] = [1: \lambda_2/\lambda_3]$, so that
\begin{align*}
    d_\rho^{\mathrm{ta}}(p, \rho(\gamma) p) = \log \left| (e_2, e_3; p, \rho(\gamma)p) \right| = \log |(\infty, 0; 1, \lambda_3/\lambda_2)| = \ell_2(\gamma) - \ell_3(\gamma) = \ell^{\alpha_2}_\rho(\gamma).
\end{align*} \end{proof}

\subsection{Convergence of Geodesics inside Weakly Stable Leaves}\label{ss-flow-conv-within-leaves}
We now describe some structure of $\phi^{\mathrm{ta}}_t$ that appears in our framework.
Namely, we show how concave foliation can be used to define a notion of distance between geodesics within a common leaf of the weakly stable foliation $\overline{\mathcal{F}}$, and show that the the rate of convergence of geodesics in weakly stable leaves is governed by the quantitative convexity of the boundary of the convex domain $\mathcal{C}_\rho$.

Namely, given a fixed $(x, y_0, z) \in \partial \Gamma^{(3)+}$, our notion of distance between $\Phitap(x,y,z)$ and $\Phitap(g_{xy_0})$ for $y$ sufficiently close to $x$ is:

\begin{definition}
    Let $(x,z) \in \partial \Gamma^{(2)}$ be fixed and $y_0 \in \partial \Gamma$ be so that $(x,y_0,z) \in \partial \Gamma^{(3)+}$.
    For all $y$ sufficiently close to $x$, the intersection $p_{y_0}(x,y,z) = (\Phitap(x,y,z) \oplus x^1) \cap \Phitap(g_{xy_0})$ is a point, and $\Phitap(x,y,z) \oplus x^1$ intersects $\partial \mathcal{C}_\rho$ in two points, $x^1$ and a second point that we denote by $q(x,y,z)$.
    Define $$d_\rho^{\mathrm{ta}}((x,y,z), g_{xy_0}) = \log |(x^1, q(x,y,z); p_{y_0}(x,y,z), \Phitap(x,y,z))|.$$
\end{definition}
See Figure \ref{fig-conv-rate}.
Of course $d^{\mathrm{ta}}_\rho$ is invariant under $\rho(\Gamma)$.

The quantity $d^{\mathrm{ta}}_\rho$ decays as points tend towards $x^2$, with decay rate governed by the regularity of $\partial \mathcal{C}_\rho$.
Towards computing this decay rate, denote by $\alpha_\rho$ the optimal $1< \alpha \leq 2$ so that $\partial \mathcal{C}_\rho$ is $C^{\alpha}$.
Then $\partial \mathcal{C}_\rho$ is also $\beta_\rho$-convex for the unique $\beta_\rho \geq 2$ so that $1/\beta_\rho + 1/\alpha_\rho = 1$.
See \cite{guichard2005dualite} for a proof of this, in addition to the definition of $\beta$-convexity of convex domains.

\begin{figure}
    \centering
    \includegraphics[scale=0.55]{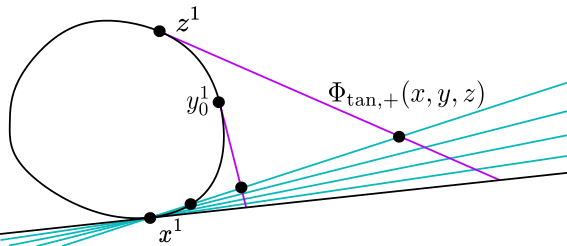}
    \caption{For $y$ sufficiently close to $x$, the cross-ratio of the four dots drawn on the teal line give a notion of distance between $\Phi_{\mathrm{tan},+}(x,y,z)$ and $g_{xy_0}$. This measure of distance goes to $0$ as $y$ moves towards $x$, with decay rate governed by the quantitative convexity of $\xi^1(\partial \Gamma)$ at $x^1$.}
    \label{fig-conv-rate}
\end{figure}

\begin{proposition}\label{prop-convergence-within-stable}
    Let $(x,y_0, z) \in \partial \Gamma^{(3)+}$ and $y \in \partial \Gamma$ be so that $d_{\rho}^\mathrm{ta}((x,y,z), g_{xy_0})$ is defined. Then there is a $C >0$ so that for all $t \geq 0$,
    $$ d_\rho^{\mathrm{ta}}(\phi^{\mathrm{ta}}_t(x,y,z) , g_{xy_0}) \leq C \exp\left(\frac{-t}{\beta_\rho -1}\right) d_\rho^{\mathrm{ta}}((x,y,z) , g_{xy_0}).$$    

    The estimate may be taken uniformly for $x,y_0,y,$ and $z$ so that $(x,y_0, z)$ in a compact subset $K$ of $\partial \Gamma^{(3)+}$ and $y$ so that $q(x,y,z)$ is uniformly separated from $y^1_0$.
\end{proposition}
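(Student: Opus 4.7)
The strategy is a direct computation in an affine chart adapted to $x^1$, driven by the quantitative $\beta_\rho$-convexity of $\partial \mathcal{C}_\rho$. The main technical content is a dictionary converting the flow time $t$ into the decay of a single scalar parameter on $\partial \mathcal{C}_\rho$; once this is in hand the remaining cross-ratio computations are straightforward.

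Choose an affine chart in which $x^1$ is the origin and the tangent line $x^2$ is the $x$-axis, so that $\partial \mathcal{C}_\rho$ is locally the graph of a convex function $y = f(x)$ with $f(0) = f'(0) = 0$. The $\beta_\rho$-convexity (together with the dual $\alpha_\rho$-regularity, which are sharp exponents by \cite{guichard2005dualite}) gives two-sided comparisons $f(x) \asymp |x|^{\beta_\rho}$ and $f'(x) \asymp |x|^{\beta_\rho - 1}\mathrm{sgn}(x)$ on a neighborhood of $0$. Write $\phi_t^{\mathrm{ta}}(x,y,z) = (x, y_t, z)$ and parameterize $y_t^1 = (u_t, f(u_t))$. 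The tangent line $y_t^2$ has slope $f'(u_t)$ and $y$-intercept $f(u_t) - u_t f'(u_t)$; intersecting with the fixed line $z^2$ and comparing with the fixed point $x^2 \cap z^2$, the leading-order contribution of the slope change gives $|\Phitap(x, y_t, z) - (x^2 \cap z^2)| \asymp |u_t|^{\beta_\rho - 1}$. On the other hand, the definition of the tangent-type metric forces $|\Phitap(x, y_t, z) - (x^2 \cap z^2)| \asymp e^{-t}\,|\Phitap(x, y, z) - (x^2 \cap z^2)|$ in any affine coordinate on $z^2$. Combining these yields the crucial estimate $|u_t| \asymp e^{-t/(\beta_\rho - 1)} |u_0|$.

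To finish, note that the line $L_t = x^1 \oplus \Phitap(x, y_t, z)$ has slope $m_t \asymp |u_t|^{\beta_\rho - 1}$, so solving $f(x) = m_t x$ gives $|q(x, y_t, z) - x^1| \asymp |u_t|$. Within $L_t$ the points $p_{y_0}(x, y_t, z)$ and $\Phitap(x, y_t, z)$ remain uniformly separated from $x^1$ because their limits $x^2 \cap y_0^2$ and $x^2 \cap z^2$ are both distinct from $x^1$. Expanding the cross-ratio appearing in $d_\rho^{\mathrm{ta}}$ about $q(x, y_t, z) = x^1$ in an affine coordinate on $L_t$ produces a factor $1 + O(|q(x, y_t, z) - x^1|)$. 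Taking logarithms gives
\[
d_\rho^{\mathrm{ta}}(\phi_t^{\mathrm{ta}}(x,y,z), g_{xy_0}) \asymp |q(x, y_t, z) - x^1| \asymp |u_t| \asymp e^{-t/(\beta_\rho - 1)}\,d_\rho^{\mathrm{ta}}((x,y,z), g_{xy_0}),
\]
which is the desired bound. For the uniformity statement, all implicit constants depend continuously on the basepoint of $\partial \mathcal{C}_\rho$ and the auxiliary geometric data; compactness of the allowed $(x, y_0, z)$ and the separation of $q(x, y, z)$ from $y_0^1$ keep these constants bounded away from $0$ and $\infty$, in particular ensuring the coefficient $1/\Phitap(x,y_t,z) - 1/p_{y_0}(x,y_t,z)$ in the cross-ratio expansion stays uniformly nonzero.

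The main obstacle is justifying the sharp two-sided asymptotics $f(x) \asymp |x|^{\beta_\rho}$ and $f'(x) \asymp |x|^{\beta_\rho - 1}\mathrm{sgn}(x)$ uniformly along a compact family of basepoints on $\partial \mathcal{C}_\rho$. This is precisely the quantitative content of the sharp exponent pair $(\alpha_\rho, \beta_\rho)$ from \cite{guichard2005dualite}: $\beta_\rho$-convexity supplies the lower bound on $f$, while dual $\alpha_\rho$-regularity (via $\alpha_\rho - 1 = 1/(\beta_\rho - 1)$) bounds the rate of slope change of tangent lines. Once these are in place the entire argument is a scaling analysis of intersections of a family of tangent lines against a fixed secant, combined with the cross-ratio expansion above.
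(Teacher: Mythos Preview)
Your overall strategy---choose an affine chart at $x^1$, convert the flow time $t$ into a slope $\asymp e^{-t}$, intersect with $\partial\mathcal{C}_\rho$, and expand the cross-ratio---is exactly the paper's. There is, however, a real error in the execution.

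The two-sided comparison $f(s)\asymp |s|^{\beta_\rho}$ (and likewise $f'(s)\asymp |s|^{\beta_\rho-1}$) is false in general. What $\beta_\rho$-convexity gives is the \emph{lower} bound $f(s)\ge c|s|^{\beta_\rho}$; the upper bound coming from $C^{\alpha_\rho}$ regularity is $f(s)\le C|s|^{\alpha_\rho}$, and one has $\alpha_\rho\le 2\le\beta_\rho$ with equality only in the Fuchsian case. Your own last paragraph essentially says this, so the earlier $\asymp$ claims are internally inconsistent. Both of your intermediate assertions $|u_t|\asymp e^{-t/(\beta_\rho-1)}$ and $|q_t-x^1|\asymp|u_t|$ use the wrong direction of this nonexistent two-sided bound.

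The fix is to drop the parameter $u_t$ entirely, which is what the paper does. The point is that the slope of $L_t=x^1\oplus w_t$ is $\asymp e^{-t}$ \emph{directly} from the cross-ratio definition of $\phi_t^{\mathrm{ta}}$ on the fixed segment in $z^2$---no boundary geometry enters here at all. Then solving $f(q_t^1)=D_t\,q_t^1$ with only the lower bound $f(s)\ge c|s|^{\beta_\rho}$ yields $|q_t^1|^{\beta_\rho-1}\lesssim e^{-t}$, hence $|q_t^1|\lesssim e^{-t/(\beta_\rho-1)}$. The cross-ratio expansion you wrote then finishes the upper bound. Since the proposition only asserts an inequality, one-sided estimates suffice throughout; your detour through $u_t$ would at best recover a weaker two-sided bound with the exponent $1/(\alpha_\rho-1)$ on the other side (which the paper also records but does not need).
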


\begin{proof}
    We prove the claim for a single collection of points; the constants appearing in the argument are evidently uniform in the desired manner.
    We begin by setting notation.
    Let $w_0 = \Phitap(x,y,z)$, let $w_t = \phi^{\mathrm{ta}}_t(x_0)$, let $p_{y_0}(\phi^{\mathrm{ta}}_t(x,y,z)) = p_t$, and let $q_t = q(\phi^{\mathrm{ta}}_t(x,y,z))$.
    Work in an affine chart that contains $\overline{C}_\rho$ and the points $z^2 \cap x^2$ and $y_0^2 \cap x^2$.
    Further normalize this affine chart so that $x^2$ is the horizontal axis, $x^1$ is the origin, $x^2 \cap z^2 = (2,0)$ and $y_0^2 \cap x^2 = (1,0)$.

    Begin by noting that as $\log (z^1, x^2 \cap z^2; w_0, w_t) = t$ there is a constant $C_1$ so that $$ |w_t -  (x^2 \cap z^2)| = e^{-t} \frac{|(x^2 \cap z^2) - w_0| \, | z^1 - w_t|}{|w_0 - z^1|} \in [C_1^{-1} e^{-t}, C_1 e^{-t}].$$

    By transversality of $y_0^2$ and $z^2$ to $x^2$, there is a constant $C_2$ so that $w_t = (2 + \alpha_1(t), \alpha_2(t))$ and $p_t = (1 + \beta_1(t), \beta_2(t))$ with $|\alpha_1(t)|,|\beta_1(t)| \leq C_2e^{-t}$ and $\alpha_2(t), \beta_2(t) \in [C_2^{-1} e^{-t}, C_2 e^{-t}]$.
    So there is a $C_3$ so that the slope $D_t$ of the line between $x^1$ and $w_t$ is in $[C_3^{-1} e^{-t}, C_3e^{-t}]$.

    Near $x^1$, the boundary $\partial \mathcal{C}_\rho$ is the graph of a function $f(s)$ so that $C_4^{-1}|s|^{\beta_\rho} \leq f(s) \leq C_4 |s|^{\alpha_\rho}$.
    Write $q_t = (q_t^1, q_t^2)$.
    From the two bounds on $f(s)$ and $D_t$, we see that there is a $C_5$ so that $q_t^1 \in [C_5^{-1} \exp(-t/(\alpha_\rho -1)), C_5 \exp(-t/(\beta_\rho -1))]$ and that $q^2_t/q_1^t$ limits to $0$ as $t$ grows large.
    We conclude that there is a $C_6$ so that
    \begin{align*}
        (x^1, q_t; p_t, w_t) = \frac{(2 + \nu_1(t)) (1 + \nu_2(t))}{ (2 + \nu_3(t)) (1 + \nu_4(t) )}
    \end{align*} with $|\nu_i(t)| \leq C_6 \exp(-t/(\beta_\rho - 1)).$ The claim follows.
\end{proof}

Let us conclude this subsection with a remark that the definition of the tangent-type flow $\phi^{\mathrm{ta}}_t$ and the proof of Proposition \ref{prop-convergence-within-stable} both rely on a common way of assigning real numbers to measure distances between certain points in leaves $f_x$ of $\overline{\mathcal{F}}$ using the structure of the domain of discontinuity $\Omega_\rho$, and that such a construction works in slightly more generality.

In particular, any point $p \in \Omega_\rho^2$ has exactly two distinct points $x,z \in \partial \Gamma$ so that $p = x^2 \cap z^2$. These bound a (relatively) closed cone $V_p$ in the concave region $\mathcal{F}_x$ so that for any point $q \in V_p - \{p\}$, the line $q \oplus p$ intersects $\partial (\RP^2 - \mathcal{F}_x)$ in at least two points.
And so taking the logarithm of the cross-ratio of $p, q$, and the two adjacent points of intersection on $p \oplus q$ arranged in circular order gives some sort of a notion of distance between $p$ and $q$.
We remark that this notion should \textit{not} be expected to satisfy the triangle inequality except when restricted to an individual line through $p$, and that the cross-ratios obtained can differ dramatically from those of points of intersection with $\partial \mathcal{C}_\rho$ because of the inclusion of $x^2$ in $\RP^2 - \mathcal{F}_x$.

\subsection{Other Groups and Roots}\label{sss-other-roots}
We now shift attention to the case $n>3$. 
We give equivariant geodesic realizations for Hitchin representations and associated refraction flows for any positive root of $\PSL_n(\bbR)$. 
These are generalizations of our tangent-type and transverse-type flows for $\SL_3(\bbR)$. 
They are typically locally injective, with a few exceptions that we classify.
Throughout this subsection we assume $n \ge 3$, we fix a positive root $\alpha=\alpha_{ij}$ of $\mathfrak{sl}_n(\bbR)$ and we fix a Hitchin representation $\rho \colon \Gamma \to \PSL_n(\bbR)$. 

\begin{definition}
    Define $\Phi_\rho^{\alpha} : \partial \Gamma^{(3)+} \to \RP^{n-1}$ by
    $$ \Phi^\alpha_\rho(x,y,z) = ((x^{i} \cap z^{n-i+1}) \oplus (x^j \cap z^{n-j+1})) \cap y^{n-1} .$$
\end{definition}

\begin{figure}[t]
    \centering
    \includegraphics[scale=0.35]{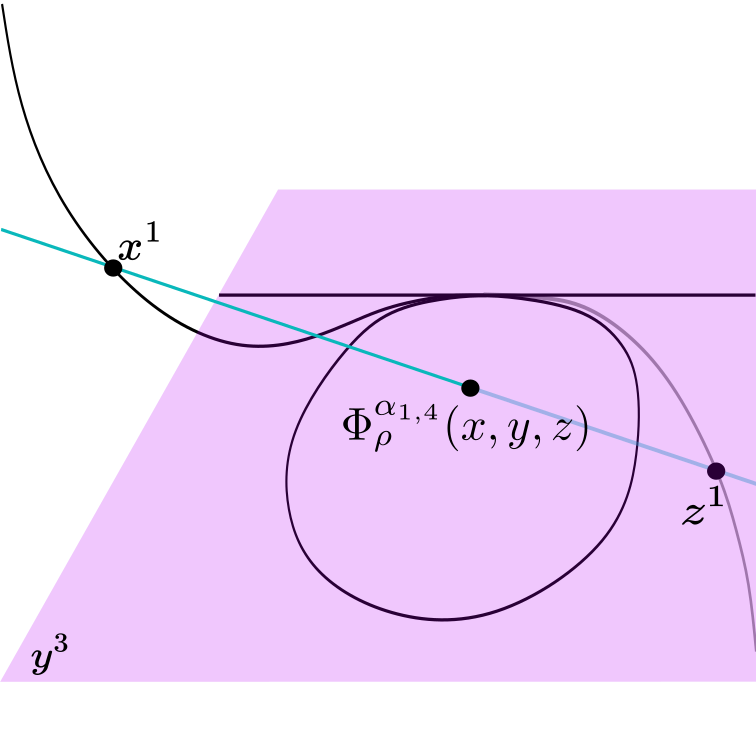}
    \caption{The geodesic realization for the highest weight $\alpha_{14}$ in $\mathfrak{sl}_4(\bbR)$. This map (and a modification of this image) appears as a developing map for a family of $(\mathrm{PSL}_4(\bbR), \RP^3)$-structures on $\mathrm{T}^1S$ in \cite{nolte2024foliations}.}
    \label{fig-geo-real}
\end{figure}

\begin{proposition}\label{prop-genl-maps-basics}
    $\Phi_\rho^{\alpha}$ is an equivariant geodesic realization $\partial \Gamma^{(3)+} \to \mathbb{RP}^{n-1}$.
    If $n >3$ and $\alpha$ is not one of the first and last simple roots $\alpha_{12}$ and $\alpha_{(n-1)n}$ then $\Phi_\rho^\alpha$ is locally injective.
\end{proposition}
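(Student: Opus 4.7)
The plan is to verify the statement in three stages: well-definedness with equivariance, the geodesic realization property on each leaf, and local injectivity outside the two excluded simple roots.

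For well-definedness I would first check that each intersection and sum in the formula for $\Phi_\rho^\alpha$ has the expected dimension, using the general position axiom of the hyperconvex Frenet curve. The direct sum $\xi^{i-1}(x) \oplus \xi^{n-i+1}(z) = \bbR^n$ forces $\xi^i(x) \cap \xi^{n-i+1}(z)$ to be exactly one-dimensional, and an analogous argument applies for $\xi^j(x) \cap \xi^{n-j+1}(z)$; general position also gives $\xi^i(x) \cap \xi^{n-j+1}(z) = 0$ for $i < j$, so the two lines whose sum defines $L(x,z) = (\xi^i(x) \cap \xi^{n-i+1}(z)) \oplus (\xi^j(x) \cap \xi^{n-j+1}(z))$ are distinct. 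To see that $L(x,z) \cap \xi^{n-1}(y)$ is one-dimensional, I would dualize: $\xi^i(x) \cap \xi^{n-i+1}(z) \subset \xi^{n-1}(y)$ would imply $\xi^{\perp,1}(y) \in \xi^{\perp,n-i}(x) + \xi^{\perp,i-1}(z)$, contradicting the direct sum $\xi^{\perp,1}(y) \oplus \xi^{\perp,n-i}(x) \oplus \xi^{\perp,i-1}(z) = (\bbR^n)^*$ that comes from general position of the dual Frenet curve. Continuity is then automatic from continuity of sums and intersections of subspaces of locally constant dimension, and equivariance is immediate from the equivariance of $\xi$.

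For the geodesic realization, $L = L(x,z)$ depends only on the leaf $g_{xz}$, so $\Phi_\rho^\alpha|_{g_{xz}}$ takes values in the projective line $\bbP(L)$. Injectivity along $g_{xz}$ reduces to showing $\xi^{n-1}(y) \cap \xi^{n-1}(w) \cap L = 0$ for distinct $y, w$ in the arc from $x$ to $z$, which is a dimension count once $L \not\subset \xi^{n-1}(y)$ is known. To identify the image as a properly convex segment, I would apply Lemma \ref{lemma-full-restriction-lemma} twice, first at $x$ with $D = j$ and then at $z$ with $D = j - i + 1$ inside $\xi^j(x)$, producing a hyperconvex Frenet curve $\eta$ in $M = \xi^j(x) \cap \xi^{n-i+1}(z)$; the boundary cases $i = 1$ or $j = n$ are handled by the corresponding single or trivial restriction. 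Unwinding the restriction formula identifies $\eta^1(x) = \xi^i(x) \cap \xi^{n-i+1}(z)$ and $\eta^1(z) = \xi^j(x) \cap \xi^{n-j+1}(z)$, so $L = \eta^1(x) \oplus \eta^1(z)$ inside $M$, and identifies the codimension-one flag entry of $\eta$ at $y$ as $\xi^{n-1}(y) \cap M$. Thus $\Phi_\rho^\alpha(x,y,z) = \eta^{j-i}(y) \cap L$, and applying the analog of item (\ref{lemma-sum-frenet}) of Lemma \ref{lemma-easy-from-frenet} to $\eta$ in the ambient $M$ exhibits the restriction as a homeomorphism of the closed arc $[x,z]$ onto a properly convex component of $\bbP(L) \setminus \{\eta^1(x), \eta^1(z)\}$.

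For local injectivity when $n > 3$ and $\alpha \notin \{\alpha_{12}, \alpha_{(n-1)n}\}$, the domain and target both have dimension at least three and the goal is to show $d\Phi_\rho^\alpha$ has trivial kernel at every point. Identifying $T_p\RP^{n-1} \cong \bbR^n/p$ at $p = \Phi_\rho^\alpha(x,y,z)$, I would split tangent vectors along the three factors. The $\partial_y$ image is a nonzero vector lying in $L/p$ by the previous step. Varying $x$ with $y, z$ fixed keeps $L$ inside $\xi^{n-i+1}(z)$, and varying $z$ keeps $L$ inside $\xi^j(x)$, so $d\Phi_\rho^\alpha(v_x, 0, 0) \in \xi^{n-i+1}(z)/p$ and $d\Phi_\rho^\alpha(0, 0, v_z) \in \xi^j(x)/p$. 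The excluded cases arise precisely because $L$ collapses in them: $L = \xi^2(x)$ for $\alpha_{12}$ and $L = \xi^2(z)$ for $\alpha_{(n-1)n}$, so the map literally ignores a factor. In all remaining cases I would check (i) each transverse partial escapes $L/p$, and (ii) the two transverse contributions are linearly independent modulo $L/p$. Both reduce to general-position statements controlling the infinitesimal motion of the lines $\xi^i(x) \cap \xi^{n-i+1}(z)$ and $\xi^j(x) \cap \xi^{n-j+1}(z)$ under perturbation, using that the osculating direction of $\xi^1$ at $x$ is recorded by $\xi^2(x)$, and similarly at $z$.

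The main obstacle is the derivative computation of the last step: the images of $\partial_x$ and $\partial_z$ both land in the subspaces $\xi^{n-i+1}(z)/L$ and $\xi^j(x)/L$ of $\bbR^n/L$, which meet in $M/L$ of positive dimension whenever $\alpha$ is not a simple root, so both transversality to $L/p$ and mutual independence require ruling out absorption into this intersection. I expect this to follow from a careful osculation bookkeeping, with the exclusion of $\alpha_{12}$ and $\alpha_{(n-1)n}$ appearing precisely at the step where the bookkeeping would otherwise fail.
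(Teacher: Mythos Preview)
Your treatment of well-definedness, equivariance, and the geodesic realization property essentially matches the paper: both proofs use the Frenet Restriction Lemma to place $L(x,z)$ inside a subspace where a restricted hyperconvex Frenet curve lives, and then invoke Lemma~\ref{lemma-easy-from-frenet}.(\ref{lemma-sum-frenet}) to identify $\Phi_\rho^\alpha|_{g_{xz}}$ with a homeomorphism onto a properly convex arc.

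For local injectivity, however, your approach diverges substantially from the paper's. The paper gives an elementary point-set argument: it supposes $\Phi_\rho^\alpha(x_1,y_1,z_1)=\Phi_\rho^\alpha(x_2,y_2,z_2)$ and, using only the hyperconvexity of $\xi$ and the restricted curves, tracks which containments of the image point in subspaces like $x_2^j$, $z_2^{n-i+1}$, $y_2^{n-1}$ force coincidences among $\{x_1,y_1,z_1\}$ and $\{x_2,y_2,z_2\}$. This is done by a short case analysis on whether $i=1$ and whether $j=n$, and the excluded roots $\alpha_{12}$, $\alpha_{(n-1)n}$ emerge because in those cases the relevant containment step degenerates. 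No differentiability is used or needed.

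Your differential strategy has two real issues. First, $\Phi_\rho^\alpha$ is a priori only H\"older as a map on $\partial\Gamma^{(3)+}$: the Frenet curve $\xi$ is not smooth in any natural parametrization of $\partial\Gamma$, so writing $d\Phi_\rho^\alpha$ requires external input. The paper does exactly this kind of derivative computation later (\S\ref{sss-exceptional-regularity}), but only after invoking the Zhang--Zimmer $C^{1+\alpha}$ regularity of the image curves $\xi^k(\partial\Gamma)$ and reparametrizing along them; you would need to make this explicit. Second, even granting regularity, the step you flag as the ``main obstacle'' is genuinely the whole content of the argument: showing that $d\Phi_\rho^\alpha(\partial_x)$ and $d\Phi_\rho^\alpha(\partial_z)$ are independent modulo $L$ when both naturally land in $M/L$ is not a routine bookkeeping exercise, and you have not indicated a mechanism for it beyond optimism. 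The paper's point-set approach sidesteps both problems entirely.
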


Observe in the case $\alpha = \alpha_{12}$ that $\Phi_\rho^\alpha(x,y,z) = x^2 \cap y^{n-1}$, so that $\Phi_\rho^\alpha $ is not locally injective.
Local injectivity fails for the same reason when $\alpha = \alpha_{(n-1)n}$.
So the assumption on roots in Proposition \ref{prop-genl-maps-basics} is essential.

\begin{proof}
Let $1 \leq i < j \leq n$ be given.
Equivariance is a direct consequence of the definition.
We next address continuity.
First observe that the points $x^{i} \cap z^{n-i+1}$ and $x^{j} \cap z^{n-j+1}$ do not coincide by hyperconvexity of $\xi$, so that $(x,z) \mapsto (x^{i} \cap z^{n-i+1}) \oplus (x^j \cap z^{n-j+1})$ is continuous.
Note next that, with two simple arguments depending on whether or not $i =1 $ and $j = n$, the line $(x^{i} \cap z^{n-i+1})$ is transverse to the hyperplane $y^{n-1}$.
So $\Phi_\rho^\alpha$ is continuous.

The claim on restrictions of $\Phi_{\rho}^\alpha$ to leaves of $\overline{\mathcal{G}}$ begins from the observation that $\Phi_\rho^\alpha$ may be written \begin{align}\Phi^\alpha_\rho(x,y,z) = (\xi^1_{x^j \cap z^{n-i}}(x) \oplus \xi^1_{x^j \cap z^{n-i}}(z)) \cap \xi^{j-i-1}_{x^j \cap z^{n-i}}(y),\label{form-other-root-restriction}\end{align}
in the notation of the Frenet Restriction Lemma \ref{lemma-full-restriction-lemma}.
Exactly as in Lemma \ref{lemma-easy-from-frenet}.(\ref{lemma-sum-frenet}), the restriction of $y \mapsto \Phi_\rho^{\alpha}(x,y,z)$ to each connected component of $\partial \Gamma - \{x,z\}$ is a homeomorphism onto a connected component of $(x^{i} \cap z^{n-i+1}) \oplus (x^j \cap z^{n-j+1}) - \{x^{i} \cap z^{n-i+1}, x^j \cap z^{n-j+1}\} $.
Note also that the endpoints of the image, $\xi^1_{x^j \cap z^{n-i}}(x)$ and $\xi^1_{x^j \cap z^{n-i}}(z)$ are distinct by the Frenet Restriction Lemma.
This gives the desired statements on restrictions.

    Local injectivity requires a bit more case analysis.
    So let $n > 3$, suppose that $\Phi^{\alpha}_\rho(x_1, y_1, z_1) = \Phi^{\alpha}_\rho(x_2, y_2,z_2)$, and seek restrictions on the points $(x_1, y_1, z_1)$ and $(x_2, y_2,z_2)$.
    
    Let us note that the observations made in the proof of the restriction claim imply that if $\{x_1, z_1\} = \{x_2, z_2\}$ and $y_1 \neq y_2$ then $y_1$ and $y_2$ are in different connected components of $\partial \Gamma - \{x_1, z_1\}$.
    So, because we are only concerned with local injectivity, we are free to assume that $x_2$ and $z_2$ are not $y_1$ and $\{x_1, z_1\} \neq \{x_2, z_2\}$.
    
    Let us break into cases on whether or not $j = n$ or $i = 1$.

\begin{proofcase}
$j = n$ and $i =1$.
\end{proofcase}

\begin{proof}
    The proof directly follows that of the injectivity of developing maps of transversely foliated projective structures in \cite{nolte2024foliations}.
    Namely, $\Phi^{\alpha}_{\rho}(x,y,z)$ has the simple expression $(x^1 \oplus z^1) \cap y^{n-1}$.
    Local injectivity for $n > 3$ then follows from the consequence of hyperconvexity that $x_1^1 \oplus z_1^1$ and $x_2^1 \oplus z_2^1$ are disjoint unless $x_2$ or $z_2$ is in $\{x_1, z_1\}$, and these lines intersect only at a single point if exactly one of $x_2$ and $z_2$ is $x_1$ or $z_1$.
    This limits any possible non-injectivity of $\Phi^{\alpha}_\rho$ with $y_2$ in the same connected component of $\partial \Gamma - \{x_1,z_1\}$ as $y_1$ to points with image $x_1^1$ or $z_1^1$, which never occur because they are not contained in $y^{n-1}$.
\end{proof}

\begin{proofcase}
    $j \neq n$ and $i \neq 1$.
\end{proofcase}

\begin{proof}
    Note that $(x_2^{i} \cap z_2^{n-i+1}) \oplus (x^j_2 \cap z_2^{n-j+1})$ is contained in both $x_2^j$ and $z_2^{n-i+1}$.
    Note that by the Frenet Restriction Lemma and the reasoning concerning $y$-entries above that $\Phi_\rho^{\alpha}(x,y,z) \cap w^{n-1} = \{0\} $ for any $w \neq x,y,z$ in the same connected component of $\partial \Gamma - \{x_1, z_1\}$ as $y_1$.
    So in order to have $\Phi_\rho^{\alpha}(x_1, y_1,z_1) = \Phi^\alpha_\rho(x_2,y_2,z_2)$ with $y_2$ in the same connected component of $\partial \Gamma - \{x_1, z_1\}$ we must have $y_1 = y_2$.
    The same argument, applied now with $x_2^j$ and $z_2^{n-j+1}$ in place of $y_2^{n-1}$ (and using the hypothesis on indexes) shows that $\{ x_2, z_2\} = \{x_1, z_1\}$.
    So $(x_1, y_1, z_1) = (x_2, y_2, z_2)$.
    So $\Phi_\rho^\alpha$ is locally injective in this case.
\end{proof}

\begin{proofcase}
    $j = n,$ and $i \neq 1, n-1$.
\end{proofcase}

\begin{proof}
    In this case $\Phi_\rho^{\alpha}(x,y,z) = ((x^{i} \cap z^{n-i+1}) \oplus z^1) \cap y^{n-1}$.
    As in the previous case, that $((x_2^{i} \cap z_2^{n-i+1}) \oplus z_2^{1}) \cap y_2^{n-1}$ is contained in $z_2^{n-i}$ with $i > 1$ implies that if $\Phi_{\rho}^\alpha(x_1, y_1,z_1) = \Phi_\rho^\alpha(x_2,y_2, z_2)$ then $z_2 \in \{x_1, z_1\}$. 
    If $z_1 = z_2$, then the claim is an immediate consequence of the Frenet Projection Lemma in \cite[Lemma 4.9]{nolte2024foliations}\footnote{This is the step that breaks in the proof if $i = n-1$. In that case, the Frenet Projection Lemma produces a hyperconvex Frenet curve in $\RP^0 = \{0\}$, which is of course not injective.} applied to the hyperconvex Frenet curve in $z^{n-i+1}$ induced by the Frenet Restriction Lemma.
    The condition $z_2 = x_1$ does not occur within a neighborhood of $(x_1,y_1,z_1)$, so that $\Phi^{\alpha}_\rho$ is locally injective.
\end{proof}

The final case, when $j \neq n,2$ and $i =1$ follows from an identical argument as the previous case, with the roles of $x$ and $z$ interchanged. We conclude that $\Phi^{\alpha}_\rho$ is locally injective, as desired.
\end{proof}

\begin{example}[{$n = 4$}]
For $n = 4$, for some roots these maps coincide with maps studied in \cite{guichard2008convex} and \cite{nolte2024foliations}, which we explain.
First, when $\alpha = \mathrm{H}$ is the highest root, the geodesic realization $\Phi_\rho^\mathrm{H}$ is the developing map of the properly convex transversely foliated projective structure associated to $\rho$ in \cite[\S 3]{nolte2024foliations}.
When $\alpha = \alpha_{13}$, the map $\Phi_\rho^{\alpha_{13}}$ is the concave transversely foliated developing map in \cite{nolte2024foliations}.
Finally, when $\alpha = \alpha_{23}$, the map $\Phi_{\rho}^{\alpha}(x,y,z)$ may be written as $x^3 \cap y^3 \cap z^3$, which appears as the non-convex foliated example in \cite{guichard2008convex} (and is the concave tangent foliated developing map in \cite{nolte2024foliations}).

We note that Guichard-Wienhard produce developing maps for their properly convex foliated projective structures which are also geodesic realizations for $\alpha=\alpha_{13}$. 
Their maps do not appear as $\Phi_\rho^{\alpha}$ for any $\alpha$ from our construction.
\end{example}

Proposition \ref{prop-genl-maps-basics} allows us to mimic the construction of the flows $\phi^{\mathrm{ta}}_t$ for positive roots and Hitchin representations in general.

As before, for $g_{xz} \in \overline{\mathcal{G}}$, the boundary of $\Phi^{\alpha}_{\rho}(g_{xz})$ consists of two \textit{distinct} points $x^i \cap z^{n-i+1}$ and $x^j \cap z^{n-j+1}$, with a continuous choice of labelling.
We orient all leaves $g_{xz}$ of $\overline{\mathcal{G}}$ so that $\Phitap(p)$ moving towards $x^i \cap z^{n-i+1}$ corresponds to $p$ moving forward on $g_{xz}$.

\begin{definition}
    For $(x, z) \in \partial \Gamma^{(2)}$ and $p_1, p_2 \in g_{xz}$ define $d_{\rho}^{\alpha}(p_1, p_2) = \log |(x^i \cap z^{n-i+1},x^j \cap z^{n-j+1};p_1,p_2)|.$
    We call $d_\rho^{\alpha}$ the {\rm{$\alpha$-leafwise metric of $\rho$}}.
\end{definition}

As before, for $p \in \partial \Gamma^{(3)+}$ with $p \in g_{xz}$ and $t \in \bbR$, define $\widetilde{\phi^{\alpha}_t}(p) \in g_{xz}$ as the unique point so $d_\rho^{\alpha}(\dev_\rho^{\alpha}(p), \dev_\rho^{\alpha}(\widetilde{\phi^\alpha_t}(p)) = t$, and $\widetilde{\phi^\alpha_t}(p)$ is in front of $p$ if and only if $t > 0$.
Then $\widetilde{\phi_t^\alpha}$ is $\overline{\Gamma}$-invariant, and so descends to a homeomorphism $\phi_t^\alpha$ of $\mathrm{T}^1S$ that preserves every leaf of $\mathcal{G}$.

\begin{definition}
    The family $\{\phi^\alpha_t\}_{t \in \bbR}$ is the \textit{$\alpha$-flow} of $\rho$. 
\end{definition}

We remark that when $\rho \in \mathrm{Hit}_n(S)$ is Fuchsian, the same argument as Example \ref{ex-reparam-fuchsian} shows that $\{\phi^{\alpha_{ij}}_t\}$ is a reparameterization of the geodesic flow at a constant rate of $(i-j)$ times the original rate of the geodesic flow.

To state the analogue of Proposition \ref{prop-flow}, fix a reference hyperbolic metric $g_{0}$ on $S$ corresponding to a Fuchsian representation $\rho_0$ and a corresponding model for $\partial \Gamma^{(3)+}$ as $\mathrm{T}^1\bbH$.
For $t \geq 0$ let $\{\phi_t^{\alpha,0}\}_{t \in \bbR}$ be the geodesic flow of $g_0$, which is exactly the $\alpha$-flow of $\rho_0$ up to a multiplication of flow rate by a constant.

Generalizing Proposition \ref{prop-flow},

\begin{theorem}[Root Refraction Flows]\label{thm-genl-flow}
    Let $\rho: \Gamma \to \mathrm{PSL}_n(\bbR)$ be Hitchin and let $\alpha$ be a positive root of $\mathfrak{sl}_n(\bbR)$. Then:
    \begin{enumerate}
        \item \label{claim-root-flow-genl} The period of any periodic orbit $g_{\gamma^-\gamma^+}$ corresponding to $\gamma \in \Gamma$ under the $\alpha$-flow $\{\phi^\alpha_t\}_{t\in\bbR}$ is $\ell_\rho^{\alpha}(\gamma)$,
        \item \label{claim-conjugate-holder-genl} The $\alpha$-flow $\phi^\alpha_t \colon \partial \Gamma^{(3)+}\times \bbR \to \partial \Gamma^{(3)+}$ is equivariant and H\"older continuous and moreover is a H\"older reparameterization of the geodesic flow.

        More specifically, let $\kappa_{\rho}^\alpha : (\partial \Gamma^{(3)+} / \Gamma) \times \bbR \to \bbR$ be given by $\kappa_\rho^{\alpha}(p,t) = d_\rho^\alpha( \phi_t^{\alpha,0}(\widetilde{p}), \widetilde{p})$, where $\widetilde{p} \in \partial \Gamma^{(3)+}$ is any lift of $p$.
        Then $\kappa_\rho^\alpha$ is a translation cocycle whose associated H\"older reparameterization of the flow $\phi_t^{\alpha,0}$ on $\partial \Gamma^{(3)+}/\Gamma$ is $\phi_t^{\alpha}$.
    \end{enumerate}
\end{theorem}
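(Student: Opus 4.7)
The plan is to reduce both claims to cross-ratio computations on the segment $\Phi_\rho^\alpha(g_{xz})$, generalizing the argument used to prove Proposition \ref{prop-flow}.(\ref{claim-root-flow}). For Part (\ref{claim-root-flow-genl}), fix $\gamma \in \Gamma - \{e\}$ and diagonalize $\rho(\gamma) = \mathrm{diag}(e^{\ell_1}, \dots, e^{\ell_n})$ with $\ell_1 > \cdots > \ell_n$; positivity of the eigenvalues follows from the Hitchin condition. Writing $e_1, \dots, e_n$ for the eigenlines, hyperconvexity forces $\xi^k(\gamma^+) = \mathrm{span}(e_1, \dots, e_k)$ and $\xi^k(\gamma^-) = \mathrm{span}(e_{n-k+1}, \dots, e_n)$, so $\xi^i(\gamma^+) \cap \xi^{n-i+1}(\gamma^-) = [e_i]$ and similarly for $j$. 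The segment $\Phi_\rho^\alpha(g_{\gamma^-\gamma^+})$ therefore spans the line $\bbR\{e_i, e_j\}$ with relative boundary $\{[e_i], [e_j]\}$; for $p$ on this segment, an affine chart placing $[e_j] = 0$, $[e_i] = \infty$, $p = 1$ makes $\rho(\gamma)$ act by multiplication by $e^{\ell_i - \ell_j}$, and the definition of $d_\rho^\alpha$ yields
\[
 d_\rho^\alpha(p, \rho(\gamma) p) \;=\; \ell_i - \ell_j \;=\; \alpha_{ij}(\lambda(\rho(\gamma))) \;=\; \ell_\rho^\alpha(\gamma).
\]

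For Part (\ref{claim-conjugate-holder-genl}), equivariance of $\phi_t^\alpha$ is immediate from that of $\Phi_\rho^\alpha$ and $d_\rho^\alpha$. The cocycle identity for $\kappa_\rho^\alpha$ rests on the observation that $d_\rho^\alpha$ makes each oriented leaf $g_{xz}$ an isometric copy of $(\bbR, d_{\mathrm{Eucl}})$: the three points $\tilde p$, $\phi_s^{\alpha, 0}\tilde p$, $\phi_{s+t}^{\alpha, 0}\tilde p$ lie on the same leaf in correct order, so signed $d_\rho^\alpha$-distances add to give $\kappa_\rho^\alpha(p, s + t) = \kappa_\rho^\alpha(p, s) + \kappa_\rho^\alpha(\phi_s^{\alpha, 0} p, t)$. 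Identifying $\phi_t^\alpha$ with the reparameterization of $\phi_t^{\alpha, 0}$ by $\kappa_\rho^\alpha$ is then formal: since $\kappa_\rho^\alpha(p, \cdot)$ is a strictly increasing homeomorphism of $\bbR$ with continuous inverse $\alpha_\kappa(p, \cdot)$, the point $\phi^{\alpha, 0}_{\alpha_\kappa(p, t)}(\tilde p)$ sits on $g_{xz}$ at signed $d_\rho^\alpha$-distance $t$ forward of $\tilde p$, which is by definition $\phi_t^\alpha(\tilde p)$.

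The main obstacle is proving H\"older regularity of $\kappa_\rho^\alpha$, which is the nontrivial content of the translation cocycle property. I would deduce it from the standard H\"older regularity of the Anosov limit map $\xi$ of a Hitchin representation, combined with smoothness of intersection and cross-ratio operations away from collisions. The key point is that on a compact fundamental domain for the $\Gamma$-action on $\partial \Gamma^{(3)+}$, the endpoints $\xi^i(x) \cap \xi^{n-i+1}(z)$ and $\xi^j(x) \cap \xi^{n-j+1}(z)$ are uniformly separated from each other and from $\Phi_\rho^\alpha(\tilde p)$ and $\Phi_\rho^\alpha(\phi_t^{\alpha, 0}\tilde p)$ for $t$ in a compact interval, so H\"older control on $\xi$ transfers to uniform H\"older control on $\kappa_\rho^\alpha$ in its first variable; uniform H\"older regularity in $t$ on compact intervals is inherited from smoothness of $\phi_t^{\alpha, 0}$. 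A secondary check is orientation compatibility: the $\alpha$-flow is declared to move $\tilde p$ forward by sending $\Phi_\rho^\alpha(\tilde p)$ toward $\xi^i(x) \cap \xi^{n-i+1}(z)$, and this must agree with the positive direction of $\phi_t^{\alpha, 0}$. Both orientations depend continuously on $\rho \in \mathrm{Hit}_n(S)$ and on $(x, z) \in \partial \Gamma^{(2)}$ and never degenerate, and they agree at the Fuchsian base point by the remark preceding the theorem, so they agree identically by connectedness of $\mathrm{Hit}_n(S)$.
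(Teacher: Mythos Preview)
Your proposal is correct and follows essentially the same approach as the paper: Part~(\ref{claim-root-flow-genl}) is the identical eigenline/cross-ratio computation, and Part~(\ref{claim-conjugate-holder-genl}) is handled by combining H\"older regularity of the limit map with smoothness of the algebraic operations (span, intersection, cross-ratio) away from collisions, plus cocompactness for uniformity, then verifying the cocycle condition via the Euclidean structure of $d_\rho^\alpha$ on leaves. The paper organizes the H\"older step slightly differently---it first proves $\Phi_\rho^\alpha$ is H\"older and that its restriction to each leaf $g_{xz}$ is locally uniformly bi-H\"older (invoking the $C^1$ regularity of the images $\xi^k(\partial\Gamma)$ from \cite{pozzettiSambarinoWienhard2021conformality} and the H\"older regularity from \cite{tsouvalas2023holder}), thereby deducing H\"older continuity of $\phi_t^\alpha$ itself before turning to $\kappa_\rho^\alpha$---whereas you go directly to $\kappa_\rho^\alpha$; both routes work. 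Your explicit orientation-compatibility check via connectedness of $\mathrm{Hit}_n(S)$ and the Fuchsian base point is a point the paper leaves implicit.
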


\begin{proof}
    The proof of the first item is similar to the proof of Proposition \ref{prop-flow}.
    
    We now prove the second item.
    First note that the map 
    \begin{align} \partial \Gamma^{(3)+} \times \bbR &\to \RP^{n-1} ,       \label{eq-full-flow-map}  \\
    ((x,y,z),t) &\mapsto \Phi_\rho^\alpha(\phi_t^{\alpha}(x,y,z)) \nonumber  \end{align}
    is H\"older. 
    To see this, we begin with the map $\Phi_\rho^\alpha$.
    Factor $\Phi_\rho^\alpha$ into the obvious maps
    \begin{align*}
    \partial \Gamma^{(3)+} \to \partial \Gamma^{5} \to \xi^i(\partial \Gamma) \times \xi^{n+i-1}(\partial \Gamma) \times \xi^j(\partial \Gamma) \times \xi^{n-j+1}(\partial \Gamma) \times \xi^{n-1}(\partial \Gamma) \to \mathbb{RP}^{n-1} \nonumber.
    \end{align*} 
    Strictly speaking, the last map is defined only on a neighborhood of the image of the second map: the relevant transversality conditions hold for $(x,z,x,z,y)$ by the discussion in Proposition \ref{prop-genl-maps-basics}, and so also hold for $(x,z,x',z',y)$ for $x'$ and $z'$ near enough to $x$ and $z$ by openness of transversality.
    The second map is a product of H\"older maps with $C^1$ image by \cite[Theorem 1.9]{tsouvalas2023holder} and \cite[Proposition 7.4]{pozzettiSambarinoWienhard2021conformality}. 
    The final map is the restriction of a polynomial map, given by intersections and spans, to a H\"older submanifold.
    It follows that the composition, $\Phi_\rho^\alpha$, is H\"older.
    
    We now address $t$-dependence of the map in Equation (\ref{eq-full-flow-map}).
    Let $\mathcal{U}$ be the subset of $(\RP^{n-1})^3$ given by triples of distinct colinear points $(p, q,r)$ in $\RP^{n-1}$.
    Consider the map $\mathscr{R} : \mathcal{U} \times \bbR \to \mathcal{U}$ given by defining $\mathscr{R}((p,q,r), t)$ to be $(p, q_t, r)$ where $q_t$ is the unique point in the line containing $p,q,$ and $r$ so that $(p, r; q, q_t) = e^t$.
    Then $\mathscr{R}$ is real-analytic and
    $$\Phi_\rho^\alpha(\phi_t^\alpha(x,y,z)) = \mathscr{R}((x^i \cap z^{n-i+1}, \Phi_\rho^\alpha(x,y,z), x^j \cap z^{n-j+1}), t).$$
    From our proof that $\Phi_\rho^\alpha$ is H\"older above, the maps $(x,z) \to x^i \cap z^{n-i+1}$ and $(x,z) \mapsto x^j \cap z^{n-j+1}$ are also H\"older.
    We conclude that the map of Eq. (\ref{eq-full-flow-map}) is H\"older.

    We next want to show that the restriction of $\Phi_\rho^\alpha$ to a leaf $g_{xz}$ is locally uniformly (in $(x,z)$) bi-H\"older.
    This then implies that $\phi_t^\alpha$ is locally H\"older, because it is obtained by the composition of a H\"older map with a bi-H\"older map.
    For a given $(x,z) \in \partial \Gamma^{(2)}$, the map $\partial \Gamma - \{x,z\} \to (\mathbb{RP}^{n-1})^\ast$ given by $y \mapsto y^{n-1}$ is bi-H\"older \cite{tsouvalas2023holder} with image contained in the set $l_{ij}^{\mathrm{opp}}$ of hyperplanes transverse to $l_{ij}=\mathbb{P}((x^i \cap z^{n-i+1} ) \oplus (x^j \cap z^{n-j+1}))$. 
    The intersection map from $l_{ij}^{\mathrm{opp}}$ to $l_{ij}$ is polynomial (in fact, degree 1) in coordinates, and so is locally bi-H\"older. 
    So the composition is locally bi-H\"older as well.
    H\"older continuity of the lines $(x^i \cap z^{n-i+1} ) \oplus (x^j \cap z^{n-j+1})$ in $x$ and $z$ gives the local uniformity.

    By equivariance and cocompactness, the flow $\phi_t^\alpha$ is then H\"older continuous, as is $\kappa_\rho^\alpha$.
    Note that $\kappa_\rho^\alpha$ satisfies the cocycle condition: for all $p \in \partial \Gamma^{(3)+}/\Gamma$ and lifts $\widetilde{p} \in \partial\Gamma^{(3)+}$ and $s,t\in \bbR$,
    \begin{align*} \kappa_\rho^\alpha(p, s+t) &= d^\alpha_\rho(\phi_{s+t}^{\alpha,0} (\widetilde{p}), \widetilde{p}) = d^{\alpha}_{\rho}(\phi_t^{\alpha,0}(\phi_s^{\alpha,0}(\widetilde{p})), \phi_s^{\alpha,0}(\widetilde{p})) + d^{\alpha}_\rho(\phi_s^{\alpha,0}(\widetilde{p}), \widetilde{p}) \\
    &= \kappa_\rho^\alpha(\phi_s^{\alpha,0}(p), t) + \kappa_\rho^{\alpha}(p,s).
    \end{align*}
    We have used that the restriction of $d_\rho^\alpha$ to leaves of $\overline{\mathcal{G}}$ is Euclidean here.
    So $\kappa_\rho^\alpha$ is a translation cocycle.
    It is clear from the basic properties of $\Phi^\alpha_\rho$ that $\kappa_\rho^\alpha(p,\cdot): \bbR \to \bbR$ is an increasing homeomorphism for all $p \in \partial \Gamma^{(3)+}/\Gamma$.
    That $\phi_t^\alpha$ is the associated H\"older reparameterization to $\kappa_\rho^\alpha$ is an immediate consequence of the definition of $\phi_t^\alpha$.
\end{proof}

Refraction flows for roots $\alpha$ in the sense of \cite{sambarino2024report} are obtained as H\"older reparameterizations of the geodesic flow on $\mathrm{T}^1S$ corresponding to H\"older cocycles $\kappa_R^\alpha$ whose periods are $\ell_{\kappa_R}^\alpha(\gamma) = \ell_\rho^\alpha(\gamma)$ for all $\gamma\in \Gamma$ \cite{bridgeman2015pressure,sambarino2014quantitative,sambarino2015orbital}.
So a corollary of Liv\v sic's Theorem \cite{livsic1972cohomology} and Theorem \ref{thm-genl-flow} is a connection of our perspective to the theory of refraction flows:

\begin{corollary}\label{cor-flow-connection}
    Let $\alpha$ be a positive root of $\mathfrak{sl}_n(\bbR)$ and let $\rho \in \mathrm{Hit}_n(S)$.
    Let $\kappa_R^\alpha$ be the associated H\"older cocycle of the $\alpha$-refraction flow.
    Then $\kappa_\rho^\alpha$ and $\kappa_R^\alpha$ are Liv\v sic cohomologous.
\end{corollary}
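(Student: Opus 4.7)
The plan is to deduce the corollary directly from Liv\v{s}ic's cohomology theorem \cite{livsic1972cohomology}, which asserts that two H\"older translation cocycles for a topologically transitive H\"older flow on a compact metric space are Liv\v{s}ic cohomologous as soon as their periods agree on every periodic orbit. The reference flow in question is the geodesic flow $\phi_t^{\alpha,0}$ on $\partial\Gamma^{(3)+}/\Gamma \cong \mathrm{T}^1S$ of the fixed hyperbolic metric, which is compact and smoothly (in particular H\"older) transitive.

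First I would check that both $\kappa_\rho^\alpha$ and $\kappa_R^\alpha$ are admissible inputs to Liv\v{s}ic's theorem. That $\kappa_\rho^\alpha$ is a H\"older translation cocycle for $\phi_t^{\alpha,0}$ is exactly the content of part (\ref{claim-conjugate-holder-genl}) of Theorem \ref{thm-genl-flow}, so no further work is required. That $\kappa_R^\alpha$ is a H\"older translation cocycle is part of the standard construction of refraction flows recalled in \cite{bridgeman2015pressure, sambarino2015orbital, sambarino2024report}.

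Next I would match the periods. For a periodic orbit $g_{\gamma^-\gamma^+}$ of $\phi_t^{\alpha,0}$ corresponding to a conjugacy class $[\gamma] \subset \Gamma - \{e\}$, the period of $\kappa_\rho^\alpha$ is $\ell_\rho^\alpha(\gamma)$ by part (\ref{claim-root-flow-genl}) of Theorem \ref{thm-genl-flow}. The period of $\kappa_R^\alpha$ over the same orbit is also $\ell_\rho^\alpha(\gamma)$; this is the defining property of the translation cocycle associated to the $\alpha$-refraction flow in the formulation of \cite{sambarino2024report}. Since the map from conjugacy classes in $\Gamma - \{e\}$ to periodic orbits of $\phi_t^{\alpha,0}$ is a bijection, the two cocycles have the same periods over all periodic orbits.

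Liv\v{s}ic's theorem applied to $\kappa_\rho^\alpha - \kappa_R^\alpha$ then yields a continuous transfer function $U \colon \partial\Gamma^{(3)+}/\Gamma \to \bbR$ realizing the Liv\v{s}ic cohomology. There is essentially no obstacle: the substantive work was carried out in Theorem \ref{thm-genl-flow} and in the existing theory of refraction flows, and this corollary is a bookkeeping step that records the compatibility of the two constructions.
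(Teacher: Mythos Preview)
Your proposal is correct and mirrors the paper's argument exactly: the paper states this corollary as an immediate consequence of Liv\v{s}ic's theorem together with Theorem \ref{thm-genl-flow} and the defining period property of $\kappa_R^\alpha$, without further detail. One small remark: Liv\v{s}ic's theorem requires more than topological transitivity (it needs the Anosov closing lemma, which holds for the hyperbolic geodesic flow), so your statement of the hypothesis is slightly loose, but the application is valid since the reference flow is Anosov.
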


\subsection{Exceptional Regularity}\label{sss-exceptional-regularity}

A final observation about our flows is that the Hilbert root flow has exceptional regularity properties, namely it is $C^{1+\alpha}$.
This is not true for $\phi^{\mathrm{ta}}_t$, which one may directly see is only H\"older.
Examining the proof of Theorem \ref{thm-genl-flow} leads us to expect this exceptional regularity should only hold in our construction for the highest root.

We emphasize that the regularity we find is only in the \textit{flows}, and \textit{not} in their reparameterizations to the geodesic flow on a reference hyperbolic structure.
The reparameterization maps will never have better than H\"older regularity due to e.g. \cite{tsouvalas2023holder}. 
In this specific case, however, the flow has greater regularity than the reparameterization maps.

Let $\mathrm{H} = \alpha_{1n}$ be the highest root of $\mathfrak{sl}_n(\bbR)$.
Then $\Phi^{\mathrm{H}}_{\rho}(x,y,z) = (x^1 \oplus z^1) \cap y^{n-1}$.
The exceptional simplicity of this expression, in comparison to the general form of $\Phi_\rho^\alpha$, leads to:

\begin{proposition}[Exceptional Regularity: Hilbert Flow]\label{prop-exceptional-hilbert}
        If $n > 3$, there is an $\alpha > 0$ so that the image of $\Phi_{\rho}^{\mathrm{H}}$ in $\RP^{n-1}$ is $C^{1+\alpha}$.
    With respect to the $C^{1+\alpha}$ structure on $\mathrm{T}^1S$ induced by $\Phi_\rho^{\mathrm{H}}$, the flow $\phi^{\mathrm{H}}_t$ is $C^{1 + \alpha'}$ for an $\alpha' > 0$, in the sense that $\phi_t^\mathrm{H}$ integrates a $C^{1+\alpha'}$ vector field on the image of $\Phi_\rho^{\mathrm H}$.

    For $n = 3$, there is a canonical real-analytic structure on $\mathrm{T}^1S$ induced by $\rho$, with respect to which the flow $\phi_t^\mathrm{H}$ is $C^{1+\alpha'}$.
\end{proposition}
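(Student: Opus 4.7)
The plan is to handle $n = 3$ and $n > 3$ in parallel. The common feature in both cases is that the image $\mathcal{I} = \Phi_\rho^{\mathrm H}(\partial \Gamma^{(3)+})$ is a union of projective chords $x^1 \oplus z^1$ for $(x,z) \in \partial \Gamma^{(2)}$, and the flow $\phi_t^{\mathrm H}$ slides along these chords at the rate prescribed by a Hilbert cross-ratio. The extra regularity originates in the observation that $\Phi_\rho^{\mathrm H}$ uses only $\xi^1$ and $\xi^{n-1}$, and that the curve $\xi^1$ is itself $C^{1+\alpha}$ for some $\alpha > 0$: its tangent line $y \mapsto y^2$ is H\"older-continuous by the Hitchin regularity results of \cite{pozzettiSambarinoWienhard2021conformality, tsouvalas2023holder}.

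For $n = 3$, the image $\mathcal{I}$ is exactly the divisible convex domain $\mathcal{C}_\rho$, and $\Phi_\rho^{\mathrm H}$ factors as the standard identification $\partial \Gamma^{(3)+} \cong \mathrm{T}^1 \mathcal{C}_\rho$ followed by the bundle projection $\mathrm{T}^1 \mathcal{C}_\rho \to \mathcal{C}_\rho$. The canonical real-analytic structure on $\mathrm{T}^1 S$ is the one induced by the real-analytic projective structure on $\mathcal{C}_\rho / \rho(\Gamma)$. Benoist's theorem \cite{benoist2004convexesI} furnishes $C^{1+\alpha}$-regularity of $\partial \mathcal{C}_\rho$, and a direct cross-ratio computation identifies $\phi_t^{\mathrm H}$ with the Hilbert geodesic flow of $\mathcal{C}_\rho$. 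Its generator is polynomial in projective coordinates and the two boundary endpoints of the chord through $p$, both of which depend $C^{1+\alpha}$-smoothly on $p$, yielding the claimed $C^{1+\alpha'}$-regularity.

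For $n > 3$, local injectivity from Proposition \ref{prop-genl-maps-basics} makes $\mathcal{I}$ a topological 3-manifold. The crucial identification is that the tangent 3-plane at $p = \Phi_\rho^{\mathrm H}(x,y,z)$ is the projective 3-plane $\bbP(x^2 \oplus z^2)$ through $p$: the $y$-derivative lies along the chord $x^1 \oplus z^1$, while the $x$- and $z$-derivatives lie in $\bbP(x^2 \oplus z^1) \cap y^{n-1}$ and $\bbP(x^1 \oplus z^2) \cap y^{n-1}$, and a dimension count using hyperconvexity shows these together span the tangent to the secant variety of $\xi^1$ at $p$. Since $y \mapsto y^2$ is $\alpha$-H\"older, the assignment $p \mapsto T_p \mathcal{I}$ is H\"older, upgrading $\mathcal{I}$ to a $C^{1+\alpha}$-submanifold of $\RP^{n-1}$ and equipping $\mathrm{T}^1 S$ with a $C^{1+\alpha}$-atlas via $\Phi_\rho^{\mathrm H}$.

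With this atlas in hand, the flow vector field $V(p)$ is tangent to the unique chord through $p$, with magnitude determined by the Hilbert cross-ratio along that chord. The chord assignment $p \mapsto x^1(p) \oplus z^1(p) \in \Gr_2(\bbR^n)$ is $C^{1+\alpha}$ because it inverts the ruling of $\mathcal{I}$ by the secant surface $\Sigma = \{ x^1 \oplus z^1 \}$, which is transverse to $V$; the endpoints are then recovered as the intersection of the chord with $\xi^1(\partial \Gamma)$, so $p \mapsto (x^1(p), z^1(p))$ is also $C^{1+\alpha}$. Since $V$ depends polynomially on these data, $V$ is $C^{1+\alpha'}$ for some $\alpha' > 0$. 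The main technical obstacle is upgrading the a priori only H\"older regularity of the chord data to $C^{1+\alpha}$-regularity with respect to the new atlas on $\mathcal{I}$, which in particular requires showing that $\Sigma$ is itself a $C^{1+\alpha}$-surface in $\Gr_2(\bbR^n)$ and that the ruling of $\mathcal{I}$ over $\Sigma$ is regular enough to transfer regularity of endpoints to regularity of $V$.
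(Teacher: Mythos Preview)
Your immersion argument for $n>3$ matches the paper's: both compute the three partial derivatives of $\Psi(x^1,y^{n-1},z^1)=(x^1\oplus z^1)\cap y^{n-1}$, identify the tangent 3-plane with $\bbP(x^2\oplus z^2)$, and conclude that $\Psi$ restricts a smooth map to a $C^{1+\alpha}$ domain with full-rank differential (the paper cites Zhang--Zimmer rather than \cite{pozzettiSambarinoWienhard2021conformality} for the regularity of $\xi^k$, but this is cosmetic).

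Where you diverge is in the flow-regularity step, and here you make life harder than necessary. You try to recover the chord $x^1(p)\oplus z^1(p)$ intrinsically from $p\in\mathcal I$ via the secant surface $\Sigma\subset\Gr_2(\bbR^n)$, which creates the ``technical obstacle'' you flag: you would need $\Sigma$ to be a $C^{1+\alpha}$ surface and the ruling map $\mathcal I\to\Sigma$ to be $C^{1+\alpha}$ in the new atlas, neither of which you establish. The paper bypasses this entirely: since $\Psi$ is a $C^{1+\alpha}$ immersion, its local inverse $p\mapsto(x^1,y^{n-1},z^1)$ is already $C^{1+\alpha}$ in the induced atlas, and then transversality of the chord $x^1\oplus z^1$ to the $C^{1+\alpha'}$ curve $\xi^1(\partial\Gamma)$ makes the endpoints---hence the cross-ratio generator of $\phi_t^{\mathrm H}$---depend $C^{1+\alpha'}$-smoothly on $p$. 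No excursion to $\Gr_2$ is needed.

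Your $n=3$ paragraph contains a factual error: the image of $\Phi_\rho^{\mathrm H}(x,y,z)=(x^1\oplus z^1)\cap y^2$ lies \emph{outside} $\overline{\mathcal C_\rho}$ (the tangent line $y^2$ is a supporting line, so cannot meet the open chord inside $\mathcal C_\rho$); indeed $\pr_1\circ\Phitr$ lands in $\pr_1(\Omega_\rho^2)$. So $\Phi_\rho^{\mathrm H}$ is not the bundle projection $\mathrm T^1\mathcal C_\rho\to\mathcal C_\rho$. Your moral identification of $\phi_t^{\mathrm H}$ with the Hilbert geodesic flow is correct (same periods), and the canonical real-analytic structure on $\mathrm T^1S$ does come from the convex projective structure---but via one of the flag developing maps of \S3, not via $\Phi_\rho^{\mathrm H}$ as an immersion into $\RP^2$ (which it is not: it fails local injectivity for $n=3$).
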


The case of $n =3$ is a pleasant exercise with some technical differences to the generic case for dimensional reasons; we focus on $n > 3$.
The proof of Proposition \ref{prop-exceptional-hilbert} is based on some explicit computations of derivatives.
Since it is a straightforward application of techniques in \cite[\S 3.2]{nolte2024metrics}, we only give a detailed sketch a proof here.

Our method of proof is to show that $\Phi^{\mathrm{H}}_{\rho}$ has $C^{1 + \alpha}$ immersed image in $\RP^{n-1}$ and the flow $\phi_t^\mathrm{H}$ integrates a $C^{1+\alpha'}$ vector field on this image.
The basic observation is that the limit curves $\xi^k(\partial \Gamma)$ inside of $\mathrm{Gr}_k(\bbR^n)$ are $C^{1+\alpha}$ as a standard consequence of work of Zhang-Zimmer \cite[Theorem 1.1]{zhangZimmer2019Regularity}.
Furthermore, the curves $\xi^k(\partial \Gamma)$ have tangent lines that may be explicitly computed \cite[\S 3.2]{nolte2024metrics}.
Intersection and sum of subspaces in general position, viewed as maps on Grassmannians, have explicit differentials as well.
So we consider the map $\Psi: (\xi^1(\partial \Gamma) \times \xi^{n-1}(\partial \Gamma) \times \xi^1(\partial \Gamma)) \to \RP^{n-1}$ given by $(x^1, y^{n-1}, z^1) \mapsto (x^1 \oplus z^1) \cap y^{n-1}$.

For immersion, let $v_x \in \mathrm{T}_{x^1}\xi^1(\partial \Gamma)$, $v_y \in \mathrm{T}_{y^{n-1}} \xi^{n-1}(\partial\Gamma)$, and $v_z \in \mathrm{T}_{z^1}\xi^{1}(\partial \Gamma)$ be nonzero and $p = \Phi_\rho^H(x,y,z)$.
The formulas for differentials of intersection and sum show that $D\Psi(v_x)$ is nonzero and tangent to the projective line $(x^2 \oplus z^1) \cap y^{n-1}$, that $D\Psi(v_y)$ is nonzero and tangent to the projective line $x^1 \oplus z^1$, and $D\Psi(v_z)$ is nonzero and tangent to the projective line $(x^1 \oplus z^2) \cap y^{n-1}$.
We must show that these lines span a $3$-plane in $\bbR^{n}/p$.
As $x^1 \oplus z^1$ is transverse to $y^{n-1}$, it suffices to show that $D\Psi(v_x)$ and $D\Psi(v_z)$ span a plane.
We claim \begin{align*}
    ((x^2 \oplus z^1) \cap y^{n-1}) + ((x^1 \oplus z^2) \cap y^{n-1}) = (x^2 \oplus z^2) \cap y^{n-1},
\end{align*} which spans a projective plane after projection to $\bbP(\bbR^n/p)$, as desired.

The general position features of the set-up used to prove this are that $\bbR^n = x^1 \oplus y^{n-1}$ and $x^2 \cap z^2 = \{0\}$ because $n > 3$.
Now,
\begin{align*}
    ((x^2 \oplus z^1) \cap y^{n-1}) \cap ((x^1 \oplus z^2) \cap y^{n-1}) \subset ((x^2 \oplus z^1) \cap (x^1 \oplus z^2)) \cap y^{n-1} = (x^1 \oplus z^1) \cap y^{n-1}.
\end{align*}
We conclude that $((x^2 \oplus z^1) \cap y^{n-1}) + ((x^1 \oplus z^2) \cap y^{n-1})$ has dimension $3$ and is contained in $(x^2\oplus z^2) \cap y^{n-1}$, and hence is $(x^2\oplus z^2) \cap y^{n-1}$ as desired.
So $\Psi$ is the restriction of a smooth map to a 
$C^{1+\alpha}$ submanifold on which its differential has full rank, and hence the image of $\Phi^{\mathrm{H}}$ is $C^{1+\alpha}$.

Finally, to see the $C^{1+\alpha'}$ regularity of $\phi^{\mathrm{H}}_t$, we note that $d^{\mathrm{H}}_\rho$ depends smoothly (in fact analytically) on the distance between $\Phi^{\mathrm{H}}_\rho(x,y,z)$ and the endpoints $x^1$ and $z^1$.
Since $\xi^1(\partial \Gamma)$ is $C^{1+\alpha'}$ and the lines $x^1 \oplus z^1$ are transverse to $\xi^1(\partial \Gamma)$, the distance between points $p$ in $x^1 \oplus z^1$ and the endpoints $x^1$ and $z^1$ in $\xi^1(\partial \Gamma)$ is $C^{1+\alpha'}$ in $p$, as $p$ varies in $\RP^{n-1}$.
So $\phi^{\mathrm{H}}_t$ is the (unique, by $C^1$ regularity) flow of a $C^{1+\alpha'}$ vector field, and hence is $C^{1+\alpha'}$.

\section{The Classification of Concave Foliated Flag Structures}\label{s-core}
We prove Theorem \ref{thm-technical-main} in this section.
While concave foliation ends up being a quite restrictive condition, this not immediately clear.
Our task in this section is to develop the relevant structure to see the rigidity of concave foliation, and this ends up being technically involved.
As in \cite{guichard2008convex}, our proof shows that the holonomy of any concave foliated projective structure factors through $\Gamma$ and constructs a hyperconvex Frenet curve for the associated representation $\text{hol}_*\colon \Gamma \to \text{SL}_3(\bbR)$.

The method of our proof is to gradually build up constraints on the structure of a concave foliated $(\SL_3(\bbR), \mathscr{F}_{1,2})$-structure on $\mathrm{T}^1S$ until rigidity may be deduced.
The argument's structure is:
\begin{enumerate}
    \item Produce candidate entries of the eventual hyperconvex Frenet curve $\xi^1$ and $\xi^2$ from $\widetilde{\partial \Gamma}$ to $\RP^2$ and $\text{Gr}_2(\bbR^3)$ from the definition of concave foliation.
    \item Prove that both $\xi^1$ and $\xi^2$ are continuous and that $\xi^1$ satisfies a local injectivity property.
    \item From concave foliation, for all $(x, y) \in \widetilde{\partial \Gamma}^{(2)}_{[0]}$ there is a projective line $\mathcal{L}(x,z)$ containing the projection of $\text{dev}(g_{xz})$ to $\RP^2$. 
    Prove that $(x,z) \mapsto \mathcal{L}(x,z)$ is continuous.
    \item Prove that $\text{hol}(\gamma)$ is diagonalizable over $\bbR$ for all $\gamma \in \overline{\Gamma}$ of $0$ translation, with distinct eigenvalues, and constrain the locations of eigenlines.
    \item Using semicontinuity properties of the concave regions in $\RP^2$ and the dynamics of the action of $\overline{\Gamma}$ on $\widetilde{\partial \Gamma}^{(2)}_{[0]}$, show non-degeneracy of the images of $\xi^1$ and $\xi^2$.
    \item Prove $\text{hol}(\tau)$ is trivial, and hence $\text{hol}$ factors through $\Gamma$.
    \item Prove that the induced map $\text{hol}_* \colon \Gamma \to \text{SL}_3(\bbR)$ is injective and preserves a properly convex domain, hence is Hitchin.
    \item Classify all possibilities for images of leaves of $\overline{\mathcal{G}}$ under $\dev$. 
    Deduce that $(\dev,\hol)$ is equivalent as a $(\text{SL}_3(\bbR), \mathscr{F}_{1,2})$-structure to a model example.
\end{enumerate}

Though our proof has a similar global structure to that of Guichard-Wienhard and a few lemmas are closely analogous, early technical differences quickly send the proofs into quite different structures at a finer level.
For the convenience of a reader familiar with Guichard-Wienhard's proof, we note a few differences that affect the structure of the proof:
\begin{enumerate}
    \item The semicontinuity features satisfied by the complementary convex sets invert relative to Guichard-Wienhard's setting,
    \item The existence of Barbot representations, a certain family of Borel-Anosov representations $\Gamma \to \mathrm{SL}_3(\bbR)$ that are not Hitchin (e.g. \cite{barbot2010three}), has something of a shadow in our proof.
    Namely, at a crucial moment in the proof (in the proof of Proposition \ref{lemma-not-line-contained}) we must obstruct qualitative structure satisfied by Barbot representations. It is not known if similar such representations exist for $n = 4$.
    \item Choi-Goldman's theorem \cite[Theorem A]{choi1993convex} is available in our setting and is used in the below to simplify the final stage of the main argument.
    \item Our concave foliated condition is flexible enough to allow for the presence of three connected components in $\mathscr{D}^{\text{cff}}_{\mathscr{F}_{1,2}}(\text{T}^1S)$, rather than one in \cite{guichard2008convex}.
\end{enumerate}

\subsection{From Concave Foliations to Hitchin Holonomy} 
For all of the following, let $(\dev, \hol, \mathcal{F}, \mathcal{G})$ be a concave foliated $(\SL_3(\bbR), \mathscr{F}_{1,2})$-structure on $\mathrm{T}^1S$, modelled as $\partial \Gamma^{(3)+}/\Gamma$ and with $\overline{\mathcal{F}}$ and $\overline{\mathcal{G}}$ the foliations of \S \ref{s-background} on $\partial \Gamma^{(3)+}$.
Denote the leaf of $\overline{\mathcal{F}}$ corresponding to $x \in \partial \Gamma$ by $f_x$ and the leaf of $\overline{\mathcal{G}}$ corresponding to $(x,z) \in \partial \Gamma^{(2)}$ as $g_{xz}$, and adopt identical notation for leaves of $\widetilde{\mathcal{F}}$ and $\widetilde{\mathcal{G}}$.

\subsubsection{The Candidate Curve}

We begin by noting that our hypotheses distinguish points and lines in $\RP^2$ for each $x \in \widetilde{\partial \Gamma}$.
We shall eventually show that these specify a hyperconvex Frenet curve.

\begin{definition}\label{def:xi1}
    For any $x \in \widetilde {\partial \Gamma}$, the image of $\mathrm{dev}(f_x)$ is a lift of a concave region about a unique point.
    Denote this concave region as $B_x$ and call this point $\xi^1(x)$.
    Denote the maximal open domain contained in $\RP^2 - B_x$ by $C_x$ (which is properly convex).
 \end{definition}

Some basic properties of $\xi^1$ may be verified immediately:

\begin{lemma}\label{basic-curve-lemma}
    The map $\xi^1 \colon \widetilde{\partial \Gamma} \to \mathbb{RP}^2$ is continuous and $\xi^1$ is locally injective. 
    Furthermore, $\xi^1(x) \neq \xi^1(y)$ for all $(x, y) \in \widetilde{\partial \Gamma}_{[0]}^{(2)}$.     
\end{lemma}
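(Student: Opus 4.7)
The proof splits into three parts, matching the three claims.

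\emph{Continuity.} Because $\dev(f_{x_0})$ is a two-dimensional concave domain lift about $\xi^1(x_0)$, its line-entries sweep out all but one line of the pencil through $\xi^1(x_0)$. I choose triples $p_i = (x_0, y_i, z_i) \in f_{x_0}$ $(i = 1, 2)$ whose line-entries $l_i = \dev^2(p_i)$ are distinct; both lines pass through $\xi^1(x_0)$, so $\xi^1(x_0) = l_1 \cap l_2$. For $x$ near $x_0$, openness of $\widetilde{\partial\Gamma}^{(3)+}$ places each $(x, y_i, z_i)$ in this set, continuity of $\dev$ makes $\dev^2(x, y_i, z_i)$ depend continuously on $x$, and openness of transversality gives $\xi^1(x) = \dev^2(x, y_1, z_1) \cap \dev^2(x, y_2, z_2)$ as a continuous function of $x$.

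\emph{Local injectivity.} Fix $p_0 = (x_0, y_0, z_0)$ and a neighborhood $U$ of $p_0$ on which $\dev|_U$ is a homeomorphism onto an open set $V \subset \mathscr{F}_{1,2}$, and set $v_0 = \dev^1(p_0)$. I consider the one-dimensional arc $F$ of flags in $V$ with first entry $v_0$. Each $(v_0, l) \in F$ lies under $\dev|_U^{-1}$ in a unique leaf $f_{x(l)}$, and the concave-domain-lift structure forces $\xi^1(x(l)) \in l \setminus \{v_0\}$. If $\xi^1(x(l)) = \xi^1(x(l'))$ for distinct $l, l' \in F$, the common value lies in $l \cap l' = \{v_0\}$ (both lines contain $v_0$), contradicting $\xi^1(x(l)) \neq v_0$; and the map $l \mapsto x(l)$ is itself a continuous injection (since each concave domain lift contains at most one flag with a given first entry), hence a local homeomorphism between one-manifolds onto an open neighborhood of $x_0$. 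On this neighborhood, $\xi^1$ is therefore injective.

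\emph{The third claim.} This is the main obstacle: $\xi^1(x) \neq \xi^1(y)$ for $(x, y) \in \widetilde{\partial\Gamma}^{(2)}_{[0]}$ is a genuinely global statement and cannot be extracted from the fiber-of-$\dev^1$ picture alone. Suppose toward contradiction that $\xi^1(x) = \xi^1(y) = p$. Then $\dev(f_x)$ and $\dev(f_y)$ are two concave domain lifts about the common point $p$, both contained in the two-dimensional pencil of flags with line-entries through $p$, and the segment lift $\dev(g_{xy}) \subset \dev(f_x)$ has all flag-lines through $p$. My plan is to study the limit of $\dev(x, s, y)$ as $s \to y$ along the geodesic leaf $g_{xy}$: compactness of $\mathscr{F}_{1,2}$ yields subsequential limits, and by approximating the triple $(x, s, y)$ with triples supported in leaves $f_s$ whose parameters $s$ approach $y$, together with continuity of $\xi^1$ and the concave-domain-lift structure of $\dev(f_y)$, I aim to force the limiting first entry to be $\xi^1(y) = p$. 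Since $p \notin B_x$, this boundary flag must sit at the supporting-line end of the line containing $\dev^1(g_{xy})$; a parallel boundary analysis on the $f_y$ side should then contradict local injectivity of $\xi^1$ or proper convexity of the segment. The delicate step is making this limiting argument rigorous from the raw concave-foliation hypothesis alone --- we do not yet have any hyperconvex Frenet structure, or even any equivariance of $\xi^1$ under $\tau$, to exploit.
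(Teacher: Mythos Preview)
Your arguments for continuity and local injectivity are correct. The paper proves local injectivity by a slightly different bookkeeping---it observes that $\pr_1\circ\dev$ restricted to $U\cap f_x$ is a homeomorphism onto an open set of $\RP^2$, then notes that if $\xi^1(x')=\xi^1(x)$ for $x'$ near $x$, a triple in $f_{x'}$ and a triple in $f_x$ with the same first projection would have the same full flag, contradicting injectivity of $\dev$ on $U$---but your fiber-of-$\dev^1$ picture is an equivalent way of organizing the same observation.

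The third claim has a genuine gap, and it stems from a misunderstanding. You write that ``we do not yet have \ldots\ any equivariance of $\xi^1$,'' but this is false: since $\dev$ is $\hol$-equivariant and the foliation $\widetilde{\mathcal{F}}$ is $\overline{\Gamma}$-invariant, the definition of $\xi^1$ as the base point of the concave lift $\dev(f_x)$ immediately gives $\xi^1(\gamma x)=\hol(\gamma)\,\xi^1(x)$ for every $\gamma\in\overline{\Gamma}$. This equivariance is exactly the missing ingredient, and the paper's proof uses it as follows. Given $(x,y)\in\widetilde{\partial\Gamma}^{(2)}_{[0]}$, choose a zero-translation element $\gamma\in\overline{\Gamma}$ with $(\gamma^-,x,\gamma^+,y,\tau\gamma^-)$ positively oriented; such $\gamma$ exist by density of pole-pairs of zero-translation elements in $\widetilde{\partial\Gamma}^{(2)}_{[0]}$. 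Then $\gamma^n x$ and $\gamma^n y$ both converge to $\gamma^+$ while remaining distinct, so for large $n$ they lie in a neighborhood of $\gamma^+$ on which you have already proved $\xi^1$ is injective. Hence
\[
\hol(\gamma^n)\,\xi^1(x)=\xi^1(\gamma^n x)\neq\xi^1(\gamma^n y)=\hol(\gamma^n)\,\xi^1(y),
\]
and applying $\hol(\gamma^n)^{-1}$ gives $\xi^1(x)\neq\xi^1(y)$.

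Your proposed boundary analysis of $\dev(g_{xy})$ as $s\to y$ is not obviously salvageable at this stage: relating the endpoints of the segment $\mathcal{S}(x,y)$ to $\xi^1$ requires exactly the kind of eigenspace-location arguments that the paper develops only later (and which in turn rely on the present lemma). The equivariance-plus-dynamics trick sidesteps all of this in two lines.
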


\begin{proof}
    Continuity of $\xi^1$ follows from continuity of $\text{dev}$ and continuity of points of intersection between transverse projective lines in $\RP^2$.
    To see that $\xi^1$ is locally injective, let $U$ be a neighborhood of a point $(x,y,z) \in \widetilde{\partial \Gamma^{(3)+}}$ on which $\text{dev}$ is a homeomorphism onto its image and $U_x = U \cap f_x$. Note that $\pr_1 \circ \dev$ restricted to $U_x$ is a homeomorphism of $U_x$ onto a neighborhood in $\RP^2$.
    
    After restricting $U$ to a smaller neighborhood $V$, we may arrange for $\pr_1(\text{dev}(V)) \subset \pr_1(\text{dev}(U_x))$. Then for any $p \in V - (U_x \cap V)$ we have $\xi^1(p) \neq \xi^1(x)$ as $\text{dev}$ is injective on $U$.

    To see the stronger form of local injectivity, let $\gamma \in \overline{\Gamma}$ be an element of zero translation with fixed point $\gamma^{+} \in \widetilde{\partial \Gamma}$ neither $x$ nor $y$ and so $(\gamma^-, x,\gamma^+, y, \tau \gamma^-)$ is positively oriented. Then by local injectivity of $\xi^1$ at $\gamma^{+}$, for $n$ sufficiently large, $$\hol(\gamma^{n})\xi^1(x) = \xi^1(\gamma^n x) \neq \xi^1(\gamma^n y) = \hol(\gamma^{n})\xi^1(y),$$ giving the claim.
\end{proof}

We now turn to the map from $\widetilde{\partial \Gamma}$ to projective lines.

\begin{definition}\label{def:xi2}
    Let $x \in \widetilde{\partial \Gamma}$. Since $B_x$ is concave, there is a unique projective line contained in the complement of $B_x$. Call this line $\xi^2(x)$.
\end{definition}

We emphasize that it is not immediately obvious that $\xi^1(x)$ lies in $\xi^2(x)$; we will prove this in Lemma \ref{lem:xi1 in xi2}.
Likewise, the proof that $\xi^2$ is not constant is postponed until Lemma \ref{lemma-xi-2-nonconstant}.
However, continuity is accessible:

\begin{lemma}
    The map $\xi^2 \colon \widetilde{\partial \Gamma} \to \mathrm{Gr}_2(\mathbb{R}^3)$ is continuous.
\end{lemma}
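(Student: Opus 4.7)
The plan is to exploit the fact that $\xi^2(x)$ has a purely intrinsic characterization in terms of the concave domain $B_x$---namely as the unique projective line contained in $\RP^2 - B_x$---and to derive continuity from a form of lower semicontinuity of the assignment $x \mapsto B_x$. By compactness of $\Gr_2(\bbR^3)$, it suffices to show that for any sequence $x_n \to x$ in $\widetilde{\partial \Gamma}$ and any subsequential limit $\xi^2(x_{n_k}) \to L$ in the Grassmannian, one has $L = \xi^2(x)$. If instead $L \neq \xi^2(x)$, then uniqueness of the line in $\RP^2 - B_x$ forces $L \cap B_x \neq \emptyset$, and I plan to reach a contradiction by producing points $v_k \in \xi^2(x_{n_k})$ accumulating at a point $v_0 \in L \cap B_x$ which are nevertheless forced to lie in $B_{x_{n_k}}$.

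The geometric input is the openness of the total space
\[ \mathcal{B} = \{(x, v) \in \widetilde{\partial \Gamma} \times \RP^2 : v \in B_x\}. \]
I plan to show this by identifying $\mathcal{B}$ with the image of the map
\[ F \colon \widetilde{M} \to \widetilde{\partial \Gamma} \times \RP^2, \qquad F(p) = (q(p), \pr_1(\dev(p))), \]
where $q \colon \widetilde{M} \to \widetilde{\partial \Gamma}$ is the continuous projection onto the leaf space $\widetilde{\mathcal{F}} \cong \widetilde{\partial \Gamma}$, and then verifying that $F$ is a local homeomorphism between $3$-manifolds. Local injectivity of $F$ reduces to local injectivity of $\pr_1 \circ \dev$ on each leaf $f_x$: since $\dev$ is a local homeomorphism, $\dev|_{f_x}$ is a local homeomorphism into $\dev(f_x)$, and $\pr_1$ restricted to the concave domain lift $\dev(f_x) = \{(v, v+\xi^1(x)) : v \in B_x\}$ is a homeomorphism onto $B_x$ directly from the definition of a lift about a point. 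Invariance of domain then promotes local injectivity of $F$ to the local homeomorphism property, so $\mathcal{B} = F(\widetilde{M})$ is open.

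Given openness of $\mathcal{B}$, the contradiction closes quickly. I pick a product neighborhood $W \times N$ of $(x, v_0)$ inside $\mathcal{B}$, so that $N \subset B_{x'}$ for every $x' \in W$. From $\xi^2(x_{n_k}) \to L$ in the Grassmannian and $v_0 \in L$, standard convergence on Grassmannians produces $v_k \in \xi^2(x_{n_k})$ with $v_k \to v_0$; for $k$ sufficiently large both $x_{n_k} \in W$ and $v_k \in N$, so $v_k \in B_{x_{n_k}}$, contradicting $\xi^2(x_{n_k}) \subset \RP^2 - B_{x_{n_k}}$. Thus $L = \xi^2(x)$, establishing continuity. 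The principal obstacle is the clean verification that $F$ is a local homeomorphism; this is the only step that relies on the definition of a concave domain lift, and it enters cleanly with no further geometric input beyond the hypotheses already in force.
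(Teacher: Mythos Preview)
Your proof is correct, and it shares with the paper the same key geometric input: the openness of $\mathcal{B} = \{(x,v) : v \in B_x\}$ in $\widetilde{\partial\Gamma} \times \RP^2$. The paper phrases this as an ``insulation'' claim (if $p \in B_x$ then there are neighborhoods $U$ of $x$ and $V$ of $p$ with $V \cap \xi^2(y) = \emptyset$ for all $y \in U$), and proves it directly from the local homeomorphism property of $\dev$ by taking a product-type neighborhood of a preimage of $p$. Your route to the same fact, via invariance of domain applied to $F = (q,\pr_1 \circ \dev)$, is a clean repackaging: the only extra ingredient is the observation that $\pr_1$ is injective on each concave domain lift, which is exactly what forces $F$ to be locally injective.

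Where the two arguments genuinely diverge is in how continuity of $\xi^2$ is extracted from openness of $\mathcal{B}$. The paper builds an explicit nested neighborhood basis $Z_n$ of $\xi^2(x)$ in $\mathrm{Gr}_2(\bbR^3)$, realizes each $Z_n$ as the set of lines lying in an open region $\mathcal{Z}_n' \subset \RP^2$, and uses compactness of the complements $K_n = \RP^2 - \mathcal{Z}_n$ together with the insulation claim to cover $K_n$ finitely and force $\xi^2(y) \in Z_n$ for $y$ near $x$. Your argument bypasses this exhaustion entirely: you pass to a subsequential limit $L$ via compactness of $\mathrm{Gr}_2(\bbR^3)$, use the uniqueness of the line in $\RP^2 - B_x$ to find $v_0 \in L \cap B_x$ if $L \neq \xi^2(x)$, and then derive a one-line contradiction from a product neighborhood in $\mathcal{B}$. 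This is shorter and more transparent; the paper's version has the minor advantage of making the neighborhood structure in $\mathrm{Gr}_2(\bbR^3)$ fully explicit, but nothing downstream depends on that.
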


\begin{proof}
    The source of continuity is the following observation:
    
    \begin{claim}\label{claim-xi-2-insulation} Suppose that $p \in B_x$.
    Then there are neighborhoods $U \subset \widetilde{\partial \Gamma}$ of $x$ and $V \subset \RP^2$ of $p$ so that $q \notin \xi^2(y)$ for any $q \in V$ and $y \in U$.
    \end{claim}

    \begin{proof}[Proof of Claim] This is a consequence of the fact that $\text{dev}$ is a local homeomorphism.
    Indeed, let $p \in B_x$.
    Then there is a triple $(x,a,b) \in \widetilde{\partial \Gamma^{(3)+}}$ so $\pr_1\circ\text{dev}(x,a,b) = p$. 
    Since $\text{dev}$ is a local homeomorphism, after restricting appropriately we may produce a neighborhood of $(x,a,b)$ of the form $U = U_{1} \times U_{2} \times U_{3} \subset \widetilde{\partial \Gamma}^3$ and a neighborhood $V \subset \RP^2$ of $p$ so that for all $y \in U_1$ we have $V \subset \pr_1\circ\text{dev}(\{y\} \times U_{2} \times U_{3})$.
    The claim then immediately follows with these neighborhoods $U$ and $V$, since $\xi^2(y)$ is in the complement of the developing image of the leaves $f_y$ ($y \in U_1$).
    \end{proof}

    Claim \ref{claim-xi-2-insulation} ``insulates'' points in the concave region $B_x$ from suddenly appearing inside a projective line of the form $\xi^2(y)$.
    We leverage this to prove continuity of $\xi^2$ by noting that the insulation is uniform on compact subsets of $B_x$ and an exhaustion argument.

    Let $x \in \widetilde{\partial \Gamma}$.
    To show that $\xi^2$ is continuous at $x$, we will show that for any 
    neighborhood $W \subset \text{Gr}_2(\bbR^3)$ of $\xi^2(x)$  there exists a neighborhood $V \subset \widetilde{\partial \Gamma}$ of $x$ so that $\xi^2(V) \subset W$.
    Pick two lines $p_1$ and $p_2$ that span $\xi^2(x)$ and nested neighborhood bases $\{Q_n\}$ and $\{W_n\}$ of $p_1$ and $p_2$ respectively that consist of small disjoint properly convex domains.
    We may take $Q_n$ and $W_n$ to be disjoint from $\overline{C}_x$ and contained in a compact subset of an affine chart $\mathcal{A}$ so that $C_x$ is contained in a compact subset of $\mathcal{A}$ and so that $p_1$ and $p_2$ are on opposite sides of $(\xi^2(x) \cap \mathcal{A}) - \partial C_x$.
    Then the sets $Z_n = \{ q_n \oplus w_n \mid q_n \in Q_n, w_n \in W_n \}$ form a nested neighborhood base of $\text{Gr}_2(\bbR^3)$ at $\xi^2(x)$.
    For each $n$, define the subset $\mathcal{Z}_n \subset \RP^2$ as the union of the points in $Z_n$, viewed as projective lines.
    Then $\mathcal{Z}_n$ is open.
    
    Furthermore if $\ell$ is any projective line contained in $\mathcal{Z}_n$, then $\ell \in Z_n$, by the construction of $Z_n$.
    We claim that for a sufficiently small fixed properly convex neighborhood $N(C_x)$ of $\overline{C_x}$, the open set $\mathcal{Z}_n' = \mathcal{Z}_n \cup N(C_x)$ retains the property that any projective line $\ell \subset \mathcal{Z}_n'$ must be an element of $Z_n$.
    
    To see this, suppose that $\ell$ is a projective line that intersects $N(C_x) - \mathcal{Z}_n$.
    We prove $\ell$ is not contained in $\mathcal{Z}_n'$.
    Work in the affine chart $\mathcal{A}$ used in our definition of $Q_n$ and $W_n$.
    Then the intersection of $\mathcal{Z}_n$ with ${N}(C_x)$ is properly convex, as the intersection of the convex hull $\Omega_n$ of $Q_n$ and $W_n$ in $\mathcal{A}$ and the properly convex domain $N(C_x)$.
    It is impossible for both points $q_1$ and $q_2$ in the intersection of $\ell$ with $\partial N(C_x)$ to be contained in $\mathcal{Z}_n$.
    Otherwise, convexity of $\Omega_n$ would imply $\ell \subset \mathcal{Z}_n$, in contradiction to our hypothesis on $\ell$.
    So $\ell$ must intersect $\partial N(C_x)$ in a boundary point not contained in $\mathcal{Z}_n$, and hence $\ell$ is not contained in $\mathcal{Z}_n \cup N(C_x)$.
    This establishes the claim.
    
    Now, the complementary regions $K_n = \RP^2 - \mathcal{Z}_n$ are compact.
    Take $n$ sufficiently large so that $Z_n \subset W$.
    Since $K_n$ is compact, Claim \ref{claim-xi-2-insulation} produces a finite covering of $K_n$ by neighborhoods $U_i$ with associated neighborhoods $V_i$ of $x$ in $\widetilde{\partial \Gamma}$ so that no point in $U_i$ is contained in $\xi^2(y)$ for any $y \in V_i$.
    Taking $V$ as the intersection of the $V_i$, then $V$ is open and $K_n$ is disjoint from $\bigcup_{y \in V} \xi^2(y)$.
    We conclude that for all $y \in V$ that $\xi^2(y) \subset \mathcal{Z}_n'$ and hence $\xi^2(y) \in Z_n \subset W$.
    So $\xi^2$ is continuous at $x$.
\end{proof}

The following are the definitions through which we primarily make use of our hypotheses on the structure of $\dev(g)$ for $g$ a leaf of $\overline{\mathcal{G}}$.

\begin{definition}\label{def:L(x,y)}
    For $(x,z)$ in $\partial \Gamma^{(2)}_{[0]}$ let $\mathcal{L}(x,z)$ be a projective line containing $\pr_1(\dev(g_{xz}))$.
    We let $\mathcal{S}(x,z)$ denote the image of $\pr_1(\dev(g_{xz}))$, which is a properly convex subset of $\mathcal{L}(x,z)$.
\end{definition}

We will show in Lemma \ref{lemma-L-dichotomy} that either $\mathcal{L}(x,z) = \xi^2(z)$ or $\mathcal{L}(x,z) = \xi^1(x) \oplus \xi^1(z)$. 
For now, some first features of $\mathcal{L}$ are:

\begin{lemma}\label{lemma-basics-on-L-and-S}  
    The projective line $\mathcal{L}(x,z)$ is uniquely determined for any $(x,z) \in \partial \Gamma_{[0]}^{(2)}$.
    The map $(x,z) \mapsto \mathcal{L}(x,z)$ is continuous. 
\end{lemma}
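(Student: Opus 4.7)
The plan is first to establish that $\mathcal{S}(x,z)$ contains more than one point (so that it uniquely determines its spanning projective line), and then to pass from there to continuity by a two-point argument.

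For uniqueness, the key input is the hypothesis that $\dev(g_{xz})$ is a segment lift, i.e.\ a set of the form $\{(v,v+p) : v \in S\}$ for $S$ a properly convex subset of a projective line and $p \notin S$. In particular, $\pr_1$ restricts to a homeomorphism from $\dev(g_{xz})$ onto the segment $S = \mathcal{S}(x,z)$. Because $\dev$ is a local homeomorphism between $3$-manifolds, its restriction to the embedded $1$-submanifold $g_{xz}$ is locally injective, and the same then holds for $\pr_1 \circ \dev|_{g_{xz}}$. It follows that $\mathcal{S}(x,z)$ contains more than one point, and hence as a properly convex subset of a projective line it uniquely determines that line, which I call $\mathcal{L}(x,z)$.

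For continuity, I would fix $(x_0,z_0) \in \widetilde{\partial \Gamma}^{(2)}_{[0]}$ and use the local injectivity just derived to pick $q_1, q_2 \in g_{x_0 z_0}$ with $p_i := \pr_1(\dev(q_i))$ distinct in $\mathcal{L}(x_0,z_0)$. Since $\widetilde{\mathcal{G}}$ is a continuous foliation by $1$-dimensional leaves, one may choose continuous local sections $q_i(x,z) \in g_{xz}$ on some neighborhood $U$ of $(x_0,z_0)$ with $q_i(x_0,z_0) = q_i$, using the global product structure of $\widetilde{\partial \Gamma^{(3)+}}$. By continuity of $\dev$ and $\pr_1$, the images $\pr_1(\dev(q_i(x,z)))$ remain distinct after possibly shrinking $U$, and each such point lies on $\mathcal{L}(x,z)$ by definition. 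Since the projective line through two distinct points of $\RP^2$ depends continuously on those points, $(x,z) \mapsto \mathcal{L}(x,z)$ is continuous at $(x_0,z_0)$.

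The only potential subtlety is the construction of the continuous sections $q_i(x,z)$, but this follows immediately from the fact that the foliation $\widetilde{\mathcal{G}}$ inherits a global product structure from $\widetilde{\partial \Gamma}^{(3)+}$, so there is no substantive obstacle beyond bookkeeping in the universal cover conventions.
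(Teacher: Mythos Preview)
Your proof is correct and follows essentially the same approach as the paper. The paper reaches the local injectivity of $\pr_1 \circ \dev|_{g_{xz}}$ via the concave domain lift structure of $\dev(f_x)$ (on which $\pr_1$ is a homeomorphism) rather than via the segment lift structure of $\dev(g_{xz})$ as you do, and then asserts continuity with a one-line reference to the argument for $\xi^1$; your explicit two-point-section argument simply spells out what that reference means.
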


\begin{proof}
    Let $x \in \partial \Gamma$.
    The image of the two-dimensional leaf $f_x$ of $\widetilde{\mathcal{F}}$ is a lifted concave region and the restriction of $\mathrm{pr}_1: \mathscr{F}_{1,2} \to \RP^2$ to any lifted concave region in $\mathscr{F}_{1,2}$ is a local homeomorphism.
    So the restriction of the composition $\pr_1 \circ \dev$ to $f_x$ is a local homeomorphism. 
    In particular, the further restriction to the one-dimensional leaf $g_{xz}$ is locally injective, and therefore entirely contained in at most one projective line.
    So $\mathcal{L}(x,z)$ is well-defined and uniquely determined.

    Continuity of $\mathcal{L}(x,z)$ then follows from the continuity of $\text{dev}$, analogously to the continuity of $\xi^1$ in Lemma \ref{basic-curve-lemma}. 
\end{proof}

\subsubsection{Holonomies of Zero Translation Elements} In this subsection, we examine the consequences of the structure of the concave regions $B_{\gamma^\pm}$ on the holonomy of elements $\gamma \in \overline{\Gamma}$ of zero translation.

Throughout the subsection, $\gamma \in \overline{\Gamma}$ is a fixed element of translation $0$ and $(\gamma^-, \gamma^+) \in \widetilde{\partial \Gamma}^{(2)}_{[0]}$ is a fixed-point pair of $\gamma$.
We begin with the general behavior of $\mathcal{L}(\gamma^-,\gamma^+)$:

\begin{lemma}\label{lemma-restriction-to-line}
    The restriction of $\hol(\gamma)$ to $\mathcal{L}(\gamma^-, \gamma^+)$ is diagonalizable over $\mathbb{R}$, with eigenvalues $\lambda_a \neq \lambda_b$ so $\lambda_a \lambda_b > 0$.
    The eigenlines $e_a$ and $e_b$ corresponding to these eigenvalues are the endpoints of $\mathcal{S}(\gamma^-, \gamma^+)$.
\end{lemma}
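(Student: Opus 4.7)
The plan is to use the preservation of the leaf $g_{\gamma^-\gamma^+}$ under $\gamma$, together with the freeness of the $\gamma$-action on this leaf coming from the zero-translation hypothesis, to pin down the action of $\hol(\gamma)$ on $\mathcal{L}(\gamma^-,\gamma^+)$.

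First I would establish that $\hol(\gamma)$ preserves $\mathcal{L}(\gamma^-,\gamma^+)$ and that its induced action on $\mathcal{S}(\gamma^-,\gamma^+)$ is free. Since $\gamma$ has zero translation, $\gamma$ fixes both $\gamma^\pm \in \widetilde{\partial \Gamma}$ and so preserves $g_{\gamma^-\gamma^+}$ as a leaf of $\widetilde{\mathcal{G}}$; by equivariance of $\dev$ and Lemma \ref{lemma-basics-on-L-and-S}, $\hol(\gamma)$ then preserves $\mathcal{S}(\gamma^-,\gamma^+)$ and its spanning line $\mathcal{L}(\gamma^-,\gamma^+)$. For freeness, note that by the argument in the proof of Lemma \ref{lemma-basics-on-L-and-S} the restriction $\pr_1 \circ \dev|_{g_{\gamma^-\gamma^+}}$ is a continuous, locally injective map from the connected $1$-manifold $g_{\gamma^-\gamma^+}$ onto the properly convex segment $\mathcal{S}(\gamma^-,\gamma^+)$, hence globally monotone and a homeomorphism; this makes $\dev|_{g_{\gamma^-\gamma^+}}$ itself a homeomorphism onto the segment lift $\dev(g_{\gamma^-\gamma^+})$. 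So the $\hol(\gamma)$-action on $\mathcal{S}(\gamma^-,\gamma^+)$ is conjugate to the $\gamma$-action on $g_{\gamma^-\gamma^+}$, which is free because $\overline{\Gamma}$ acts freely on the universal cover $\widetilde{M}$.

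Next, I would locate the fixed points of $\hol(\gamma)$ on $\mathcal{L}(\gamma^-,\gamma^+)$. Proper convexity of $\mathcal{S}(\gamma^-,\gamma^+)$ produces two distinct boundary points $e_a, e_b$ in $\mathcal{L}(\gamma^-,\gamma^+)$, which $\hol(\gamma)$ must permute. Swapping them would force a fixed point in $\mathcal{S}(\gamma^-,\gamma^+)$ by an intermediate value argument along any continuous path from $e_a$ to $e_b$ in $\overline{\mathcal{S}(\gamma^-,\gamma^+)}$, contradicting freeness. So $\hol(\gamma)$ fixes each of $e_a$ and $e_b$ individually, and hence the restriction of $\hol(\gamma)$ to the $2$-dimensional subspace underlying $\mathcal{L}(\gamma^-,\gamma^+)$ has two distinct invariant lines. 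It is therefore diagonalizable over $\mathbb{R}$ with eigenlines $e_a, e_b$ and eigenvalues $\lambda_a, \lambda_b \in \mathbb{R}$.

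The remaining spectral conditions fall out from the same dynamical picture. If $\lambda_a = \lambda_b$ then $\hol(\gamma)|_{\mathcal{L}(\gamma^-,\gamma^+)}$ acts as a scalar and hence trivially projectively on $\mathcal{L}(\gamma^-,\gamma^+)$, contradicting freeness on $\mathcal{S}(\gamma^-,\gamma^+)$; hence $\lambda_a \neq \lambda_b$. For $\lambda_a\lambda_b > 0$, observe that $\{e_a, e_b\}$ separates $\mathcal{L}(\gamma^-,\gamma^+)$ into two open arcs, distinguished by the sign of $\mu\nu$ in the $\{\mu v_a + \nu v_b\}$ coordinatization of the underlying $2$-plane (with $e_a = [v_a]$, $e_b = [v_b]$); opposite-sign eigenvalues swap these two arcs, contradicting that $\hol(\gamma)$ preserves $\mathcal{S}(\gamma^-,\gamma^+)$. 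The main conceptual point worth flagging is that the zero-translation hypothesis is precisely what supplies the free $\gamma$-action on $g_{\gamma^-\gamma^+}$ needed to rule out both endpoint swapping and the scalar eigenvalue case; the proof is then a careful packaging of this observation.
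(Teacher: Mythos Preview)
Your proof is correct and follows essentially the same approach as the paper's: both establish that $\hol(\gamma)$ preserves the properly convex segment $\mathcal{S}(\gamma^-,\gamma^+)$, fixes its two endpoints individually, and then read off the spectral conclusions from this. The only minor difference is in the justification for $\lambda_a \neq \lambda_b$: you deduce it from freeness of the $\gamma$-action on $g_{\gamma^-\gamma^+}$, while the paper observes that for $x$ with $(\gamma^-,x,\gamma^+)$ positively oriented one has $\gamma^n x \to \gamma^+$, so equivariance of $\dev$ forces the projective action on $\mathcal{L}(\gamma^-,\gamma^+)$ to be nontrivial. Your version is arguably cleaner in that the single freeness observation handles both the no-swapping of endpoints and the nontriviality of the action in one stroke, whereas the paper's terse ``by proper convexity and local injectivity of $\dev$'' leaves the no-swapping step implicit.
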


\begin{proof}
    By our assumption that $\mathcal{S}(\gamma^-, \gamma^+)$ is properly convex and the local injectivity of $\mathrm{dev}$, the two endpoints $e_a$ and $e_b$ of $\mathcal{S}(\gamma^-, \gamma^+)$ are fixed by $\hol(\gamma)$, so that $\hol(\gamma)|_{\mathcal{L}(\gamma^-, \gamma^+)}$ is real-diagonalizable with eigenlines $e_a$ and $e_b$.
    Let $\lambda_a$ and $\lambda_b$ be the two eigenvalues.
    We see that $\lambda_a \lambda_b > 0$ as $\mathcal{S}(\gamma^-, \gamma^+)$ is connected and setwise fixed by $\hol(\gamma)$.
    Now, for a fixed $x \in \widetilde{\partial \Gamma}$ so that $(\gamma^-, x, \gamma^+)$ is positively oriented, we have $\lim\limits_{n \to\ \infty} \gamma^{N}x = \gamma^+$.
    So from $\hol$-equivariance of $\text{dev}$, we see $\lambda_a \neq \lambda_b$.
\end{proof}

It is also useful to record the following:

\begin{lemma}\label{rem-L-not-equal-xi}
    For all $(x,z) \in \widetilde{\partial \Gamma}^{(2)}_{[0]}$, the line $\mathcal{L}(x, z)$ is not equal to $\xi^2(x)$.
\end{lemma}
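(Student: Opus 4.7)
The plan is to exploit the nesting of the geodesic foliation inside the weakly stable foliation: since every triple in $g_{xz}$ has first coordinate $x$, the leaf $g_{xz}$ is contained in $f_x$. Under the developing map this forces $\mathcal{S}(x,z)$ to lie inside the concave region $B_x$, which by construction is disjoint from $\xi^2(x)$. Nonemptiness of $\mathcal{S}(x,z)$ then rules out $\mathcal{L}(x,z) = \xi^2(x)$.

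Concretely, from the definitions of $\overline{\mathcal{F}}$ and $\overline{\mathcal{G}}$ in \S\ref{s-background}, every $(x,y,z) \in g_{xz}$ lies in $f_x$, so $g_{xz} \subset f_x$. Applying $\dev$ and then projecting by $\pr_1$,
\[
\mathcal{S}(x,z) \;=\; \pr_1(\dev(g_{xz})) \;\subset\; \pr_1(\dev(f_x)) \;=\; B_x,
\]
where the last equality is Definition~\ref{def:xi1}. By Definition~\ref{def:xi2}, $\xi^2(x) \subset \RP^2 \setminus B_x$, so $\xi^2(x) \cap B_x = \emptyset$. Since $g_{xz}$ is nonempty, $\mathcal{S}(x,z)$ is nonempty; if $\mathcal{L}(x,z) = \xi^2(x)$, we would obtain $\mathcal{S}(x,z) \subset \mathcal{L}(x,z) \cap B_x = \xi^2(x) \cap B_x = \emptyset$, a contradiction. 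There is no genuine obstacle here—the argument is just a matter of unwinding the definitions of $B_x$ and $\xi^2(x)$ and tracing through the inclusion of foliation leaves.
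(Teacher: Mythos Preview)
Your proof is correct and is essentially identical to the paper's own proof, which is the one-line observation that $\mathcal{S}(x,z)$ lies in the concave region $B_x$ while $\xi^2(x)$ lies in its complement. You have simply unpacked this observation more carefully by tracing the inclusion $g_{xz} \subset f_x$ through $\pr_1 \circ \dev$.
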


\begin{proof}
    $\mathcal{S}(x, z)$ is contained in the concave region $B_{x}$ and $\xi^2(x)$ is entirely contained in the complement of $B_{x}$.
\end{proof}

The relative arrangements of $\mathcal{L}$, $\xi^1$, and $\xi^2$ are sufficiently constrained so as to force the existence of three real eigenspaces of $\hol(\gamma)$:

\begin{lemma}\label{lemma-diagonalizability}
    $\hol(\gamma)$ is diagonalizable over $\mathbb{R}$.
\end{lemma}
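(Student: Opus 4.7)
The plan is to produce three linearly independent real eigenvectors of $\hol(\gamma)$. Lemma \ref{lemma-restriction-to-line} already provides two $1$-dimensional eigenspaces $V_a, V_b$ whose sum is the fixed $2$-plane $\mathcal{L} := \mathcal{L}(\gamma^-, \gamma^+)$, with distinct real eigenvalues $\lambda_a, \lambda_b$. Since $\det \hol(\gamma) = 1$, the third eigenvalue $\lambda_c = (\lambda_a\lambda_b)^{-1}$ is also real. If $\lambda_c \notin \{\lambda_a, \lambda_b\}$, the three distinct real eigenvalues give diagonalizability immediately. Otherwise, by the symmetry between $a$ and $b$, I may assume $\lambda_c = \lambda_a$, so the $\lambda_a$-eigenvalue is algebraically doubled; diagonalizability then reduces to producing a $\lambda_a$-eigenvector outside $\mathcal{L}$.

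I next exploit the additional fixed $2$-plane $\xi^2(\gamma^-)$, distinct from $\mathcal{L}$ by Lemma \ref{rem-L-not-equal-xi}. The intersection $\xi^2(\gamma^-) \cap \mathcal{L}$ is a $\hol(\gamma)$-invariant line in $\mathcal{L}$, so equal to $V_a$ or $V_b$. An eigenvalue count on $\hol(\gamma)|_{\xi^2(\gamma^-)}$ using the multiset $\{\lambda_a, \lambda_a, \lambda_b\}$ resolves all but one subcase: if $\xi^2(\gamma^-) \cap \mathcal{L} = V_b$, the second eigenvalue on $\xi^2(\gamma^-)$ must be $\lambda_a$ (since $V_b$ exhausts the $\lambda_b$-eigenspace), producing the desired $\lambda_a$-eigenvector in $\xi^2(\gamma^-) \setminus \mathcal{L}$; and if $\xi^2(\gamma^-) \cap \mathcal{L} = V_a$ with $\hol(\gamma)$ acting diagonalizably on $\xi^2(\gamma^-)$, then $\xi^2(\gamma^-)$ is itself a $2$-dimensional $\lambda_a$-eigenspace. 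In both subcases $\hol(\gamma)$ is diagonalizable.

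The sole remaining case is $\xi^2(\gamma^-) \cap \mathcal{L} = V_a$ with $\hol(\gamma)|_{\xi^2(\gamma^-)}$ a Jordan block, forcing $\xi^2(\gamma^-) = \mathcal{M}$, the unique $2$-dimensional generalized $\lambda_a$-eigenspace; the only invariant projective lines of $\hol(\gamma)$ are then $\mathcal{L}$ and $\mathcal{M}$, meeting at $e_a := [V_a]$, the unique fixed point of $\hol(\gamma)|_\mathcal{M}$. Applying Lemma \ref{rem-L-not-equal-xi} to the zero-translation fixed-point pair $(\gamma^+, \tau\gamma^-) \in \widetilde{\partial\Gamma}^{(2)}_{[0]}$---whose invariant segment must have two distinct fixed-point endpoints, necessarily $e_a$ and $e_b := [V_b]$, so that $\mathcal{L}(\gamma^+, \tau\gamma^-) = \mathcal{L}$---then also forces $\xi^2(\gamma^+) = \mathcal{M}$. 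The main obstacle is deriving a contradiction in this final configuration, where $\mathcal{M}$ is tangent to each $\overline{C_{\gamma^\pm}}$ at $e_a$ while $e_b$ lies in each $\partial C_{\gamma^\pm}$. My plan is to combine this rigid geometric setup with the semi-attracting Jordan-block dynamics of $\hol(\gamma)$ near $e_a$ and the strong local injectivity of $\xi^1$ from Lemma \ref{basic-curve-lemma}: strong local injectivity forces a nontrivial continuous $\xi^1$-arc through $e_a$ that must be invariant under the Jordan dynamics, and I expect to rule this out by showing that such an arc is incompatible with the only available invariant curves near $e_a$ (namely $\mathcal{L}$ and $\mathcal{M}$) together with the boundary condition $\xi^1(\gamma^-) = e_b$ at the opposite fixed point.
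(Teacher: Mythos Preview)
Your reduction is correct and takes a different route from the paper: you use $\det\hol(\gamma)=1$ to force the third eigenvalue real and then do a clean linear-algebra case split on multiplicities, whereas the paper argues by contradiction from ``no three non-colinear fixed points,'' combining the pairs of fixed points on $\mathcal{L}(\gamma^-,\gamma^+)$, $\mathcal{L}(\gamma^+,\tau\gamma^-)$, and $\{\xi^1(\gamma^\pm)\}$, together with a dual count of invariant projective lines, to reach $\xi^2(\gamma^+)=\xi^2(\gamma^-)$. Both routes land in the same final configuration: two invariant projective lines $\mathcal{L}$ and $\mathcal{M}=\xi^2(\gamma^+)=\xi^2(\gamma^-)$ meeting only at $e_a$, with $\{\xi^1(\gamma^+),\xi^1(\gamma^-)\}=\{e_a,e_b\}$.

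The genuine gap is your last paragraph: ``My plan is\ldots'' and ``I expect to rule this out'' are not a proof. Several assertions are unjustified---you write $\xi^1(\gamma^-)=e_b$ when a priori only $\{\xi^1(\gamma^+),\xi^1(\gamma^-)\}=\{e_a,e_b\}$ is known, and $e_b\in\partial C_{\gamma^\pm}$ would need an argument---and the Jordan-dynamics idea is not pinned down: when $|\lambda_a|>|\lambda_b|$ the point $e_a$ is globally attracting under $\hol(\gamma)$, so many topological invariant arcs pass through it and no incompatibility is evident. The paper's completion grafts directly onto your setup and avoids all of this: since $\mathcal{L}\cap\mathcal{M}=e_a$ is one of $\xi^1(\gamma^\pm)$, the condition $\xi^1(x)\in\xi^2(x)$ holds at one of $\gamma^\pm$; this condition is closed and $\overline{\Gamma}$-equivariant, so minimality of the $\overline{\Gamma}$-action on $\widetilde{\partial\Gamma}$ forces it for all $x$, giving both $\xi^1(\gamma^+),\xi^1(\gamma^-)\in\mathcal{M}$---impossible since one of them is $e_b\notin\mathcal{M}$.
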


\begin{proof}
For the sake of contradiction, suppose that $\hol(\gamma)$ does not admit three non-colinear fixed points in $\mathbb{RP}^2$.
By Lemma \ref{lemma-restriction-to-line}, we know that $\mathcal{L}(\gamma^-,\gamma^+)$ already contains two distinct fixed points of $\hol(\gamma)$. 
The same holds for $\mathcal{L}(\gamma^+,\tau \gamma^-)$.
By Lemma \ref{basic-curve-lemma}, we know that $\xi^1(\gamma^-) \neq \xi^1(\gamma^+)$ are also two distinct fixed points of $\hol(\gamma)$. 
So these three pairs of distinct fixed points must coincide, and $\mathcal{L}(\gamma^-,\gamma^+) = \mathcal{L}(\gamma^+,\tau \gamma^-) = \mathrm{Span}\{\xi^1(\gamma^-) ,\xi^1(\gamma^+)\}.$

By Lemma \ref{rem-L-not-equal-xi}, $\mathcal{L}(\gamma^-, \gamma^+) \ne \xi^2(\gamma^-)$ and $\mathcal{L}(\gamma^+, \tau \gamma^-) \neq \xi^2(\gamma^+)$.
Since $\hol(\gamma)$ does not admit three distinct fixed points in $\mathbb{RP}^{2\ast}$ we then have $\{\mathcal{L}(\gamma^-, \gamma^+) , \xi^2(\gamma^-)\} =\{\mathcal{L}(\gamma^+, \tau \gamma^-) , \xi^2(\gamma^+)\}$.
From the first paragraph we have $\mathcal{L}(\gamma^-,\gamma^+) = \mathcal{L}(\gamma^+,\tau \gamma^-)$ which then implies that $\xi^2(\gamma^+)=\xi^2(\gamma^-)$. 

The intersection of $\mathcal{L}(\gamma^-,\gamma^+)$ and $\xi^2(\gamma^-)$ is a fixed point of $\hol(\gamma)$, and so is $\xi^1(\gamma^-)$ or $\xi^1(\gamma^+)$. 
So because $\xi^2(\gamma^+) = \xi^2(\gamma^-)$, we have $\xi^1(x) \in \xi^2(x)$ for one $x \in \{\gamma^-, \gamma^+\}$.

Now, as $\xi^1$ and $\xi^2$ are continuous, the property that $\xi^1(x) \in \xi^2(x)$ is a closed condition in $x \in \widetilde{\partial \Gamma}$. As both are $\hol(\overline{\Gamma})$-equivariant and $\overline{\Gamma}$ acts minimally on $\widetilde{\partial \Gamma}$, we must have that $\xi^1(x) \in \xi^2(x)$ for all $x \in \widetilde{\partial \Gamma}$.
So $\xi^1(\gamma^-) \in \xi^2(\gamma^-)$ and also $\xi^1(\gamma^+) \in \xi^2(\gamma^+) =\xi^2(\gamma^-)$.
This forces $\mathcal{L}(\gamma^-, \gamma^+)$ to be $\xi^2(\gamma^-)$, which is a contradiction.
\end{proof}

\begin{definition}
    For $(x,z) \in \widetilde{\partial \Gamma}^{(2)}_{[0]}$, let $p_{xz} \in \RP^2$ be given by $\mathcal{L}(x,z) \cap \xi^2(x)$.
\end{definition}

Note that the map $(x,z) \mapsto p_{xz}$ is continuous as $\mathcal{L}(x,z)$ and $\xi^2(x)$ both vary continuously and are transverse for any $(x,z) \in \widetilde{\partial \Gamma}^{(2)}_{[0]}$.
We are now prepared to show that the eigenvalues of $\hol(\gamma)$ are distinct.
We retain the notation from Lemma \ref{lemma-restriction-to-line} that $e_a,$ and $e_b$ are the eigenlines of $\hol(\gamma)$ restricted to $\mathcal{L}(\gamma^-, \gamma^+)$.

\begin{lemma}[Distinct Eigenvalues]\label{lemma-three-distinct-eigenvalues}
    Let $\gamma \in \overline{\Gamma}$ have translation $0$. Then $\hol(\gamma)$ has exactly three real eigenspaces, $e_a, e_b,$ and $e_c$ whose corresponding eigenvalues have distinct moduli.
\end{lemma}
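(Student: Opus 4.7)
\textit{Plan.} By Lemma~\ref{lemma-diagonalizability}, $\hol(\gamma)$ is diagonalizable over $\mathbb R$, so it has three real eigenvalues counted with multiplicity. The strategy is to apply Lemma~\ref{lemma-restriction-to-line} twice: once to $g_{\gamma^-,\gamma^+}$, and once to the leaf $g_{\gamma^+,\tau\gamma^-}$, which is also $\hol(\gamma)$-invariant because $\tau$ is central in $\overline\Gamma$ (so $\gamma\cdot\tau\gamma^-=\tau\gamma^-$, using that $\gamma\cdot\gamma^-=\gamma^-$). The first application gives two distinct eigenvalues $\lambda_a,\lambda_b$ of $\hol(\gamma)$ with eigenlines on $\mathcal L(\gamma^-,\gamma^+)$ and $\lambda_a\lambda_b>0$; the second gives two distinct eigenvalues $\lambda_{a'},\lambda_{b'}$ with eigenlines on $\mathcal L(\gamma^+,\tau\gamma^-)$ and $\lambda_{a'}\lambda_{b'}>0$. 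In each pair, same-sign and distinct already force distinct moduli via the elementary observation that $|x|=|y|$ together with $xy>0$ forces $x=y$.

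The critical step is to show $\mathcal L(\gamma^-,\gamma^+)\neq \mathcal L(\gamma^+,\tau\gamma^-)$. Suppose for contradiction they coincide as a common $\hol(\gamma)$-invariant line $\mathcal L$. Then $\hol(\gamma)|_{\mathcal L}$ has exactly two fixed points, which must simultaneously be the endpoints of both $\mathcal S(\gamma^-,\gamma^+)\subset B_{\gamma^-}$ and $\mathcal S(\gamma^+,\tau\gamma^-)\subset B_{\gamma^+}$, while the two segments live in distinct concave regions. Using the continuity of $(x,z)\mapsto\mathcal L(x,z)$ (Lemma~\ref{lemma-basics-on-L-and-S}), I would examine the limiting behavior of $\mathcal L(\gamma^-,y)$ and $\mathcal L(y,\tau\gamma^-)$ as $y\to\gamma^+$, together with the continuity of $\xi^1,\xi^2$ and the concave-region structure around $y$, to force an incompatibility between the shared-endpoint configuration and the position of $\xi^1(\gamma^\pm)$ on the respective boundaries $\partial B_{\gamma^\pm}$.

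Granting the distinctness, the two invariant lines intersect in a single point, which is a common eigenline of $\hol(\gamma)$, so the four eigenvalues from the two pairs collapse to exactly three distinct real values $\lambda_a,\lambda_b,\lambda_c$ of $\hol(\gamma)$. Transitivity of the two same-sign conditions---via the common shared eigenvalue---then forces all three of $\lambda_a,\lambda_b,\lambda_c$ to share a sign. Since equal modulus plus equal sign would force equality and our three values are distinct, the three moduli are pairwise distinct. Moreover, three distinct eigenvalues for the diagonalizable matrix $\hol(\gamma)$ force three distinct 1-dimensional real eigenspaces, ruling out any 2-dimensional eigenspace.

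The main obstacle is establishing the distinctness of the two lines $\mathcal L(\gamma^-,\gamma^+)$ and $\mathcal L(\gamma^+,\tau\gamma^-)$, which requires a careful combination of the continuity of $\mathcal L,\xi^1,\xi^2$ with the local injectivity of $\xi^1$ (Lemma~\ref{basic-curve-lemma}) and the structural distinction between the concave regions $B_{\gamma^-}$ and $B_{\gamma^+}$. The remaining modulus comparison is essentially algebraic from the sign constraints.
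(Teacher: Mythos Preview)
Your proposal has two gaps, and the second is fatal even if the first is granted.

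First, the distinctness $\mathcal{L}(\gamma^-,\gamma^+)\neq\mathcal{L}(\gamma^+,\tau\gamma^-)$ is genuinely unproved: the sketch you offer (limits of $\mathcal{L}(\gamma^-,y)$ as $y\to\gamma^+$, positions on $\partial B_{\gamma^\pm}$) does not obviously lead anywhere at this stage of the argument, since almost nothing is yet known about the relative positions of $\xi^1,\xi^2$ and the convex sets $C_x$.

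Second, and more seriously, your ``collapse to three distinct real values'' is unjustified. Granting distinctness of the two lines, their intersection is a common eigenline, so after relabelling you have three non-collinear eigenlines with eigenvalues $\lambda_a,\lambda_b,\lambda_{b'}$ satisfying $\lambda_a\neq\lambda_b$, $\lambda_b\neq\lambda_{b'}$, and all of the same sign. But nothing you have said rules out $\lambda_a=\lambda_{b'}$: the matrix $\mathrm{diag}(\lambda,\mu,\lambda)$ with $\lambda\neq\mu$ of the same sign is diagonalizable, and the two lines $e_a\oplus e_b$ and $e_b\oplus e_{b'}$ are distinct invariant lines meeting at $e_b$, each carrying two eigenvalues of the same sign with distinct moduli. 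So your sign-transitivity argument establishes only two of the three required inequalities.

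The paper takes a different route that directly confronts the repeated-eigenvalue obstruction. It first reduces ``distinct moduli'' to ``distinct eigenvalues'' by the squaring trick: since $\hol(\gamma)$ is diagonalizable, $\hol(\gamma)$ has eigenvalues of distinct modulus if and only if $\hol(\gamma^2)$ has distinct eigenvalues, and $\gamma^2$ also has translation zero. Then, working only with $\mathcal{L}(\gamma^-,\gamma^+)$ and the third eigenline $e_c$ supplied by Lemma~\ref{lemma-diagonalizability}, it supposes a repeated eigenvalue, which forces $\hol(\gamma)$ to pointwise fix the line $L=e_b\oplus e_c$ (where $e_b$ is the endpoint of $\mathcal{S}(\gamma^-,\gamma^+)$ not lying on $\xi^2(\gamma^-)$). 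Continuity of $\mathcal{L}$ and the dynamics $\hol(\gamma)^n\mathcal{L}(\gamma^-,y)\to\mathcal{L}(\gamma^-,\gamma^+)$ then force every $\mathcal{L}(\gamma^-,y)$ to meet $L$ only at $e_b$, so the concave region $B_{\gamma^-}$ misses $L$ entirely---impossible since $L\neq\xi^2(\gamma^-)$. This foliation argument is precisely what handles the case your approach leaves open.
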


\begin{proof}
   For $\gamma$ of translation $0$, note that because of Lemma \ref{lemma-diagonalizability} the property that $\hol(\gamma)$ has three eigenvalues of distinct modulus is equivalent to $\hol(\gamma^2)$ having three distinct eigenvalues, with no assumption on the modulus.
   Since for every $\gamma$ of translation $0$, the square $\gamma^2$ has translation zero, it thus suffices to prove that $\hol(\gamma)$ has exactly three eigenspaces whose eigenvalues are distinct for every $\gamma$ of translation $0$.

    So let $\gamma$ have translation $0$.
    By Lemma \ref{lemma-diagonalizability}, there is at least one eigenline $e_c \notin \mathcal{L}(\gamma^-, \gamma^+)$, with eigenvalue $\lambda_c$.
    Then the projective line $L$ defined as the sum of $e_c$ and the eigenspace $e \neq p_{\gamma^-\gamma^+}$ in $\mathcal{L}(\gamma^-, \gamma^+)$ is not $\xi^2(\gamma^-)$ and intersects $\xi^2(\gamma^-)$ at a point $q$.
    By definition, $p_{\gamma^-\gamma^+} \in \{ e_a, e_b\}$.
    Let us assume $p_{\gamma^-\gamma^+} = e_a$ so that $L = e_b \oplus e_c$.
    The other case is handled identically.

    Let us now show that $e_a, e_b,$ and $e_c$ are the only eigenlines of $\hol(\gamma)$, that is that $q$ must be $e_c$.
    Suppose otherwise. Then Lemma \ref{lemma-restriction-to-line} forces $\hol(\gamma)$ to fix three points on $L$, and hence pointwise fix $L$.

    We claim that for all $y \in \widetilde{\partial \Gamma}$ such that $(\gamma^-,y) \in \widetilde{\partial \Gamma}^{(2)}_{[0]}$, the intersection of $\mathcal{L}(\gamma^-,y)$ and $L$ is exactly $\{e_b\}$. 
    Note that this leads to the desired contradiction.
    Indeed, this implies that each segment $\mathcal{S}(\gamma^-,y)$ cannot meet $L$.
    Since the concave region $B_{\gamma^-} = \pr_1 \circ \dev(f_{\gamma^-})$ is foliated by such segments, it follows that it must be disjoint from $L$.
    This contradicts the fact that $L$ is not equal to $\xi^2(\gamma^-)$. 

    To see the claim, for any such $y$, let $p_y$ be a point of intersection of $\mathcal{L}(\gamma^-, y)$ and $L$.
    Then $\lim_{n\to\infty} \hol(\gamma)^{n} p_y = p_y$.
    On the other hand, by continuity of $\mathcal{L}$ and equivariance, we have $\lim_{n \to \infty} \hol(\gamma)^{n} \mathcal{L}(\gamma^-, y) = \mathcal{L}(\gamma^-, \gamma^+)$.
    We conclude that $p_y \in \mathcal{L}(\gamma^-,\gamma^+)$, and hence that $p_y = e_b$.
    Note that the convergence of $\hol(\gamma)^n \mathcal{L}(\gamma^-, y)$ to $\mathcal{L}(\gamma^-,\gamma^+)$ also implies $\mathcal{L}(\gamma^-, y) \neq L$, and hence $\mathcal{L}(\gamma^-, y) \cap L = \{e_b\}$.

\end{proof}

\subsubsection{Locations of Eigenspaces} 
By Lemma \ref{lemma-three-distinct-eigenvalues}, the holonomy of any $\gamma \in \overline{\Gamma}$ of translation $0$ is \emph{loxodromic}, i.e.\ has exactly three eigenvalues of distinct moduli.

Let us introduce new notation that represents the distinct sizes of their eigenvalues.
We shall call these eigenvalues $\lambda_+, \lambda_0, $ and $\lambda_-$, and order them by decreasing modulus.
We denote the corresponding eigenlines by $e_+,e_0,$ and $e_-$, with the obvious notation.
When $\gamma \in \overline{\Gamma}$ is not clear from context, we write $e_+ = e_+(\gamma), e_0 = e_0(\gamma)$ and $e_- = e_-(\gamma)$.

We now turn to restricting the locations of eigenspaces.

\begin{lemma}\label{lemma-xi-1-at-fixed-point}
    For all $\gamma \in \overline{\Gamma}$ of translation $0$, we have $\xi^1(\gamma^+) = e_+$.
\end{lemma}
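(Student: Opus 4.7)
The plan is to combine equivariance with the forward dynamics of $\hol(\gamma)$ on $\RP^2$. Since $\gamma$ has translation zero it fixes $\gamma^+$ in $\widetilde{\partial \Gamma}$, so $\xi^1(\gamma^+)$ is a fixed point of $\hol(\gamma)$, hence (by Lemma \ref{lemma-three-distinct-eigenvalues}) one of $\{e_+, e_0, e_-\}$. First I would pick $y_0 \in \widetilde{\partial \Gamma}$ close to $\gamma^+$ with $(\gamma^-, y_0, \gamma^+)$ positively oriented; then $\gamma^n y_0 \to \gamma^+$, and continuity and equivariance of $\xi^1$ (Lemma \ref{basic-curve-lemma}) give
\[
\hol(\gamma)^n \xi^1(y_0) \;=\; \xi^1(\gamma^n y_0) \;\longrightarrow\; \xi^1(\gamma^+).
\]
The forward dynamics of $\hol(\gamma)$ on $\RP^2$ are standard: points off $L := e_0 \oplus e_-$ converge to $e_+$; points in $L \setminus \{e_-\}$ converge to $e_0$; and $e_-$ is fixed. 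In particular, if some $y_0$ arbitrarily close to $\gamma^+$ has $\xi^1(y_0) \notin L$, we are done.

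The case $\xi^1(\gamma^+) = e_-$ is immediate to rule out: it would force $\xi^1 \equiv e_-$ on a neighborhood of $\gamma^+$, violating local injectivity of $\xi^1$ (Lemma \ref{basic-curve-lemma}). Assume for contradiction the remaining case $\xi^1(\gamma^+) = e_0$, which then forces $\xi^1(U) \subset L$ for some neighborhood $U$ of $\gamma^+$. I would next identify $\mathcal{L}(\gamma^-, \gamma^+)$ by a continuity-and-dynamics argument on projective lines: the family $\{\mathcal{L}(\gamma^-, y)\}_y$ is continuous in $y$ (Lemma \ref{lemma-basics-on-L-and-S}) and foliates the two-dimensional region $B_{\gamma^-}$ by nondegenerate segments, hence is not locally constantly any single invariant line of $\hol(\gamma)$; so some $y_0 \in U$ has $\mathcal{L}(\gamma^-, y_0)$ avoiding the three invariant lines $\{e_+ \oplus e_0,\, e_+ \oplus e_-,\, e_0 \oplus e_-\}$. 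Its forward iterates then converge, under the dual action of $\hol(\gamma)$ on $\RP^{2\ast}$, to the attracting invariant line $e_+ \oplus e_0$, giving $\mathcal{L}(\gamma^-, \gamma^+) = e_+ \oplus e_0$. Disjointness of $\mathcal{S}(\gamma^-,\gamma^+) \subset B_{\gamma^+}$ from $\xi^2(\gamma^+)$, combined with $\hol(\gamma)$-invariance of $\xi^2(\gamma^+)$ and the fact that $\xi^1(\gamma^+) = e_0 \in \xi^2(\gamma^+)$, pins $\xi^2(\gamma^+) = L$.

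The main obstacle is extracting the final contradiction from this configuration. The plan is twofold. First, run the same analysis for $\gamma^{-1}$: its pathological case would give $\xi^1(\gamma^-) = e_0 = \xi^1(\gamma^+)$, contradicting Lemma \ref{basic-curve-lemma}, so necessarily $\xi^1(\gamma^-) = e_-$. Second, apply the dual forward dynamics to $\xi^2$: since $\hol(\gamma)^n \xi^2(y_0) = \xi^2(\gamma^n y_0) \to \xi^2(\gamma^+) = L$ while a generic starting line iterates to the dual attractor $e_+ \oplus e_0 \ne L$, one concludes that $\xi^2(y) = L$ for every $y \in U$. Combined with $\xi^1(U) \subset L$, this collapses the family of concave regions $\{B_y\}_{y \in U}$ against the fixed line $L$ in a way incompatible with $\dev$ being a local homeomorphism on the three-dimensional leaves $f_y$. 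Making this geometric degeneration precise, without appealing to later structural results such as Lemma \ref{lem:xi1 in xi2} or Lemma \ref{lemma-L-dichotomy}, is the main technical challenge.
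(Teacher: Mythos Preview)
Your dynamics observation---that $\hol(\gamma)^n\xi^1(\eta)\to\xi^1(\gamma^+)$ for $\eta$ near $\gamma^+$, so forward limits force $\xi^1(\gamma^+)\in\{e_+,e_0\}$---is exactly the paper's second step. The difference is entirely in how the $e_0$ case is handled. The paper dispatches it in one line \emph{before} the dynamics step, by exploiting the symmetry $\gamma\leftrightarrow\gamma^{-1}$: this involution swaps $e_+\leftrightarrow e_-$ and fixes $e_0$, and since $\xi^1(\gamma^+)\neq\xi^1(\gamma^-)$ (Lemma~\ref{basic-curve-lemma}), the two-element set $\{\xi^1(\gamma^+),\xi^1(\gamma^-)\}\subset\{e_+,e_0,e_-\}$ must be invariant under the swap and hence equals $\{e_+,e_-\}$. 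Intersecting with $\{e_+,e_0\}$ gives $\xi^1(\gamma^+)=e_+$ immediately. You actually came within reach of this when you proposed to ``run the same analysis for $\gamma^{-1}$,'' but you buried it as one ingredient of a two-part plan after a long detour, rather than using the symmetry at the outset.

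Your detour through $\mathcal{L}(\gamma^-,\gamma^+)$, $\xi^2(\gamma^+)$, and the final ``geometric degeneration'' is unnecessary and, as you acknowledge, incomplete. It also has genuine gaps at this stage of the paper: your identification $\xi^2(\gamma^+)=L$ explicitly uses $\xi^1(\gamma^+)\in\xi^2(\gamma^+)$, which is Lemma~\ref{lem:xi1 in xi2} and is only proved \emph{after} the present lemma (indeed its proof uses Lemma~\ref{lemma-xi-1-at-fixed-point}); and in the dual dynamics on lines, convergence of $\hol(\gamma)^n\mathcal{L}(\gamma^-,y_0)$ to $e_+\oplus e_0$ requires $e_-\notin\mathcal{L}(\gamma^-,y_0)$, not merely that the line avoid the three invariant lines. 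The lesson is to look for the $\gamma\leftrightarrow\gamma^{-1}$ symmetry first before reaching for heavier structural machinery.
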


\begin{proof}
    Let us first note that the bijection induced on $\{e_+, e_0, e_-\}$ by interchanging $\gamma$ and $\gamma^{-1}$ has exactly one fixed point, which is $e_0$.
    So as $\xi^1(\gamma^+) \neq \xi^1(\gamma^-)$ by Lemma \ref{basic-curve-lemma}, $\{\xi^1(\gamma^+), \xi^1(\gamma^-) \}$ is a two-element invariant subset of $\{e_+, e_0, e_-\}$ and hence $\{e_+, e_-\}$.
    
    Now for $\eta$ near $\gamma^+$,
    $$\xi^1(\gamma^+) = \lim_{n \to \infty} \xi^1(\gamma^n \eta) = \lim_{n \to \infty} \hol(\gamma)^n\xi^1(\eta) \in \{ e_0, e_+\}.$$
    Hence $\xi^1(\gamma^+) = e_+$.
    Here we have used the structure of the dynamics of $\hol(\gamma)$ acting on $\RP^2$ implied by $\hol(\gamma)$ being real-diagonalizable with three distinct eigenvalues.
\end{proof}

This basic knowledge has two useful consequences that we document.
The first is the following corollary on the central element $\tau \in \overline{\Gamma}$.

\begin{corollary}\label{cor-tau-fixes-xi-1}
    $\hol(\tau)\xi^1(x) = \xi^1(x)$ for all $x \in \widetilde{\partial \Gamma}$.
\end{corollary}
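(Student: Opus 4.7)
The strategy is to exploit that $\tau$ is central in $\overline{\Gamma}$ combined with the strong constraints on eigenvalues proved in Lemma \ref{lemma-three-distinct-eigenvalues}, and then to propagate the fixed-point information from a dense set of points to all of $\widetilde{\partial \Gamma}$ using continuity.

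Fix any $\gamma \in \overline{\Gamma}$ of translation $0$. Since $\tau$ is central, $\hol(\tau)$ commutes with $\hol(\gamma)$. By Lemma \ref{lemma-three-distinct-eigenvalues}, $\hol(\gamma)$ is loxodromic, so its three eigenlines $e_+(\gamma), e_0(\gamma), e_-(\gamma)$ are the $1$-dimensional eigenspaces of a matrix with three distinct eigenvalues. A standard linear-algebra argument then shows that $\hol(\tau)$ must preserve each eigenline individually: if $v$ spans the $\lambda$-eigenspace of $\hol(\gamma)$, then $\hol(\gamma)\hol(\tau)v = \hol(\tau)\hol(\gamma)v = \lambda\hol(\tau)v$, and since the $\lambda$-eigenspace is one-dimensional, $\hol(\tau)v$ must be a scalar multiple of $v$. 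In particular $\hol(\tau) e_+(\gamma) = e_+(\gamma)$, which by Lemma \ref{lemma-xi-1-at-fixed-point} says $\hol(\tau)\xi^1(\gamma^+) = \xi^1(\gamma^+)$.

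It remains to promote this from the set of attracting fixed-points of translation-$0$ elements to all of $\widetilde{\partial \Gamma}$. Recall from \S \ref{s-background} (Guichard-Wienhard \cite[Lemma 1.11]{guichard2008convex}) that the fixed-point pairs $(\gamma^-, \gamma^+)$ of translation-$0$ elements form a dense subset of $\widetilde{\partial \Gamma}^{(2)}_{[0]}$. Projecting this dense subset of the open subset $\widetilde{\partial \Gamma}^{(2)}_{[0]}$ of $\widetilde{\partial \Gamma}^{2}$ onto the second coordinate produces a dense subset of $\widetilde{\partial \Gamma}$ consisting of attracting fixed-points $\gamma^+$. Since $\xi^1 \colon \widetilde{\partial \Gamma} \to \RP^2$ and $\hol(\tau) \colon \RP^2 \to \RP^2$ are continuous and agree with the identity composed with $\xi^1$ on this dense set, they agree on all of $\widetilde{\partial \Gamma}$, giving the corollary.

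The proof above is short and presents no real obstacle; the only point requiring any attention is verifying that the centralizer of a loxodromic element in $\SL_3(\bbR)$ preserves each eigenline, which is immediate from the distinct-eigenvalue conclusion of Lemma \ref{lemma-three-distinct-eigenvalues}. In particular, no additional constraints on $\hol(\tau)$ (e.g.\ that it is the identity) are required at this stage — the stronger statement that $\hol(\tau) = \mathrm{id}$ is presumably established later in the argument and does not need to be anticipated here.
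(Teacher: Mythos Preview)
Your proof is correct and follows essentially the same approach as the paper: use centrality of $\tau$ and loxodromicity of $\hol(\gamma)$ from Lemma \ref{lemma-three-distinct-eigenvalues} to conclude that $\hol(\tau)$ fixes each eigenline of $\hol(\gamma)$, invoke Lemma \ref{lemma-xi-1-at-fixed-point} to identify $e_+(\gamma)$ with $\xi^1(\gamma^+)$, and then extend by continuity and density of attracting fixed-points of translation-$0$ elements. The paper phrases the eigenline-preservation step as simultaneous diagonalizability rather than your direct eigenspace argument, but the content is identical.
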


\begin{proof}
    Let $\gamma \in \overline{\Gamma}$ have zero translation and let $\gamma^+ \in \widetilde{\partial \Gamma}$ be an attracting fixed-point of $\gamma$.
    Because $\hol(\gamma)$ and $\hol(\tau)$ commute and $\hol(\gamma)$ is loxodromic by Lemma \ref{lemma-three-distinct-eigenvalues}, the matrices $\hol(\tau)$ and $\hol(\gamma)$ are simultaneously diagonalizable.
    Because $\hol(\gamma)$ is loxodromic, $\hol(\tau)$ must then fix every eigenspace of $\hol(\gamma)$, and so must fix $\xi^1(\gamma^+)$ by Lemma \ref{lemma-xi-1-at-fixed-point}.
    The claim then follows by the continuity of $\xi^1$ and the density of attracting fixed-points of zero-translation elements in $\widetilde{\partial \Gamma}$. 
\end{proof}

The second consequence is a dichotomy for the location of $p_{\gamma^-\gamma^+}$:

\begin{lemma}[Dichotomy]\label{lemma-stable-intersection-dichotomy}
    Either $p_{\gamma^-\gamma^+}=p_{\gamma^+,\tau\gamma^-}=e_0$ for all $\gamma \in \overline{\Gamma}$ of translation $0$, or $\{p_{\gamma^-\gamma^+},p_{\gamma^+, \tau\gamma^-}\} = \{e_+,e_-\}$ for all $\gamma \in \overline{\Gamma}$ of translation $0$.

    In the second case, either $p_{xz} = \xi^1(x)$ for all $(x,z) \in \widetilde{\partial \Gamma}^{(2)}_{[0]}$, or $p_{xz} = \xi^1(z)$ for all $(x,z) \in \widetilde{\partial\Gamma}^{(2)}_{[0]}$.
\end{lemma}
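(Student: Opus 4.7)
The plan is to leverage the eigenspace rigidity of $\hol(\gamma)$ to show $p_{\gamma^-\gamma^+}$ lies in a short explicit list of eigenlines, and then upgrade this pointwise information to a global rigid dichotomy via a closed-invariant-set argument on $\widetilde{\partial\Gamma}^{(2)}_{[0]}$. First I would note that $\mathcal{L}(\gamma^-,\gamma^+)$ and $\xi^2(\gamma^-)$ are both $\hol(\gamma)$-invariant projective lines, distinct by Lemma \ref{rem-L-not-equal-xi}; since Lemma \ref{lemma-three-distinct-eigenvalues} provides three distinct real eigenlines $e_+, e_0, e_-$ of $\hol(\gamma)$, each invariant line is spanned by exactly two of them, so $p_{\gamma^-\gamma^+} = \mathcal{L}(\gamma^-,\gamma^+) \cap \xi^2(\gamma^-)$ is one of $\{e_+, e_0, e_-\}$. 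Lemma \ref{lemma-xi-1-at-fixed-point}, applied to $\gamma$ and $\gamma^{-1}$, identifies $e_+ = \xi^1(\gamma^+)$ and $e_- = \xi^1(\gamma^-)$, so $p_{\gamma^-\gamma^+} \in \{\xi^1(\gamma^-), \xi^1(\gamma^+), e_0\}$; the same reasoning at the pole-pair of $\gamma^{-1}$, together with Corollary \ref{cor-tau-fixes-xi-1} (which gives $\xi^1(\tau\gamma^-) = \xi^1(\gamma^-)$ and uses that the middle eigenline of $\gamma^{-1}$ coincides with $e_0(\gamma)$), places $p_{\gamma^+,\tau\gamma^-}$ in the same three-element set.

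Next I would introduce the closed subsets
\[
  A_1 = \{(x,z) \in \widetilde{\partial\Gamma}^{(2)}_{[0]} : p_{xz} = \xi^1(x)\}, \qquad A_2 = \{(x,z) \in \widetilde{\partial\Gamma}^{(2)}_{[0]} : p_{xz} = \xi^1(z)\}.
\]
Continuity of $\mathcal{L}$, $\xi^1$, and $\xi^2$ via Lemma \ref{lemma-basics-on-L-and-S} and the preceding lemmas makes $(x,z) \mapsto p_{xz}$ continuous, so $A_1$ and $A_2$ are closed. They are disjoint by Lemma \ref{basic-curve-lemma} and $\overline{\Gamma}$-invariant by equivariance of $p_{xz}$ and $\xi^1$. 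I would then invoke minimality of the $\overline{\Gamma}$-action on $\widetilde{\partial\Gamma}^{(2)}_{[0]}$---equivalent to minimality of the diagonal $\Gamma$-action on $\partial\Gamma^{(2)}$, a classical fact for non-elementary cocompact surface groups that strengthens the topological transitivity recalled in \S\ref{s-background}---to deduce that any nonempty closed $\overline{\Gamma}$-invariant subset of $\widetilde{\partial\Gamma}^{(2)}_{[0]}$ is the whole space. Combined with $A_1 \cap A_2 = \emptyset$, this leaves three mutually exclusive, exhaustive cases: $A_1 = \widetilde{\partial\Gamma}^{(2)}_{[0]}$, or $A_2 = \widetilde{\partial\Gamma}^{(2)}_{[0]}$, or $A_1 = A_2 = \emptyset$.

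In the first two cases, the identity $p_{xz} = \xi^1(x)$ or $p_{xz} = \xi^1(z)$ holds on all of $\widetilde{\partial\Gamma}^{(2)}_{[0]}$, which is the second half of the lemma directly; evaluating at a pole-pair then reads $\{p_{\gamma^-\gamma^+}, p_{\gamma^+,\tau\gamma^-}\} = \{\xi^1(\gamma^-), \xi^1(\gamma^+)\} = \{e_-, e_+\}$, the second case of the main dichotomy. In the third case every pole-pair satisfies $p_{\gamma^-\gamma^+} \notin \{\xi^1(\gamma^-), \xi^1(\gamma^+)\}$, so the first step forces $p_{\gamma^-\gamma^+} = e_0$; applying the same reasoning at $(\gamma^+, \tau\gamma^-)$, which is the pole-pair of $\gamma^{-1}$, yields $p_{\gamma^+,\tau\gamma^-} = e_0$ as well, giving the first case of the main dichotomy.

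The hard step will be justifying the minimality claim, since only topological transitivity is recorded in \S\ref{s-background}. For cocompact surface groups this can be promoted to full minimality using standard convergence-action dynamics---namely density of attracting fixed points of loxodromic elements together with a ping-pong argument showing that the $\Gamma$-orbit of any pair in $\partial\Gamma^{(2)}$ is dense---and this upgrade is exactly what allows the pointwise eigenline analysis of the first step to be globalized to a uniform dichotomy on the entire leaf space.
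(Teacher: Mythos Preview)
Your argument has a genuine gap: the minimality claim is \emph{false}. The diagonal $\Gamma$-action on $\partial\Gamma^{(2)}$ is equivalent, via the identification $\partial\Gamma^{(2)}\cong G/A$ for $G=\PSL_2(\bbR)$ and $A$ the diagonal subgroup, to the right $A$-action on $\Gamma\backslash G=\mathrm{T}^1S$, i.e.\ the geodesic flow. A $\Gamma$-orbit in $G/A$ is dense precisely when the corresponding geodesic flow orbit in $\mathrm{T}^1S$ is dense; since closed geodesics give non-dense flow orbits, the $\Gamma$-action on $\partial\Gamma^{(2)}$ is not minimal. Concretely, the $\Gamma$-orbit of any pole-pair $(\gamma^-,\gamma^+)$ is not dense. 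Your sets $A_1,A_2$ are indeed closed, disjoint, and invariant, but without minimality there is no mechanism forcing them to be empty or everything, and your proof structure depends on this essentially. The ping-pong sketch in your last paragraph cannot repair this: under any loxodromic $\eta$, both coordinates of a generic pair are attracted to $\eta^+$, so iterating group elements does not separate them onto prescribed distinct targets.

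The paper's proof sidesteps this by running an open--closed argument on the \emph{opposite} condition. It works with the open set $\Omega=\{(x,z):p_{xz}\neq p_{z,\tau x}\}$. Pole-pairs are dense in any nonempty open subset, and for a pole-pair in $\Omega$ a symmetry under $\gamma\leftrightarrow\gamma^{-1}$ forces $\{p_{\gamma^-\gamma^+},p_{\gamma^+,\tau\gamma^-}\}=\{\xi^1(\gamma^-),\xi^1(\gamma^+)\}$; this identity then passes to $\overline{\Omega}$ by continuity of $p$ and $\xi^1$, and Lemma~\ref{basic-curve-lemma} ($\xi^1(x)\neq\xi^1(z)$) shows boundary points still satisfy the defining inequality, so $\Omega$ is closed as well as open, hence all of $\widetilde{\partial\Gamma}^{(2)}_{[0]}$ by connectedness. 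This uses only density of pole-pairs and connectedness---no minimality. Your first paragraph (placing $p_{\gamma^-\gamma^+}$ among $\{e_+,e_0,e_-\}$) is correct and is exactly the ingredient the paper uses; the divergence is entirely in how the pointwise information is globalized.
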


The argument boils down to the idea that while $p_{xz}\neq p_{z,\tau x}$ is an open condition in $(x,z)$, for pole-pair lifts $(\gamma^-,\gamma^+)$, Lemma \ref{lemma-xi-1-at-fixed-point} allows an alternative characterization of this condition in terms of $\xi^1$ with closedness properties.

\begin{proof}    
    Suppose that $p_{xz} \neq p_{z,\tau x}$ for some pair $(x,z) \in \widetilde{\partial \Gamma}^{(2)}_{[0]}$.
    We claim that then $p_{xz} \neq p_{z,\tau x}$ for all $(x,z) \in \widetilde{\partial \Gamma}^{(2)}_{[0]}$.
    By continuity, this condition is open.
    
    So let $\Omega \subset \widetilde{\partial \Gamma}^{(2)}_{[0]}$ be the nonempty and open set of pairs $(x,z)$ so that $p_{xz} \neq p_{z,\tau x}$.
    We note that by Lemma \ref{lemma-xi-1-at-fixed-point} any pole-pair lift $(\gamma^-, \gamma^+) \in \Omega$ satisfies $\{p_{\gamma^-\gamma^+}, p_{\gamma^+,\tau \gamma^-} \} = \{ \xi^1(\gamma^+), \xi^1(\gamma^-)\}$.
    Furthermore, since $\Omega$ is open, standard hyperbolic group theory shows that such pairs $(\gamma^-, \gamma^+)$ are dense in $\Omega$.
    As $p_{xz}$ and $\xi^1$ are both continuous, we see that $\{p_{xz} , p_{z,\tau x} \} = \{\xi^1(x), \xi^1(z)\}$ for all $(x,z)$ in the closure $\overline{\Omega}$ of $\Omega$ in $\widetilde{\partial \Gamma}^{(2)}_{[0]}.$
    Lemma \ref{basic-curve-lemma} now shows that for any point $(x,z) \in \overline{\Omega}$ that $\xi^1(x) \neq \xi^1(z)$, hence $p_{xz} \neq p_{z,\tau x}$ and hence $(x,z) \in \Omega$.
    So $\Omega$ is closed.
    We conclude that $\Omega = \widetilde{\partial \Gamma}^{(2)}_{[0]}.$
    
    The last paragraph also shows that for each pair $(x,z)$ we have $\{p_{xz}, p_{z,\tau x}\} = \{\xi^1(x), \xi^1(z) \}$, from which it follows that in this case either $p_{xz} = \xi^1(x)$ for all $(x,z) \in \widetilde{\partial \Gamma}^{(2)}_{[0]}$ or $p_{xz} = \xi^1(z)$ for all $(x,z) \in \widetilde{\partial \Gamma}^{(2)}_{[0]}$.
\end{proof}

In the following lemma we begin to relate the eigenvalues and eigenspaces of elements $\gamma$ of translation $0$ with the convex domain $C_{\gamma_+}$.

\begin{lemma}\label{lemma-line-containment-and-signs}
    If $g \in {\rm{SL}}_3(\bbR)$ is loxodromic with attracting eigenline $e_+$ and repelling eigenline $e_-$, and $g$ preserves a properly convex domain $\Omega$, then $e_+$ and $e_-$ are in $\partial \Omega$ and their corresponding eigenvalues have the same sign.
\end{lemma}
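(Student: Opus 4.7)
Order the eigenvalues of $g$ by decreasing modulus as $\lambda_+, \lambda_0, \lambda_-$, with corresponding eigenlines $e_+, e_0, e_-$. The plan is to combine the north-south dynamics of $g$ on $\RP^2$ with the rigidity of proper convexity when restricted to the line $L = e_+ \oplus e_-$. First I would verify that $e_+, e_- \in \overline{\Omega}$: any $p \in \Omega$ not contained in the projective line $e_0 \oplus e_-$ satisfies $g^n p \to e_+$ by writing $p$ in the eigenbasis, and such $p$ exists because the open set $\Omega$ is not contained in any projective line. By $g$-invariance of $\Omega$ we get $e_+ \in \overline{\Omega}$, and applying the same argument to $g^{-1}$ gives $e_- \in \overline{\Omega}$.

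Next I would study $S = \overline{\Omega} \cap L$. This is a closed, properly convex, $g$-invariant subset of $L \cong \RP^1$ containing the two distinct points $e_+$ and $e_-$: it is convex as an intersection of convex sets in any affine chart containing $\overline{\Omega}$, and proper convexity of $\Omega$ rules out $S = L$ (which would force $\overline{\Omega}$ to contain a full projective line). Hence $S$ is a proper closed arc with two distinct endpoints. These endpoints form a two-point $g|_L$-invariant set, and since $g|_L$ is a non-trivial projective transformation of $\RP^1$ (as $\lambda_+ \neq \lambda_-$) with fixed points exactly $e_+$ and $e_-$, the endpoints of $S$ must be $\{e_+, e_-\}$. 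This immediately implies $e_+, e_- \in \partial \Omega$: if, say, $e_+ \in \Omega$, then the open-in-$L$ set $\Omega \cap L$ would contain $e_+$, placing $e_+$ in the $L$-interior of $S$ and contradicting its being an endpoint.

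For the sign claim, parametrize $L$ by an affine coordinate $t$ with $e_+ \mapsto 0$ and $e_- \mapsto \infty$; in this chart $g|_L$ acts by $t \mapsto (\lambda_-/\lambda_+)t$. The arc $S$ is one of the two closed arcs of $\RP^1$ with endpoints $\{0, \infty\}$, namely $\{t \geq 0\}$ or $\{t \leq 0\}$, and invariance of either under multiplication by $\lambda_-/\lambda_+$ requires this scalar to be positive, giving $\lambda_+ \lambda_- > 0$.

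The main subtle point is pinning down $S$ as a proper closed arc whose endpoints are forced to be exactly the $g|_L$-fixed points $\{e_+, e_-\}$: proper convexity is essential to exclude $S = L$, and the identification of the endpoints with the loxodromic fixed point set of $g|_L$ does the rest. Once this structural fact is in hand, both the boundary containment and the sign condition follow at once, with no need to consider the signs of individual eigenvalues separately.
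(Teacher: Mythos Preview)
Your proof is correct and follows the same overall arc as the paper: use the north--south dynamics of $g$ to place $e_\pm$ in $\overline{\Omega}$, invoke proper convexity to rule out $e_\pm \in \Omega$, and then use invariance of a segment in $L = e_+ \oplus e_-$ to force $\lambda_+\lambda_- > 0$. The difference lies in how you handle the middle step. The paper argues in two dimensions: if $e_+ \in \Omega$, then iterating $g^{-1}$ on a small ball about $e_+$ forces $\Omega$ to contain the complement of $e_0 \oplus e_-$, contradicting proper convexity. You instead restrict to the one-dimensional picture on $L$, identifying $S = \overline{\Omega} \cap L$ as a proper closed arc and pinning its endpoints to $\{e_+, e_-\}$ via $g|_L$-invariance. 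Both arguments are short; yours has the advantage of packaging the sign conclusion and the boundary conclusion into a single structural statement about $S$, while the paper's blow-up argument avoids the (minor) need to rule out that $g|_L$ swaps the endpoints of $S$. On that last point, your justification is slightly terse: a nontrivial projective map of $\RP^1$ with two fixed points can still have a $2$-cycle if it is an involution, so you should note that $|\lambda_+| \neq |\lambda_-|$ forces $(\lambda_-/\lambda_+)^2 \neq 1$, hence no swapping is possible.
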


\begin{proof}
    One sees that $e_+$ must at least be in $\overline{\Omega}$ by observing that as $\Omega$ is open, there is a point in $\Omega$ not contained in the repelling projective line for $g$, which must limit on $e_+$ under iteration of $g$.
    Furthermore, $e_+$ cannot be contained in $\Omega$, as otherwise $g$-invariance of $\Omega$ would force $\Omega$ to contain the complement of a projective line, which is incompatible with proper convexity.
    The analogues for $e_-$ are symmetric, by considering $g^{-1}$, which shows that $e_+$ and $e_-$ are in $\partial \Omega$.

    Hence a full segment $I$ between $e_+$ and $e_-$ is contained in $\overline{\Omega}$. If the eigenvalues of $g$ corresponding to $e_+$ and $e_-$ had opposite signs, then $\overline{\Omega}$ would contain a full projective line, which is impossible.
\end{proof}        

\subsubsection{Non-Degeneracy of $\xi^1$}
We next constrain the images of the maps $\xi^1$ and $\xi^2$.
Our goal is to prove:

\begin{proposition}\label{lemma-not-line-contained}
    The image of $\xi^1$ is not contained in a line. 
\end{proposition}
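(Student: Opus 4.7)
The plan is to argue by contradiction: assume $\xi^1(\widetilde{\partial \Gamma}) \subset \ell$ for some projective line $\ell \subset \RP^2$. For each translation-zero $\gamma \in \overline{\Gamma}$, Lemma \ref{lemma-xi-1-at-fixed-point} applied to $\gamma$ and $\gamma^{-1}$ gives $\xi^1(\gamma^\pm) = e_\pm(\gamma) \in \ell$, so $\ell = e_+(\gamma) \oplus e_-(\gamma)$ is $\hol(\gamma)$-invariant and $e_0(\gamma) \notin \ell$. Together with Corollary \ref{cor-tau-fixes-xi-1} and the local injectivity of $\xi^1$ (Lemma \ref{basic-curve-lemma}), $\hol(\tau)$ fixes at least three distinct points of $\ell$ and hence fixes $\ell$ pointwise. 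Since every element of $\overline{\Gamma}$ has the form $\tau^n\gamma$ with $\gamma$ of translation zero, $\hol(\overline{\Gamma})$ preserves $\ell$. Lemma \ref{lemma-line-containment-and-signs} applied to the $\hol(\gamma)$-invariant properly convex domain $C_{\gamma^+}$, together with $\det\hol(\gamma) = 1$, forces all three eigenvalues of $\hol(\gamma)$ to be positive.

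Next I will invoke Lemma \ref{lemma-stable-intersection-dichotomy}. In Case 2 (say $p_{xz} = \xi^1(x)$ for all $(x,z)$), I fix a pole pair $(\gamma^-, \gamma^+)$ so that $p_{\gamma^-\gamma^+} = e_-$. The projective lines $\mathcal{L}(\gamma^-,\gamma^+)$ and $\xi^2(\gamma^-)$ are $\hol(\gamma)$-invariant, distinct (Lemma \ref{rem-L-not-equal-xi}), and meet exactly at $e_-$, yielding two alternatives: either (i) $\mathcal{L}(\gamma^-,\gamma^+) = \ell$ and $\xi^2(\gamma^-) = e_-\oplus e_0$, or (ii) $\mathcal{L}(\gamma^-,\gamma^+) = e_-\oplus e_0$ and $\xi^2(\gamma^-) = \ell$. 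Continuity of $\mathcal{L}$ and $\xi^2$, together with density of pole pairs, forces a globally consistent choice. In (i), $\mathcal{L}(x,z) = \ell$ for every $(x,z) \in \widetilde{\partial \Gamma}^{(2)}_{[0]}$, whence $B_x = \bigcup_z \mathcal{S}(x,z) \subset \ell$, contradicting that $B_x$ is an open two-dimensional subset of $\RP^2$. In (ii), $\xi^2 \equiv \ell$, and $\ell$ is a supporting line to $C_{\gamma^-}$ whose tangency interval contains both $\xi^1(\gamma^-) = e_-$ and $\xi^1(\gamma^+) = e_+$; as the pole-pair partner varies, semicontinuity of the family $\{C_x\}$ together with density of $\xi^1(\widetilde{\partial \Gamma})$ in $\ell$ will force $\ell \subset \partial C_{x_0}$ for some limit point $x_0$, contradicting proper convexity of $C_{x_0}$.

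In Case 1 of Lemma \ref{lemma-stable-intersection-dichotomy}, $p_{\gamma^-\gamma^+} = e_0(\gamma) \notin \ell$. An analogous constraint analysis forces $\{\mathcal{L}(\gamma^-,\gamma^+), \xi^2(\gamma^-)\} = \{e_+\oplus e_0, e_0\oplus e_-\}$ in some order. Tracking continuity of $\xi^2$ and $\hol(\overline{\Gamma})$-equivariance, the family $\{e_0(\gamma)\}$ is forced either to be constant across all translation-zero $\gamma$, or to vary within a single $\overline{\Gamma}$-invariant line. In the constant case, $\hol(\overline{\Gamma})$ preserves both $\ell$ and a point $p = e_0 \notin \ell$; a direct examination of the concave regions $B_x$, using that the supporting line $\xi^2(x)$ must contain $p$ while $p$ is disjoint from $B_x$ for every $x$, will contradict the concave foliation structure. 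The varying case will be ruled out via semicontinuity of $\{C_x\}$ as in Case 2(ii).

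The hardest step will be ruling out the reducible configurations in Case 1 and Case 2(ii), which are qualitative shadows of the Barbot-type reducible representations $\Gamma \to \SL_3(\bbR)$ highlighted at the start of \S \ref{s-core}. Unlike the setup of \cite{guichard2008convex}, no single ambient convex domain organizes the structure; instead we must exploit semicontinuity of the individual complementary convex domains $\{C_x\}$ together with local injectivity of $\xi^1$ to obstruct these reducible degenerations.
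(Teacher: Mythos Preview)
Your plan has a genuine gap at the crucial endgame, and it also misses a simplification that collapses most of your case analysis.

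First, once $\xi^1(\widetilde{\partial\Gamma})\subset L$, Lemma~\ref{lemma-image-doesnt-touch-xi1} gives $B_x\cap L=\emptyset$ for every $x$ (since local injectivity plus the dynamics of translation-zero elements forces $\xi^1(\widetilde{\partial\Gamma})=L$). By definition $\xi^2(x)$ is the \emph{unique} projective line in $\RP^2-B_x$, so $\xi^2\equiv L$ immediately. This kills your Case~1 outright (since then $p_{\gamma^-\gamma^+}\in\xi^2(\gamma^-)=L$, while $e_0(\gamma)\notin L$) and also your Case~2(i). Only your Case~2(ii) survives, and this is where the real content lies.

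Second, your proposed contradiction in Case~2(ii) does not work as stated. You write that semicontinuity will ``force $\ell\subset\partial C_{x_0}$ for some limit point $x_0$.'' But the only semicontinuity available (Lemma~\ref{lemma-easy-semicontinuity}) says that points lying in infinitely many $D_{x_n}$ lie in $D_x$; it does not force points into $\partial C_x$, and in any case the tangency segment of $\ell$ in $\partial C_{\gamma^+}$ is determined by the single pole pair $(\gamma^-,\gamma^+)$ attached to $\gamma^+$, so varying ``the pole-pair partner'' cannot enlarge the boundary of a fixed $C_{x_0}$. The paper's route is substantially different: it first shows (Claim~\ref{claim-line-implies-all-triangles}, via the Segment Containment Lemma~\ref{lemma-force-stupid-segment}) that each $C_{\gamma^+}$ is \emph{exactly} the triangle with vertices $e_+(\gamma),e_0(\gamma),e_-(\gamma)$. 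It then uses continuity of the middle-eigenspace locator $q_{xz}=\mathcal{L}(x,z)\cap\mathcal{L}(z,\tau x)$, which equals $e_0(\gamma)$ on pole pairs, together with a sequence $\gamma_n$ with $\gamma_n^+\to\eta^+$ but $\xi^1(\gamma_n^-)$ landing on the \emph{wrong} side of $\{e_+(\eta),e_-(\eta)\}$ in $L$, to produce a point lying in infinitely many triangles $T_{\gamma_n^+}$ yet outside $T_{\eta^+}\cup L$; Lemma~\ref{lemma-easy-semicontinuity} then gives the contradiction. This triangle-plus-continuity-of-$q_{xz}$ mechanism is the missing idea in your sketch.
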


The following Lemma will force $\xi^2$ to be constant in the hypothetical case where the image of $\xi^1$ is contained in a line.

\begin{lemma}\label{lemma-image-doesnt-touch-xi1}
    The image of ${{\rm{pr}}_1 \circ {\rm{dev}}}$ is disjoint from $\xi^1\left(\widetilde{\partial \Gamma}\right)$.
\end{lemma}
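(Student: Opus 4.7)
The plan is to assume for contradiction that $\xi^1(y_0) \in B_{x_0}$ for some $(x_0,a_0,b_0) \in \widetilde{\partial\Gamma^{(3)+}}$ and some $y_0 \in \widetilde{\partial\Gamma}$ with $\pr_1\circ\dev(x_0,a_0,b_0) = \xi^1(y_0)$, and to derive a contradiction in three stages: establish openness of the set $U = \{(x,y) \in \widetilde{\partial\Gamma}^2 \setminus \Delta : \xi^1(y) \in B_x\}$; produce a pole-pair $(\widetilde\gamma^-,\widetilde\gamma^+)$ of a translation-$0$ element $\bar\gamma$ in $U$; and use loxodromic invariance (Lemmas~\ref{lemma-xi-1-at-fixed-point} and~\ref{lemma-line-containment-and-signs}) to force $\xi^1(\widetilde\gamma^+) = e_+(\bar\gamma) \in \partial C_{\widetilde\gamma^-}$, contradicting $\xi^1(\widetilde\gamma^+) \in B_{\widetilde\gamma^-}$.

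The main technical step is openness of $U$. I plan to verify this by showing that the map
$$ \Phi : \widetilde{\partial\Gamma^{(3)+}} \to \RP^2 \times \widetilde{\partial\Gamma}, \qquad (x,a,b) \mapsto (\pr_1 \circ \dev(x,a,b),\, x)$$
is a local homeomorphism, so its image $\{(p,x) : p \in B_x\}$ is open; then $U$ is the preimage of this image under the continuous map $(x,y)\mapsto(\xi^1(y),x)$. Local injectivity of $\Phi$ combines two facts. First, on each leaf $f_x$, the map $\dev|_{f_x}$ is a local homeomorphism onto the concave domain lift $\dev(f_x)$, and $\pr_1$ restricts to a bijection $\dev(f_x) \to B_x$ by definition of lifts; hence $\pr_1\circ\dev|_{f_x}$ is a local homeomorphism, handling the leaf-tangent direction. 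Second, the $x$-coordinate of $\Phi$ resolves the direction transverse to the leaves. Since both source and target are $3$-manifolds, invariance of domain upgrades local injectivity to local homeomorphism.

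To pass from openness of $U$ to a pole-pair in $U$, I first note $\pi(y_0) \neq \pi(x_0)$: otherwise the $\tau$-invariance of $\xi^1$ (which follows from Corollary~\ref{cor-tau-fixes-xi-1} and $\hol$-equivariance of $\xi^1$) would give $\xi^1(y_0) = \xi^1(x_0) \in \overline{C_{x_0}} \cup \xi^2(x_0) = \RP^2 \setminus B_{x_0}$. Then I replace $y_0$ by $\tau^k y_0$ for the unique $k$ putting $\tau^k y_0$ in the positively oriented arc from $x_0$ to $\tau x_0$; by $\tau$-invariance of $\xi^1$, we still have $\xi^1(\tau^k y_0) \in B_{x_0}$, so $(x_0, \tau^k y_0) \in U \cap \widetilde{\partial\Gamma}^{(2)}_{[0]}$. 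Openness of $U$ and density of pole-pair lifts of translation-$0$ elements in $\widetilde{\partial\Gamma}^{(2)}_{[0]}$ (\S\ref{s-background}) then produce a translation-$0$ $\bar\gamma \in \overline\Gamma$ with $(\widetilde\gamma^-,\widetilde\gamma^+) \in U$, i.e.\ $\xi^1(\widetilde\gamma^+) \in B_{\widetilde\gamma^-}$.

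The closing contradiction is immediate. Since $\bar\gamma$ fixes $\widetilde\gamma^-$ it setwise preserves $f_{\widetilde\gamma^-}$, so by equivariance $\hol(\bar\gamma)$ preserves $\dev(f_{\widetilde\gamma^-})$, and in particular $B_{\widetilde\gamma^-}$. By equivariance of $\xi^2$, it also preserves $\xi^2(\widetilde\gamma^-)$, hence the properly convex complement $C_{\widetilde\gamma^-}$. Lemma~\ref{lemma-line-containment-and-signs} then places $e_+(\bar\gamma)$ on $\partial C_{\widetilde\gamma^-} \subset \RP^2 \setminus B_{\widetilde\gamma^-}$, while Lemma~\ref{lemma-xi-1-at-fixed-point} identifies $e_+(\bar\gamma) = \xi^1(\widetilde\gamma^+) \in B_{\widetilde\gamma^-}$. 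The principal obstacle is verifying that $\Phi$ is a local homeomorphism; the rest of the argument is short once openness of $U$ and the reduction to $\widetilde{\partial\Gamma}^{(2)}_{[0]}$ are in place.
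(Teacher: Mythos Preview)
Your proof is correct and shares the paper's overall architecture: reduce to a pole-pair of a translation-$0$ element and derive a contradiction from loxodromy. Your treatment of the openness/density step is more explicit than the paper's, which simply asserts that by density of pole-pairs one can find $(\gamma^-,\gamma^+)$ with $\xi^1(\gamma^-)\in B_{\gamma^+}$; your local-homeomorphism argument for $\Phi$ makes that step precise.

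The final contradiction, however, runs along a genuinely different line. The paper takes a small neighborhood $V\subset B_{\gamma^+}$ of $\xi^1(\gamma^-)=e_-(\gamma)$ and iterates: since $e_-$ is the repeller, $\bigcup_n \hol(\gamma)^n V = \RP^2\setminus(e_0\oplus e_+)$, which is too large to sit inside a concave region. You instead observe that $\hol(\bar\gamma)$ preserves the properly convex $C_{\widetilde\gamma^-}$ and invoke Lemma~\ref{lemma-line-containment-and-signs} to place $e_+(\bar\gamma)=\xi^1(\widetilde\gamma^+)$ on $\partial C_{\widetilde\gamma^-}$, contradicting $\xi^1(\widetilde\gamma^+)\in B_{\widetilde\gamma^-}$. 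Your route is cleaner once Lemma~\ref{lemma-line-containment-and-signs} is available (and that lemma itself encapsulates an iteration argument of the same flavor), while the paper's argument is more self-contained at this spot and avoids the extra appeals to Corollary~\ref{cor-tau-fixes-xi-1} and Lemma~\ref{lemma-line-containment-and-signs}.
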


\begin{proof}
    Suppose for the sake of contradiction that there is an $x \in \widetilde{\partial \Gamma}$ so $\xi^1(x)$ is in the image $U$ of the developing map.
    Then by density of pole-pairs, there is a pole-pair $(\gamma^-, \gamma^+)$ so that $\xi^1(\gamma^-) \in \text{pr}_1(\text{dev}(f_{\gamma^+}))$.
    Let $V$ be a small neighborhood of $\xi^1(\gamma^{-})$ contained in $\text{pr}_1 (\text{dev}(f_{\gamma^+}))$.

    Then, since $\hol(\gamma)$ is diagonalizable with distinct real eigenvalues and $\xi^1(\gamma^-)$ is the eigenline of least modulus by Lemma \ref{lemma-three-distinct-eigenvalues} and Lemma \ref{lemma-xi-1-at-fixed-point}, we see $$\text{pr}_1 (\text{dev}(f_{\gamma^+})) \supset \bigcup_{n \in \mathbb{N}} \hol(\gamma)^n(V) = \RP^2 - (e_0 \oplus e_+).$$
    Since no concave region contains the complement of a line, this contradicts the concavity of $\text{dev}(f_{\gamma^+})$.
\end{proof}

The proof of Proposition \ref{lemma-not-line-contained} uses semicontinuity features of the complements $D_x$ of the concave domains $B_x$ in $\RP^2$ for $x \in \widetilde{\partial \Gamma}$.
The form of semicontinuity that we have convenient access to is:
\begin{lemma}[Contraction Control]\label{lemma-easy-semicontinuity}
    For any sequence $x_n$ in $\widetilde{\partial \Gamma}$ converging to $x \in \widetilde{\partial \Gamma}$ the closed set $D_x$ contains $\mathcal{D}_{\{x_n\}} = \{ p \in \RP^2 \mid \text{there exist infinitely many $n$ so } p \in D_{x_n}\}$.
\end{lemma}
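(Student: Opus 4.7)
\emph{Proof plan.} The plan is to prove the contrapositive: if $p \in B_x$ (where $B_x = \RP^2 - D_x$), then $p \in B_{x_n}$ for all but finitely many $n$. Since $p \in B_x = \pr_1(\dev(f_x))$, pick a preimage $q \in f_x$ with $\pr_1(\dev(q)) = p$. Choose a foliation chart $U \subseteq \widetilde{M}$ for $\widetilde{\mathcal{F}}$ about $q$ with product decomposition $U \cong V \times W$, where $V \subseteq \widetilde{\partial \Gamma}$ is an open neighborhood of $x$ in the leaf space and $\{y\} \times W$ is a piece of the leaf $f_y$ for each $y \in V$. Shrinking $U$ if necessary, arrange that $\dev|_U$ is a homeomorphism onto an open subset $\dev(U) \subseteq \mathscr{F}_{1,2}$.

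The heart of the argument is the auxiliary map $\Psi \colon U \to V \times \RP^2$ defined by $\Psi(\tilde q) = (y(\tilde q), \pr_1(\dev(\tilde q)))$, where $y(\tilde q) \in V$ is the leaf coordinate of $\tilde q$. Continuity is immediate from the foliation chart. The key claim is that $\Psi$ is injective: if $\Psi(q_1) = \Psi(q_2)$, then $q_1$ and $q_2$ lie on a common leaf $f_y$ and their dev-images project to a common point in $\RP^2$; since by hypothesis $\dev(f_y)$ is a concave domain lift, i.e., the graph of $v \mapsto (v, v + \xi^1(y))$ over $B_y$, the projection $\pr_1$ is injective on $\dev(f_y)$, so $\dev(q_1) = \dev(q_2)$, and injectivity of $\dev|_U$ then gives $q_1 = q_2$.

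Finally, $U$ and $V \times \RP^2$ are both topological $3$-manifolds, so invariance of domain promotes the continuous injection $\Psi$ to a homeomorphism onto an open subset of $V \times \RP^2$. Since $\Psi(q) = (x, p) \in \Psi(U)$ and $\Psi(U)$ is open, the point $(x_n, p)$ lies in $\Psi(U)$ for all $n$ large enough, producing $q_n \in f_{x_n} \cap U$ with $\pr_1(\dev(q_n)) = p$, so $p \in B_{x_n}$ eventually, as required.

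The main technical point is the injectivity of $\Psi$, which rests on the graph structure of a concave domain lift over its projection to $\RP^2$: this is the specific geometric input of the concave foliated hypothesis. Everything else is standard foliation-chart bookkeeping together with Brouwer's invariance of domain; no additional input on continuity of $B_y$ or $\xi^1$ beyond what has been established is needed for this semicontinuity statement.
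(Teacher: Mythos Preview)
Your proof is correct and follows the same contrapositive strategy as the paper: show that any $p \in B_x$ lies in $B_{x_n}$ for all sufficiently large $n$, using that $\dev$ is a local homeomorphism. The paper's proof is a terse two-line version of the same idea; your explicit construction of the auxiliary map $\Psi$ and appeal to invariance of domain is precisely what is needed to justify the paper's assertion that ``for all $w$ in a neighborhood of $x$ we have $p \in B_w$,'' and indeed is the same reasoning the paper spells out more carefully in the earlier Claim on insulation of $\xi^2$.
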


\begin{proof}
    If $p \in B_x$ then $p = \text{pr}_1 (\text{dev}(x, y, z))$ for some $(x,y,z) \in \widetilde{\partial \Gamma^{(3)+}}$.
    Since $\text{dev}$ is a local homeomorphism, for all $w$ in a neighborhood of $x$ we have $p \in B_w$, so that $p \notin \mathcal{D}_{\{x_n\}}$.
    So $\mathcal{D}_{\{x_n\}} \subset (\RP^2 - B_x)$, which is $D_x$.
\end{proof}

The application of semicontinuity that we shall use is:

\begin{lemma}[Segment Containment]\label{lemma-force-stupid-segment}
    Let $\gamma,\eta$ be elements of zero translation in $\overline{\Gamma}$ with fixed-point pairs $(\gamma^-,\gamma^+)$ and $(\eta^-, \eta^+)$ in $\widetilde{\partial \Gamma}^{(2)}_{[0]}$, respectively.
    Suppose that $\eta^-$ is not in the $\langle\tau\rangle$-orbit of $\gamma^+$ or $\gamma^-$, that $e_{-}(\eta) \in \partial C_{\gamma^+}$, and that $e_-(\eta) \oplus e_0(\eta)$ does not support $C_{\gamma^+}$ at $e_-(\eta)$.
    Finally, suppose that $(\gamma^-, \gamma^+)$ and $(\eta^-,\eta^+)$ are near to each other in the sense that $(\eta^-, \gamma^+, \eta^+)$ or $(\eta^+, \gamma^+, \tau \eta^-)$ is a positively oriented triple in $\widetilde{\partial \Gamma}$.
    
    Then a segment between $e_-(\eta)$ and $e_0(\eta)$ is contained in $D_{\eta^+}$.
\end{lemma}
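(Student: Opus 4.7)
The plan is to apply the Contraction Control Lemma (Lemma \ref{lemma-easy-semicontinuity}) to the sequence $x_n := \eta^n \gamma^+ \to \eta^+$ in $\widetilde{\partial \Gamma}$, using the equivariance $D_{\eta^n \gamma^+} = \hol(\eta)^n D_{\gamma^+}$ together with the fact that the projective line $L := e_-(\eta) \oplus e_0(\eta)$ is $\hol(\eta)$-invariant. The conclusion will then follow from exhibiting a segment on $L$ lying in $D_{\eta^n \gamma^+}$ for all sufficiently large $n$ and passing to the limit.

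First, I would verify that $\eta^n \gamma^+ \to \eta^+$ in $\widetilde{\partial \Gamma}$. Either positivity hypothesis places $\gamma^+$ in an arc of $\widetilde{\partial \Gamma}$ bounded by translates of $\eta^-$ and $\eta^+$ that forms the attracting basin of $\eta^+$ under the zero-translation element $\eta$; the assumption that $\eta^-$ lies outside the $\langle \tau \rangle$-orbit of $\gamma^\pm$ excludes the degenerate cases where $\gamma^+$ would itself be a fixed point of $\eta$ (or where the iterates would fail to accumulate at $\eta^+$). Standard North-South dynamics, lifted from $\partial \Gamma$ to $\widetilde{\partial \Gamma}$, then yields the convergence.

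Next I would produce an initial segment. Since $e_-(\eta) \in \partial C_{\gamma^+}$ and $L$ is not a supporting line of $C_{\gamma^+}$ at $e_-(\eta)$, the line $L$ enters the open convex set $C_{\gamma^+}$ arbitrarily close to $e_-(\eta)$, so one may choose a closed segment $J \subset L$ with one endpoint $e_-(\eta)$ and all remaining points in $C_{\gamma^+}$. Then $J \subset \overline{C_{\gamma^+}} \subset D_{\gamma^+}$, the latter inclusion because $D_{\gamma^+}$ is closed and contains $C_{\gamma^+}$. By Lemma \ref{lemma-three-distinct-eigenvalues}, $|\lambda_0(\eta)| > |\lambda_-(\eta)|$, so the action of $\hol(\eta)$ on the projective line $L$ has attracting fixed point $e_0(\eta)$ and repelling fixed point $e_-(\eta)$. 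Consequently $\{\hol(\eta)^n J\}_{n}$ is a nested increasing family of closed segments on $L$ sharing the common endpoint $e_-(\eta)$, with the other endpoint tending along one side of $L$ to $e_0(\eta)$, and by equivariance $\hol(\eta)^n J \subset D_{\eta^n \gamma^+}$ for each $n$.

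Finally, for any point $q$ in the open segment $I \subset L$ between $e_-(\eta)$ and $e_0(\eta)$ swept out above, one has $q \in D_{\eta^n \gamma^+}$ for all sufficiently large $n$. Applying Lemma \ref{lemma-easy-semicontinuity} to the sequence $\eta^n \gamma^+ \to \eta^+$ then gives $q \in D_{\eta^+}$. Since $q \in I$ was arbitrary and $D_{\eta^+}$ is closed, the closed segment $\overline{I}$ between $e_-(\eta)$ and $e_0(\eta)$ is contained in $D_{\eta^+}$, which is the desired conclusion. The main subtle point I anticipate is step one --- verifying the convergence of $\eta^n \gamma^+$ to $\eta^+$ in the universal cover (not merely in $\partial \Gamma$) and pinning down precisely how the $\tau$-orbit hypothesis enters to rule out degeneracies; the remaining steps reduce to elementary North-South dynamics of $\hol(\eta)$ on the invariant line $L$ and to the already-established semicontinuity in Lemma \ref{lemma-easy-semicontinuity}.
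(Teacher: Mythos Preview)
Your proposal is essentially the paper's own argument: pick an initial segment on $L=e_-(\eta)\oplus e_0(\eta)$ entering $C_{\gamma^+}$, iterate by $\hol(\eta)$, and feed the resulting family into the Contraction Control Lemma along the sequence $\eta^n\gamma^+\to\eta^+$. One small wrinkle to fix: your assertion that $\{\hol(\eta)^n J\}_n$ is a \emph{nested increasing} family sharing the endpoint $e_-(\eta)$ tacitly assumes $\lambda_0(\eta)/\lambda_-(\eta)>0$, which has not been established at this stage (Lemma~\ref{lemma-restriction-to-line} controls signs only along $\mathcal{L}(\gamma^-,\gamma^+)$, not along $L$); if the ratio is negative, $\hol(\eta)$ swaps the two components of $L\setminus\{e_-(\eta),e_0(\eta)\}$ and the segments alternate sides. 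The paper sidesteps this by iterating with $\hol(\eta)^{2n}$ instead of $\hol(\eta)^n$; equivalently, you can keep $\eta^n$ and weaken ``for all sufficiently large $n$'' to ``for infinitely many $n$'', which is all Lemma~\ref{lemma-easy-semicontinuity} requires.
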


\begin{proof}
    Since $\mathcal{P}_\eta := e_{-}(\eta) \oplus e_0(\eta)$ does not support $C_{\gamma^+}$, there is an open neighborhood $U \subset \mathcal{P}_\eta \cap C_{\gamma^+}$ with endpoint $e_-(\eta)$.
    Apply $\hol(\eta^n)$.
    Then $\hol(\eta)^{2n} U$ is a nested family of open intervals contained in $C_{\eta^{2n} \gamma^+}$ limiting on an open segment between $e_0(\eta)$ and $e_-(\eta)$.
    Our hypotheses ensure that $\eta^{2n} \gamma^+$ converges to $\eta^+$, and so Lemma \ref{lemma-easy-semicontinuity} shows $D_{\eta^+}$ contains an open segment between $e_0(\eta)$ and $e_-(\eta)$.
\end{proof}

\begin{proof}[Proof of Proposition \ref{lemma-not-line-contained}]
    Suppose for contradiction that $\xi^1$ is contained in a projective line $L$.
    Note that from local injectivity of $\xi^1$ and the dynamics of $0$-translation elements of $\overline{\Gamma}$ that $\xi^1(\widetilde{\partial \Gamma}) = L$.
    Lemma \ref{lemma-image-doesnt-touch-xi1} implies for all $x \in \widetilde{\partial \Gamma}$ that $\xi^2(x) = L$.
    
    The argument is now based around leveraging the following claim to show that the convex domains $C_x$ must wildly change in unallowable ways.

    \begin{claim}\label{claim-boundary-line}
        Under these hypotheses, for all $\gamma \in \overline{\Gamma}$ of zero translation with pole-pairs $(\gamma^-,\gamma^+) \in \widetilde{\partial \Gamma}^{(2)}_{[0]}$, a closed segment between $\xi^1(\gamma^+)$ and $\xi^1(\gamma^-)$ is contained in $\partial C_{\gamma^+}$, and all eigenvalues of $\hol(\gamma)$ have the same sign.
    \end{claim}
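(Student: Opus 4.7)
The plan is to first use the hypothesis $\xi^1(\widetilde{\partial\Gamma})=L$ together with Lemma \ref{lemma-image-doesnt-touch-xi1} to conclude $\xi^2(x)=L$ for every $x$, which is already noted in the opening of the proof of Proposition \ref{lemma-not-line-contained}. Applying Lemma \ref{lemma-xi-1-at-fixed-point} to $\gamma$ and to $\gamma^{-1}$ then identifies $\xi^1(\gamma^+)=e_+$ and $\xi^1(\gamma^-)=e_-$, both of which lie in $L$; hence $L=e_+\oplus e_-$ and $L$ is $\hol(\gamma)$-invariant. Since $f_{\gamma^+}$ is $\gamma$-invariant, $B_{\gamma^+}$ and its complement $\overline{C_{\gamma^+}}\cup L$ are $\hol(\gamma)$-invariant, and the maximality of $C_{\gamma^+}$ as an open subset of the complement (Definition \ref{def:xi1}) gives $\hol(\gamma)C_{\gamma^+}=C_{\gamma^+}$.

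I would next apply Lemma \ref{lemma-line-containment-and-signs} to the loxodromic element $\hol(\gamma)$ acting on the properly convex domain $C_{\gamma^+}$, obtaining $e_+,e_-\in\partial C_{\gamma^+}$ and $\lambda_+\lambda_->0$. Because $L$ is, by Definition \ref{def: concave}, a supporting line to $C_{\gamma^+}$, the intersection $L\cap\overline{C_{\gamma^+}}$ is a closed convex subset of $L$ disjoint from the open set $C_{\gamma^+}$; containing the two distinct points $e_+$ and $e_-$, it must be a closed segment between them contained in $\partial C_{\gamma^+}$, which settles the segment-containment assertion.

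The sign of $\lambda_0$ is the subtler part, for which the plan is to exploit the line $\mathcal{L}(\gamma^-,\gamma^+)$. By Lemma \ref{rem-L-not-equal-xi}, $\mathcal{L}(\gamma^-,\gamma^+)\neq L$. Being $\hol(\gamma)$-invariant, it is spanned by two of the three eigenlines of $\hol(\gamma)$, and as it is not $L=e_+\oplus e_-$, it must contain $e_0$. Lemma \ref{lemma-restriction-to-line} then identifies the endpoints of $\mathcal{S}(\gamma^-,\gamma^+)$ with the two eigenlines spanning $\mathcal{L}(\gamma^-,\gamma^+)$ and asserts that their eigenvalues share a sign, pairing $\lambda_0$ with $\lambda_+$ or $\lambda_-$. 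Combined with $\lambda_+\lambda_->0$, this forces a common sign for all three eigenvalues of $\hol(\gamma)$. The main delicate point, which I expect to be the only real obstacle, is the identification of $L$ as the distinguished supporting line in the concave decomposition of $\RP^2 - B_{\gamma^+}$ and the resulting invariance of $C_{\gamma^+}$; both are consequences of the uniqueness clauses in Definitions \ref{def: concave}, \ref{def:xi1}, and \ref{def:xi2}.
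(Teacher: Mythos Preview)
Your proof is correct and follows essentially the same approach as the paper: both use Lemma~\ref{lemma-line-containment-and-signs} applied to the $\hol(\gamma)$-invariant properly convex domain $C_{\gamma^+}$ to place $e_+,e_-$ on $\partial C_{\gamma^+}$ and to get $\lambda_+\lambda_->0$, then use that $L=\xi^2(\gamma^+)$ supports $C_{\gamma^+}$ to place the segment in the boundary, and finally use Lemma~\ref{rem-L-not-equal-xi} together with Lemma~\ref{lemma-restriction-to-line} to pair $\lambda_0$ with one of $\lambda_\pm$. You are somewhat more explicit than the paper about why $C_{\gamma^+}$ is $\hol(\gamma)$-invariant and why $\mathcal{L}(\gamma^-,\gamma^+)$ must contain $e_0$, but the argument is the same.
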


    \begin{proof}
        As each $\hol(\gamma)$ for $\gamma \in \overline{\Gamma} - \{e\}$ of translation $0$ is loxodromic and preserves $C_{\gamma^+}$ we conclude from Lemma \ref{lemma-xi-1-at-fixed-point} and Lemma \ref{lemma-line-containment-and-signs} that $\xi^1(\gamma^+)$ and $\xi^{1}(\gamma^-)$ are contained in $\partial C_{\gamma^+}$. 
        By convexity, a closed segment between $\xi^1(\gamma^+)$ and $\xi^1(\gamma^-)$ is contained in $\overline{C_{\gamma^+}}$.
        Since $\xi^2(\gamma^+) = L$ supports $C_{\gamma^+}$, the properly convex domain $C_{\gamma^+}$ cannot intersect such a segment, and the first claim is proved.

        Lemma \ref{lemma-line-containment-and-signs} shows that the eigenvalues of $\hol(\gamma)$ corresponding to $e_+$ and $e_-$ have the same sign.
        Since $\xi^2(\gamma^+) = \xi^2(\gamma^-) = e_+(\gamma) \oplus e_-(\gamma)$ in this situation, Lemma \ref{lemma-restriction-to-line} shows that the signs of the eigenvalues corresponding to $e_0$ and one of $e_+, e_-$ agree, which finishes the final claim.
    \end{proof}

    The next step in the argument is to control the location of the middle eigenspace in an appropriate sense.
    Let us begin by noting that our hypotheses force $p_{\gamma^-\gamma^+}$ to be inside $L$ so that either $p_{xz} = \xi^1(x)$ or $p_{xz} = \xi^1(z)$ for all $(x,z) \in \widetilde{\partial \Gamma}^{(2)}_{[0]}$ by the Dichotomy Lemma \ref{lemma-stable-intersection-dichotomy}.
    In the first case, $\mathcal{L}(x,z)$ contains $\xi^1(x)$ for all $(x,z) \in \widetilde{\partial \Gamma}^{(2)}_{[0]}$.
    In the second case, $\mathcal{L}(x,z)$ contains $\xi^1(z)$ for all $(x,z) \in \widetilde{\partial \Gamma}^{(2)}_{[0]}$.
    In both cases, Lemma \ref{rem-L-not-equal-xi} and Corollary \ref{cor-tau-fixes-xi-1} imply that $\mathcal{L}(x,z)$ and $\mathcal{L}(z,\tau x)$ are transverse for all $(x,z) \in \widetilde{\partial \Gamma}^{(2)}_{[0]}$.

    So for any pair $(x,z) \in \widetilde{\partial \Gamma}^{(2)}_{[0]}$ define $q_{xz} = \mathcal{L}(x,z) \cap \mathcal{L}(z, \tau x)$.
    Then $(x,z) \mapsto q_{xz}$ is continuous by construction.
    Furthermore, we have arranged that $q_{\gamma^- \gamma^+} = e_{0}(\gamma)$ when $\gamma$ is a nontrivial element of zero translation.

    We now apply semicontinuity of the complements of the concave domains' complements $D_x$ to force $C_x$ to have highly constrained shape.

    \begin{claim}\label{claim-line-implies-all-triangles}
        Under these hypotheses, for all pole-pairs $(\gamma^-, \gamma^+) \in \widetilde{\partial \Gamma}^{(2)}_{[0]}$ the closure of $C_{\gamma^+}$ is a triangle with vertices $e_+(\gamma), e_-(\gamma),$ and $e_0(\gamma)$.
    \end{claim}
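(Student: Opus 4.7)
The plan is to apply the Segment Containment Lemma (Lemma \ref{lemma-force-stupid-segment}) to force $e_0(\gamma) \in \overline{C_{\gamma^+}}$, use convexity to deduce that the triangle $T$ with vertices $e_+(\gamma), e_-(\gamma), e_0(\gamma)$ is contained in $\overline{C_{\gamma^+}}$, and then rule out any extension beyond $T$ using proper convexity together with the projective dynamics of $\hol(\gamma)$.

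First, by the density of pole-pair lifts in $\widetilde{\partial \Gamma}^{(2)}_{[0]}$, I would choose a zero-translation element $\sigma \in \overline{\Gamma}$ with fixed-point pair $(\sigma^-, \sigma^+)$ arranged so that $\gamma^-$ lies strictly between $\sigma^-$ and $\sigma^+$ in the orientation on $\widetilde{\partial \Gamma}$, and so that the orientation hypothesis of Lemma \ref{lemma-force-stupid-segment} holds with $\eta = \gamma$ and $\sigma$ in the role of the lemma's $\gamma$. Claim \ref{claim-boundary-line} applied to $\sigma$ gives a closed segment of $\partial C_{\sigma^+}$ between $\xi^1(\sigma^-)$ and $\xi^1(\sigma^+)$; combining the continuity and local injectivity of $\xi^1$ (Lemma \ref{basic-curve-lemma}) with the positioning of $\gamma^-$, the point $\xi^1(\gamma^-) = e_-(\gamma)$ lies in the interior of this segment. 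Since the segment is flat and contained in $L$, the line $L$ is the unique supporting line to $C_{\sigma^+}$ at $e_-(\gamma)$; moreover, $e_0(\gamma) \notin L$ because the three eigenlines of $\hol(\gamma)$ form a basis of $\bbR^3$. Hence $e_-(\gamma) \oplus e_0(\gamma) \ne L$ fails to support $C_{\sigma^+}$, and Lemma \ref{lemma-force-stupid-segment} yields a segment between $e_-(\gamma)$ and $e_0(\gamma)$ contained in $D_{\gamma^+} = \overline{C_{\gamma^+}} \cup L$. Its interior avoids $L$, so lies in $\overline{C_{\gamma^+}}$, and in particular $e_0(\gamma) \in \overline{C_{\gamma^+}}$.

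Together with $e_\pm(\gamma) \in \partial C_{\gamma^+}$ from Claim \ref{claim-boundary-line}, the closure $\overline{C_{\gamma^+}}$ contains three non-collinear points. Proper convexity places $\overline{C_{\gamma^+}}$ in an affine chart where it is convex, so $T \subset \overline{C_{\gamma^+}}$. The reverse containment follows in two steps. First I would rule out $e_0(\gamma) \in C_{\gamma^+}$: otherwise an open interval of $e_+(\gamma) \oplus e_0(\gamma)$ meeting $C_{\gamma^+}$ near $e_0$ would be swept by $\hol(\gamma)$-iterates to cover all of $(e_+\oplus e_0) \setminus \{e_+,e_0\}$ inside the open set $C_{\gamma^+}$, forcing the full projective line $e_+\oplus e_0$ into $\overline{C_{\gamma^+}}$ and contradicting proper convexity. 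Thus $e_0(\gamma) \in \partial C_{\gamma^+}$.

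Finally, the three fixed points partition $\partial C_{\gamma^+}$ into three $\hol(\gamma)$-invariant arcs. On the arc from $e_+$ to $e_0$ (not passing through $e_-$), any non-fixed point $p$ has $\hol(\gamma)^n p \to e_+$ and $\hol(\gamma)^{-n} p \to e_0$. A direct computation in the eigenbasis shows that the backward limit can be $e_0$ rather than the generic $e_-$ only if the $e_-$-coordinate of $p$ vanishes, i.e.\ $p \in e_+(\gamma) \oplus e_0(\gamma)$. Hence this arc lies on the line $e_+ \oplus e_0$ and must be one of its two projective segments; by the convex hull containment already established, it must coincide with the straight segment $[e_+, e_0]$. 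The same reasoning handles the arc from $e_-$ to $e_0$, yielding $\partial C_{\gamma^+} = \partial T$ and $\overline{C_{\gamma^+}} = T$. The main obstacle is carefully arranging the auxiliary pole-pair $(\sigma^-, \sigma^+)$ so that both the positioning of $\xi^1(\gamma^-)$ on the correct arc of $L$ inside $\partial C_{\sigma^+}$ and the orientation hypothesis of Lemma \ref{lemma-force-stupid-segment} are simultaneously satisfied, which requires tracking the monotonic behavior of the lifted map $\xi^1 \colon \widetilde{\partial \Gamma} \to \widetilde{L}$.
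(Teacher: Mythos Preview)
Your strategy is essentially the paper's: apply Lemma~\ref{lemma-force-stupid-segment} to obtain a segment from $e_-(\gamma)$ to $e_0(\gamma)$ in $D_{\gamma^+}$, deduce $T\subset\overline{C_{\gamma^+}}$ by convexity, then use loxodromic dynamics to rule out any overshoot.  Two points of divergence are worth noting.

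First, the obstacle you flag is genuine.  Placing $\gamma^-$ between $\sigma^-$ and $\sigma^+$ in $\widetilde{\partial\Gamma}$ does not by itself force $\xi^1(\gamma^-)$ onto the particular segment of $L$ that lies in $\partial C_{\sigma^+}$, because $\xi^1$ is only locally injective and surjects onto $L$.  The paper does not attempt a direct argument for every $\gamma$.  Instead it fixes a single $\nu$, uses local injectivity of $\xi^1$ near $\nu^-$ to find an interval $V\subset\widetilde{\partial\Gamma}$ with $\xi^1(V)$ inside the boundary segment of $\partial C_{\nu^+}$, and then applies Lemma~\ref{lemma-force-stupid-segment} to all pole-pairs $(\gamma^-,\gamma^+)$ with both entries in $V$.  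This establishes the triangle conclusion on an open set of pole-pairs, after which equivariance and topological transitivity of the $\overline{\Gamma}$-action on $\widetilde{\partial\Gamma}^{(2)}_{[0]}$ extend it to all pole-pairs.  This sidesteps the monotonicity tracking entirely.

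Second, your reverse containment via boundary-arc dynamics is a valid alternative, but the paper's argument is more direct: since $L=\xi^2(\gamma^+)$ is disjoint from $C_{\gamma^+}$, one works in an affine chart and observes that any $p\notin T_{\gamma^+}\cup L$ has either a forward or backward $\hol(\gamma)$-orbit converging to $e_\pm(\gamma)$ from the side of $L$ opposite to $T_{\gamma^+}$, which is incompatible with $p\in C_{\gamma^+}$.  Your arc analysis works too, though it needs a small case split you elide: the invariant arc from $e_+$ to $e_0$ has monotone restricted dynamics, and one must rule out the possibility that forward iteration on the arc limits to $e_0$ rather than $e_+$ (which would place the arc in $e_0\oplus e_-$, contradicting $e_+$ being an endpoint).
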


    \begin{proof}
        To begin, let $\nu$ be a fixed element of translation $0$.
        Claim \ref{claim-boundary-line} shows that $\partial C_{\nu^+}$ contains an open line segment $U$ between $\xi^1(\nu^+)$ and $\xi^1(\nu^-)$.
        Since $\xi^1$ is a local homeomorphism with image contained in $L \supset U$, there is an open set $V \subset \widetilde{\partial \Gamma}$ contained in the interval between $\nu^-$ and $\nu^+$ with an endpoint at $\nu^-$ so that $\xi^1(x) \in U$ for all $x \in V$.
       Let us assume that $(\nu^-, x) \in \widetilde{\partial \Gamma}^{[2]}_{[0]}$ for all $x \in V$; the case where $(x, \nu^-) \in \widetilde{\partial \Gamma}^{[2]}_{[0]}$ is analogous. 
        In any matter, for any pole-pair $(\gamma^-, \gamma^+) \in \widetilde{\partial \Gamma}^{(2)}_{[0]}$ with both $\gamma^-$ and $\gamma^+$ in $V$, the repeller $e_-(\gamma^-)$ is in $U$ by Lemma \ref{lemma-xi-1-at-fixed-point} and $(\gamma^+, \nu^+, \tau \gamma^-)$ is positively oriented.
        
        Lemma \ref{lemma-force-stupid-segment} then implies that for all such pole-pairs $(\gamma^-, \gamma^+)$, in particular all pole-pairs in an open subset of $\widetilde{\partial \Gamma}^{(2)}_{[0]}$, a line segment $\ell_\gamma^+$ in $e_-(\gamma) \oplus e_0(\gamma)$ is contained in $\overline{C_{\gamma^+}}$.
        For such $\gamma$, convexity and containment of a line segment between $e_+(\gamma)$ and $e_-(\gamma)$ forces $\overline{C_{\gamma^+}}$ to contain a triangle $T_{\gamma^+}$ with vertices $e_+(\gamma), e_0(\gamma),$ and $e_-(\gamma)$.
        
        Because the line spanned by $e_-(\gamma)$ and $e_+(\gamma)$ is contained in $\RP^2 - C_{\gamma^+}$ from the identity $L = \xi^2(\gamma^+) = \xi^2(\gamma^-)$, it is a straightforward consequence of the fact that $\hol(\gamma)$ is loxodromic that $C_{\gamma^+} = T_{\gamma^+}$.
        Indeed, observe that for any $p \notin T_{\gamma^+} \cup L$ that at least one of $\hol(\gamma^n)p$ and $\hol(\gamma^{-n})p$ converges to $e_+(\gamma)$ or $e_-(\gamma)$ from the side of $\RP^2 - L$ that does not contain $T_{\gamma^+}$ (in any an affine chart containing $T_{\gamma^+}$).
        So if $p$ were in $C_{\gamma^+}$, then $C_{\gamma^+}$ would need to contain a point in $L$ by proper convexity, which is impossible.

         We have shown that for all pole-pairs $(\gamma^-, \gamma^+)$ in $V \times V$ that $C_{\gamma^+}$ is a triangle.
         Since the collection of pole-pairs of translation zero elements is $\overline{\Gamma}$-invariant, the topological transitivity of the action of $\overline{\Gamma}$ on $\widetilde{\partial \Gamma}^{(2)}_{[0]}$ now forces $C_{\gamma^+}$ to be a triangle for all pole-pairs $(\gamma^-, \gamma^+) \in \widetilde{\partial \Gamma}^{(2)}_{[0]}$.
    \end{proof}

    We are now ready to conclude.
    By density of pole-pairs, we may produce a sequence $\gamma_n$ of translation zero elements in $\overline{\Gamma}$ so that:
    \begin{enumerate}
        \item $\gamma_n^+$ converges to $\eta^+$ for some $\eta$ of zero translation, and
        \item $\gamma_{n}^-$ converges to $r$ so that $\xi^1(r)$ is not in the boundary segment of $C_{\eta^+}$ between $e_+(\eta)$ and $e_-(\eta)$.
    \end{enumerate}
    Then continuity of $(\gamma_n^-,\gamma_n^+) \mapsto q_{\gamma_n^-\gamma_n^+} = e_0(\gamma)$ shows that there is a point $p' \notin T_{\eta^+} \cup L$ so that infinitely many of the triangles $T_{\gamma_n^+}$ contain $p'$.
    Then Lemma \ref{lemma-easy-semicontinuity} forces $p' \in C_{\eta^+} = T_{\eta^+}$, which is a contradiction.    
\end{proof}

\subsubsection{Factoring the Holonomy}

That the image of $\xi^1$ is not contained in a line implies strong structural constraints on the holonomy and on the maps $\xi^2$ and $\mathcal{L}$, that we document here.

\begin{proposition}
    $\hol(\tau) = {\rm{Id}}$.
\end{proposition}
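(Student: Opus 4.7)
The plan is to derive $\hol(\tau) = \mathrm{Id}$ from a case analysis on the fixed-point set $F \coloneqq \mathrm{Fix}(\hol(\tau)) \subset \RP^2$, using three previously established facts: by Corollary \ref{cor-tau-fixes-xi-1}, $\xi^1(\widetilde{\partial \Gamma}) \subset F$; by Proposition \ref{lemma-not-line-contained}, $\xi^1(\widetilde{\partial \Gamma})$ is not contained in any projective line; and by Lemma \ref{basic-curve-lemma}, $\xi^1$ is continuous on the connected space $\widetilde{\partial \Gamma}$ and satisfies $\xi^1(\gamma^+) \neq \xi^1(\gamma^-)$ for any pole-pair.

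First I would enumerate the possible shapes of $F$ assuming $\hol(\tau) \neq \mathrm{Id}$. Since the only scalar matrix in $\SL_3(\bbR)$ is the identity, any non-identity element acts non-trivially on $\RP^2$, so $F \subsetneq \RP^2$. A short Jordan-form analysis over $\mathbb{C}$ (using that every real $3 \times 3$ matrix admits at least one real eigenvalue, and $\det = 1$) produces exactly five possibilities for $F$: a single point (a complex conjugate pair of eigenvalues together with one real eigenvalue, or a full $3 \times 3$ unipotent Jordan block); two distinct points (a non-trivial $2 \times 2$ Jordan block with eigenvalue $\lambda$ together with a $1 \times 1$ block with eigenvalue $\mu \neq \lambda$); three non-collinear points (three distinct real eigenvalues, diagonalizable); a single projective line (a non-trivial $2 \times 2$ unipotent Jordan block together with a $1 \times 1$ unipotent block); or the disjoint union of a projective line $\ell$ and an isolated point $p \notin \ell$ (diagonalizable with eigenvalues $\lambda, \lambda, \mu$ and $\lambda \neq \mu$).

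Next I would rule out each case. Whenever $F$ is finite and discrete, which covers the first three possibilities, continuity of $\xi^1$ and connectedness of $\widetilde{\partial \Gamma}$ force $\xi^1$ to be constant, contradicting $\xi^1(\gamma^+) \neq \xi^1(\gamma^-)$. When $F$ is a projective line, the inclusion $\xi^1(\widetilde{\partial\Gamma}) \subset F$ directly contradicts Proposition \ref{lemma-not-line-contained}. In the last case, writing $F = \ell \sqcup \{p\}$ with $\ell$ closed in $\RP^2$ and $p \notin \ell$, the singleton $\{p\}$ is open in the subspace topology on $F$, so both $(\xi^1)^{-1}(\ell)$ and $(\xi^1)^{-1}(\{p\})$ are clopen in $\widetilde{\partial \Gamma}$; connectedness forces one of them to be empty, which puts us back into one of the two already-excluded situations.

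The main work here is purely the bookkeeping of the five-case enumeration — correctly identifying the eigenspace dimensions of each Jordan-block pattern in $\mathrm{M}_3(\bbR)$ compatible with $\det = 1$ and handling the complex-conjugate case — after which the contradictions drop out immediately from continuity, connectedness, and the two properties of $\xi^1$ recalled at the start. This is not really an obstacle so much as a list to check, and no additional geometric input beyond the already-established non-degeneracy Proposition \ref{lemma-not-line-contained} is required.
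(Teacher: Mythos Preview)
Your proof is correct. Both your argument and the paper's rest on the same two inputs: Corollary \ref{cor-tau-fixes-xi-1} (so $\xi^1(\widetilde{\partial\Gamma}) \subset \mathrm{Fix}(\hol(\tau))$) and Proposition \ref{lemma-not-line-contained} (the image is not in a line). The difference is in how triviality of $\hol(\tau)$ is extracted from these. The paper argues more directly: since the image of $\xi^1$ is not in a line and attracting fixed points of translation-zero elements are dense, one can find four points $\xi^1(\gamma_i^+)$ in general position, each of which is an eigenline of $\hol(\tau)$; a matrix with four eigenlines in general position is scalar, hence the identity in $\SL_3(\bbR)$. Your route replaces this single classical fact with a five-case Jordan-form enumeration of possible fixed-point sets, eliminating each by connectedness of $\xi^1(\widetilde{\partial\Gamma})$ or by Proposition \ref{lemma-not-line-contained}. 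Your approach is more elementary in that it avoids invoking the general-position fact, at the cost of some bookkeeping; the paper's is shorter and also re-derives the eigenline property via simultaneous diagonalizability with the loxodromic $\hol(\gamma)$ rather than citing Corollary \ref{cor-tau-fixes-xi-1}.
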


\begin{proof}
    Since the image of $\xi^1$ is not contained in a line, the image of $\xi^1$ contains four points in $\RP^2$ in general position.
    As $\tau$ is central, $\hol(\tau)$ must commute with $\hol(\gamma)$ for all $\gamma \in \overline{\Gamma}$.
    As each $\hol(\gamma)$ for $\gamma$ of translation $0$ is diagonalizable with no repeated eigenvalues, for every $\gamma$ of zero translation, $\hol(\gamma)$ and $\hol(\tau)$ are simultaneously diagonalizable.
    
    Since general position is an open condition and attracting fixed-points of elements of $\overline{\Gamma}$ of translation $0$ are dense in $\widetilde{\partial \Gamma}$, there are $\gamma_1, \gamma_2, \gamma_3, \gamma_4 \in \overline{\Gamma}$ of translation zero so that $\xi^1(\gamma_1^+), \xi^1(\gamma_2^+), \xi^1(\gamma_3^+),$ and $\xi^1(\gamma_4^+)$ are in general position.
    Each of these four points must be an eigenspace of $\hol(\tau)$ because of simultaneous diagonalizability of $\hol(\gamma_i)$ and $\hol(\tau)$ $(i = 1, 2,3,4)$.

    We conclude that $\hol(\tau)$ is a multiple of the identity in $\text{SL}_3(\bbR)$, hence the identity.
\end{proof}

\subsubsection{$\xi$ Maps to Flags}

Since $\hol(\tau) = \text{Id}$, the holonomy factors to a homomorphism $\hol_*: \Gamma \to \text{SL}_3(\bbR)$, both $\xi^1$ and $\xi^2$ descend to maps from $\partial \Gamma$, the developing map $\text{dev}$ descends to a map with domain $\partial \Gamma^{(3)+}$, and $\mathcal{L}$ descends to a map from $\partial \Gamma^{(2)}$.
We shall henceforth only work with these induced maps, and will abuse notation and denote these induced maps by the same symbols.

\begin{lemma}\label{lemma-xi-2-nonconstant}
    $\xi^2$ is non-constant.
\end{lemma}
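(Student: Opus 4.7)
The plan is to argue by contradiction: suppose $\xi^2$ is the constant map equal to some projective line $L \subset \RP^2$. Then $L$ is $\hol(\Gamma)$-invariant by equivariance of $\xi^2$. First I would invoke Lemma \ref{lemma-three-distinct-eigenvalues} (which applies to every $\gamma \in \Gamma - \{e\}$ now that $\hol$ factors through $\Gamma$) to conclude $\hol(\gamma)$ is loxodromic with three distinct real eigenlines $e_+(\gamma), e_0(\gamma), e_-(\gamma)$. Since $\hol(\gamma)$ is diagonalizable with simple spectrum, any $\hol(\gamma)$-invariant projective line is the span of two eigenlines; in particular $L$ contains exactly two of $\{e_+(\gamma), e_0(\gamma), e_-(\gamma)\}$.

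The central step is to analyze $K := \{x \in \partial \Gamma : \xi^1(x) \in L\}$, which is closed by continuity of $\xi^1$ and $\Gamma$-invariant by equivariance. I would show that $K$ is either empty or all of $\partial \Gamma$: for any loxodromic $\gamma \in \Gamma$ and any $y \in K$ with $y \neq \gamma^-$, the North-South dynamics on $\partial \Gamma$ together with closedness of $K$ yield $\gamma^+ = \lim_{n \to \infty} \gamma^n y \in K$. Since attracting fixed points of loxodromic elements of $\Gamma$ are dense in $\partial \Gamma$, a nonempty $K$ must equal $\partial \Gamma$.

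Then I would rule out both alternatives. The case $K = \partial \Gamma$ gives $\xi^1(\partial \Gamma) \subset L$, contradicting Proposition \ref{lemma-not-line-contained}. For the case $K = \emptyset$, pick any loxodromic $\gamma \in \Gamma$: Lemma \ref{lemma-xi-1-at-fixed-point} gives $\xi^1(\gamma^+) = e_+(\gamma)$, and the same lemma applied to $\gamma^{-1}$ gives $\xi^1(\gamma^-) = e_-(\gamma)$. Since $\gamma^\pm \notin K$, both $e_+(\gamma)$ and $e_-(\gamma)$ avoid $L$. But then $L$ contains at most the single eigenline $e_0(\gamma)$, contradicting that $L$ must contain two eigenlines of $\hol(\gamma)$.

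The hard part is really just the clean dichotomy for $K$; the other ingredients (density of loxodromic attractors, Lemmas \ref{lemma-three-distinct-eigenvalues} and \ref{lemma-xi-1-at-fixed-point}, and Proposition \ref{lemma-not-line-contained}) do the rest of the work. I do not anticipate any serious obstacle beyond being careful that the cited structural results, originally stated for zero-translation elements of $\overline{\Gamma}$, descend as expected to loxodromic elements of $\Gamma$ via the quotient by $\ker \hol \supseteq \langle \tau \rangle$.
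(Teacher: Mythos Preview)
Your proof is correct and follows essentially the same approach as the paper. The paper also assumes $\xi^2 \equiv L_0$, uses that $L_0$ is $\hol(\gamma)$-invariant and hence contains two eigenlines of each loxodromic $\hol(\gamma)$ (so at least one of $\xi^1(\gamma^\pm) \in L_0$), and then concludes $\xi^1(\partial\Gamma) \subset L_0$ by a density argument, contradicting Proposition~\ref{lemma-not-line-contained}; your version simply reorganizes this into the dichotomy $K = \emptyset$ or $K = \partial\Gamma$ (which also follows immediately from minimality of the $\Gamma$-action on $\partial\Gamma$) and rules out $K = \emptyset$ by the same eigenline count.
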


\begin{proof}
    Suppose for contradiction that $\xi^2$ is constant, and let $L_0$ be its value.
    We claim that this forces the image of $\xi^1$ to be contained in $L_0$. 
    
    Note that the condition $\xi^1(x) \subset L_0$ is closed in $\partial \Gamma$ by the continuity of $\xi^1$.
    Also, by Lemmas \ref{basic-curve-lemma} and \ref{lemma-xi-1-at-fixed-point}, for every $\gamma \in \Gamma - \{e\}$ at least one of $\xi^1(\gamma^+)$ and $\xi^1(\gamma^-)$ is in $\xi^2(\gamma^+)$.
    Since pole-pairs are dense in $\partial\Gamma^{2}$, for every $x \in \partial \Gamma$ there is a sequence $\gamma_n \in \Gamma$ so that $\lim_{n\to\infty} (\gamma^+_n, \gamma^-_n) = (x,x).$ 
    Since at least one entry $y$ of each pair $(\gamma^+_n, \gamma^-_n)$ has $\xi^1(y) \in L_0$, we must have $\xi^1(x) \in L_0$, as desired.

    So the image of $\xi^1$ is contained in $L_0$, in contradiction to Proposition \ref{lemma-not-line-contained}.
\end{proof}

We may now deduce that the map $\xi = (\xi^1, \xi^2)$ is valued in $\mathscr{F}_{1,2}$. 

\begin{lemma}\label{lem:xi1 in xi2}
    $\xi^1(x) \in \xi^2(x)$ for all $x \in \partial \Gamma$.
\end{lemma}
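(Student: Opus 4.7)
The plan is to reduce the problem to a closed, $\Gamma$-invariant subset argument and then handle each case of the Dichotomy Lemma \ref{lemma-stable-intersection-dichotomy}. I will observe that the set
$$ G := \{ x \in \partial \Gamma : \xi^1(x) \in \xi^2(x) \} $$
is closed in $\partial \Gamma$ (by continuity of $\xi^1$ and $\xi^2$ together with the closedness of the incidence relation $\{(p, L) : p \in L\}$ in $\mathbb{RP}^2 \times \mathrm{Gr}_2(\mathbb{R}^3)$) and $\Gamma$-invariant (by $\hol_*$-equivariance of both maps). Since $\Gamma$ acts minimally on $\partial \Gamma$, $G$ is either empty or all of $\partial \Gamma$, so it suffices to verify that $G \neq \emptyset$.

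For Case 2a of the Dichotomy Lemma ($p_{xz} = \xi^1(x)$ for all distinct $(x,z)$), the very definition $p_{xz} = \mathcal{L}(x,z) \cap \xi^2(x)$ immediately gives $\xi^1(x) \in \xi^2(x)$ for every $x$. For Case 2b ($p_{xz} = \xi^1(z)$), the same definition gives $\xi^1(z) \in \xi^2(x)$ for every distinct pair, so fixing $x$ and choosing a sequence $z_n \to x$ with $z_n \neq x$, continuity of $\xi^1$ and of $\xi^2$ together with closedness of incidence yield $\xi^1(x) \in \xi^2(x)$.

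The substantive case is Case 1, where $p_{\gamma^- \gamma^+} = e_0(\gamma)$ for every nontrivial $\gamma \in \Gamma$. Applying the Dichotomy Lemma also to $\gamma^{-1}$, whose attracting fixed point is $\gamma^-$, gives $p_{\gamma^+ \gamma^-} = e_0(\gamma^{-1}) = e_0(\gamma)$, so both $\xi^2(\gamma^+)$ and $\xi^2(\gamma^-)$ contain $e_0(\gamma)$; being $\hol_*(\gamma)$-invariant, each must coincide with one of the two invariant lines $e_0 \oplus e_+$ or $e_0 \oplus e_-$ through $e_0$. Suppose, for contradiction, that $G = \emptyset$. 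Then for every nontrivial $\gamma$, neither $\xi^1(\gamma^+) = e_+$ can lie on $\xi^2(\gamma^+)$ nor $\xi^1(\gamma^-) = e_-$ can lie on $\xi^2(\gamma^-)$, which forces $\xi^2(\gamma^+) = e_0 \oplus e_-$ and $\xi^2(\gamma^-) = e_0 \oplus e_+$. Consequently $\xi^1(\gamma^+) = e_+ \in e_0 \oplus e_+ = \xi^2(\gamma^-)$ for every pole-pair. Density of pole-pairs in $\partial \Gamma^{(2)}$ together with continuity of $\xi^1$ and $\xi^2$ then upgrades this off-diagonal incidence to $\xi^1(z) \in \xi^2(x)$ for \emph{all} distinct $(x, z) \in \partial \Gamma^{(2)}$; taking $z \to x$ yields $\xi^1(x) \in \xi^2(x)$, contradicting $G = \emptyset$.

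The main obstacle is cleanly executing Case 1: the Dichotomy Lemma alone leaves a genuine two-fold ambiguity for the location of $\xi^2(\gamma^\pm)$ among the invariant lines through $e_0(\gamma)$, and resolving it requires combining information extracted from both $\gamma$ and $\gamma^{-1}$ with the same continuity-plus-density mechanism already used for Case 2b, so that the ``wrong'' alternative propagates into its own contradiction.
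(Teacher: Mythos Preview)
Your proof is correct but follows a genuinely different route from the paper's.

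Both arguments share the reduction: the set $G = \{x : \xi^1(x) \in \xi^2(x)\}$ is closed and $\Gamma$-invariant, so by minimality it suffices to show $G \neq \emptyset$. The difference is in how nonemptiness is established.

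The paper's proof is a single dynamical argument: it uses Proposition~\ref{lemma-not-line-contained} to find a point $y$ with $\xi^1(y) \notin \xi^2(x_0)$, upgrades this by density of pole-pairs to a $\gamma$ with both $\xi^1(\gamma^\pm) = e_\pm(\gamma)$ off $\xi^2(x_0)$, and then iterates $\hol(\gamma)^n$ on $\xi^2(x_0)$ to deduce $\xi^2(\gamma^+) = e_+ \oplus e_0$ directly. Your proof instead invokes the Dichotomy Lemma~\ref{lemma-stable-intersection-dichotomy} and handles three cases. Cases 2a and 2b are immediate from the definition of $p_{xz}$ together with a limit. Case 1 is the interesting one: you observe that both $p_{\gamma^-\gamma^+}$ and $p_{\gamma^+,\tau\gamma^-}$ equal $e_0$ (this is already the content of Case 1 of the Dichotomy Lemma, so your appeal to $\gamma^{-1}$ is slightly redundant), hence each of $\xi^2(\gamma^\pm)$ is an invariant line through $e_0$; assuming $G = \emptyset$ forces the ``wrong'' choice $\xi^2(\gamma^+) = e_0 \oplus e_-$ and $\xi^2(\gamma^-) = e_0 \oplus e_+$, which yields the off-diagonal incidence $\xi^1(\gamma^+) \in \xi^2(\gamma^-)$, and density plus a limit then contradicts $G = \emptyset$.

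What each buys: the paper's argument is shorter and uniform, avoiding case analysis, but it consumes Proposition~\ref{lemma-not-line-contained} as an input. Your argument is longer but more elementary in that it does not directly invoke non-degeneracy of $\xi^1$; it extracts everything from the structure already encoded in the Dichotomy Lemma.
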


\begin{proof}
    We will show that the collection of elements $x \in \partial \Gamma$ so that $\xi^1(x) \in \xi^2(x)$ is thus $\Gamma$-invariant, closed, and nonempty, hence all of $\partial \Gamma$.
    Because $\xi^1$ and $\xi^2$ are continuous and the action of $\Gamma$ on $\partial \Gamma$ is minimal, it remains to show that this collection is nonempty.

    Now let $x_0 \in \partial \Gamma$ be fixed and given.
    Since $\xi^1$ is not contained in a line by Proposition \ref{lemma-not-line-contained}, there is a point $y \in \partial \Gamma$ so $\xi^1(y) \notin \xi^2(x_0)$.
    From the density of pole-pairs in $\partial \Gamma^{2}$, there is a $\gamma \in \Gamma - \{e\}$ so that $\xi^1(\gamma^+)$ and $\xi^1(\gamma^-)$ are not contained in $\xi^2(x_0)$.
    Then, since $\xi^2(x_0)$ does not contain the repelling fixed-point $e_-(\gamma)$ of $\hol(\gamma)$, we see \begin{align*}
        \xi^2(\gamma^+) = \lim_{n \to\ \infty} \xi^2(\gamma^n x_0) = \lim_{n\to\infty} \hol(\gamma)^n \xi^2(x_0) = e_+(\gamma) \oplus e_0(\gamma).
    \end{align*}
    In particular, $\xi^1(\gamma^+) \in \xi^2(\gamma^+)$, as desired.
\end{proof}

\subsubsection{Hitchin Holonomy} That $\hol_*$ is Hitchin is now a fairly straightforward consequence of what we have proved.

\begin{lemma}\label{lemma-theres-the-convex}
    There exists a projective line $L \subset \RP^2$ so that $\xi^1(\partial \Gamma) \cap L = \emptyset$.
\end{lemma}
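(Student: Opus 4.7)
The plan is to establish the stronger claim that, for any fixed $x_0 \in \partial\Gamma$, the full image $\xi^1(\partial\Gamma)$ lies in the closed properly convex region $\overline{C_{x_0}}$. Once this is done, $\overline{C_{x_0}}$ sits inside some affine chart of $\RP^2$ and the projective line at infinity of this chart provides the desired $L$.

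Lemma \ref{lemma-image-doesnt-touch-xi1} combined with the decomposition $\RP^2 \setminus B_{x_0} = \overline{C_{x_0}} \cup \xi^2(x_0)$ already gives $\xi^1(\partial\Gamma) \subset \overline{C_{x_0}} \cup \xi^2(x_0)$, so the task reduces to ruling out any $y_0 \in \partial\Gamma$ with $\xi^1(y_0) \in \xi^2(x_0) \setminus \overline{C_{x_0}}$; the value $\xi^1(x_0)$ will then lie in $\overline{C_{x_0}}$ by closedness and continuity at $x_0$. Suppose such a $y_0$ existed. Openness of $\RP^2 \setminus \overline{C_{x_0}}$ and continuity of $\xi^1$ yield a neighborhood $U$ of $y_0$ with $\xi^1(U) \cap \overline{C_{x_0}} = \emptyset$, forcing $\xi^1(U) \subset \xi^2(x_0)$. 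By density of pole pairs in $\partial\Gamma^{(2)}$, pick a nontrivial $\gamma \in \Gamma$ with both $\gamma^\pm \in U$ and $\gamma^\pm \neq x_0$.

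By Lemma \ref{lemma-xi-1-at-fixed-point} these fixed points satisfy $\xi^1(\gamma^\pm) = e_\pm(\gamma)$, and Lemma \ref{basic-curve-lemma} makes them distinct points of $\xi^2(x_0)$; hence $\xi^2(x_0) = e_+(\gamma) \oplus e_-(\gamma)$, which is one of the three $\hol(\gamma)$-invariant projective lines. By equivariance, $\xi^2(\gamma^n x_0) = \hol(\gamma)^n \xi^2(x_0) = \xi^2(x_0) = e_+ \oplus e_-$ for every $n$, while $x_0 \neq \gamma^-$ gives $\gamma^n x_0 \to \gamma^+$ and so $\xi^2(\gamma^n x_0) \to \xi^2(\gamma^+)$ by continuity of $\xi^2$.

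The contradiction will follow once I identify $\xi^2(\gamma^+) = e_+(\gamma) \oplus e_0(\gamma)$; this is the only substantive step. The argument is a dual basin-of-attraction calculation: $\xi^2(\gamma^+)$ is a $\hol(\gamma)$-fixed line in $\RP^{2\ast}$, and Lemma \ref{lem:xi1 in xi2} forces it to contain $e_+$, so $\xi^2(\gamma^+) \in \{e_+ \oplus e_0,\, e_+ \oplus e_-\}$; the attracting eigenline of $\hol(\gamma)$ on $\RP^{2\ast}$ is $e_+ \oplus e_0$, and if $\xi^2(\gamma^+) = e_+ \oplus e_-$ instead, then one may pick $x$ close to $\gamma^+$ with $\xi^2(x) \notin \{e_+ \oplus e_-, e_0 \oplus e_-\}$---possible because Lemma \ref{lemma-xi-2-nonconstant} combined with equivariance rules out $\xi^2$ being locally constant---whereupon $\xi^2(\gamma^n x) = \hol(\gamma)^n \xi^2(x) \to e_+ \oplus e_0 \neq \xi^2(\gamma^+)$, contradicting continuity at $\gamma^+$. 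Feeding $\xi^2(\gamma^+) = e_+ \oplus e_0$ back into the previous paragraph yields $e_+ \oplus e_- = e_+ \oplus e_0$, impossible since $e_0 \neq e_-$. This contradiction completes the proof.
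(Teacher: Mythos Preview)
Your argument has a genuine gap in the step where you attempt to show $\xi^2(\gamma^+) = e_+(\gamma) \oplus e_0(\gamma)$. You claim that for $x$ near $\gamma^+$ with $\xi^2(x) \notin \{e_+ \oplus e_-,\, e_0 \oplus e_-\}$ one has $\hol(\gamma)^n \xi^2(x) \to e_+ \oplus e_0$. But the basin of attraction of $e_+ \oplus e_0$ for the action of $\hol(\gamma)$ on $\mathrm{Gr}_2(\bbR^3)$ is the set of planes \emph{not containing} $e_-$, which is strictly smaller than the complement of the two fixed lines you named: if $e_- \in \xi^2(x)$ and $\xi^2(x) \neq e_0 \oplus e_-$, then $\hol(\gamma)^n \xi^2(x) \to e_+ \oplus e_-$, exactly the hypothetical value of $\xi^2(\gamma^+)$, and no contradiction arises. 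Ruling out that $e_- = \xi^1(\gamma^-)$ lies in $\xi^2(x)$ for all $x$ near $\gamma^+$ is not immediate from non-local-constancy of $\xi^2$, since the pencil of lines through $e_-$ is a one-dimensional family rather than a finite set.

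There is a much simpler route from where you already stand, and it is essentially what the paper does. Once you have an open $U \subset \partial\Gamma$ with $\xi^1(U) \subset \xi^2(x_0)$ and $\gamma^\pm \in U$, you know $\xi^2(x_0) = e_+ \oplus e_-$ is $\hol(\gamma)$-invariant. For any $y \neq \gamma^-$, eventually $\gamma^n y \in U$, so $\xi^1(\gamma^n y) \in \xi^2(x_0)$ and hence $\xi^1(y) \in \hol(\gamma)^{-n} \xi^2(x_0) = \xi^2(x_0)$; by continuity $\xi^1(\partial\Gamma) \subset \xi^2(x_0)$, contradicting Proposition~\ref{lemma-not-line-contained}. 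This bypasses the $\xi^2$-dynamics entirely. The paper reaches the same contradiction by a slightly different setup---it takes an open family of lines missing $\overline{C_{x_0}}$ and observes that the part of $\xi^1(\partial\Gamma)$ they sweep out lies in $\xi^2(x_0)$---but the underlying mechanism, namely that $\xi^1$ cannot map a nonempty open subset of $\partial\Gamma$ into a single line, is the same.
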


\begin{proof}
Let $x_0 \in \partial \Gamma$ be fixed.
By Lemma \ref{lemma-image-doesnt-touch-xi1}, the concave region $B_{x_0} = \text{dev}(f_{x_0})$ has trivial intersection with $\xi^1(\partial\Gamma)$.

By proper concavity, there is an open set $U \subset \text{Gr}_2(\bbR^3)$ of lines so that no $\ell \in U$ intersects the convex domain $C_{x_0}$ in $D_{x_0} = \RP^2 - B_{x_0}$.
In particular, 
\begin{align} \xi^1(\partial \Gamma) \cap \left( \bigcup_{\ell \in U} \ell \right) \subset \xi^2(x_0). \label{eq-avoids-lines}
\end{align}
From Proposition \ref{lemma-not-line-contained}, $\xi^1$ maps no nonempty open subset of $\partial \Gamma$ into a line, so that (\ref{eq-avoids-lines}) implies for all $\ell \in U$ we have $\xi^1(\partial \Gamma) \cap \ell = \emptyset$.
\end{proof}

We then have:

\begin{corollary}\label{prop-convex-preserved}
    The convex hull $\Omega$ of $\xi^1(\partial \Gamma)$ is properly convex and preserved by $\hol_{*}(\Gamma)$.
\end{corollary}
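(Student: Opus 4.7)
The plan is to construct $\Omega$ in the affine chart provided by Lemma~\ref{lemma-theres-the-convex} and verify proper convexity directly, then characterize $\Omega$ intrinsically so that invariance under $\hol_*(\Gamma)$ is automatic.

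Set $\mathcal{K} = \xi^1(\partial \Gamma)$. By Lemma~\ref{lemma-theres-the-convex} there is a projective line $L$ disjoint from $\mathcal{K}$, so $\mathcal{K}$ lies in the affine chart $A = \RP^2 \setminus L \cong \bbR^2$. Let $\Omega$ be the convex hull of $\mathcal{K}$ in $A$. Since $\mathcal{K}$ is compact, $\Omega$ is a compact convex subset of $A$, and by Proposition~\ref{lemma-not-line-contained} the set $\mathcal{K}$ is not contained in a projective line, so $\Omega$ has non-empty interior. Because $\overline{\Omega} \subset A$ contains no projective line, $\Omega$ is properly convex in $\RP^2$.

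For $\hol_*(\Gamma)$-invariance, I would show that $\Omega$ is the minimal closed properly convex subset of $\RP^2$ containing $\mathcal{K}$. Granted this minimality, invariance is immediate: since $\hol(\tau) = \mathrm{Id}$ makes $\xi^1$ an $\hol_*$-equivariant map on $\partial \Gamma$, for any $\gamma \in \Gamma$ the image $\hol_*(\gamma)\Omega$ is again a closed properly convex set containing $\hol_*(\gamma)\mathcal{K} = \mathcal{K}$ (projective maps preserve proper convexity), so $\Omega \subseteq \hol_*(\gamma)\Omega$ by minimality, and the reverse inclusion follows by applying the same argument to $\gamma^{-1}$.

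The main obstacle is the minimality claim. Given any closed properly convex $C \supset \mathcal{K}$, one has $\overline{C} \subset A'$ for some affine chart $A' = \RP^2 \setminus L'$, and the affine convex hull of $\mathcal{K}$ in $A'$ is contained in $C$ by convexity. What remains is to verify that this affine convex hull agrees with $\Omega$, i.e., that the affine convex hull is independent of the chart, provided the chart contains $\overline{\Omega}$. This should be a routine check on projective lines: any line $\ell$ meeting $\overline{\Omega}$ in a proper arc has that arc disjoint from both $\ell \cap L$ and $\ell \cap L'$, so the arc is a closed interval with respect to both of the affine structures on $\ell \setminus L$ and $\ell \setminus L'$, forcing the two convex hulls to coincide.
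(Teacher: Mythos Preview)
Your overall strategy is sound: take the hull in one affine chart, then argue it is chart-independent (equivalently, minimal among properly convex sets containing $\mathcal{K}$), from which $\hol_*(\Gamma)$-invariance is automatic by equivariance of $\xi^1$.  The paper itself gives no argument, treating the statement as an immediate consequence of Lemma~\ref{lemma-theres-the-convex}, so there is no ``paper's proof'' to compare against beyond this.

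There is, however, a genuine gap in your last paragraph.  You assert that the auxiliary chart $A' = \RP^2 \setminus L'$ contains $\overline{\Omega}$, but you only chose $L'$ disjoint from $\overline{C}$ (hence from $\mathcal{K}$), not from $\overline{\Omega}$.  Since $\Omega \subset C$ is precisely what you are trying to establish, assuming $L' \cap \overline{\Omega} = \emptyset$ is circular.  The ``routine check on projective lines'' you sketch does not resolve this: knowing that $\ell \cap \overline{\Omega}$ avoids $\ell \cap L$ is clear, but saying it also avoids $\ell \cap L'$ is exactly the claim in question, and in any case it is not evident how equality of the two hulls would follow from a line-by-line comparison of arcs.

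The missing ingredient is that $\mathcal{K} = \xi^1(\partial\Gamma)$ is \emph{connected}, being the continuous image of the circle $\partial\Gamma$.  With this, one shows directly that any projective line $\ell$ disjoint from $\mathcal{K}$ is already disjoint from $\overline{\Omega}$: if $\ell$ met $\overline{\Omega}$, the intersection $\ell \cap \overline{\Omega}$ would be a point or a segment in $\partial\Omega$ (in which case $\ell$ contains an extreme point of $\overline{\Omega}$, hence a point of $\mathcal{K}$), or else $\ell$ would cross $\mathrm{int}(\Omega)$ and thus separate the affine chart $A$ into two half-planes each containing points of $\mathcal{K}$ (namely extreme points of $\overline{\Omega}$ on either side), contradicting that the connected set $\mathcal{K}$ lies in $A \setminus \ell$.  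Once every line missing $\mathcal{K}$ also misses $\overline{\Omega}$, chart-independence and your minimality characterization follow immediately, and the rest of your argument goes through.
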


Here, the convex hull is taken in any affine chart with the line at infinity disjoint from $\xi^1(\partial \Gamma)$.
In order to conclude Hitchin holonomy from Lemma \ref{lemma-theres-the-convex} we shall need:

\begin{lemma}\label{lemma-hol*-injective}
    $\hol_*$ is injective.
\end{lemma}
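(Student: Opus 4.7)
The plan is to reduce the injectivity of $\hol_*$ to the already-established loxodromicity of holonomies of translation-zero elements of $\overline{\Gamma}$. Suppose $\gamma \in \Gamma - \{e\}$. Recall from \S \ref{s-background} that there is a distinguished lift $\overline{\gamma} \in \overline{\Gamma}$ of $\gamma$ with translation $\mathbf{t}(\overline{\gamma}) = 0$. Since we have shown $\hol(\tau) = \mathrm{Id}$, the representation $\hol$ vanishes on $\ker q_\Gamma = \langle \tau \rangle$ and factors through $q_\Gamma$ to produce $\hol_*$; in particular, $\hol_*(\gamma) = \hol(\overline{\gamma})$.

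By Lemma \ref{lemma-three-distinct-eigenvalues}, $\hol(\overline{\gamma})$ has three real eigenspaces with eigenvalues of distinct moduli, and is therefore loxodromic. In particular, $\hol(\overline{\gamma}) \neq \mathrm{Id}$, so $\hol_*(\gamma) \neq \mathrm{Id}$. As $\gamma \in \Gamma - \{e\}$ was arbitrary, $\ker \hol_* = \{e\}$ and $\hol_*$ is injective. There is no real obstacle here: the work was done in Lemma \ref{lemma-three-distinct-eigenvalues} and in the argument that $\hol(\tau) = \mathrm{Id}$, and the present statement is simply the combination of these two facts through the translation-zero lift construction.
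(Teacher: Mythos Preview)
Your proof is correct and takes a genuinely different route from the paper's. The paper first observes that Lemma~\ref{basic-curve-lemma} combined with the fact that $\hol$ factors through $\Gamma$ implies $\xi^1 \colon \partial\Gamma \to \RP^2$ is \emph{globally} injective; it then argues dynamically that for $\gamma \in \Gamma - \{e\}$ and $x \notin \{\gamma^-,\gamma^+\}$ one has $\lim_{n\to\infty} \hol_*(\gamma)^n \xi^1(x) = \xi^1(\gamma^+) \neq \xi^1(x)$, forcing $\hol_*(\gamma) \neq \mathrm{Id}$. You instead bypass $\xi^1$ entirely and invoke Lemma~\ref{lemma-three-distinct-eigenvalues} directly on the translation-zero lift $\overline{\gamma}$, concluding from loxodromicity that $\hol_*(\gamma) = \hol(\overline{\gamma}) \neq \mathrm{Id}$. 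Your argument is shorter and more self-contained, since loxodromicity has already done the heavy lifting; the paper's argument has the side benefit of recording the global injectivity of $\xi^1$, though that fact is not used elsewhere in the paper and is in any case subsumed once $\xi$ is identified with the hyperconvex Frenet curve.
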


\begin{proof}
    Lemma \ref{basic-curve-lemma} together with the holonomy factoring through $\Gamma$ gives that $\xi^1 : \partial \Gamma \to \RP^2$ is injective.
    Let $\gamma \in \Gamma - \{e\}$.
    For any $x \notin \{\gamma^-, \gamma^+\}$ from equivariance of $\xi^1$ we have that $\lim_{n \to \infty} \hol_*(\gamma^n) \xi^1(x) = \xi^1(\gamma^+) \neq \xi^1(x)$.
    We conclude that $\hol(\gamma) \neq \text{Id}$.
\end{proof}

From this point it is easy to prove that $\hol_\ast$ is Hitchin with limit map $\xi$. 
We take advantage of the characterizations of $\text{Hit}_3(S)$ due to Choi-Goldman and Labourie-Guichard.

\begin{proposition}
    ${\rm{hol}}_*$ is Hitchin, with hyperconvex Frenet curve $(\xi^1, \xi^2)$ as defined above.
\end{proposition}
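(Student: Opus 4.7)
The plan is to invoke Choi--Goldman's characterization of $\mathrm{Hit}_3(S)$ as the space of holonomies of convex $\RP^2$--structures on $S$: a discrete faithful representation $\Gamma \to \mathrm{SL}_3(\bbR)$ preserving a properly convex domain $\Omega \subset \RP^2$ on which the action is properly discontinuous and cocompact is automatically Hitchin, and its Frenet curve parametrizes $\partial \Omega$ together with its (automatic) tangent lines. Faithfulness is Lemma \ref{lemma-hol*-injective}, and Corollary \ref{prop-convex-preserved} supplies a preserved properly convex $\Omega$ (the convex hull of $\xi^1(\partial \Gamma)$ in the affine chart from Lemma \ref{lemma-theres-the-convex}), so the remaining tasks are to establish proper discontinuity and cocompactness and then identify $(\xi^1,\xi^2)$ with the Frenet curve.

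First, I would identify $\xi^1(\partial \Gamma)$ with $\partial \Omega$. By Lemma \ref{lemma-line-containment-and-signs} the attracting eigenlines $\xi^1(\gamma^+)$ of the loxodromic elements $\hol_*(\gamma)$ all lie in $\partial \Omega$; density of attracting fixed points in $\partial \Gamma$ combined with continuity of $\xi^1$ upgrades this to $\xi^1(\partial \Gamma) \subset \partial \Omega$. Since $\xi^1$ is a continuous injection of the compact space $\partial \Gamma \cong S^1$, its image is a topological circle in $\partial \Omega$; in the affine chart where $\Omega$ is bounded, $\partial \Omega$ is itself a topological circle, so $\xi^1(\partial \Gamma) = \partial \Omega$ and $\xi^1$ is a $\hol_*$--equivariant homeomorphism $\partial \Gamma \to \partial \Omega$.

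The main obstacle is establishing that $\hol_*(\Gamma)$ acts properly discontinuously and cocompactly on $\Omega$. The equivariant homeomorphism $\xi^1$ conjugates the $\hol_*(\Gamma)$--action on $\partial \Omega$ to the natural action of $\Gamma$ on its Gromov boundary, so the former is a uniform convergence group action, and all non-trivial elements act as biproximal loxodromics with fixed points on $\partial \Omega$. I plan to deduce proper discontinuity and cocompactness from this via Benoist's theory of divisible convex sets, which applies once these dynamical hypotheses on the boundary action are in place; alternatively, one can extend the action to $\overline{\Omega}$ and compare directly to the cocompact action of $\Gamma$ on $\partial \Gamma^{(3)+}$, using $\xi^1$ to match boundary dynamics. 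The parallel step in \cite{guichard2008convex} requires substantial work in $\PSL_4(\bbR)$; the simplification available to us in $\SL_3(\bbR)$ is precisely the availability of Choi--Goldman.

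Once proper discontinuity and cocompactness are in hand, Choi--Goldman gives $\hol_* \in \mathrm{Hit}_3(S)$, and Benoist's theorem ensures $\Omega$ is strictly convex with $C^1$ boundary. Labourie--Guichard then produces a unique $\hol_*$--equivariant hyperconvex Frenet curve $\eta = (\eta^1, \eta^2)$ with $\eta^1$ parametrizing $\partial \Omega$ and $\eta^2$ giving tangent lines. The equivariant homeomorphism $\xi^1 \colon \partial \Gamma \to \partial \Omega$ matches $\eta^1$ by uniqueness of equivariant parametrizations for the boundary convergence group action, and $\xi^2(x)$---the unique projective line in the complement of the concave region $B_x$, necessarily supporting $C_x \supset \Omega$ at $\xi^1(x)$ (compare Lemma \ref{lem:xi1 in xi2})---coincides with the tangent line $\eta^2(x)$. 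This identifies $(\xi^1,\xi^2)$ with the Frenet curve, completing the proof.
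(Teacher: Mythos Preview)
Your overall strategy matches the paper's: invoke Choi--Goldman to get $\hol_* \in \Hit_3(S)$, then use uniqueness of the equivariant boundary map to identify $(\xi^1,\xi^2)$ with the Frenet curve. The differences are in how much work each step takes.

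For the first step, the paper simply writes that injectivity (Lemma~\ref{lemma-hol*-injective}) together with preservation of a properly convex domain (Corollary~\ref{prop-convex-preserved}) already suffices to cite Choi--Goldman's Theorem~A directly, without separately establishing proper discontinuity or cocompactness. You instead treat those as genuine obligations and sketch an argument via Benoist's theory and the boundary convergence-group action. Your route is more cautious, and the Benoist citation is left somewhat vague; the paper's route is a one-line appeal to the literature. Either way the conclusion is the same, and your extra care (identifying $\xi^1(\partial\Gamma)=\partial\Omega$, etc.) is not wasted---it makes explicit things the paper leaves implicit.

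For the second step, the paper's argument is considerably cleaner than yours. Rather than matching $\xi^1$ with $\eta^1$ via boundary dynamics and then separately arguing that $\xi^2$ gives supporting (hence tangent) lines, the paper cites Guichard's uniqueness result \cite[Proposition~16]{guichard2008composantes}: for any Hitchin $\rho$, \emph{any} continuous $\rho$-equivariant map $\partial\Gamma \to \mathscr{F}_n$ is automatically the hyperconvex Frenet curve. Since you already know $(\xi^1,\xi^2)$ is continuous, equivariant, and flag-valued (Lemma~\ref{lem:xi1 in xi2}), this single citation finishes the identification in one stroke. Your hands-on argument would work but is unnecessary once this uniqueness result is available.
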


\begin{proof}
    Since $\hol_*$ is injective and $\hol_*(\Gamma)$ preserves a properly convex domain, $\hol_* \in \text{Hit}_3(S)$ from Choi-Goldman's Theorem \cite[Theorem A]{choi1993convex}.

    It is a proposition of Guichard \cite[Proposition 16]{guichard2008composantes} that if $\rho \in \text{Hit}_n(S)$ and $\eta : \partial \Gamma \to \mathscr{F}_n$ is any continuous $\rho$-equivariant map, then $\eta$ is the hyperconvex Frenet curve.
\end{proof}

\subsection{The Leaves} 
Throughout the following, let $\Omega$ denote the convex region preserved by $\text{hol}_*$.
Note that since we have shown $\text{hol}_*$ is Hitchin, $\partial \Omega$ is strictly convex and $C^1$, e.g. \cite[Th\'eor\`eme 1.1]{benoist2004convexesI} or as a standard consequence of \cite{benzecri1960thesis}.

\begin{lemma}\label{lemma-L-dichotomy} Either $\mathcal{L}(x,z) = \xi^1(x) \oplus \xi^1(z)$ for all $(x, z) \in \partial \Gamma^{(2)}$ or $\mathcal{L}(x,z) = \xi^2(z)$ for all $(x, z) \in \partial \Gamma^{(2)}$.
In the case where $\mathcal{L}(x,z) = \xi^2(z)$ we have the following alternative:
\begin{enumerate}
    \item\label{init-S-dichotomy-pos} For every $(x,z) \in \partial \Gamma^{(2)}$, the line segment $\mathcal{S}(x,z)$ is contained in the connected component of $\xi^2(z) - \{\xi^1(z), \xi^2(x) \cap \xi^2(z)\}$ that intersects $w^2$ for all $w \in \partial \Gamma$ so that $(x,w,z)$ is positively oriented,
    \item\label{init-S-dichotomy-neg} For every $(x,z) \in \partial \Gamma^{(2)}$, the line segment $\mathcal{S}(x,z)$ is contained in the connected component of $\xi^2(z) - \{\xi^1(z), \xi^2(x) \cap \xi^2(z)\}$ that intersects $w^2$ for all $w \in \partial \Gamma$ so that $(x,z,w)$ is positively oriented.
\end{enumerate}
\end{lemma}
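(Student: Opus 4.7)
The plan is to combine the Dichotomy Lemma \ref{lemma-stable-intersection-dichotomy} with the Hitchin/Frenet structure now established, and propagate by continuity and density of pole-pair lifts. Let $\gamma \in \Gamma - \{e\}$ with pole-pair $(\gamma^-, \gamma^+) \in \partial \Gamma^{(2)}$, and denote by $e_+, e_0, e_-$ the eigenlines of $\hol_*(\gamma)$ in order of decreasing modulus, so that $\xi^1(\gamma^\pm) = e_\pm$ and $\xi^2(\gamma^\pm) = e_\pm \oplus e_0$. By Lemma \ref{lemma-restriction-to-line}, $\mathcal{L}(\gamma^-, \gamma^+)$ contains two of these three eigenlines; by Lemma \ref{rem-L-not-equal-xi} it is not $\xi^2(\gamma^-) = e_- \oplus e_0$; hence
\[
    \mathcal{L}(\gamma^-, \gamma^+) \;\in\; \{\xi^2(\gamma^+),\; \xi^1(\gamma^-) \oplus \xi^1(\gamma^+)\},
\]
the two options being distinguished by whether $p_{\gamma^-\gamma^+} = e_0$ or $p_{\gamma^-\gamma^+} = \xi^1(\gamma^-) = e_-$. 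The Dichotomy Lemma then asserts that one of these conditions holds for every pole-pair --- its other subcase $p_{xz} = \xi^1(z)$ is ruled out by the containment $p_{xz} \in \xi^2(x)$ together with Frenet general position, which forbids $\xi^1(z) \in \xi^2(x)$ for $z \neq x$. Continuity of $\mathcal{L}, \xi^1, \xi^2$ and density of pole-pairs in $\partial\Gamma^{(2)}$ then extend the chosen identity, yielding either $\mathcal{L}(x,z) = \xi^2(z)$ for all $(x,z)$, or $\mathcal{L}(x,z) = \xi^1(x) \oplus \xi^1(z)$ for all $(x,z)$.

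Now assume we are in the case $\mathcal{L}(x,z) = \xi^2(z)$. For a pole-pair, the endpoints of $\overline{\mathcal{S}(\gamma^-,\gamma^+)}$ in $\xi^2(\gamma^+)$ are the two fixed eigenlines of $\hol_*(\gamma)|_{\mathcal{L}(\gamma^-,\gamma^+)}$, namely $\xi^1(\gamma^+) = e_+$ and $\xi^2(\gamma^+) \cap \xi^2(\gamma^-) = e_0$ (Lemma \ref{lemma-restriction-to-line}); by continuity and density the endpoints of $\overline{\mathcal{S}(x,z)}$ are $\xi^1(z)$ and $\xi^2(x) \cap \xi^2(z)$ for all $(x,z) \in \partial\Gamma^{(2)}$. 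Thus $\mathcal{S}(x,z)$ coincides with one of the two connected components of $\xi^2(z) - \{\xi^1(z),\, \xi^2(x)\cap\xi^2(z)\}$. To pin down which one, consider
\[
    \sigma(x,w,z) \;=\; \operatorname{sign}\bigl[\xi^1(z),\, \xi^2(x)\cap\xi^2(z);\; \pr_1(\dev(x,w,z)),\; w^2 \cap \xi^2(z)\bigr]
\]
computed inside the projective line $\xi^2(z)$. The four points are pairwise distinct for every $(x,w,z) \in \partial\Gamma^{(3)+}$: $\pr_1(\dev(x,w,z)) \in \mathcal{S}(x,z)$ avoids both endpoints; $w^2 \cap \xi^2(z) \neq \xi^1(z)$ by hyperconvexity; and $w^2 \cap \xi^2(z) = \xi^2(x) \cap \xi^2(z)$ would force a nontrivial common point of $\xi^2(x), \xi^2(w), \xi^2(z)$, again ruled out by Frenet general position. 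Hence $\sigma$ is continuous on the connected space $\partial\Gamma^{(3)+} \cong \mathrm{T}^1\bbH$, and therefore constant. The two constant values correspond directly to subcases (1) and (2): if $\sigma \equiv +$, then $\pr_1(\dev(x,w,z))$ and $w^2 \cap \xi^2(z)$ lie in the same component, putting $\mathcal{S}(x,z)$ into the component containing $w^2 \cap \xi^2(z)$ for every positive $(x,w,z)$, which is subcase (1); if $\sigma \equiv -$, the two lie in opposite components, and since $w \mapsto w^2 \cap \xi^2(z)$ carries the two arcs of $\partial\Gamma - \{x,z\}$ into the two different components, this places $\mathcal{S}(x,z)$ in the component containing $w^2 \cap \xi^2(z)$ for every positive $(x,z,w)$, which is subcase (2).

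The main subtlety is the endpoint identification of $\overline{\mathcal{S}(x,z)}$ for general $(x,z)$, which requires continuity of the Hausdorff limits of $\overline{\mathcal{S}(x_n,z_n)}$ along sequences of pole-pairs converging to $(x,z)$; this follows from continuity of $\dev$ together with the observation that $\pr_1 \circ \dev$ parametrizes each $\mathcal{S}(x,z)$ as an open arc. Once this point is settled, the remaining content of the lemma is a direct application of the Dichotomy Lemma, connectedness of $\partial\Gamma^{(2)}$ and $\partial\Gamma^{(3)+}$, and elementary properties of cross-ratios.
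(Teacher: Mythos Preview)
Your proof is correct and takes a somewhat different route from the paper's. For the main dichotomy, you invoke the Dichotomy Lemma \ref{lemma-stable-intersection-dichotomy} directly: once Frenet general position is available, its subcase $p_{xz} = \xi^1(z)$ is immediately impossible (it would force $\xi^1(z)\in\xi^2(x)$), and the remaining two cases pin down $\mathcal{L}$ at all pole-pairs, whence everywhere by density and continuity. The paper instead argues contrapositively: if some $\mathcal{L}(x_0,z_0) \neq \xi^2(z_0)$, it locates a pole-pair with $\mathcal{L}(\gamma^-,\gamma^+)$ transverse to $\partial\Omega$, then uses that non-tangency is open and that the identity $\mathcal{L}(x,z) = \xi^1(x)\oplus\xi^1(z)$ is \emph{closed} (via $C^1$-regularity of $\partial\Omega$) to propagate by topological transitivity of the $\Gamma$-action on $\partial\Gamma^{(2)}$. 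Your approach is cleaner in that it recycles a lemma already in hand; the paper's is more self-contained at this spot but effectively reproves part of that lemma. For the tangent-case alternative, your cross-ratio sign function $\sigma$ is an explicit implementation of what the paper compresses to the one-line remark that ``which component holds is locally constant by continuity of $\dev$.''

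One point to tidy: the endpoint identification $\partial\,\overline{\mathcal{S}(x,z)} = \{\xi^1(z),\, \xi^2(x)\cap\xi^2(z)\}$ for general $(x,z)$ is stronger than what the lemma asserts (containment in a component, not equality with it) and is not actually used in your sign argument. All you need is that $\pr_1(\dev(x,w,z))$ avoids those two points, which follows directly from $\mathcal{S}(x,z)\subset B_x$ (so it misses $\xi^2(x)$) together with Lemma \ref{lemma-image-doesnt-touch-xi1} (so it misses $\xi^1(z)$). Your final paragraph's justification of the endpoint claim via Hausdorff limits is a bit thin as written --- continuity of $\dev$ on compacta of $\partial\Gamma^{(3)+}$ does not by itself control what happens as the parameter $y$ escapes toward $x$ or $z$ --- so I would simply delete the endpoint sentence and the last paragraph; the proof then stands without caveat. (The full endpoint statement is true, but in the paper it is deduced only later, as Corollary \ref{cor-S-options}, after the Weakly Stable Leaf Characterization.)
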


\begin{proof}
    Suppose that there is a pair $(x_0, z_0) \in \partial \Gamma^{(2)}$ so that $\mathcal{L}(x_0,z_0) \neq \xi^2(z_0)$.
    We must prove $\mathcal{L}(x,z) = \xi^1(x) \oplus \xi^1(z)$ for all $(x,z) \in \partial \Gamma^{(2)}$.
    By continuity of $\mathcal{L}$ and $\xi^2$, this holds on a neighborhood of $(x_0,z_0)$, and in particular for a pole-pair $(\gamma^-, \gamma^+)$ for some $\gamma \in \Gamma - \{e\}$.

    We have shown that $\mathcal{L}(\gamma^-, \gamma^+)$ is an invariant subspace of $\hol_*(\gamma)$ other than $\xi^2(\gamma^-)$.
    So $\mathcal{L}(\gamma^-,\gamma^+)$ is either $\xi^2(\gamma^+) = e_+(\gamma) \oplus e_0(\gamma)$ or $\xi^1(\gamma^+) \oplus \xi^1(\gamma^-) = e_-(\gamma) \oplus e_+(\gamma)$, and we have arranged for $\mathcal{L}(\gamma^-,\gamma^+) = \xi^1(\gamma^-) \oplus \xi^1(\gamma^+)$.
    
    Since the lines $\mathcal{L}(x,z)$ are continuous in $(x,z)$, there is a neighborhood $U$ of $(\gamma^-, \gamma^+)$ in $\partial \Gamma$ so that for all $(x,z) \in U\times U$ the line $\mathcal{L}(x,z)$ is not tangent to $\partial \Omega$.
    For all such $(x,z)$, we cannot have $\mathcal{L}(x,z) = \xi^2(z)$, and so must have $\mathcal{L}(x,z) = \xi^1(x) \oplus \xi^1(z)$.

    Since $\Omega$ is convex with $C^1$ boundary $\partial \Omega = \xi^1(\partial \Gamma)$, the condition $\mathcal{L}(x,z) = \xi^1(x) \oplus \xi^1(z)$ is closed in $\partial \Gamma^{(2)}$.
    From topological transitivity of the action of $\Gamma$ on $\partial \Gamma^{(2)}$ and closedness of the condition $\mathcal{L}(x,z) =\xi^1(x) \oplus \xi^1(z)$, we see that $\mathcal{L}(x,z) = \xi^1(x) \oplus \xi^1(z)$ for all $(x,z) \in \partial \Gamma^{(2)}$.

    In the case $\mathcal{L}(x,z) = \xi^2(z)$, for a given $(x,z) \in \partial \Gamma^{(2)}$ exactly one of the conditions (\ref{init-S-dichotomy-pos}) or (\ref{init-S-dichotomy-neg}) is satisfied. Which is satisfied is locally constant because of the continuity of the developing map, which gives the final dichotomy.
\end{proof}

The trichotomy of the previous lemma shall eventually correspond to the choices of developing map between $\Phitr, \Phitap,$ and $\Phitam$.
To document this trichotomy and the two connected components available in $\xi^2(z) - \{\xi^1(z), \xi^2(x) \cap \xi^2(z)\}$, we define:

\begin{definition}
    The concave foliated flag structure $(\dev, \hol, \mathcal{F}, \mathcal{G})$ is of {\rm{transverse type}} if $\mathcal{L}(x,z) = \xi^1(x) \oplus \xi^1(z)$ for all $(x,z) \in \partial \Gamma^{(2)}$ and $(\dev, \hol, \mathcal{F}, \mathcal{G})$ is of {\rm{tangent type}} if $\mathcal{L}(x,z) = \xi^2(z)$ for all $(x,z) \in \partial \Gamma^{(2)}$.

    If $(\dev, \hol, \mathcal{F}, \mathcal{G})$ is of tangent type and for every $(x,z) \in \partial \Gamma$, the line segment $\mathcal{S}(x,z)$ intersects $\xi^2(w)$ for some $w \in \partial \Gamma$ so that $(x,w,z)$ is positively oriented, we say $(\dev, \hol, \mathcal{F}, \mathcal{G})$ is of {\rm{positive tangent type}}.
    If for every $(x,z) \in \partial \Gamma$, the line segment $\mathcal{S}(x,z)$ intersects $\xi^2(w)$ for some $w \in \partial \Gamma$ so that $(x,z,w)$ is positively oriented we say that $(\dev, \hol, \mathcal{F}, \mathcal{G})$ is of {\rm{negative tangent type}}.
\end{definition}

Lemma \ref{lemma-L-dichotomy} shows that every concave foliated $\mathscr{F}_{1,2}$ structure on $\mathrm{T}^1S$ is of transverse type or of positive or negative tangent type.
Using this structure, we are now able to constrain exactly what the shapes of $\dev(f_x) = B_x$ are for $x \in \partial \Gamma$.

\begin{proposition}[Weakly Stable Leaf Characterization]
    For all $x \in \partial \Gamma$, $B_x = \RP^2 - (\overline{\Omega} \cup \xi^2(x))$.
\end{proposition}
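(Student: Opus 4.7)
The plan is to sandwich $B_x$ between $\RP^2 - (\overline{\Omega} \cup \xi^2(x))$ from the outside by a topological argument, and from the inside by exploiting the foliation of $B_x$ by the segments $\mathcal{S}(x,z)$.

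For the inclusion $B_x \subset \RP^2 - (\overline{\Omega} \cup \xi^2(x))$, I first observe that $B_x$ is connected: in any affine chart where $\xi^2(x)$ sits at infinity, $B_x$ is the complement of the closed convex set $\overline{C_x}$, which contains no affine line by proper convexity, hence has connected complement. By Lemma \ref{lemma-image-doesnt-touch-xi1}, $B_x$ is disjoint from $\xi^1(\partial\Gamma) = \partial\Omega$, so connectedness forces $B_x \subset \Omega$ or $B_x \subset \RP^2 - \overline{\Omega}$. The first option is ruled out as follows: it would give $\overline{C_x} \cup \xi^2(x) \supset \RP^2 - \Omega$, so for any projective line $\ell \neq \xi^2(x)$ disjoint from $\overline{\Omega}$ (such lines exist by proper convexity of $\overline{\Omega}$), one would have $\ell - \{\mathrm{pt}\} \subset \overline{C_x}$, and closedness would force $\ell \subset \overline{C_x}$, contradicting proper convexity of $C_x$. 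So $B_x \subset \RP^2 - \overline{\Omega}$, and combined with $B_x \cap \xi^2(x) = \emptyset$ from the definition of the concave region, the inclusion follows.

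For the reverse inclusion, I fix $p \in \RP^2 - (\overline{\Omega} \cup \xi^2(x))$ and split into the three cases of Lemma \ref{lemma-L-dichotomy}. In the transverse case, the line $\xi^1(x) \oplus p$ is not the tangent line $\xi^2(x)$, so by strict convexity of $\partial\Omega$ it meets $\partial\Omega$ at $\xi^1(x)$ and a unique second point $\xi^1(z)$; thus $p$ lies in the open outside arc $\mathcal{L}(x,z) - \overline{\Omega}$. I would next identify $\mathcal{S}(x,z)$ with this outside arc. For pole-pairs $(\gamma^-,\gamma^+)$, Lemma \ref{lemma-restriction-to-line} identifies the endpoints of $\mathcal{S}(\gamma^-,\gamma^+)$ as the two eigenspaces of $\hol(\gamma)$ on $\mathcal{L}(\gamma^-,\gamma^+) = \xi^1(\gamma^-) \oplus \xi^1(\gamma^+)$, and Lemma \ref{lemma-xi-1-at-fixed-point} (applied to both $\gamma$ and $\gamma^{-1}$) names these as $\xi^1(\gamma^-)$ and $\xi^1(\gamma^+)$, forcing $\mathcal{S}(\gamma^-,\gamma^+)$ to coincide with the outside arc. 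The general case then follows from density of pole-pairs in $\partial\Gamma^{(2)}$ and continuity of $\dev$: approximating $p$ by $p_n \in \mathcal{S}(\gamma_n^-,\gamma_n^+)$ with $(\gamma_n^-,\gamma_n^+) \to (x,z)$ and writing $p_n = \pr_1 \circ \dev(\gamma_n^-, w_n, \gamma_n^+)$, a compactness argument (keeping $p_n$ uniformly bounded away from $\xi^1(\gamma_n^\pm)$ on $\mathcal{L}(\gamma_n^-, \gamma_n^+)$) produces a limit $w \in \partial\Gamma$ with $(x,w,z) \in \partial\Gamma^{(3)+}$ and $p = \pr_1 \circ \dev(x,w,z) \in \mathcal{S}(x,z)$.

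In the positive tangent case, strict convexity of $\partial\Omega$ provides exactly two tangent lines $\xi^2(z_1), \xi^2(z_2)$ to $\partial\Omega$ through $p$; since $p \notin \xi^2(x)$, we have $z_1, z_2 \neq x$, and I can relabel so that $(x,z_1,z_2)$ is positively oriented. Lemma \ref{lemma-L-dichotomy}.(\ref{init-S-dichotomy-pos}) applied with $(x,z) = (x,z_2)$ and $w = z_1$ identifies $\mathcal{S}(x,z_2)$ as the component of $\xi^2(z_2) - \{\xi^1(z_2), \xi^2(x) \cap \xi^2(z_2)\}$ that meets $\xi^2(z_1)$; the unique intersection $\xi^2(z_1) \cap \xi^2(z_2) = p$ is neither $\xi^1(z_2)$ (else $z_1 = z_2$ by strict convexity) nor $\xi^2(x) \cap \xi^2(z_2)$ (else $p \in \xi^2(x)$), so $p \in \mathcal{S}(x,z_2) \subset B_x$. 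The negative tangent case is symmetric with $z_1, z_2$ swapped. The main obstacle is the identification $\mathcal{S}(x,z) = \mathcal{L}(x,z) - \overline{\Omega}$ in the transverse case, which is handled by the density/compactness argument above; the tangent cases are nearly immediate once the component descriptions in Lemma \ref{lemma-L-dichotomy} are in hand.
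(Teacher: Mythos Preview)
Your argument for the containment $B_x \subset \RP^2 - (\overline{\Omega} \cup \xi^2(x))$ is correct and more explicit than the paper's one-line appeal to Lemma~\ref{lemma-image-doesnt-touch-xi1} and the definitions.

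For the reverse inclusion, however, both your transverse and tangent arguments have genuine gaps. In the tangent cases you write that Lemma~\ref{lemma-L-dichotomy}.(\ref{init-S-dichotomy-pos}) ``identifies $\mathcal{S}(x,z_2)$ as the component,'' but that lemma only asserts that $\mathcal{S}(x,z_2)$ is \emph{contained in} the relevant component of $\xi^2(z_2)\setminus\{\xi^1(z_2),\xi^2(x)\cap\xi^2(z_2)\}$; equality is precisely what remains to be shown (and is only recorded afterward in Corollary~\ref{cor-S-options}). So knowing $p$ lies in that component does not place $p$ in $\mathcal{S}(x,z_2)$.

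In the transverse case, the compactness step is the problem. You have $p_n = \pr_1\circ\dev(\gamma_n^-,w_n,\gamma_n^+)\to p$ and want a subsequential limit $w$ with $(x,w,z)\in\partial\Gamma^{(3)+}$. Ruling out $w\in\{x,z\}$ requires knowing that when $w_n$ approaches an endpoint of its interval, $p_n$ approaches the corresponding endpoint $\xi^1(\gamma_n^\pm)$ \emph{uniformly in $n$}. For each fixed pole-pair the leafwise map is a homeomorphism onto the full outside arc, so this holds pointwise; but you have no equicontinuity, and the only semicontinuity available (Lemma~\ref{lemma-easy-semicontinuity}) goes the wrong way: it gives $D_x\supset\limsup D_{x_n}$, i.e.\ $B_x\subset\liminf B_{x_n}$, whereas you would need $\limsup B_{x_n}\subset B_x$ to pass the limit. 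That missing direction is exactly the content of the proposition.

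The paper avoids both issues by arguing indirectly: assuming some $p$ lies in $D_x\cap(\RP^2\setminus(\overline\Omega\cup\xi^2(x)))$, it uses the uniform convergence group property to push the configuration by a sequence $\gamma_n$ so that the shrunken segments $\mathcal S(\gamma_n x,\gamma_n z)$ force, via Lemma~\ref{lemma-easy-semicontinuity}, the complement $D_b$ at a limit point $b$ to contain two distinct projective lines (transverse case), contradicting concavity. This contradiction route uses only the available semicontinuity direction.
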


\begin{proof}
    That $B_x$ is contained in $\RP^2 - (\overline{\Omega} \cup \xi^2(x))$ follows from Lemma \ref{lemma-image-doesnt-touch-xi1} and the definitions of $\Omega$ and $\xi^2$.

    For the other containment, suppose otherwise. Then there exists a point $p \in D_x \cap (\RP^2 - [\overline{\Omega} \cup \xi^2(x)])$, where we maintain the notation that $D_x = \RP^2 - B_x$.
    We first consider the case that our concave foliated $\mathscr{F}_{1,2}$ structure is of transverse type.
    Then $p \in \mathcal{L}(x,z)$ for some $z \in \partial \Gamma$.
    For this pair $(x,z)$, the line segment $\mathcal{S}(x,z)$ is strictly contained in the line segment $\mathcal{T}(x,z)$ between $\xi^1(x)$ and $\xi^1(z)$ complementary to $\Omega$, because the line segments $(\xi^1(x) \oplus \xi^1(z)) - \Omega $ for $z \neq x$ in $\partial \Gamma$ foliate $\RP^2 - (\overline{\Omega} \cup \xi^2(x))$.
    Since $\xi^2$ parameterizes the supporting lines of $\partial \Omega$, there is a point $w \in \partial \Gamma$ so that $\xi^2(w) \cap \mathcal{T}(x,z)$ is not contained in $\mathcal{S}(x,z)$.
    We may arrange for $(x,w,z)$ to be positively oriented.
    
    From the uniform convergence group property of $\Gamma$, there are distinct $a, b\in \partial \Gamma$ and a sequence $\gamma_n \in \Gamma$ so that:
    \begin{align*}
        \lim_{n \to \infty} \gamma_n z = a, \qquad \lim_{n \to \infty} \gamma_n y = b \qquad (\text{for all } y \neq z \in \partial \Gamma).
    \end{align*}
    This sequence $\gamma_n$ has the properties that:
    \begin{enumerate}
        \item The oriented intervals between $\gamma_nx$ and $\gamma_n w$ converge to $\{b\}$,
        \item For all $n$, the line segment $\mathcal{S}(\gamma_nx, \gamma_nz)$ is contained in the line segment $$I_n = \{ (\xi^1(\gamma_n x) \oplus \xi^1(\gamma_n z)) \cap \xi^2(\gamma_n r) \mid (\gamma_n x, r, \gamma_n w) \text{ is positively oriented} \}.$$
    \end{enumerate}
    So for all $n$, the complementary closed set $D_{\gamma_nx}$ of $B_{\gamma_n x}$ contains the complement of $I_n$ in $\xi^1(\gamma_nx) \oplus \xi^1(\gamma_n z)$.
    Note also that each $D_{\gamma_nx}$ contains the common properly convex domain $\Omega$ and that $\xi^1(\gamma_nx) \oplus \xi^1(\gamma_n z)$ converges to $\xi^1(a) \oplus \xi^1(b),$ which is transverse to $\partial \Omega$.
    So for any point $p \in \xi^1(a) \oplus \xi^1(b)$, the closed set $D_{\gamma^nx}$ contains $p$ for all sufficiently large $n$.
    So the Contraction Control Lemma \ref{lemma-easy-semicontinuity} forces $D_{b}$ to contain two entire distinct projective lines, namely $\xi^2(b)$ and $\xi^1(a) \oplus \xi^1(b)$.
    This contradicts proper concavity of $D_b$.
    This finishes the the transverse type case.

    The tangent type cases are similar, with the difference that the final contradiction is obtained from forcing $D_b$ to contain the segment of $\mathcal{L}(a,b)$ that must also contain $\mathcal{S}(a,b)$.
\end{proof}

We now deduce the exact structure of $\mathcal{S}(x,z)$ in each case on type of concave flag foliated structure.

\begin{corollary}[Options for $\mathcal{S}$]\label{cor-S-options} We have:
    \begin{enumerate}
        \item If $(\dev, \hol, \mathcal{F}, \mathcal{G})$ is of transverse type, then $\mathcal{S}(x,z)$ is the segment in $\xi^1(x) \oplus \xi^1(z)$ complementary to $\overline{\Omega}$ for all $(x,z) \in \partial\Gamma$.
        \item If $(\dev, \hol, \mathcal{F}, \mathcal{G})$ is of positive tangent type, then $\mathcal{S}(x,z)$ is the line segment in $\xi^2(z)$ that intersects $\xi^2(w)$ for all $w \in \partial \Gamma$ so that $(x,w,z)$ is positively oriented.
        \item If $(\dev, \hol, \mathcal{F}, \mathcal{G})$ is of negative tangent type, then $\mathcal{S}(x,z)$ is the line segment in $\xi^2(z)$ that intersects $\xi^2(w)$ for all $w \in \partial \Gamma$ so that $(x,z,w)$ is positively oriented.
    \end{enumerate}
\end{corollary}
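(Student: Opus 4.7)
\textbf{Proof proposal for Corollary \ref{cor-S-options}.}

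The plan is to first show $\mathcal{S}(x,z)$ is contained in an explicit candidate open interval $T(x,z)$, then verify equality at pole-pairs using the dynamics of $\hol_*(\gamma)$, and finally extend to all of $\partial \Gamma^{(2)}$ by approximation.

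First I would compute the candidate segment $T(x,z)$ in each of the three cases. Since $g_{xz}$ is a leaf of $\widetilde{\mathcal{G}}$, the subset $\mathcal{S}(x,z) = \pr_1(\dev(g_{xz}))$ is a properly convex subset of $\mathcal{L}(x,z)$; as $g_{xz}\cong \bbR$ and $\pr_1\circ \dev$ is locally injective on it, $\mathcal{S}(x,z)$ is in fact an open interval. By Lemma \ref{lemma-L-dichotomy} the line $\mathcal{L}(x,z)$ is determined by the type, and the Weakly Stable Leaf Characterization gives
\begin{equation*}
    \mathcal{S}(x,z) \subset B_x \cap B_z \cap \mathcal{L}(x,z) = \mathcal{L}(x,z) - \bigl(\overline{\Omega} \cup \xi^2(x) \cup \xi^2(z)\bigr).
\end{equation*}
A direct computation shows that the right-hand side is: the open segment of $\xi^1(x)\oplus \xi^1(z)$ complementary to $\overline{\Omega}$ in the transverse case (since this line is transverse to both $\xi^2(x)$ and $\xi^2(z)$, meeting them only at $\xi^1(x)$ and $\xi^1(z)$); and, in either tangent case, one of the two open arcs of $\xi^2(z) - \{\xi^1(z), \xi^2(x) \cap \xi^2(z)\}$ (here $\overline{\Omega}$ meets $\xi^2(z)$ only at $\xi^1(z)$ by strict convexity). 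The type distinguishes which arc by its interaction with tangent lines $\xi^2(w)$, yielding the candidate $T(x,z)$ as described.

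Next I would prove $\mathcal{S}=T$ at pole-pairs $(\gamma^-,\gamma^+)$. Both $\mathcal{S}(\gamma^-,\gamma^+)$ and $T(\gamma^-,\gamma^+)$ are $\hol_*(\gamma)$-invariant open subintervals of $\mathcal{L}(\gamma^-, \gamma^+)$, and the restriction of $\hol_*(\gamma)$ to this line is loxodromic. In the transverse case $\mathcal{L}(\gamma^-,\gamma^+) = e_+(\gamma) \oplus e_-(\gamma)$ with fixed points exactly $\xi^1(\gamma^\pm)$; in the tangent cases $\mathcal{L}(\gamma^-,\gamma^+)=\xi^2(\gamma^+)$ with fixed points exactly $\xi^1(\gamma^+)=e_+(\gamma)$ and $\xi^2(\gamma^+)\cap \xi^2(\gamma^-)=e_0(\gamma)$. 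In each case these two fixed points are precisely the endpoints of $T(\gamma^-, \gamma^+)$. If $\mathcal{S}(\gamma^-, \gamma^+)$ were a proper subinterval, its boundary inside $T(\gamma^-,\gamma^+)$ would be a nonempty $\hol_*(\gamma)$-invariant finite set, but no fixed point of $\hol_*(\gamma)$ lies in the interior of $T(\gamma^-,\gamma^+)$. Hence $\mathcal{S}(\gamma^-,\gamma^+) = T(\gamma^-,\gamma^+)$.

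Finally I would extend to arbitrary $(x,z)\in\partial\Gamma^{(2)}$ by density. Fix $p_0$ in the interior of $T(x,z)$ and a sequence of pole-pairs $(\gamma_n^-, \gamma_n^+) \to (x,z)$. Continuity of $\xi^1, \xi^2,$ and $\mathcal{L}$ guarantees that $T(\gamma_n^-,\gamma_n^+) \to T(x,z)$ in Hausdorff topology, so for large $n$ there exist $p_n \in T(\gamma_n^-,\gamma_n^+) = \mathcal{S}(\gamma_n^-,\gamma_n^+)$ with $p_n \to p_0$. Writing $p_n = \pr_1(\dev(\gamma_n^-, y_n, \gamma_n^+))$, I observe that the endpoints of $T(\gamma_n^-, \gamma_n^+)$ correspond exactly to the two directions $y_n \to \gamma_n^\pm$ in which $y_n$ can exit the leaf $g_{\gamma_n^-, \gamma_n^+}$; since $p_0$ is bounded away from the endpoints of $T(x,z)$, the $y_n$ stay in a compact subset of $\partial\Gamma - \{x,z\}$. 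Passing to a subsequence with $y_n \to y \in \partial \Gamma - \{x,z\}$ and invoking continuity of $\dev$ gives $p_0 = \pr_1(\dev(x,y,z)) \in \mathcal{S}(x,z)$. Thus the open interval $\mathcal{S}(x,z)$ contains the interior of $T(x,z)$, and as both are open intervals, $\mathcal{S}(x,z) = T(x,z)$.

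The main obstacle is the last step: ensuring that the parameters $y_n$ selected to push $p_n$ toward $p_0$ stay in a compact subset of $\partial\Gamma - \{x,z\}$. This is where the earlier identification of the endpoints of $T$ with the ``ends'' of the leaves $g_{\gamma_n^-,\gamma_n^+}$ is essential; it is exactly this compatibility between the parameterization of the leaf and the geometry of the candidate segment that lets continuity of $\dev$ on compact subsets of $\partial\Gamma^{(3)+}$ close the argument.
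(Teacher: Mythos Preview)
There are two issues. First, a minor one in Step 1: the leaf $g_{xz}$ lies in $f_x$, not in $f_z$, so the inclusion $\mathcal{S}(x,z)\subset B_z$ is unjustified---and in fact false in the tangent case, where $\mathcal{S}(x,z)\subset\xi^2(z)$ lies entirely in $\RP^2\setminus B_z$, making your displayed intersection empty. The fix is to intersect only with $B_x$: then $\mathcal{L}(x,z)\cap B_x$ is already the desired single segment in the transverse case and both arcs in the tangent case, after which the type definition selects one arc as you go on to say.

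The real gap is in Step 3. Your compactness claim for the $y_n$ requires a \emph{uniform} properness of the leafwise homeomorphisms $y\mapsto\pr_1(\dev(\gamma_n^-,y,\gamma_n^+))$ as $n$ varies, and nothing established so far rules out these maps stretching so that $y_n\to z$ while $p_n\to p_0$ stays interior; you have no control on $\dev$ near the boundary of $\partial\Gamma^{(3)+}$. The paper states the result as an immediate corollary of the Weakly Stable Leaf Characterization, and the direct deduction bypasses this difficulty: once $B_x=\RP^2\setminus(\overline{\Omega}\cup\xi^2(x))$ is known, the candidate segments $T(x,z)$ for $z\in\partial\Gamma\setminus\{x\}$ \emph{partition} $B_x$ (in the transverse case this is the foliation of the concave region by secants through $\xi^1(x)$, already used in the proof of the Characterization; in the tangent case each point of $B_x$ lies on exactly two tangent lines to $\partial\Omega$ and on the positive arc of exactly one of them). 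Since $f_x=\bigcup_z g_{xz}$ and $\pr_1(\dev(f_x))=B_x$, one also has $\bigcup_z\mathcal{S}(x,z)=B_x$; combined with $\mathcal{S}(x,z)\subset T(x,z)$ and disjointness of the $T(x,z)$, this forces $\mathcal{S}(x,z)=T(x,z)$ for every $z$, with no appeal to pole-pairs or density.
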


\subsection{Conclusion}
We now produce and obstruct the equivalences of concave foliated flag structures needed to prove Theorem \ref{thm-moduli-maps}.

We begin by noting that transverse type and the two tangent types of concave foliated flag structures are preserved by foliated $(\SL_3(\bbR), \mathscr{F}_{1,2})$-equivalence.

\begin{lemma}\label{lemma-distinguish-types}
    If $(\dev_1, \hol_1, \mathcal{F}, \mathcal{G})$ is of transverse type and $(\dev_2, \hol_2, \mathcal{F}, \mathcal{G})$ are of tangent type, then $(\dev_1,\hol_1, \mathcal{F}, \mathcal{G})$ and $(\dev_2, \hol_2, \mathcal{F}, \mathcal{G})$ are not foliated-equivalent.

    If $(\dev_1, \hol_1, \mathcal{F}, \mathcal{G})$ is of positive tangent type and $(\dev_2, \hol_2, \mathcal{F}, \mathcal{G})$ is of negative tangent type, then $(\dev_1,\hol_1, \mathcal{F}, \mathcal{G})$ and $(\dev_2, \hol_2, \mathcal{F}, \mathcal{G})$ are not foliated-equivalent.
\end{lemma}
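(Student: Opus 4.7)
The plan is to identify a foliated-equivalence invariant that separates the three types, and to check that it is read off directly from the lines $\mathcal{L}(x,z)$ and segments $\mathcal{S}(x,z)$. Suppose toward contradiction that $(\varphi,g)$ is a foliated equivalence between structures of different types, so that $\dev_1 \circ \widetilde{\varphi} = g^{-1} \circ \dev_2$ and $\hol_2 = g \, \hol_1 \, g^{-1}$. First, I would upgrade the leaf-preservation property. By Remark \ref{rmk-equivalence}, $\varphi$ sends every leaf of $\mathcal{G}$ to itself. Lifting an isotopy from the identity to $\varphi$ gives a canonical lift $\widetilde{\varphi}$ for which the deck-translation part on each lift of each leaf varies continuously in the leaf; since the relevant $\overline{\Gamma}$-orbits are discrete and the leaf space is connected, $\widetilde{\varphi}$ must fix each lift of each leaf of $\widetilde{\mathcal{G}}$ setwise. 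Consequently $\dev_1$ and $g^{-1} \circ \dev_2$ have identical images on every such lift, which forces
\[
\mathcal{S}_1(x,z) = g^{-1} \mathcal{S}_2(x,z), \qquad \mathcal{L}_1(x,z) = g^{-1} \mathcal{L}_2(x,z),
\]
for every $(x,z) \in \partial\Gamma^{(2)}$.

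Next, I would tie $g$ to the limit data. Both $g\, \xi_1$ and $\xi_2$ are $\hol_2$-equivariant continuous hyperconvex Frenet curves, and each sends the attracting fixed point of $\gamma \in \Gamma-\{e\}$ to the attracting eigenline (resp.\ fixed hyperplane) of $\hol_2(\gamma)$. By density of pole-pairs in $\partial \Gamma$ and the standard uniqueness of Hitchin limit maps (cf.\ Guichard), this gives $g\xi_1^k = \xi_2^k$ for $k=1,2$ and hence $g\Omega_1 = \Omega_2$. In particular $g^{-1}$ carries secants of $\partial\Omega_2$ to secants of $\partial\Omega_1$, tangents to tangents, and preserves the orientation on $\partial\Omega$ coming from $\xi^1$ and the fixed orientation of $\partial\Gamma$.

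From these two ingredients the two claims follow by direct geometric comparison. For the first claim, a transverse-type $\mathcal{L}_1(x,z)=\xi_1^1(x)\oplus\xi_1^1(z)$ is a secant to $\partial\Omega_1$ (intersecting it in two points by strict convexity and $\xi_1^1(x)\neq\xi_1^1(z)$), while a tangent-type $\mathcal{L}_2(x,z)=\xi_2^2(z)$ is tangent to $\partial\Omega_2$; hence $g^{-1}\mathcal{L}_2(x,z)$ is tangent to $\partial\Omega_1$, contradicting $\mathcal{L}_1(x,z)=g^{-1}\mathcal{L}_2(x,z)$. For the second claim, both structures satisfy $\mathcal{L}(x,z)=\xi^2(z)$, so the issue is which connected component of $\xi^2(z)-\{\xi^1(z), \xi^2(x)\cap\xi^2(z)\}$ contains $\mathcal{S}(x,z)$. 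By Corollary \ref{cor-S-options} the correct component is characterized by whether it meets $\xi^2(w)\cap\xi^2(z)$ for $w$ with $(x,w,z)$ positively oriented or with $(x,z,w)$ positively oriented. Because $g^{-1}$ sends $\xi_2^2(w)\cap\xi_2^2(z)$ to $\xi_1^2(w)\cap\xi_1^2(z)$ with the orientation condition on $w$ unchanged, the characterization is preserved under the equality $\mathcal{S}_1(x,z)=g^{-1}\mathcal{S}_2(x,z)$, and positive/negative tangent types cannot be equivalent.

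The main obstacle I anticipate is the first step: justifying that the canonical lift $\widetilde{\varphi}$ preserves each individual lift of each leaf, rather than merely permuting lifts within an $\overline{\Gamma}$-orbit. Once this isotopy-lifting foothold is in place, the remainder reduces to bookkeeping with the Frenet curve and the convex domain, using only the uniqueness of limit maps for Hitchin representations and the structural description of $\mathcal{S}(x,z)$ already obtained in Corollary \ref{cor-S-options}.
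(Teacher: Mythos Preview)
Your argument is correct, but it takes a longer and more delicate route than the paper's, and the obstacle you flag is in fact avoidable.

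For the first claim the paper uses a cleaner invariant that does not require intertwining the Frenet curves. Given any leaf $g$ of $\widetilde{\mathcal{G}}$, the image $\dev(g)$ is a segment lift, so it determines intrinsically both a projected segment $S \subset \RP^2$ and the unique point $p$ about which the lift is taken. The paper's invariant is simply whether $p \in \overline{S}$: in the transverse case $p = \xi^1(x)$ is an endpoint of $\overline{S} \subset \xi^1(x)\oplus\xi^1(z)$, while in the tangent case $\overline{S} \subset \xi^2(z)$ and $\xi^1(x) \notin \xi^2(z)$ by hyperconvexity. This property is manifestly preserved by applying $g^{-1}$ and is the same for \emph{every} leaf, so it is a foliated-equivalence invariant without any need to match leaves one-by-one or to invoke uniqueness of limit maps. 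Your secant/tangent argument is a correct alternative, but it costs you your step~3.

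For the second claim the paper is quite terse, and your argument via $g\xi_1 = \xi_2$ genuinely fills in what the paper leaves implicit.

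The obstacle you anticipate in step~1 (that $\widetilde{\varphi}$ fixes each individual lift of each leaf) is not actually needed for either claim. Your step~3 uses only the conjugacy $\hol_2 = g\,\hol_1\,g^{-1}$, not step~1. Once $g\xi_1 = \xi_2$ is established, observe that the type conditions are uniform over all leaves: if $\widetilde{\varphi}$ sends $g_{xz}$ to some leaf $g_{x'z'}$, you get $\mathcal{L}_1(x',z') = g^{-1}\mathcal{L}_2(x,z)$ and $\mathcal{S}_1(x',z') = g^{-1}\mathcal{S}_2(x,z)$. For the first claim this already suffices, since every $\mathcal{L}_1$ is a secant and every $g^{-1}\mathcal{L}_2$ is a tangent. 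For the second claim, comparing the line and the two endpoints of the segments and using injectivity of $z \mapsto \xi^2(z)$ and of $x \mapsto \xi^2(x)\cap\xi^2(z)$ (the dual of Lemma~\ref{lemma-easy-from-frenet}.(\ref{lemma-frenet-restriction})) forces $(x',z') = (x,z)$ anyway; then your component argument applies verbatim. So you can drop the isotopy-lifting discussion entirely.
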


\begin{proof}
    If $(\dev, \hol, \mathcal{F}, \mathcal{G})$ is of tangent type, then $\overline{\dev(g_{xz})} \subset \mathcal{L}(x,z) = \xi^2(z)$ ($(x, z) \in \partial \Gamma^{(2)}$) does not contain the point $\xi^1(x)$ about which $\dev(g_{xz})$ is a line segment lift.
    On the other hand, if $(\dev, \hol, \mathcal{F}, \mathcal{G})$ is of transverse type,  then $\overline{\dev(g_{xz})} \subset \mathcal{L}(x,z) = \xi^1(x) \oplus \xi^1(z)$ contains the point $\xi^1(x)$ about which $\dev(g_{xz})$ is a line segment lift.
    These properties are mutually exclusive and preserved by foliated $(\mathscr{F}_{1,2}, \text{SL}_3(\bbR))$-equivalence.

    The defining conditions of positive tangent and negative tangent type are also mutually exclusive and preserved by foliated $(\mathscr{F}_{1,2}, \SL_3(\bbR))$-equivalence.
\end{proof}

The conclusion now closely follows Guichard-Wienhard's method in \cite{guichard2008convex}.
In the preceding we have proved that the holonomy map induces a map $\hol_*: \mathscr{D}_{\mathscr{F}_{1,2}}^{\text{cff}}(\mathrm{T}^1S) \to \text{Hit}_3(S)$.
We have also shown that $\mathscr{D}_{\mathscr{F}_{1,2}}^{\text{cff}}(\mathrm{T}^1S)$ is a disjoint union $\mathcal{C}^{\mathrm{tr}} \sqcup \mathcal{C}^{\mathrm{ta},+}\sqcup \mathcal{C}^{\mathrm{ta},-}$ of transverse, positive tangent, and negative tangent type.

On the other hand, Example \ref{lemma-affirmatively-flag-foliated} gives three sections $\sigma_{\mathrm{tr}}$, $\sigma_{\mathrm{ta},+}$, and $\sigma_{\mathrm{ta},-}$ corresponding to $\Phitr, \Phitap$, and $\Phitam$, respectively, from $\text{Hit}_3(S)$ to $\mathscr{D}_{\mathscr{F}_{1,2}}^{\text{cff}}(\mathrm{T}^1S)$.
    Note that $\sigma_{\mathrm{ta},+}$ has image in $\mathcal{C}^{\mathrm{ta},+}$, $\sigma_{\mathrm{ta},+}$ has image in $\mathcal{C}^{\mathrm{ta},-}$, and $\sigma_{\mathrm{tr}}$ has image in $\mathcal{C}^{\mathrm{tr}} $.
To prove Theorems \ref{thm-technical-main} and \ref{thm-moduli-maps} it suffices to show:

\begin{lemma}The maps $\sigma_{\mathrm{tr}}$ and $\hol_{*}$ restricted to $\mathcal{C}^{\mathrm{tr}}$ are inverse to each other, the maps $\sigma_{\mathrm{ta},+}$ and $\hol_{*}$ restricted to $\mathcal{C}^{\mathrm{ta},+}$ are inverse to each other, and the maps $\sigma_{\mathrm{ta},-}$ and $\hol_*$ restricted to $\mathcal{C}^{\mathrm{ta},-}$ are inverse to each other.
\end{lemma}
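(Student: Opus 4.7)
The first composition $\hol_{\ast}\circ\sigma_{\ast}=\mathrm{Id}_{\mathrm{Hit}_3(S)}$ is tautological from Example \ref{lemma-affirmatively-flag-foliated}: $\sigma_{\ast}(\rho)$ has holonomy $\rho\circ q_\Gamma$, which descends to $\rho$ under $q_\Gamma$, and continuity of $\sigma_{\ast}$ is inherited from the continuous dependence of the hyperconvex Frenet curve on $\rho\in\mathrm{Hit}_3(S)$. So the content lies in verifying $\sigma_{\ast}\circ\hol_{\ast}=\mathrm{Id}_{\mathcal{C}^{\ast}}$ for each $\ast\in\{\mathrm{tr},(\mathrm{ta},+),(\mathrm{ta},-)\}$.

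For this reverse composition, fix $A=(\dev,\hol,\mathcal{F},\mathcal{G})\in\mathcal{C}^{\ast}$ with $\hol_{\ast}=\rho$. After a conjugation in $\mathrm{SL}_3(\bbR)$ absorbed into the equivalence relation, one may assume $\hol=\rho\circ q_\Gamma$; since $\hol(\tau)=e$, both $\dev$ and the corresponding model developing map descend to $\Gamma$-equivariant maps $D,D^{\mathrm{mod}}\colon\partial\Gamma^{(3)+}\to\mathscr{F}_{1,2}$. The plan is to produce a $\Gamma$-equivariant homeomorphism $\varphi\colon\partial\Gamma^{(3)+}\to\partial\Gamma^{(3)+}$ preserving $\overline{\mathcal{F}}$ and $\overline{\mathcal{G}}$ and descending to a homeomorphism of $\mathrm{T}^1S$ isotopic to the identity, with $D=D^{\mathrm{mod}}\circ\varphi$. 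The crucial input is the preceding leaf-level rigidity: by the Weakly Stable Leaf Characterization, $D|_{f_x}$ and $D^{\mathrm{mod}}|_{f_x}$ are continuous bijections onto the common concave domain lift determined by $\xi^1(x)$ and $\xi^2(x)$, and by Corollary \ref{cor-S-options} both restrict on each $g_{xz}$ to homeomorphisms onto the same properly convex segment lift in the common projective line $\mathcal{L}(x,z)$, with the type $\ast$ fixing a common orientation on $\mathcal{S}(x,z)$.

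The construction of $\varphi$ then proceeds leafwise: set $\varphi|_{g_{xz}}=(D^{\mathrm{mod}}|_{g_{xz}})^{-1}\circ D|_{g_{xz}}$, so that $\varphi$ takes the form $(x,y,z)\mapsto(x,\psi(x,y,z),z)$ with each $\psi(x,\cdot,z)$ an orientation-preserving self-homeomorphism of the appropriate interval of $\partial\Gamma$. Equivariance of $\varphi$ under $\Gamma$ is inherited from that of $D$ and $D^{\mathrm{mod}}$, and the descended self-homeomorphism of $\mathrm{T}^1S$ preserves every leaf of $\mathcal{G}$, hence is isotopic to the identity by Remark \ref{rmk-equivalence}; this yields the desired foliated equivalence $A\sim\sigma_{\ast}(\rho)$. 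The principal technical point is the continuity of $\varphi$ on all of $\partial\Gamma^{(3)+}$ (as opposed to leafwise): this will follow from the continuity of $(x,z)\mapsto\mathcal{L}(x,z)$ established in Lemma \ref{lemma-basics-on-L-and-S} together with the leaf-level local injectivity of $D^{\mathrm{mod}}$ from Proposition \ref{prop-relevant-dev-coverings}, which jointly provide continuous local trivializations of $\overline{\mathcal{G}}$ on both sides along which the leafwise inverses depend continuously.
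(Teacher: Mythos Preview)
Your approach is essentially the same as the paper's: both construct the foliated equivalence as the leafwise composition $(\dev^{\mathrm{mod}}|_{g_{xz}})^{-1}\circ\dev|_{g_{xz}}$, use Corollary \ref{cor-S-options} to see that the two developing maps have identical image on each leaf $g_{xz}$, and then invoke Remark \ref{rmk-equivalence} to conclude the resulting leaf-preserving homeomorphism is isotopic to the identity. The paper's version is terser and does not isolate the global continuity of $\varphi$ as a separate issue, whereas you flag it explicitly; your justification via the continuity of $(x,z)\mapsto\mathcal{L}(x,z)$ and the local injectivity of the model map is adequate, though one could equally note that $D^{\mathrm{mod}}$ is a local homeomorphism and $D$ is continuous, so the locally defined branches of $(D^{\mathrm{mod}})^{-1}\circ D$ glue.
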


\begin{proof}
    We prove the claim for $\sigma_{\mathrm{tr}}$; the other cases are similar.
    That $\hol_* \circ \sigma_{\mathrm{tr}} = \text{Id}$ is trivial.
    To address $\sigma_{\mathrm{tr}} \circ \hol_{*}$, let $A = (\dev,\hol, \mathcal{F}, \mathcal{G})$ be of transverse type, and let $(\dev_\rho^{\mathrm{tr}}, \hol_\rho, \mathcal{F}, \mathcal{G})$ denote $\sigma_{\mathrm{tr}}(\hol_*{A})$.
    By Remark \ref{rmk-equivalence}, to show equivalence of $\sigma_{\mathrm{tr}}(\hol_*(A))$ and $A$, it suffices to show that $(\dev_\rho^{\mathrm{tr}})^{-1} \circ \dev$ preserves each leaf of $\mathcal{G}$.
    Here, the branch of $(\dev_\rho^{\mathrm{tr}})^{-1}$ is chosen so as to map $g_{xz}$ into $g_{xz}$ for all $(x,z) \in \partial\Gamma^{(2)}$.
    The claim follows because both $\dev(g_{xz})$ and $\dev_\rho^{\mathrm{tr}}(g_{xy})$ are the line segment lift complementary to $\Omega$ in $\xi^1(x) \oplus \xi^1(z)$ based at $\xi^1(x)$ by Lemma \ref{cor-S-options}.
\end{proof}

\bibliographystyle{plain}
\bibliography{refs}

\end{document}